\renewcommand{\theequation}{%
\thesubsection.\arabic{equation}}
\newtheorem{Thm*}{Theorem}[section]
\newtheorem{Thm}{Theorem}[subsection]
\newtheorem{Lem}[Thm]{Lemma}
\newtheorem{Cor}[Thm]{Corollary}
\newtheorem{Prop}[Thm]{Proposition}
\newtheorem{Prop*}[Thm*]{Proposition}
\newtheorem{Rem}[Thm]{Remark}
\newtheorem{Def*}[Thm*]{Definition}
\newtheorem{Def}[Thm]{Definition}
\newcommand{\C}{\mathbb{C}}           
\newcommand{\Z}{\mathbb{Z}}
\newcommand{\Q}{\mathbb{Q}}
\newcommand{\Hom}{\mathrm{Hom} \,}
\newcommand{\End}{\mathrm{End}}
\newcommand{\Ker}{\text{Ker} \,}
\newcommand{\id}{\mathrm{id}}
\newcommand{\cl}{\mathrm{cl}}
\newcommand{\fg}{{\mathfrak g}}
\newcommand{\fm}{{\mathfrak m}}
\newcommand{\ga}{\alpha}
\newcommand{\gb}{\beta}
\newcommand{\gl}{\lambda}
\newcommand{\gL}{\Lambda}
\newcommand{\gd}{\delta}
\newcommand{\gD}{\Delta}
\renewcommand{\ggg}{\gamma}
\newcommand{\gs}{\sigma}
\newcommand{\gS}{\Sigma}
\newcommand{\gee}{\varepsilon}
\newcommand{\ol}{\overline}
\newcommand{\wti}{\widetilde}
\newcommand{\bk}{\mathbf{k}}
\newcommand{\modfg}[1]{{#1}\mathrm{\mathchar`-mod}_{\mathrm{fg}}}
\newcommand{\modfd}[1]{{#1}\mathrm{\mathchar`-mod}_{\mathrm{fd}}}
\renewcommand{\Hom}[1]{\mathrm{Hom}_{#1}}
\newcommand{\opp}[1]{{#1}^{\mathrm{opp}}}
\renewcommand{\ss}{\mathsf{s}}
\newcommand{\sg}{\mathsf{g}}
\newcommand{\bP}{\mathbb{P}}
\newcommand{\bS}{\mathbb{S}}
\newcommand{\bV}{\mathbb{V}}
\newcommand{\bE}{\mathbb{E}}
\newcommand{\bfm}{\mathbf{m}}
\newcommand{\bfn}{\mathbf{n}}
\newcommand{\bQ}{\mathbf{Q}}
\newcommand{\bA}{\mathbf{A}}
\newcommand{\cC}{\mathcal{C}}
\newcommand{\cF}{\mathcal{F}}
\newcommand{\cP}{\mathcal{P}}
\newcommand{\cQ}{\mathcal{Q}}
\newcommand{\cA}{\mathcal{A}}
\newcommand{\cI}{\mathcal{I}}
\newcommand{\cJ}{\mathcal{J}}
\newcommand{\aff}{\mathrm{aff}}
\newcommand{\wh}{\widehat}
\newcommand{\bmi}{\bm{\imath}}
\newcommand{\bmj}{\bm{\jmath}}
\title[Equivalence between quiver Hecke algebra modules and HL categories]
{Equivalence between module categories over quiver Hecke algebras and Hernandez--Leclerc's categories in general types}
\author[K.~Naoi]{Katsuyuki Naoi}
\address[K.~Naoi]{%
Institute of Engineering \\
Tokyo University of Agriculture and Technology\\
2-24-16 Naka-cho, Koganei-shi, Tokyo 184-8588, JAPAN}
\email{naoik@cc.tuat.ac.jp}
\keywords{generalized quantum affine Schur--Weyl duality, quantum affine algebra, quiver Hecke algebra, affine highest weight category}
\subjclass[2010]{17B37, 81R50}
\begin{document}

\begin{abstract}
 We prove in full generality that the generalized quantum affine Schur--Weyl duality functor, introduced by Kang--Kashiwara--Kim, gives an equivalence
 between the category of finite-dimensional modules over a quiver Hecke algebra and a certain full subcategory of finite-dimensional modules over a quantum affine algebra
 which is a generalization of the Hernandez--Leclerc's category $\cC_Q$.
 This was previously proved in untwisted $ADE$ types by Fujita using the geometry of quiver varieties, which is not applicable in general.
 Our proof is purely algebraic, and so can be extended uniformly to general types.
\end{abstract}

\maketitle

\section{Introduction}

\textit{Quiver Hecke algebras} (also called Khovanov--Lauda--Rouquier algebras) are a family of $\Z$-graded algebras introduced by Khovanov--Lauda \cite{khovanov2009diagrammatic}
and Rouquier \cite{rouquier20082}.
The main motivation of the study of these algebras comes from the fact that they categorify the half of a quantum group.
More precisely, for a Kac-Moody Lie algebra $\sg$, the Grothendieck group of the direct sum $\bigoplus_\gb R^{\sg}(\gb)\mathrm{\mathchar`-gmod}_{\mathrm{fd}}$ of the categories of finite-dimensional 
graded modules over the quiver Hecke algebras $R^\sg(\gb)$ associated with $\sg$
is isomorphic to the integral form $U^-_{\Z[q^{\pm 1}]}(\sg)^\vee$ of the dual of the negative half of the quantized enveloping algebra $U_q(\sg)$.
Here the $\Z[q^{\pm 1}]$-algebra structure is given by the convolution product and the grading shift.
Moreover for symmetric $\sg$, it is proved in \cite{varagnolo2011canonical} and \cite{rouquier2012quiver} that 
this isomorphism sends the classes of self-dual simple modules to the upper global basis.
In particular, when $\sg$ is a simple Lie algebra of type $ADE$, the above results imply that, if we forget the grading, $\bigoplus_\gb \modfd{R^\sg(\gb)}^0$ gives a categorification of
the coordinate ring $\C[N]$ of the unipotent group associated with $\sg$, since $U^-_{\Z[q^{\pm 1}]}(\sg)^\vee$ specializes to $\C[N]$ at $q=1$.
Here we denote by $\modfd{R^\sg(\gb)}^0$ the category of finite-dimensional $R^\sg(\gb)$-modules obtained from graded ones by forgetting the gradings.

In \cite{hernandez2015quantum}, Hernandez--Leclerc introduced another categorification of $\C[N]$ by using finite-dimensional representations over a quantum affine algebra $U_q'(\wh{\sg})$,
where $\wh{\sg}$ is the untwisted affine Lie algebra associated with $\sg$ of type $ADE$.
Denote by $\cC_{\wh{\sg}}$ the category of finite-dimensional integrable $U_q'(\wh{\sg})$-modules.
For each Dynkin quiver $Q$ whose underlying graph is the Dynkin diagram of $\sg$, 
they defined a full subcategory $\cC_Q$ of $\cC_{\wh{\sg}}$ which contains some fundamental modules determined from given data, and is stable under taking tensor products.
Then they proved that the complexified Grothendieck ring $\C \otimes_{\Z} K(\cC_Q)$ is isomorphic to 
$\C[N]$,
and that this isomorphism sends the classes of simple modules to the upper global basis of $\C[N]$.

Therefore there are two categorifications of $\C[N]$ and its upper global basis, via quiver Hecke algebras and quantum affine algebras.
Now it would be natural to ask (see \cite[Subsection 1.6]{hernandez2015quantum})
whether there is some functorial relationship between them or not.
This question has been solved completely by Kang--Kashiwara--Kim \cite{kang2018symmetric,kang2015symmetric}, and Fujita \cite{fujita2017affine}.
In \cite{kang2018symmetric} they developed a machinery of the \textit{generalized quantum affine Schur--Weyl duality functor} for a quantum affine algebra $U_q'(\fg)$ of general type.
They proved that, if a suitable family $J$ of simple modules in the category $\cC=\cC_{\fg}$ of finite-dimensional integrable $U_q'(\fg)$-modules is given, then we can construct a
$(U_q'(\fg),R^J(\gb))$-bimodule $\wh{W}^{\otimes \gb}$ for each $\gb$, and hence a functor 
\[ \cF_\gb=\wh{W}^{\otimes \gb}\otimes_{R^J(\gb)} -\colon \modfd{R^J(\gb)}^0 \to \cC
\]
is obtained.
Here the type of the corresponding quiver Hecke algebras $R^J(\gb)$ is determined from the given datum $J$, and  a priori irrelevant to that of $\fg$.
Moreover, they proved that the direct sum $\cF=\bigoplus_\gb \cF_\gb\colon \bigoplus_\gb \modfd{R^J(\gb)}^0\to \cC$ 
is a monoidal functor in general, and exact if $R^J(\gb)$ are of type $ADE$.
Then in \cite{kang2015symmetric}, they applied this construction for $\fg=\hat{\sg}$ of type $ADE$ 
to obtain a functor whose target is the Hernandez--Leclerc's category $\cC_Q$.
They proved that $R^J(\gb)$ in this case are of type $\sg$,
and the obtained functor gives a one-to-one correspondence between simple modules and an isomorphism of their Grothendieck rings\footnote{In type $E$ this was proved
assuming some simpleness of normalized $R$-matrices, which has been proved later in \cite{fujita2020geometric,oh2019categorical}.}.
This gives a functorial interpretation of the fact that two algebras give the same categorifications. 
In the following we write $\mathcal{R}_\gb^J = \modfd{R^J(\gb)}^0$.

After that, Fujita \cite{fujita2017affine,fujita2020geometric} proved a stronger relation between $\cC_Q$ and $\bigoplus_\gb \mathcal{R}^J_\gb$,
namely, the categorical equivalence.
Let us recall the proof of \cite{fujita2017affine} briefly, which strongly motivated the present work.
A main idea is to consider some ``completions" $\wh{\cC}_{Q,\gb}$ and $\wh{\mathcal{R}}^J_\gb$ of blocks of these categories, each of which has the structure of an 
\textit{affine highest weight category}
introduced by Kleshchev \cite{kleshchev2015affine}.
Here the completion $\wh{\mathcal{R}}_\gb^J$ is naturally defined as the category of finitely generated modules over a completion 
$\wh{R}^J(\gb)$ of $R^J(\gb)$.
On the other hand,  there is no obvious candidate for the completion $\wh{\cC}_{Q,\gb}$.
In \cite{fujita2017affine}, $\wh{\cC}_{Q,\gb}$ is defined as the category of finitely generated modules over a completion of the convolution algebra of the equivariant $K$-groups of
the quiver varieties associated with $\sg$, whose object becomes a $U_q'(\wh{\sg})$-module by \cite{nakajima2001quiver}.
Then by applying a sufficient condition for a functor between two affine highest weight categories to be an equivalence
(see Theorem \ref{Thm:Fujita_theorem} of the present paper), he proved that $\wh{\cC}_{Q,\gb}$ and $\wh{\mathcal{R}}^J_\gb$ are equivalent.
In addition, he also proved that the full subcategory of $\wh{\cC}_{Q,\gb}$ (resp.~$\wh{\mathcal{R}}^J_\gb$) 
consisting of finite-dimensional modules coincides with $\cC_{Q,\gb}$ (resp.~$\mathcal{R}^J_\gb$),
which completed the proof.

The aforementioned result of \cite{kang2015symmetric} has been extended to all the other quantum affine algebras:
types $A_{n}^{(2)}$ and $D_{n}^{(2)}$ in \cite{kang2016symmetric}, types $B_n^{(1)}$ and $C_n^{(1)}$ in \cite{kashiwara2019categorical}, 
and the remaining exceptional types in \cite{oh2019categorical}.
More precisely, in these papers a family of full subcategories of $\cC$ is defined for each type of a quantum affine algebra as a generalization of the Hernandez--Leclerc's categories $\cC_Q$, 
and it was shown in each case that the corresponding $R^J(\gb)$ are of type $ADE$, and 
the generalized quantum affine Schur--Weyl duality functor gives a bijection of simple modules and an isomorphism of their Grothendieck rings.
Recently, Fujita--Oh \cite{fujita2020q} developed a construction of these subcategories in a unified way for general types using the notion of a \textit{$Q$-datum} (which will be
recalled in Subsection \ref{Subsection:Q-data} of the present paper). 
We denote by $\cC_{\bQ}$ the full subcategory of $\cC$ corresponding to a $Q$-datum $\bQ$.

The purpose of this paper is to show that the result of Fujita mentioned above is also extended to all the other quantum affine algebras, namely, 
that the generalized quantum affine Schur--Weyl duality functor gives an equivalence between $\cC_{\bQ}$ and $\bigoplus_\gb \mathcal{R}_\gb^J$ in general types (Theorem \ref{Thm}).
This gives a proof to \cite[Conjecture 5.7]{kang2016symmetric} and \cite[Conjecture 6.11]{kashiwara2019categorical}.
Our proof is purely algebraic, and results of \cite{fujita2017affine} are not used.
In some cases, it occurs for different quantum affine algebras that the corresponding quiver Hecke algebras coincide.
In such cases our main theorem gives interesting equivalences of categories of finite-dimensional modules over \textit{different} types of quantum affine algebras (see Corollary \ref{Cor}).

Let us explain our strategy of the proof.
Similarly as in the untwisted $ADE$ types in \cite{fujita2017affine}, 
we consider some completions $\wh{\cC}_{\bQ,\gb}$ and $\wh{\mathcal{R}}^J_\gb$ of blocks.
Here $\wh{\mathcal{R}}^J_\gb$ is the same with the one used previously.
However, we cannot apply the geometry of quiver varieties in general types, and hence we adopt a completely different category for $\wh{\cC}_{\bQ.\gb}$,
the category of finitely generated modules over a ``Schur-like" algebra.
Precisely to say, we define an algebra $\bE^{\gb}$ as the $\wh{R}^J(\gb)^{\mathrm{opp}}$-linear endomorphism 
\[ \bE^{\gb} = \mathrm{End}_{\wh{R}^J(\gb)^{\mathrm{opp}}}(\wh{W}^{\otimes \gb})
\]
of the bimodule $\wh{W}^{\otimes \gb}$ appearing in the construction of the functor, and study the category $\modfg{\bE^{\gb}}$ of finitely generated $\bE^{\gb}$-modules, which
 is our $\wh{\cC}_{\bQ,\gb}$.
Note that there is an algebra homomorphism $\Phi_\gb\colon U_q'(\fg) \to \bE^{\gb}$ by the construction, and hence an $\bE^{\gb}$-module is automatically a $U_q'(\fg)$-module. 
In this paper, we show that $\modfg{\bE^{\gb}}$ is an affine highest weight category (Proposition \ref{Prop:Main_Prop} (ii)), 
and using this the equivalence of $\modfg{\bE^{\gb}}$ and $\wh{\mathcal{R}}^J_\gb$ is proved.
To complete the proof, we also have to show that the category of finite-dimensional  $\bE^{\gb}$-modules coincides with $\cC_{\bQ,\gb}$ (Proposition \ref{Prop:Main_Prop} (i)).
A main idea for the proof of this assertion is to use \textit{affine cellular} structures (introduced by Koenig--Xi \cite{koenig2012affine}) of the algebras.
By Kleshchev \cite{kleshchev2015affine}, $\wh{R}^J(\gb)$ has a chain of ideals $\wh{R}^J(\gb) =R_0  \supseteq \cdots \supseteq R_p=\{0\}$
whose subquotients are described in terms of \textit{standard modules} appearing in the axioms of affine highest weight categories.
By setting 
\[ \wh{W}_k=\wh{W}^{\otimes \gb} \otimes_{\wh{R}^J(\gb)} R_k\subseteq \wh{W}^{\otimes \gb} \ \ \text{ and } \ \ \bE_k = \{f \in \bE^{\gb} \mid f(\wh{W}^{\otimes \gb}) \subseteq \wh{W}_k\}
   \subseteq \bE^{\gb}
\]
for each $k$, we also obtain a chain of ideals $\bE^{\gb} = \bE_0 \supseteq \cdots \supseteq \bE_p = \{0\}$.
On the other hand, (a quotient of) $U_q'(\fg)$ also has a similar chain of ideals by \cite{cui2015affine,nakajima2015affine},
and these are compatible with $\Phi_\gb$.
By comparing the subquotients of these two chains we show a kind of the \textit{centralizing property}, namely, that the image of $\Phi_\gb$ is dense (in a suitable sense) in $\bE^{\gb}$.
Then the coincidence of the category of finite-dimensional $\bE^\gb$-modules and $\cC_{\bQ,\gb}$ follows from this.

The paper is organized as follows.
In Section \ref{Section:affine_quasi_hereditary}, we recall the definitions of affine quasihereditary algebras and affine highest weight categories,
and their properties.
In Section \ref{section:Quantum affine algebras}, we prepare several definitions and facts concerning with quantum affine algebras and their representations,
and in Section \ref{section:QHA} we prepare these concerning with quiver Hecke algebras.
In Subsection \ref{Subsection:Q-data} we recall the notion of $Q$-data, and after some preparation mainly for combinatorial objects in Subsection \ref{subsection:Full_subcategories},
we recall the definition of generalized quantum affine Schur--Weyl duality functors in Subsection \ref{subsection:GQASWD}.
In Subsection \ref{Subsection:Image}, we prove that the functor sends standard modules to standard modules.
In Subsection \ref{subsection:Main theorems and corollaries} we state key propositions, and assuming this we give a proof of the main theorem and a corollary.
The proof of the key propositions is given in Subsection \ref{subsection:proof_of_main}.\\

\noindent\textbf{Acknowledgments.}\ \ \
The author is very grateful to Ryo Fujita for a lot of valuable comments.
In particular, he told the author the proof of Lemma \ref{Lem:Lem_for_comparing_orders} using the inverse of a quantum Cartan matrix,
which is more conceptual than that in the previous version.
The author was supported by JSPS Grant-in-Aid for Scientific Research (C) No.~20K03554.\\

\noindent\textbf{Convention}\ \ \ 
For a ring $\cA$, $\opp{\cA}$ denotes the opposite ring of $\cA$.
On an $\cA$-module $M$, the endomorphism ring $\mathrm{End}_{\cA}(M)$  acts from the left.
For a two-sided ideal $\cI \subseteq \cA$, we denote the quotient $M/\cI M$ by $M/\cI$ to simplify the notation.
Let $\bk$ be a field.
When the base field is $\bk$, we write $\otimes$ for $\otimes_{\bk}$ if no confusion will occur.
For $i=1,2$, let $R_i$ be a complete local commutative $\bk$-algebra with maximal ideal $\mathfrak{m}_i$ with $R_i/\mathfrak{m}_i \cong \bk$.
For an $R_i$-module $M_i$ ($i =1,2$), we denote by $M_1 \hat{\otimes} M_2$ the completion of the $(R_1\otimes R_2)$-module $M_1 \otimes M_2$ with respect to the maximal
ideal $\mathfrak{m}_1 \otimes R_2 +R_1 \otimes \mathfrak{m}_2$. 
Note that $M_1 \hat{\otimes} M_2$ is a module over the complete local ring $R_1 \hat{\otimes} R_2$.

\section{affine quasihereditary algebras and affine highest weight categories}\label{Section:affine_quasi_hereditary}

\makeatletter
\renewcommand{\theequation}{%
\thesection.\arabic{equation}}
\@addtoreset{equation}{section}
\makeatother

Let $\mathcal{A}$ be a left Noetherian algebra over an algebraically closed field $\mathbf{k}$, and denote by $\mathcal{I}$ the Jacobson radical of $\cA$.
Throughout this section, we assume that $\cA/\cI$ is finite-dimensional and $\cA$ is complete with respect to the $ \cI $-adic topology (i.e., $\underset{n}{\varprojlim}\, \cA/\cI^n \cong \cA$).
Let $\modfg{\cA}$ denote the category of finitely generated left $\cA$-modules, and let $\{L(\pi) \mid \pi \in \Pi\}$ be
the set of isomorphism classes of simple modules in $\modfg{\cA}$, where $\Pi$ is a parametrizing set.
By our assumptions, $\Pi$ is a finite set and all $L(\pi)$ are finite-dimensional.
For each $\pi \in \Pi$, fix a projective cover $P(\pi)$ of $L(\pi)$.

Here we recall the definitions and some properties of affine quasihereditary algebras and affine highest weight categories in a topologically complete
setting.
The main reference is \cite{kleshchev2015affine}. 
Although the paper treats a graded setting,
the proofs (at least for the results appearing below) also work in our complete setting with obvious modification\footnote{Note that, instead of a graded structure,
each $M \in \modfg{\cA}$ has a filtration
$M\supseteq  \cI M\supseteq \cI^2 M\supseteq \cdots$, which satisfies $\dim M/\cI^nM <\infty$ and 
$\underset{n}{\varprojlim}\, M/\cI^nM \cong M$ (see e.g.~\cite{zbMATH03777679}).}
(see also \cite[Section 4]{fujita2018tilting}).
\begin{Def*}\label{Def:affine_heredity_ideal}\normalfont
 A two-sided ideal $\cJ \subseteq \cA$ is called \textit{affine heredity} if it satisfies the following conditions:
 \begin{itemize}
  \setlength{\leftskip}{-0.5cm}
  \item[(i)] We have $\Hom{\modfg{\cA}} (\cJ,\cA/\cJ) =0$;
  \item[(ii)] As a left $\cA$-module, we have $\cJ \cong P(\pi)^{\oplus m}$ for some $\pi \in \Pi$ and $m \in \Z_{>0}$;
  \item[(iii)] The endomorphism $\bk$-algebra $\End_\cA\big(P(\pi)\big)$ is isomorphic to a ring of formal power series $\bk\llbracket z_1,\ldots,z_n\rrbracket$ with some 
   $n \in \Z_{\geq 0}$,
   and $P(\pi)$ is free of finite rank over $\End_\cA\big(P(\pi)\big)$.
  \end{itemize} 
\end{Def*}
\begin{Def*}\normalfont
 The algebra $\cA$ is called \textit{affine quasihereditary} if there exists a finite chain of ideals
  \[ \cA=\cJ_0 \supsetneq \cJ_1 \supsetneq \cdots \supsetneq \cJ_l = \{0\}
  \]
  with $\cJ_{i-1}/\cJ_{i}$ an affine heredity ideal of $\cA/\cJ_{i}$ for all $1\leq i \leq l$.
  Such a chain of ideals is called an \textit{affine heredity chain}.
\end{Def*}
For a subset $\gS \subseteq \Pi$ and $M \in \modfg{\cA}$, define a submodule $\mathcal{O}^{\gS}(M) \subseteq M$ by
\[ \mathcal{O}^{\gS}(M)=\sum_{\pi \notin \gS, f \in \Hom{\cA}(P(\pi),M)} \mathrm{Im} f,
\]
and set $\mathcal{Q}^{\gS} (M) = M/ \mathcal{O}^{\gS}(M)$.
We have $\mathcal{O}^{\gS} (M) = \mathcal{O}^{\gS}(\cA) M$ (\cite[Lemma 3.12]{kleshchev2015affine}), and $\mathcal{Q}^{\gS}$ can be regarded as a right exact functor 
from $\modfg{\cA}$ to 
$\modfg{(\cA/\mathcal{O}^{\gS}(\cA))}$.
When the set $\Pi$ is endowed with a partial order $\leq$,
we set 
\[ \Pi_{\leq \pi}= \{ \gs \in \Pi \mid \gs \leq \pi\} \ \ \text{ for $\pi \in \Pi$},
\]
and we write $\mathcal{O}^{\leq \pi} = \mathcal{O}^{\Pi_{\leq \pi}}$ and $\mathcal{Q}^{\leq \pi} = \mathcal{Q}^{\Pi_{\leq \pi}}$.
For $\pi \in \Pi$, 
\begin{equation} \label{eq:definition_of_standard}
 \gD(\pi) = \mathcal{Q}^{\leq \pi} (P(\pi))
\end{equation}
is called the \textit{standard module} associated with $\pi$. 

\begin{Def*}\normalfont
 The category $\modfg{\cA}$ is called an \textit{affine highest weight category} with respect to the poset $(\Pi, \leq)$ if it satisfies the following conditions:
 \begin{itemize}
    \setlength{\leftskip}{-0.5cm}
  \item[(i)] For every $\pi \in \Pi$, there exists a filtration
   \[ \mathcal{O}^{\leq \pi} (P(\pi)) =V_0 \supseteq V_1 \supseteq V_2\supseteq \cdots \supseteq V_{p_\pi} = \{0\}
   \]
   such that each $V_{i-1}/V_i$ is isomorphic to $\gD(\gs)$ for some $\gs > \pi$;
 \item[(ii)] For every $\pi \in \Pi$, the endomorphism $\bk$-algebra $\End_\cA\big(\gD(\pi)\big)$ is isomorphic to a ring of formal power series 
  $\bk\llbracket z_1,\ldots,z_{n_\pi}\rrbracket$ with some $n_{\pi} \in \Z_{\geq 0}$, and $\gD(\pi)$ is free of finite rank over $\End_{\cA}\big(\gD(\pi)\big)$.
 \end{itemize}
 For $\pi \in \Pi$, let $N_\pi$ be the unique maximal ideal of $\mathrm{End}_\cA\big(\gD(\pi)\big)$.
 The module $\bar{\gD}(\pi) = \gD(\pi)/N_\pi$ is called the \textit{proper standard module} associated with $\pi$.
\end{Def*}

\begin{Thm*}[{\cite[Theorem 6.7]{kleshchev2015affine}}]\label{Thm:hereditary_corresp}\ \\
 {\normalfont(i)} Assume that $\modfg{\cA}$ is an affine highest weight category with respect to a poset $(\Pi, \leq)$.
  Then for any total order $\{\pi_1,\ldots,\pi_l\}$ on $\Pi$ such that $i < j$ holds whenever $\pi_i < \pi_j$, the chain of ideals
  \[ \cA \supsetneq \mathcal{O}^{\Pi(1)}(\cA) \supsetneq \mathcal{O}^{\Pi(2)}(\cA) \supsetneq \cdots \supsetneq \mathcal{O}^{\Pi(l)}(\cA)=\{0\}
  \]
  is affine heredity, where we set $\Pi(i)= \{\pi_1,\ldots,\pi_i\} \subseteq \Pi$. In particular, $\cA$ is an affine quasihereditary algebra.\\
 {\normalfont(ii)} Conversely, if $\cA$ is an affine quasihereditary algebra with an affine heredity chain $\{\cJ_i\}_{i=0}^l$, 
  then $\modfg{\cA}$ is an affine highest weight category with respect to the totally ordered set $\Pi = \{ \pi_1,\ldots,\pi_l\}$, where the order is given by setting $\pi_i = \pi$ if
  \[ i = \min\{j \mid \Hom{\modfg{\cA}}(P(\pi),\cA/\cJ_j)\neq 0\}.
  \]
  Moreover for $1\leq i \leq l$, we have $\cJ_{i-1}/\cJ_i \cong \gD(\pi_i)^{\oplus m_i}$ as left $\cA$-modules with some $m_i \in \Z_{>0}$.
\end{Thm*}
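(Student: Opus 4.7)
The plan is to prove both implications by analyzing the chain of two-sided ideals coming from the operators $\mathcal{O}^{\Pi(i)}$ in direction (i), and from the given affine heredity chain in direction (ii), and to transfer information between the standard modules $\gD(\pi_i)$ and the subquotients $\cJ_{i-1}/\cJ_i$.

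For (i), I would set $\cJ_i := \mathcal{O}^{\Pi(i)}(\cA)$, so that $\cJ_0 = \cA$ and $\cJ_l = \{0\}$, and verify that each $\cJ_{i-1}/\cJ_i$ is an affine heredity ideal of $\bar{\cA}_i := \cA/\cJ_i$. The crucial point is that in $\bar{\cA}_i$, every morphism $P(\pi_j) \to \cJ_{i-1}/\cJ_i$ with $\pi_j \notin \Pi(i-1)$ factors through the standard module $\gD(\pi_i)$. Indeed, morphisms out of $P(\pi_j)$ with $j > i$ are killed by construction of $\cJ_i$; and a morphism out of $P(\pi_i)$ annihilates $\mathcal{O}^{\leq \pi_i}(P(\pi_i))$ in $\bar{\cA}_i$ because, by AHWC axiom (i), this submodule admits a $\gD$-filtration whose pieces $\gD(\gs)$ all satisfy $\gs > \pi_i$ --- hence have index $> i$ in the linear extension, placing the corresponding subquotients inside $\cJ_i$. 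This identifies $\cJ_{i-1}/\cJ_i \cong \gD(\pi_i)^{\oplus m_i}$ for some $m_i > 0$. Condition (iii) of Definition \ref{Def:affine_heredity_ideal} then follows directly from AHWC axiom (ii) applied to $\gD(\pi_i)$, and condition (i) --- the vanishing $\Hom{\bar{\cA}_i}(\cJ_{i-1}/\cJ_i, \bar{\cA}_i/\cJ_{i-1}) = 0$ --- follows because $\gD(\pi_i)$ has head $L(\pi_i)$, while $\bar{\cA}_i/\cJ_{i-1}$ admits a filtration whose composition factors are $L(\pi_j)$ with $j < i$.

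For (ii), I would instead start from the given chain $\{\cJ_i\}$ and introduce the prescribed partial order, defining $\pi_i$ so that $\cJ_{i-1}/\cJ_i \cong P_{\cA/\cJ_i}(\pi_i)^{\oplus m_i}$. The core task is to identify $P_{\cA/\cJ_i}(\pi_i)$ with the standard module $\gD(\pi_i)$ defined a posteriori by (\ref{eq:definition_of_standard}). This reduces to proving $\cJ_i = \mathcal{O}^{\Pi(i)}(\cA)$ for the linear extension $\pi_1,\ldots,\pi_l$ coming from the chain, which I would argue by reverse induction on $i$: the inclusion $\cJ_i \subseteq \mathcal{O}^{\Pi(i)}(\cA)$ follows because $\cJ_{i-1}/\cJ_i$ is generated by the image of $P(\pi_i)$, so $\cJ_i$ is generated in turn by images of $P(\pi_j)$ with $j > i$; the reverse inclusion uses the definition of the order, which forces any morphism $P(\pi_j) \to \cA/\cJ_i$ with $j > i$ to vanish. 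Once this identification is in place, AHWC axiom (ii) is part of the definition of affine heredity ideal, and axiom (i) is obtained by restricting the chain to $\mathcal{O}^{\leq \pi_i}(P(\pi_i))$, whose successive subquotients are then $\gD(\pi_j)^{\oplus \ast}$ with $j > i$.

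The main obstacle I anticipate in both directions is the careful handling of incomparable elements of the poset: a linear extension may sequence incomparable elements arbitrarily, so one must verify that $\mathcal{O}^{\leq \pi_i}(P(\pi_i))$ --- defined using \emph{all} $\pi' \not\leq \pi_i$ --- coincides with the submodule generated only by images of $P(\pi')$ with strictly larger index in the chosen linear extension. This reconciliation relies essentially on the $\gD$-filtration structure provided by AHWC axiom (i) in direction (i), and on the successive affine heredity quotients in direction (ii), which together ensure that contributions from incomparable indices of smaller index are already absorbed by contributions from larger indices.
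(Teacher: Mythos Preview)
The paper does not prove this theorem: it is quoted verbatim from \cite[Theorem 6.7]{kleshchev2015affine} without argument, so there is no proof in the paper to compare against. Your sketch follows essentially the standard route taken in Kleshchev's paper, namely identifying the successive subquotients $\cJ_{i-1}/\cJ_i$ with powers of the standard modules $\gD(\pi_i)$ in both directions.

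Your outline is correct in spirit, and you have rightly flagged the delicate point about incomparable elements. Two places would need more care to become a complete proof. First, in direction (i), after showing that every map $P(\pi_i) \to \cA/\cJ_i$ factors through $\gD(\pi_i)$, you still need that $\gD(\pi_i)$ is \emph{projective} over $\cA/\cJ_i$, so that $\cJ_{i-1}/\cJ_i$ is not merely generated by images of $\gD(\pi_i)$ but actually splits as a direct sum of copies; this is usually argued via $\mathrm{Ext}^1$-vanishing between $\gD(\pi_i)$ and modules with composition factors $L(\pi_j)$, $j\le i$, which in turn uses the standard filtration of projectives and the freeness in axiom (ii). Second, in direction (ii), the claim that $P_{\cA/\cJ_i}(\pi_i)$ coincides with $\gD(\pi_i) = \mathcal{Q}^{\leq \pi_i}(P(\pi_i))$ requires the equality $\mathcal{O}^{\leq \pi_i}(P(\pi_i)) = \cJ_i P(\pi_i)$, not just $\cJ_i = \mathcal{O}^{\Pi(i)}(\cA)$; these are related but the former involves the partial order on $\Pi$ rather than the chosen linear extension, so the reconciliation of incomparable elements you mention must be made explicit here.
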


\begin{Prop*}
 Assume that $\modfg{\cA}$ is an affine highest weight category with respect to a poset $(\Pi, \leq)$.
 Then for each $\pi \in \Pi$, there is an indecomposable module $\bar{\nabla}(\pi)$, which is unique up to isomorphism, satisfying
 \[ \mathrm{Ext}_\cA^i(\gD(\gs),\bar{\nabla}(\pi)) = \begin{cases} \bk & \text{\normalfont{if} } i=0 \text{ \normalfont{and} } \gs = \pi,\\ 0 & \text{\normalfont{otherwise}}.\end{cases}
 \]
 The module $\bar{\nabla}(\pi)$ is called the \textit{proper costandard module} associated with $\pi$.
\end{Prop*}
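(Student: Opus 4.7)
The plan is to construct $\bar{\nabla}(\pi)$ via a $\bk$-linear duality between the categories of finite-dimensional modules over $\cA$ and over its opposite algebra $\opp{\cA}$, and then to verify the characterizing Ext-orthogonality. The key preliminary step is to show that $\opp{\cA}$ is again affine quasi-hereditary with respect to the same poset $(\Pi,\leq)$.

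Starting from an affine heredity chain $\cA = \cJ_0 \supsetneq \cdots \supsetneq \cJ_l = \{0\}$ of $\cA$ provided by Theorem \ref{Thm:hereditary_corresp}(i), the symmetric presentation of each affine heredity ideal in the form $\cJ_{i-1}/\cJ_i \cong (\cA/\cJ_i) e \otimes_{e(\cA/\cJ_i)e} e(\cA/\cJ_i)$, with $e(\cA/\cJ_i)e$ Morita-equivalent to $\bk\llbracket z_1,\ldots,z_n\rrbracket$, transfers the axioms of Definition \ref{Def:affine_heredity_ideal} verbatim to the opposite algebra. This yields proper standard modules $\bar{\gD}^{\opp}(\pi)$ of $\opp{\cA}$, each finite-dimensional as a quotient of a free module of finite rank over a local ring by the maximal-ideal action. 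I would then define
\[ \bar{\nabla}(\pi) := \Hom{\bk}\bigl(\bar{\gD}^{\opp}(\pi), \bk\bigr), \]
viewed as a finite-dimensional left $\cA$-module via the right $\cA$-action on $\bar{\gD}^{\opp}(\pi)$; indecomposability is inherited from that of $\bar{\gD}^{\opp}(\pi)$. The Ext-orthogonality would then follow from the tensor-hom identity
\[ \mathrm{Ext}^i_{\cA}\bigl(\gD(\gs), \bar{\nabla}(\pi)\bigr) \cong \Hom{\bk}\bigl(\mathrm{Tor}_i^{\cA}(\bar{\gD}^{\opp}(\pi), \gD(\gs)), \bk\bigr), \]
together with a direct computation: using a projective resolution of $\gD(\gs)$ whose terms are finite direct sums of $P(\gr)$ with $\gr \leq \gs$, obtained by combining axiom (i) of affine highest weight category with the structure of the heredity chain, the Tor reduces to the pairing $\Hom{\cA}(\gD(\gs), L(\pi))$, which equals $\bk$ when $\gs = \pi$ and vanishes otherwise, with the higher-degree terms vanishing by the filtration property.

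Uniqueness then follows from these characterizing conditions: any indecomposable $M$ satisfying the stated Ext-orthogonality has the same composition multiplicities as $\bar{\nabla}(\pi)$, analysed via $\Hom{\cA}(\gD(\gs), M)$ for all $\gs$, and a non-zero morphism between $M$ and $\bar{\nabla}(\pi)$ produced from the one-dimensional $\Hom$-spaces must be an isomorphism by indecomposability and $\mathrm{Ext}^1$-vanishing. The main technical obstacle will be the first step, namely verifying in the complete local setting that $\opp{\cA}$ retains the affine quasi-hereditary structure with respect to the same poset. While the analogous statement in the graded or purely finite-dimensional setting is well documented, the $\cI$-adic topology and the free-of-finite-rank condition in Definition \ref{Def:affine_heredity_ideal}(iii) require a careful choice of primitive idempotents and a check that the symmetric presentation above is compatible with the completion. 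Once this symmetry is established, the Ext-orthogonality and the uniqueness are formal consequences of the duality.
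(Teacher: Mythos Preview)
Your overall strategy---show that $\opp{\cA}$ is again affine quasi-hereditary with respect to the same poset, set $\bar{\nabla}(\pi):=\Hom{\bk}(\bar{\gD}^{\opp}(\pi),\bk)$, and read off the Ext-orthogonality via the Ext--Tor duality---is correct and is essentially the route taken in Kleshchev's paper, which is all the present paper cites. Two points in your sketch need correction, however.

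First, the direction of the inequality in your projective resolution is reversed. By axiom~(i) of the definition of affine highest weight category, the kernel $\mathcal{O}^{\leq\gs}\big(P(\gs)\big)$ of $P(\gs)\twoheadrightarrow\gD(\gs)$ is filtered by $\gD(\gr)$ with $\gr>\gs$, so iterating gives a resolution of $\gD(\gs)$ by sums of $P(\gr)$ with $\gr\geq\gs$, not $\gr\leq\gs$. With this fixed, the Tor computation is best organised as a downward induction on $\gs$ using the long exact sequence attached to $0\to\mathcal{O}^{\leq\gs}(P(\gs))\to P(\gs)\to\gD(\gs)\to 0$; your phrase ``reduces to the pairing $\Hom{\cA}(\gD(\gs),L(\pi))$'' is not literally correct, since $\mathrm{Tor}_0$ is the tensor product $\bar{\gD}^{\opp}(\pi)\otimes_{\cA}\gD(\gs)$, which one computes directly by writing both factors as idempotent truncations of $\cA/\mathcal{O}^{\leq\pi}(\cA)$ and $\cA/\mathcal{O}^{\leq\gs}(\cA)$.

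Second, your uniqueness argument has a gap: the one-dimensional Hom-spaces at your disposal are $\Hom{\cA}(\gD(\pi),M)$ and $\Hom{\cA}(\gD(\pi),\bar{\nabla}(\pi))$, which do not by themselves produce a nonzero morphism between $M$ and $\bar{\nabla}(\pi)$. One way to close this: from $\mathrm{Ext}^1_{\cA}(\gD(\gr),M)=0$ for all $\gr$ and the standard filtration of each $P(\gr)$, deduce $[M:L(\gr)]=(P(\gr):\gD(\pi))$, so $M$ and $\bar{\nabla}(\pi)$ are finite-dimensional with identical composition series and simple socle $L(\pi)$. Passing to $\bk$-duals then turns both into $\opp{\cA}$-modules with simple head $L(\pi)$ and the same length, whence a surjection from the projective cover forces them to be isomorphic. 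Indecomposability of $\bar{\nabla}(\pi)$ follows from the simplicity of its socle rather than from that of $\bar{\gD}^{\opp}(\pi)$ directly.
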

\begin{proof}
 See \cite[Lemmas 7.2, 7.4, and 7.9]{kleshchev2015affine}.
\end{proof}

The following theorem by Fujita is quite important in this paper.

\begin{Thm*}[{\cite[Theorem 3.9 and Subsection 4.2]{fujita2018tilting}}]\label{Thm:Fujita_theorem}
 For $i=1,2$, let $\modfg{\cA_i}$ be an affine highest weight category with respect to a poset $(\Pi_i, \leq_i)$.
 Assume that there is an exact functor $F\colon \modfg{\cA_1} \to \modfg{\cA_2}$ and the following conditions are satisfied:
 \begin{itemize}
  \setlength{\leftskip}{-0.5cm}
  \item[(i)] For $i =1,2$, $\cA_i$ is a finitely generated module over its center;
  \item[(ii)] There exists a bijection $f\colon \Pi_1 \to \Pi_2$ preserving partial orders, and we have
   \[ F(\gD(\pi)) \cong \gD(f(\pi)) \ \text{ and } \ F(\bar{\nabla}(\pi)) \cong \bar{\nabla}(f(\pi)) \ \ \text{ for all $\pi \in \Pi_1$}.
   \]
 \end{itemize}
 Then the functor $F$ gives an equivalence of categories $F\colon \modfg{\cA_1} \stackrel{\sim}{\to} \modfg{\cA_2}$.
\end{Thm*}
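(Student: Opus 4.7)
The plan is to show $F$ is fully faithful and essentially surjective by reducing everything to the Ext-orthogonality between standard and proper costandard modules, which governs the entire $\mathrm{Hom}$-theory of an affine highest weight category.

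The starting point is the observation that, since $F$ is exact and sends $\gD(\pi) \mapsto \gD(f(\pi))$ and $\bar{\nabla}(\pi) \mapsto \bar{\nabla}(f(\pi))$, the induced map
\[
\mathrm{Ext}^i_{\cA_1}\!\bigl(\gD(\pi), \bar{\nabla}(\gs)\bigr) \longrightarrow \mathrm{Ext}^i_{\cA_2}\!\bigl(\gD(f(\pi)), \bar{\nabla}(f(\gs))\bigr)
\]
is an isomorphism for every $i \geq 0$ and all $\pi, \gs \in \Pi_1$: both sides vanish unless $i = 0$ and $\pi = \gs$, and in that case both are one-dimensional, spanned by the canonical composition $\gD(\pi) \twoheadrightarrow \bar{\gD}(\pi) \hookrightarrow \bar{\nabla}(\pi)$, which $F$ sends to a nonzero morphism of the same form.

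I would then promote this to an Ext-isomorphism for every pair $(M, N)$ with $M$ admitting a $\gD$-filtration and $N$ admitting a $\bar{\nabla}$-filtration, by induction on the filtration lengths using the long exact sequences of $\mathrm{Ext}$ and the five lemma. Since each projective cover $P(\pi)$ admits a $\gD$-filtration by axiom (i) applied to $\mathcal{O}^{\leq \pi}(P(\pi)) \subseteq P(\pi)$ (with quotient $\gD(\pi)$), this yields $\Hom{\cA_1}(P(\pi), N) \xrightarrow{\sim} \Hom{\cA_2}(FP(\pi), FN)$ for every $\bar{\nabla}$-filtered $N$, matching the $\gD$-filtration multiplicities of $FP(\pi)$ with those of $P(f(\pi))$. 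Combined with the $\mathrm{Ext}^{\geq 1}$-vanishing against all proper costandards (hence against all $\bar{\nabla}$-filtered modules), one can identify $FP(\pi) \cong P(f(\pi))$.

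The proof then concludes by a Morita-type argument: $F$ sends the projective generator $\bigoplus_\pi P(\pi)$ of $\modfg{\cA_1}$ to the projective generator $\bigoplus_\pi P(f(\pi))$ of $\modfg{\cA_2}$ and induces an isomorphism on the corresponding endomorphism algebras, yielding the desired equivalence. The main technical obstacle will be pinning down $FP(\pi) \cong P(f(\pi))$ in the complete (non-graded) setting, where naive dimension counts are unavailable and where one must also establish that $FP(\pi)$ is actually projective (not merely $\gD$-filtered); this is precisely where the hypothesis that each $\cA_i$ is finitely generated over its center is essential, controlling the endomorphism rings of the standard modules (themselves formal power series rings) and guaranteeing that the inductive filtration arguments terminate.
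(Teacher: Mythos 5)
Your starting point --- the Ext-orthogonality $\mathrm{Ext}^i_{\cA}(\gD(\pi),\bar{\nabla}(\gs)) = \gd_{i0}\gd_{\pi\gs}\bk$, promoted by the five lemma to an isomorphism $\mathrm{Ext}^i_{\cA_1}(M,N) \cong \mathrm{Ext}^i_{\cA_2}(FM,FN)$ for any $\gD$-filtered $M$ and $\bar{\nabla}$-filtered $N$ --- is correct, and it is indeed the engine of Fujita's proof. But the route you take through projectives has a genuine gap, and it is not merely the completion technicality you acknowledge. The isomorphism you construct controls $\mathrm{Hom}(FP(\pi),-)$ and $\mathrm{Ext}^{\geq 1}(FP(\pi),-)$ only against $\bar{\nabla}$-filtered targets, whereas $P(\pi)$ is $\gD$-filtered but in general \emph{not} $\bar{\nabla}$-filtered. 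Consequently you cannot conclude $\mathrm{End}_{\cA_1}(P(\pi)) \cong \mathrm{End}_{\cA_2}(FP(\pi))$ (so you cannot even show $FP(\pi)$ is indecomposable), and the three facts you invoke --- $\gD$-filtered, same $\gD$-multiplicities as $P(f(\pi))$, $\mathrm{Ext}$-vanishing against $\bar{\nabla}$-filtered modules --- do not determine $P(f(\pi))$, because the last condition is automatic for \emph{every} $\gD$-filtered module and matching multiplicities alone do not fix the isomorphism class. The concluding Morita step then has no footing either, since $\mathrm{Hom}_{\cA_1}(P(\pi),P(\gs))$ has a $\gD$-filtered, not $\bar{\nabla}$-filtered, target and so is not controlled by your isomorphism.

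Fujita's actual proof closes this by passing through tilting modules --- hence the title of the cited paper. The indecomposable tiltings $T(\pi)$ lie simultaneously in the $\gD$-filtered and $\bar{\nabla}$-filtered classes, so your Ext-isomorphism applies on \emph{both} arguments at once: $\mathrm{End}(T(\pi)) \stackrel{\sim}{\to} \mathrm{End}(F(T(\pi)))$ is an isomorphism of local rings, hence $F(T(\pi))$ is indecomposable, is $\gD$- and $\bar{\nabla}$-filtered with the same multiplicities as $T(f(\pi))$, and is therefore isomorphic to $T(f(\pi))$. Since affine quasihereditary algebras have finite global dimension, the bounded derived category is generated by tilting modules, $F$ then induces a derived equivalence, and $t$-exactness of $F$ descends this to an equivalence of abelian categories. (A secondary point you gloss over: that $F$ carries the canonical nonzero map $\gD(\pi)\to\bar{\nabla}(\pi)$ to a nonzero map requires knowing $F(L(\pi))\neq 0$; this follows from unitriangularity of the transition matrix between $\{[\bar{\nabla}(\pi)]\}$ and $\{[L(\pi)]\}$ in the Grothendieck group, but must be argued.)
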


Finally let us recall the connection with affine cellular algebras introduced by Koenig and Xi \cite{koenig2012affine}.
Assume that $\cA$ has a $\bk$-algebra anti-involution $\tau$.
For $M \in \modfg{\cA}$, let $M^{\tau}$ denote the right $\cA$-module obtained from $M$ by twisting the action of $\cA$ via $\tau$.
When $M$ is finite-dimensional, a left $\cA$-module structure on $M^* = \Hom{\,\bk}(M,\bk)$ is given by
\begin{equation}\label{eq:module_on_dual}
 (af)(v) = f(\tau(a)v) \ \ \text{ for } f \in M^*, \ a \in \cA,\ v \in M.
\end{equation}

\begin{Thm*}\label{Thm:affine_cellular}
 Let $\pi \in \Pi$, and assume that $\cA$ has an affine heredity ideal $\cJ$ with $\cJ\cong P(\pi)^{\oplus m}$ for some $\pi \in \Pi$,
 and an anti-involution $\tau$ satisfying $L(\pi)^*\cong L(\pi)$ as $\cA$-modules.
 Then $\cJ$ satisfies the following conditions:
 \begin{itemize}
  \setlength{\leftskip}{-0.5cm}
  \item[(i)] The ideal $\cJ$ is $\tau$-invariant;
  \item[(ii)] Set $\mathcal{B} = \End_\cA P(\pi) (\cong \bk\llbracket z_1,\ldots,z_n\rrbracket)$.
   Then there exists an idempotent $e$ in $\cA$ such that $\cJ =  \cA e \cA$, $e\cA e \cong \mathcal{B}$ as $\bk$-algebras, 
   $\cA e \cong P(\pi)$ as $(\cA, \mathcal{B})$-bimodules,
   $e\cA \cong P(\pi)^{\tau}$ as $(\mathcal{B},\cA)$-bimodules, and
   \[ \cA e\cA \cong P(\pi) \otimes_{\mathcal{B}} P(\pi)^{\tau} \ \text{ as $(\cA,\cA)$-bimodules}.
   \]
\end{itemize}
\end{Thm*}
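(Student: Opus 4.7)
The plan is to first construct a primitive idempotent $e$ realising $\cJ$ as $\cA e \cA$ with $\cA e \cong P(\pi)$, and then deduce each of the required identifications. Since $\cA$ is semi-perfect ($\cA$ is $\cI$-adically complete and $\cA/\cI$ is finite-dimensional), I would lift a primitive idempotent of $\cA/\cI$ acting non-trivially on $L(\pi)$ to a primitive idempotent $e \in \cA$ with $\cA e \cong P(\pi)$ and $e\cA e \cong \End_\cA(P(\pi)) = \mathcal{B}$. Since $\Hom{\cA}(\cA e, \cA) = e\cA$, the trace ideal $\mathrm{tr}_{P(\pi)}(\cA) = \cA e \cA$, so $\cJ \subseteq \cA e \cA$. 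For the reverse inclusion, axiom (i) for the affine heredity ideal gives $\Hom{\cA}(P(\pi), \cA/\cJ) = 0$, so the composition $\cA e \hookrightarrow \cA \twoheadrightarrow \cA/\cJ$ must vanish, yielding $\cA e \subseteq \cJ$ and hence $\cJ = \cA e \cA$.

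For part (i), applying $\tau$ gives $\tau(\cJ) = \cA \tau(e) \cA$ with $\tau(e)$ a primitive idempotent. I would verify $\cA \tau(e) \cong P(\pi)$ by showing $\tau(e) L(\sigma) \neq 0$ iff $\sigma = \pi$: using \eqref{eq:module_on_dual} this reduces to $e L(\sigma)^* \neq 0$, equivalent by primitivity of $e$ to $L(\sigma)^* \cong L(\pi)$, which by the hypothesis $L(\pi)^* \cong L(\pi)$ and bijectivity of $\sigma \mapsto \sigma^*$ is equivalent to $\sigma = \pi$. Then $\cA \tau(e) \subseteq \cJ$ by the same axiom-(i) argument, so $\tau(\cJ) \subseteq \cJ$; applying $\tau$ again yields equality.

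In part (ii), the identifications $\cA e \cong P(\pi)$ and $e\cA e \cong \mathcal{B}$ are built into the choice of $e$. For $e\cA \cong P(\pi)^\tau$ as $(\mathcal{B}, \cA)$-bimodules, the $\tau$-twist of the right $\cA$-module $e\cA$ is the left $\cA$-module $\cA \tau(e) \cong P(\pi)$ from part (i); compatibility with the $\mathcal{B}$-actions uses commutativity of $\mathcal{B}$ and the fact that $\tau|_\mathcal{B}$ is a $\bk$-algebra automorphism.

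The central step, and the main obstacle, is the final bimodule isomorphism $\cJ \cong P(\pi) \otimes_\mathcal{B} P(\pi)^\tau$ via the surjective multiplication map $\mu(ae \otimes eb) = aeb$. To prove injectivity, I would apply the exact Schur functor $e \cdot (-)$ to the axiom-(ii) isomorphism $\cJ \cong (\cA e)^{\oplus m}$, obtaining $e\cA = e\cJ \cong \mathcal{B}^{\oplus m}$ as left $\mathcal{B}$-modules; remarkably, the freeness of $e\cA$ over $\mathcal{B}$ of rank $m$ need not be assumed but emerges from axiom (ii) together with the identification $e\cJ = e\cA$ inside $\cA$. Substituting, $\cA e \otimes_\mathcal{B} e\cA \cong \cA e \otimes_\mathcal{B} \mathcal{B}^{\oplus m} \cong (\cA e)^{\oplus m}$ as left $\cA$-modules, matching the left module structure of $\cJ$. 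The map $\mu$ is therefore a surjection between isomorphic finitely generated projective left $\cA$-modules, and an application of Krull--Schmidt (available since $\cA$ is semi-perfect) shows $\ker \mu = 0$; bijectivity as a left module map gives the required $(\cA, \cA)$-bimodule isomorphism.
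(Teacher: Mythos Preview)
The paper itself gives no argument for this result; its proof consists solely of the reference to \cite[Proposition 9.8]{kleshchev2015affine}. Your proposal therefore supplies what the paper omits, and the overall strategy---lift a primitive idempotent, identify $\cJ$ with its trace ideal $\cA e\cA$, use the self-duality $L(\pi)^*\cong L(\pi)$ to get $\tau$-invariance, and prove the multiplication map is an isomorphism by a rank count---is sound and presumably close to Kleshchev's own argument.

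Two points deserve comment. First, the final injectivity step is simpler than you make it: since $\cA$ is left Noetherian, the module $(\cA e)^{\oplus m}$ is Noetherian and hence Hopfian, so after composing $\mu$ with abstract isomorphisms on source and target one obtains a surjective endomorphism of a Noetherian module, automatically injective. No appeal to Krull--Schmidt is needed.

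Second, and more substantively, the $(\mathcal{B},\cA)$-bimodule identification $e\cA\cong P(\pi)^\tau$ is genuinely delicate, and your sentence invoking ``commutativity of $\mathcal{B}$ and $\tau|_\mathcal{B}$'' does not settle it. The chosen isomorphism $\cA e\cong P(\pi)$ fixes one identification $\alpha\colon e\cA e\stackrel{\sim}{\to}\mathcal{B}$, while an arbitrary right-$\cA$-isomorphism $e\cA\cong P(\pi)^\tau$ induces another identification $\beta$, and the tensor-product assertion $\cA e\cA\cong P(\pi)\otimes_\mathcal{B}P(\pi)^\tau$ needs these to agree. One way to handle this is to note that $e$ and $\tau(e)$ are conjugate primitive idempotents and then either arrange $\tau(e)=e$ (which requires an additional argument about lifting $\tau$-fixed idempotents from $\cA/\cI$) or track the discrepancy $\gamma=\beta\alpha^{-1}\in\Aut_\bk(\mathcal{B})$ explicitly and verify that the resulting twist does not change the $(\cA,\cA)$-bimodule isomorphism type of the tensor product. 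Either route works, but neither is the one-liner you suggest; this is precisely the kind of bookkeeping that Kleshchev's proof carries out.
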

\begin{proof}
 See \cite[Proposition 9.8]{kleshchev2015affine} and its proof.
\end{proof}
This theorem implies that an affine quasihereditary algebra $\cA$ with an anti-involution $\tau$ satisfying $L(\pi)^* \cong L(\pi)$ for all $\pi \in \Pi$ is an affine cellular algebra.


\makeatletter
\renewcommand{\theequation}{%
\thesubsection.\arabic{equation}}
\@addtoreset{equation}{subsection}
\makeatother

\section{Quantum affine algebras}
\label{section:Quantum affine algebras}

\subsection{Cartan datum}

A \textit{Cartan datum} is a quintuple $(A,P,\Pi,P^{\vee},\Pi^\vee)$ with index set $I$ consisting of
\begin{itemize}
 \setlength{\leftskip}{-0.5cm}
 \item[(a)] a symmetrizable generalized Cartan matrix $A = (a_{ij})_{i,j \in I}$,
 \item[(b)] a free abelian group $P$, called the \textit{weight lattice},
 \item[(c)] $\Pi=\{\ga_i \in P \mid i \in I\}$, called the set of \textit{simple roots},
 \item[(d)] $P^\vee = d^{-1}\Hom{\Z}(P,\Z)$ with some positive integer $d$, called the \textit{coweight lattice},
 \item[(e)] $\Pi^\vee=\{h_i \in P^\vee\mid i\in I\}$, called the set of \textit{simple coroots},
\end{itemize}
satisfying 
\begin{itemize}
 \setlength{\leftskip}{-0.5cm}
 \item[(i)] $\langle h_i,\ga_j\rangle = a_{ij}$ for $i,j\in I$, \ \ \ \ \ \ \ (ii) $\Pi$ is linearly independent,
 \item[(iii)] for each $i \in I$, there exists $\gL_i \in P$ such that $\langle h_j,\gL_i\rangle =\gd_{ij}$ for all $j\in I$. 
\end{itemize}
We call $\gL_i$ the \textit{fundamental weights}.
Let $D=\mathrm{diag}(\mathsf{s}_i\!\mid\! i\in I)$ be a diagonal matrix such that $DA$ is symmetric and $\mathsf{s}_i$ are positive rational numbers.
A free abelian group $Q=\bigoplus_{i \in I} \Z \ga_i$ is called the \textit{root lattice}, and set $Q^+ = \sum_{i \in I} \Z_{\geq 0} \ga_i \subseteq Q$.
Let $(\ ,\ )$ be a $\Q$-valued symmetric bilinear form on $P$ satisfying
\[ (\ga_i,\ga_j) =\ss_ia_{ij} \ \ \text{for $i,j \in I$} \ \ \ \text{ and } \ \ \ \langle h_i,\gl\rangle = \frac{2(\ga_i,\gl)}{(\ga_i,\ga_i)} \ \ 
   \text{for $i \in I$ and $\gl\in P$}.
\]

\subsection{Definition of quantum affine algebras}

Throughout the rest of this paper, we assume that $A=(a_{ij})_{i,j\in I}$ is a generalized Cartan matrix of affine type with index set $I=\{0,1,\ldots,n\}$.
We assume that indices are ordered as in \cite[Section 4.8]{MR1104219}, except $A_{2n}^{(2)}$-case in which we reverse the numbering (i.e., $i \leadsto n-i$).
Set $I_0 = I \setminus \{0\}$. 
Let $(A,P,\Pi,P^\vee,\Pi^\vee)$ be an affine Cartan datum, and let 
$\gd= \sum_{i \in I} c_i\ga_i$ be the imaginary root and $c=\sum_{i \in I} c_i^\vee h_i$ the canonical central element.
We assume that $P = \bigoplus_{i \in I} \Z \gL_i \oplus \Z \gd$.
We fix the symmetrizer $D=\mathrm{diag}(\ss_0,\ldots,\ss_n)$ so that
\[ \min\{\ss_i\mid i \in I\} =\begin{cases} 1/2 & \text{if $\fg$ is of type $A_{2n}^{(2)}$},\\
                                            1   & \text{otherwise},\end{cases}
\]
and also fix the integer $d$ defining $P^\vee$ by $d=2$ if $\fg$ is of type $A_{2n}^{(2)}$, and $d=1$ otherwise.
Let $\fg$ be the affine Kac-Moody Lie algebra associated with the datum $(A,P,\Pi,P^\vee,\Pi^\vee)$,
and $\fg_0$ the subalgebra corresponding to $I_0$.

From now on, we take the algebraic closure of $\C(q)$ in $\bigcup_{m>0} \C(\!(q^{1/m})\!)$ as a base field $\bk$.
Let $q_i = q^{\ss_i}$ for $i \in I$, $[k]_i = (q_i^{k}-q_i^{-k})/(q_i-q_i^{-1})$ for $k \in \Z$ and $[m]_i! = \prod_{s=1}^m [s]_i$ for $m \in \Z_{\geq 0}$.
Let $U_q=U_q(\fg)$ be the quantum group associated with $(A,P,\Pi,P^\vee,\Pi^\vee)$, 
which is a $\bk$-algebra generated by $e_i,f_i$ ($i \in I$) and $q^h$ ($h \in P^\vee$) subject to the following relations:
\begin{gather*}
 q^0=1, \ \ q^hq^{h'} = q^{h+h'}, \ \ q^he_iq^{-h} = q^{\langle h,\ga_i\rangle} e_i, \ \ q^hf_iq^{-h}=q^{-\langle h,\ga_i\rangle}f_i \ \text{ for } h,h' \in P^\vee,\\
 e_if_j-f_je_i = \gd_{ij} \frac{K_i-K_i^{-1}}{q_i-q_i^{-1}}, \text{ where } K_i = q^{\ss_ih_i},\\
 \sum_{k=0}^{1-a_{ij}} (-1)^k e_i^{(1-a_{ij}-k)}e_je_i^{(k)} = \sum_{k=0}^{1-a_{ij}} (-1)^k f_i^{(1-a_{ij}-k)}f_jf_i^{(k)}=0 \ \text{ for } i\neq j.
\end{gather*}
Here $e_i^{(k)} = e_i^k/[k]_i!$ and $f_i^{(k)} = f_i^k/[k]_i!$ for $k \in \Z_{\geq 0}$.

Set $P_\cl=P/\Z\gd$, and let $\cl\colon P \to P_\cl$ be the canonical projection.
We have $P_\cl = \bigoplus_{i \in I} \Z \,\cl(\gL_i)$.
For simplicity of notation, we will write $\ga_i$ for $\cl(\ga_i)$ when there should be no confusion.
Let $U_q'=U_q'(\fg)$ be the subalgebra of $U_q$ generated by $e_i,f_i$ ($i \in I$) and $q^h$ ($h \in P_\cl^\vee = d^{-1}\Hom{\Z} (P_\cl,\Z)$),
and denote by $U_q^0=U_q^0(\fg)$ 
the quotient algebra $U_q'/\langle q^c-1\rangle$, where $\langle q^c-1\rangle$ is the two-sided ideal of $U_q'$ generated by
$q^c-1$.
By a slight abuse of notation, we will use same symbols $e_i,f_i,q^h$ for their images under the canonical projection $U_q' \to U_q^0$.
Let
$P_0 = \{\gl \in P_\cl\mid \langle c,\gl\rangle = 0\}$,
and define a partial order $\leq$ on $P_0$ by setting $\gl \leq \mu$ if and only if $\mu-\gl \in \sum_{i \in I_0}\Z_{\geq 0} \cl(\ga_i)$.
Set 
\[ \varpi_i = \mathrm{gcd}(c_0^\vee,c_i^\vee)^{-1}\big(c_0^\vee\cl(\gL_i)-c_i^\vee\cl(\gL_0)\big) \in P_0 \ \ \ \text{for } i \in I_0.
\]
Then we have $P_0 = \sum_{i \in I_0} \Z\varpi_i$.
Note that we have $\langle h_i, \varpi_j\rangle = \gd_{ij}$ for all $i,j\in I_0$ except for $i=j=n$ in type $A_{2n}^{(2)}$, in which we have $\langle h_n,\varpi_n\rangle =2$.
Set $P_0^+ = \sum_{i \in I_0} \Z_{\geq 0} \varpi_i$.
The bilinear form $(\ ,\ )$ on $P$ induces a bilinear form on $P_0$, which we also denote by $(\ ,\ )$.
Denote by $\gD$ the coproduct of $U_q$ defined by
\[ \gD(q^h) = q^h \otimes q^h, \ \ \ \gD(e_i)=e_i \otimes K_i^{-1} + 1\otimes e_i, \ \ \ \gD(f_i)=f_i\otimes 1 + K_i \otimes f_i,
\]
and by $S$ the antipode of $U_q$ defined by
\[ S(e_i)=-e_iK_i, \ \ \ S(f_i)=-K_i^{-1}f_i, \ \ \ S(q^h)=q^{-h}.
\]
These $\gD$ and $S$ also define coproducts and antipodes on $U_q'$ and $U_q^0$, respectively.

For $i \in I_0$, let $d_i \in \Z_{>0}$ be the smallest positive integer such that $\ga_i + d_i \gd$ is a root of $\fg$.
Let $x_{i,m}^{\pm} \in U_q^0$ (resp.~$h_{i,m} \in U_q^0$) for $i \in I_0$ and $m \in \Z$ (resp.~$m\in \Z\setminus\{0\}$) 
be the Drinfeld generator corresponding to the root $\pm \ga_i + md_i\gd$ (resp.~$md_i\gd$).
For the definitions, see \cite{MR1301623} in untwisted types, and \cite{MR2973398,MR3367090} in twisted types.
Let $U_q(0)$ (resp.\ $U_q(>)$, $U_q(<)$) be the $\bk$-subalgebra of $U_q^0$ generated by $h_{i,m}$'s and $q^h$'s 
(resp.~$x^+_{i,m}$'s, $x^-_{i,m}$'s).
We have 
\[ U^0_q=U_q(<)U_q(0)U_q(>),
\]
and the subalgebra $U_q(0)$ is commutative.

\subsection{Finite-dimensional representations over a quantum affine algebra}

For a $U_q$-module (resp.\ $U_q'$-module) $M$ and $\gl \in P$ (resp.\ $\gl \in P_\cl$), write
\[ M_\gl = \{ v \in M \mid q^h v = q^{\langle h, \gl\rangle}v \ \text{for $h \in P^\vee$ (resp.\ $h \in P_\cl^\vee$)}\}.
\]
We say a $U_q$-module (or $U_q'$-module) $M$ is \textit{integrable} if $M = \bigoplus_{\gl} M_\gl$
and the actions of $e_i$ and $f_i$ ($i\in I$) are locally nilpotent.
We say the \textit{level} of $M$ is $p \in \Z$ if $q^c$ acts as a multiplication by $q^p$ on $M$.
We denote by $\mathcal{C}$ the category of finite-dimensional integrable $U_q'$-modules, which has a monoidal structure via the coproduct $\gD$.
Any $M \in \cC$ is of level-zero, and hence a $U_q^0$-module.

Let $\mathcal{P}=\bigoplus_{(i,a)\in I_0\times \bk^\times} \Z \bm{\varpi}_{i,a}$, which is a free abelian group with a basis $\{\bm{\varpi}_{i,a}\mid i \in I_0,a \in \bk^\times\}$.
We call $\mathcal{P}$ the \textit{$\ell$-weight lattice}.
We define a $\Z$-linear map $\mathsf{cl}\colon \mathcal{P} \to P_0$ by $\mathsf{cl}(\bm{\varpi}_{i,a})=\varpi_i$.
An element in the submonoid $\mathcal{P}^+= \sum \Z_{\geq 0}\bm{\varpi}_{i,a}$ is said to be \textit{$\ell$-dominant}.

Let $M \in \mathcal{C}$.
Since $U_q(0)$ is commutative,
$M$ decomposes into a direct sum of generalized eigenspaces for $U_q(0)$ as $M = \bigoplus_{\Psi} M_{\Psi}$, where 
\[ M_{\Psi} = \{ v \in M \mid (u-\Psi(u))^Nv = 0 \ \text{ for } u \in U_q(0) \text{ and } N \gg 0\}
\]
for $\Psi \in \Hom{\bk\mathrm{\mathchar`-alg}}\left(U_q(0),\bk\right)$.
It is known  (see \cite{MR1745260} in untwisted types and \cite{MR2576287} in twisted types) that if $M_{\Psi}\neq 0$,
there exists a unique element $\bm{\pi}=\sum l_{i,a}\bm{\varpi}_{i,a} \in \mathcal{P}$ such that
\begin{equation}\label{eq:ell-weight_space}
 \Psi\big(\phi_i^{\pm}(z)\big)= q^{r_i\langle h_i,\mathsf{cl}(\bm{\pi})\rangle}\left(\prod_{a \in \bk^\times} \left(\frac{1-aq^{-r_i}z}{1-aq^{r_i}z}\right)^{l_{i,a}}\right)^{\pm}
 \ \ \ \text{for all } i\in I_0,
\end{equation}
where $r_i = \ss_i$ (resp.~$r_i=d_i$) if $\fg$ is untwisted (resp.~twisted), $\phi_i^{\pm}(z) \in U_q(0)\llbracket z^{\pm 1} \rrbracket$ are defined by
\[ \phi_i^{\pm}(z) = K_i^{\pm 1} \mathrm{exp}\Big(\pm (q_i-q_i^{-1})\sum_{m \geq 1} h_{i,\pm m}z^{\pm m}\Big),
\]
and $(-)^{\pm}$ denotes the formal expansion at $z=0$ and $z=\infty$ respectively.
In this case we write $M_{\bm{\pi}}= M_{\Psi}$, and call it the \textit{$\ell$-weight space} of $M$ with $\ell$-weight $\bm{\pi}$.
For $M_1,M_2 \in \cC$, it is known that
\begin{equation}\label{eq:tensor_lweight}
 \dim(M_1 \otimes M_2)_{\bm{\pi}} = \sum_{\bm{\pi}_1+\bm{\pi}_2=\bm{\pi}} \dim(M_1)_{\bm{\pi}_1} \cdot \dim (M_2)_{\bm{\pi}_2},
\end{equation}
see \cite[Lemma 3]{MR1745260} and \cite[Theorem 3.4]{MR2576287}.
We say $M \in \mathcal{C}$ is \textit{$\ell$-highest weight} with $\ell$-highest weight vector $v$ and $\ell$-highest weight $\bm{\pi} \in \cP^+$ if $M_{\bm{\pi}} = \bk v$,
$U_q'v = M$ and $x_{i,m}^{+}v=0$ for all $i \in I_0$ and $m \in \Z$.
For any $\bm{\pi} \in \mathcal{P}^+$, there is a simple $\ell$-highest weight module $L(\bm{\pi}) \in \mathcal{C}$ with $\ell$-highest weight $\bm{\pi}$ unique up to isomorphism,
and $\{ L(\bm{\pi})\mid \bm{\pi} \in \mathcal{P}^+\}$ is a complete set of isomorphism classes of simple modules in $\mathcal{C}$.
A module $L(\bm{\varpi}_{i,a})$ is called a \textit{fundamental module}.
For $k =1,2$, given an $\ell$-highest weight module $M_k$ with $\ell$-highest weight $\bm{\pi}_k \in \cP^+$,
we have 
\begin{equation}\label{eq:tensor_lweight2}
 (M_1 \otimes M_2)_{\bm{\pi}_1+\bm{\pi}_2} = (M_1)_{\bm{\pi}_1} \otimes (M_2)_{\bm{\pi}_2}.
\end{equation}

\begin{Rem}\normalfont
 In several literatures, for example \cite{MR1607008,MR1890649,kang2018symmetric}, a fundamental module is denoted by $V(\varpi_i)_a$,
 a spectral shift of the fundamental module having a global basis,
 and this is slightly different from our $L(\bm{\varpi}_{i,a})$.
 The precise relations are
 \[ V(\varpi_i)_a \cong L(\bm{\varpi}_{i,o(i)(-a\vartheta)^{d_i}}) \ \text{ with }\vartheta =(-1)^{\langle \rho^\vee,\gd\rangle}q^{-(\rho,\gd)},
 \]
 where $\rho$ (resp.~$\rho^\vee$) is an element such that $\langle h_i,\rho\rangle = 1$ (resp.~$\langle \rho^\vee,\ga_i\rangle = 1$) for all $i \in I$,
 and $o\colon I_0 \to \{\pm 1\}$ is a map such that $o(i) = -o(j)$ if $a_{ij}\neq 0$.
 In untwisted types this is proved in \cite{nakajima2004extremal}, and can be proved similarly in twisted types.
\end{Rem}

If $\fg$ is of untwisted type, we define an \textit{$\ell$-root} $\bm{\ga}_{i,a} \in \cP$ for $i \in I_0$ and $a \in \bk^\times$ by
\begin{equation}\label{eq:l_root}
 \bm{\ga}_{i,a} = \bm{\varpi}_{i,aq_i}+\bm{\varpi}_{i,aq_i^{-1}}-\sum_{j\neq i} \sum_{k=1}^{-a_{ji}}\bm{\varpi}_{i,aq^{-a_{ji}+1-2k}}.
\end{equation}
When $\fg$ is of twisted type, following \cite{MR2576287} we define $\ell$-roots $\bm{\ga}_{i,a}$ as follows. 
Let $(\mathsf{g},\tau)$ be the pair of a simple Lie algebra and an automorphism of the Dynkin diagram of $\sg$ used in the loop realization of $\fg$ (see \cite[Ch.~7]{MR1104219}).
Let $A^{\mathsf{g}}=(a_{ij}^{\mathsf{g}})_{i,j\in J}$ be the Cartan matrix of $\mathsf{g}$, 
and choose a subset $J^\tau \subseteq J$ of representatives for the $\tau$-orbits, which is naturally identified with $I_0$.
We choose $J^\tau$ so that for all $i,j \in J^\tau(=I_0)$, $a_{ij} \neq 0$ implies $a_{ij}^{\mathsf{g}}\neq 0$.
Let $r \in \{2,3\}$ be the order of $\tau$, and $\zeta$ a primitive $r$-th root of unity.
Let $\mathcal{P}_{\mathsf{g}} = \bigoplus \Z \bm{\varpi}_{i,a}^{\mathsf{g}}$ be the $\ell$-weight lattice of $U_q'(\wh{\mathsf{g}})$ with $\wh{\sg}$ the untwisted affine Lie algebra associated with
$\sg$,
and define a $\Z$-linear homomorphism $\mathbf{t}\colon \mathcal{P}_{\mathsf{g}} \to \mathcal{P}$ by setting 
\begin{equation}\label{eq:map_t}
 \mathbf{t}\big(\bm{\varpi}_{\tau^p(i),a}^{\mathsf{g}}\big)= \bm{\varpi}_{i,(\zeta^pa)^{d_i}} \ \ \text{ for $i \in J^{\tau}$, $a \in \bk^\times$ and $0\leq p <r$}.
\end{equation} 
Then for $i \in J^\tau(=I_0)$ and $a \in \bk^\times$, we set $\bm{\ga}_{i,a^{d_i}}=\mathbf{t}(\bm{\ga}_{i,a}^{\mathsf{g}})\in \cP$,
where $\bm{\ga}_{i,a}^{\mathsf{g}} \in \cP_{\sg}$ is an $\ell$-root of $U_q'(\wh{\mathsf{g}})$ defined above.
Note that we have $\mathsf{cl}(\bm{\ga}_{i,a}) = \ga_i$.

Now we return to the case where $\fg$ is of general affine type, that is, assume that $\fg$ is untwisted or twisted.
We call the sublattice $\mathcal{Q} = \bigoplus_{(i,a)} \Z \bm{\ga}_{i,a} \subseteq \mathcal{P}$ the \textit{$\ell$-root lattice}, and set 
$\mathcal{Q}^+= \sum_{(i,a)} \Z_{\geq 0}\bm{\ga}_{i,a} \subseteq \mathcal{Q}$.
We define a partial order $\leq$ on $\mathcal{P}$ by the condition that for $\bm{\pi},\bm{\gs} \in \mathcal{P}$, we say $\bm{\pi} \leq \bm{\gs}$ if and only if $\bm{\gs}-\bm{\pi} \in 
\mathcal{Q}^+$.

\begin{Thm}\label{Thm:Chari_Moura}
 If $M \in \mathcal{C}$ is indecomposable,
 then there exists $\bm{\pi}\in \mathcal{P}^+$ such that any composition factor of $M$ is isomorphic to a simple module of the form $L(\bm{\gs})$ with $\bm{\gs} 
  \in (\bm{\pi}+\mathcal{Q})\cap \mathcal{P}^+$.
\end{Thm}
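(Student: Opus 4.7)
The plan is to prove the block decomposition $\cC = \bigoplus_{\omega \in \cP/\cQ} \cC_\omega$, from which the theorem follows at once. For $M \in \cC$ and $\omega \in \cP/\cQ$ set
\[
M_{[\omega]} \;=\; \bigoplus_{\bm{\mu} \in \omega} M_{\bm{\mu}},
\]
so that by the generalized eigenspace decomposition of $M$ under the commutative subalgebra $U_q(0)$ we have $M = \bigoplus_\omega M_{[\omega]}$ as $U_q(0)$-modules. Once each $M_{[\omega]}$ is shown to be $U_q'$-stable, an indecomposable $M$ must coincide with $M_{[\omega]}$ for a single $\omega$; then for every composition factor $L(\bm{\gs})$ of $M$, the $\ell$-weight $\bm{\gs}$ appears in $L(\bm{\gs})$ itself, and (by compatibility of $\ell$-weight multisets with subquotients via \eqref{eq:tensor_lweight} and Jordan--H\"older) it appears in $M$, so $\bm{\gs} \in \omega$; picking $\bm{\pi}$ to be any one such $\bm{\gs}$ yields the statement.

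The core of the argument is therefore the $U_q'$-stability of $M_{[\omega]}$. Because $M$ has level zero, the action factors through $U_q^0 = U_q(<)U_q(0)U_q(>)$. Since $U_q(0)$ evidently preserves the $\ell$-weight decomposition, it suffices to verify the shifting property
\[
x_{i,m}^{\pm}\cdot M_{\bm{\mu}} \;\subseteq\; \bigoplus_{a \in \bk^\times} M_{\bm{\mu}\pm\bm{\ga}_{i,a}}
\qquad (i \in I_0,\ m \in \Z),
\]
for then stability modulo $\cQ$ is automatic since every $\bm{\ga}_{i,a}$ lies in $\cQ$. In untwisted types this shifting is extracted from the Drinfeld commutation relations between $\phi_j^{\pm}(z)$ and $x_{i,m}^{\pm}$: comparing the rational factor produced by conjugation $\phi_j^{\pm}(w)\,x_{i,m}^{\pm}\,\phi_j^{\pm}(w)^{-1}$ with the explicit eigenvalues in \eqref{eq:ell-weight_space} identifies the resulting $\ell$-weight shift with the untwisted $\ell$-root $\pm\bm{\ga}_{i,a}$ defined in \eqref{eq:l_root}; this is essentially the argument of Frenkel--Mukhin and Chari--Moura, which already yields the block decomposition in the untwisted case.

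For twisted types the shifting property is transported from the untwisted case by means of the homomorphism $\mathbf{t}\colon\cP_{\sg}\to\cP$ of \eqref{eq:map_t}: the defining identity $\bm{\ga}_{i,a^{d_i}}=\mathbf{t}(\bm{\ga}_{i,a}^{\sg})$ ensures that the twisted $\ell$-roots are exactly the $\mathbf{t}$-images of the untwisted ones, and the twisted Drinfeld generators inherit the required commutation with $\phi_j^{\pm}(z)$ from their untwisted counterparts together with the $\tau$-averaging construction. The main technical obstacle is careful bookkeeping of this transport---particularly in type $A_{2n}^{(2)}$, where $d=2$ and the parameters $\ss_i$, $d_i$, $r_i$ interact non-trivially---but no essentially new idea beyond the untwisted argument is required, since the twisted $\ell$-roots are defined precisely so as to make the shifting property hold.
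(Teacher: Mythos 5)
Your proposal is correct in outline and it is precisely a sketch of the argument underlying the references the paper cites: the paper's proof of this theorem consists entirely of a reference to Chari--Moura (Theorem 8.3(i)) for the untwisted case and Hernandez (Theorem 5.5) for the twisted case, and your block-decomposition argument $\cC = \bigoplus_{\omega \in \cP/\cQ}\cC_\omega$ via $\ell$-weight cosets is exactly what those references establish. So you are not taking a genuinely different route; you are unfolding the citation.

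One caveat worth flagging: the heart of the matter is the ``shifting property'' $x_{i,m}^{\pm}M_{\bm{\mu}} \subseteq \bigoplus_{a} M_{\bm{\mu}\pm\bm{\ga}_{i,a}}$, and your justification of it by ``conjugation of $x_{i,m}^{\pm}$ by $\phi_j^{\pm}(z)$'' is considerably more compressed than the actual argument. The commutation relation $\phi_j^{\pm}(z)\,x_i^{\pm}(w) = g_{ij}(z/w)\,x_i^{\pm}(w)\,\phi_j^{\pm}(z)$ relates generating series, not individual modes, and $g_{ij}(z/w)$ is a rational factor with poles; extracting from this the statement that the individual coefficients $x_{i,m}^{\pm}v$ decompose into generalized $U_q(0)$-eigenvectors whose $\ell$-weights differ from that of $v$ by $\ell$-roots requires the reduction to $U_{q_i}(\hat{\mathfrak{sl}}_2)$ and the structure theory of finite-dimensional $\hat{\mathfrak{sl}}_2$-modules, which is the real content of Frenkel--Mukhin. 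You acknowledge this by citing them, so it is not a gap, but the ``comparing rational factors with the explicit eigenvalues in (3.3.1)'' gloss understates the work. Similarly, the clause ``by compatibility of $\ell$-weight multisets with subquotients via (3.3.2)'' misattributes the fact: (3.3.2) is about tensor products; additivity of $\dim M_{\bm{\mu}}$ in short exact sequences follows simply because the $M_{\bm{\mu}}$ are generalized eigenspaces for the commutative algebra $U_q(0)$.
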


\begin{proof}
 If $\fg$ is untwisted this assertion follows from \cite[Theorem 8.3 (i)]{chari2005characters},
 and in twisted types the same proof also works by applying \cite[Theorem 5.5]{MR2576287}.
\end{proof}

For $M \in \cC$, define its left dual module ${ }^*M$ (resp.~right dual module $M{ }^*$) by the dual space $\Hom{\bk}(M,\bk)$ on which $U_q'$ acts from the left via $S$ (resp.~$S^{-1}$).
For any $M_1,M_2 \in \cC$, we have 
\begin{equation}\label{eq:tensor_and_dual}
 {}^*(M_1 \otimes M_2) \cong {}^*(M_2) \otimes {}^*(M_1) \ \text{ and } \ (M_1\otimes M_2)^* \cong (M_2)^* \otimes (M_1)^*.
\end{equation}
By ${}^*(-)\colon \cP \to \cP$ we denote the $\Z$-linear map determined by ${}^*L(\bm{\pi}) \cong L({}^*\bm{\pi})$ for $\bm{\pi} \in \cP^+$, and define $(-)^*\colon \cP \to \cP$ similarly.
It is known that there is $p_* \in \bk^\times$ satisfying ${}^*\bm{\varpi}_{i,a}=\bm{\varpi}_{i,ap_*^{d_i}}$ and $\bm{\varpi}_{i,a}^*=\bm{\varpi}_{i,ap_*^{-d_i}}$ for all 
$i \in I_0$ and $a \in \bk^\times$
(see \cite{MR1607008} for an explicit description of $p_*$, which we do not need in this paper).


\subsection{Affinizations, extremal weight modules and Weyl modules}

Take a $\Z$-linear section $\iota\colon P_\cl \to P$ of the projection $\cl$ such that $\ga_i \in \iota(P_\cl)$ for all $i \in I_0$.
For $M \in \cC$, the \textit{affinization} $M_{\mathrm{aff}}$ of $M$ is an integrable $U_q$-module 
such that $M_{\mathrm{aff}} = M \otimes \bk[z^{\pm 1}]$ as a $\bk$-vector space,
\[ (M_{\mathrm{aff}})_{\iota(\gl)+m\gd} = M_{\gl} \otimes z^m \ \text{ for } \gl \in P_\cl, \ m \in \Z,
\]
and the actions of $e_i$ and $f_i$ are given by $e_i \otimes z^{\gd_{0i}}$ and $f_i \otimes z^{-\gd_{0i}}$ respectively.
We denote by $z_M$ the $U_q'$-module automorphism of $M_{\aff}$ given by the multiplication of $z$.
We sometimes write $M_z$ for $M_{\aff}$.
We also define
\begin{equation}\label{eq:completion}
 \widehat{M}_{\aff} = M_{\aff} \otimes_{\bk[z^{\pm 1}_M]} \bk\llbracket (z_M -1) \rrbracket,
\end{equation}
where $\bk\llbracket (z_M-1)\rrbracket$ denotes the completion $\underset{N}{\varprojlim} \,\,\bk[z_M]\Big/(z_M-1)^N\bk[z_M]$.

For $\gl \in P_0^+$, let $\mathbb{V}(\gl)$ denote the \textit{extremal weight module} associated with $\gl$, which is introduced in \cite{MR1262212}.
The module $\mathbb{V}(\gl)$ is an integrable $U_q$-module generated by a single vector $v_\gl$ of $P$-weight $\iota(\gl)$ with certain defining relations, 
and characterized by the following universal property \cite[Theorem 5.3]{MR1890649}.

\begin{Prop}\label{Prop:universality}
 For $\gl \in P_0^+$, the vector $v_\gl \in \bV(\gl)$ satisfies $x_{i,m}^+v_\gl=0$ for all $i \in I_0$ and $m \in \Z$. 
 Moreover, if $M$ is an integrable $U_q'$-module and $v \in M_\gl$ satisfies $x_{i,m}^+v=0$ for all $i\in I_0$ and $m \in \Z$, then there is a unique $U_q'$-module homomorphism
 from $\mathbb{V}(\gl)$ to $M$ mapping $v_\gl$ to $v$. 
\end{Prop}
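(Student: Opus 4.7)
My plan is to treat the two assertions separately: the annihilation follows from the extremal defining relations of $\bV(\gl)$ combined with the Beck--Drinfeld realization of the loop generators, while the universal property is proved by affinization and reduction to the $U_q$-module universal property of $\bV(\gl)$.

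For the annihilation, $\gl\in P_0^+$ gives $\langle h_i,\iota(\gl)\rangle\geq 0$ for every $i\in I_0$, so the extremal defining relations of $\bV(\gl)$ yield $e_iv_\gl=0$, and hence $x_{i,0}^+v_\gl=0$. For general $m\in\Z$, I would invoke Beck's theorem in untwisted types (respectively its Damiani-type analogue in twisted types) to express each $x_{i,m}^+$ as an iterated $q$-commutator of Chevalley generators, obtained by applying powers of an affine-braid-group translation $T_\go$ to $e_i$. The extremal structure of $\bV(\gl)$ pins down $T_w v_\gl$ in a prescribed one-dimensional weight space for each Weyl group element $w$, and a weight analysis then forces every such iterated commutator to annihilate $v_\gl$.

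For the universal property, uniqueness is immediate since $v_\gl$ generates $\bV(\gl)$ over $U_q'$. For existence, given $v\in M_\gl$ satisfying $x_{i,m}^+v=0$, I would affinize to obtain a $U_q$-module $M_{\aff}$ and view $v\otimes 1$ as a vector of $P$-weight $\iota(\gl)$. The strategy is to verify the extremal defining relations of $\bV(\gl)$ at $v\otimes 1$, apply the universal property of $\bV(\gl)$ as a $U_q$-module to obtain a $U_q$-map $\bV(\gl)\to M_{\aff}$ with $v_\gl\mapsto v\otimes 1$, and then specialize at $z=1$ to obtain the desired $U_q'$-map $\bV(\gl)\to M$. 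The verification of the extremal relations proceeds by induction on Weyl group length: the base case reduces to $e_iv=0$ for $i\in I_0$; the inductive step uses integrability of $M_{\aff}$ to ensure that the requisite divided powers are well-defined and uses the $x_{i,m}^+$-annihilation, translated through the Chevalley--Drinfeld dictionary, to control the $e_0$ and $f_0$ directions.

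The principal obstacle is precisely this inductive verification of extremality under every braid-group operator, which requires precise formulas from Beck's isomorphism relating Chevalley and Drinfeld generators; in twisted types one must additionally keep track of the diagram automorphism $\tau$ and the folding identifications entering the definition of $x_{i,m}^+$.
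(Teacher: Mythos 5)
This proposition is not proved in the paper at all; the text preceding it attributes it directly to \cite[Theorem 5.3]{MR1890649} and moves on, so there is no proof of the authors' own to compare against. Your outline for the universal property --- affinize $M$, verify that $v\otimes 1$ satisfies the extremal defining relations of $\bV(\gl)$ as a $U_q$-module, invoke the presentation of $\bV(\gl)$ by generators and relations, then specialize at $z=1$ --- is essentially the conceptual skeleton of Kashiwara's actual argument, so the route is appropriate.

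That said, as a proof your sketch has two substantive gaps. For the annihilation $x^+_{i,m}v_\gl=0$, the Beck/Damiani braid-group machinery is more than is needed and would in any case require a careful induction you have not carried out; the cleaner observation is that $x^+_{i,m}$ raises the $P_0$-weight by $\cl(\ga_i)$, and $\gl+\cl(\ga_i)$ lies outside the convex hull of $W_0\gl$, so the target weight space of $\bV(\gl)$ is zero. (Of course one must justify that the classical weights of $\bV(\gl)$ lie in $\mathrm{conv}(W_0\gl)$, but that is part of the crystal-theoretic structure of extremal weight modules established already in Kashiwara's 1994 paper.) For the universal property, you correctly identify the crux --- showing that the Drinfeld condition $x^+_{i,m}v=0$ forces $v\otimes 1$ to be $W$-extremal in $M_{\aff}$ --- but you then declare it "the principal obstacle" and stop. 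This equivalence between extremality of a level-zero weight vector and Drinfeld-positive annihilation is precisely the mathematical content of Kashiwara's Theorem 5.3; it is not a matter of keeping track of braid-group formulas but of interleaving the action of affine translations with the level-zero weight geometry. As written, the proposal restates the hard step without supplying an argument for it.
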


\begin{Prop}[{\cite[Theorem 5.17]{MR1890649}}]\label{Prop:extremal_and_affinization}
 Let $i \in I_0$. \\
 {\normalfont(i)} There exists a $U_q'$-linear automorphism $z_{\bV(\varpi_i)}$ of $\bV(\varpi_i)$ of weight $d_i\gd$, and we have $\End_{U_q'}\big(\bV(\varpi_i)\big) 
  = \bk[z_{\bV(\varpi_i)}^{\pm 1}]$.\\
 {\normalfont(ii)} For any $a \in \bk^\times$, there is a $U_q$-module isomorphism
  \[ L(\bm{\varpi}_{i,a})_{\aff} \cong \bV(\varpi_i) \otimes_{\bk[z_{\bV(\varpi_i)}^{\pm 1}]} \bk[z_{\bV(\varpi_i)}^{\pm 1/d_i}].
  \]
  In particular, $L(\bm{\varpi}_{i,a})_{\aff} \cong L(\bm{\varpi}_{i,b})_{\aff}$ as $U_q$-modules for any $a,b\in \bk^\times$.
\end{Prop}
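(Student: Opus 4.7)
The plan is to deduce both parts from the universal property of the extremal weight module (Proposition \ref{Prop:universality}) combined with the braid-group symmetries of integrable $U_q$-modules.

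For (i), I would first produce a nonzero vector $v' \in \bV(\varpi_i)$ of $P$-weight $\iota(\varpi_i) + d_i\gd$ (so $\cl$-image $\varpi_i$) that is annihilated by $x_{j,m}^+$ for every $j \in I_0$ and $m \in \Z$. The natural candidate is $T_{t_{\varpi_i^\vee}} v_{\varpi_i}$, where $t_{\varpi_i^\vee}$ is the translation element of the extended affine Weyl group whose action on $P$ shifts $\iota(\varpi_i)$ by exactly $d_i\gd$, and $T_{(-)}$ denotes the Lusztig braid symmetries. One checks that this vector is extremal in Kashiwara's sense and that its annihilation by the Drinfeld raising operators follows from the corresponding annihilation of $v_{\varpi_i}$ together with the intertwining relations between Lusztig's braid operators and the Drinfeld generators. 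By Proposition \ref{Prop:universality} applied to $\bV(\varpi_i)$ as a $U_q'$-module, there is then a unique $U_q'$-endomorphism $z_{\bV(\varpi_i)}$ sending $v_{\varpi_i}$ to $v'$, and the same recipe applied to $t_{\varpi_i^\vee}^{-1}$ supplies a two-sided inverse, making $z_{\bV(\varpi_i)}$ an automorphism of weight $d_i\gd$. To identify the endomorphism ring, any $\phi \in \End_{U_q'}(\bV(\varpi_i))$ is determined by $\phi(v_{\varpi_i})$, which must lie in the subspace of $x_{j,m}^+$-annihilated vectors of classical weight $\varpi_i$; the character/crystal theory of $\bV(\varpi_i)$ forces this subspace to be $\bigoplus_{m \in \Z} \bk \, z_{\bV(\varpi_i)}^m v_{\varpi_i}$, which yields $\End_{U_q'}(\bV(\varpi_i)) = \bk[z_{\bV(\varpi_i)}^{\pm 1}]$.

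For (ii), the strategy is to compare the two $U_q$-modules through their $\ell$-highest weight data. Since $L(\bm{\varpi}_{i,a}) \in \cC$ is $\ell$-highest weight, its generator $v_a$ of classical weight $\varpi_i$ is annihilated by all $x_{j,m}^+$, so Proposition \ref{Prop:universality} produces a unique $U_q'$-surjection $\bV(\varpi_i) \twoheadrightarrow L(\bm{\varpi}_{i,a})$ sending $v_{\varpi_i}$ to $v_a$. Using \eqref{eq:ell-weight_space} together with the Drinfeld presentation, the kernel is computed to be $(z_{\bV(\varpi_i)} - \gamma)\bV(\varpi_i)$ for a scalar $\gamma \in \bk^\times$ related to $a$ by a fixed $d_i$-th power relation. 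Passing to the affinization $L(\bm{\varpi}_{i,a})_{\aff}$ amounts to reinstating the $P$-grading with $z_M$ of weight $\gd$; since $z_{\bV(\varpi_i)}$ has weight $d_i\gd$, the grading compatibility forces one to adjoin a $d_i$-th root $z_{\bV(\varpi_i)}^{1/d_i}$ and to identify it (up to scalar) with $z_M$. A direct comparison via the universal property then yields the claimed isomorphism $L(\bm{\varpi}_{i,a})_{\aff} \cong \bV(\varpi_i) \otimes_{\bk[z_{\bV(\varpi_i)}^{\pm 1}]} \bk[z_{\bV(\varpi_i)}^{\pm 1/d_i}]$ as $U_q$-modules, and the $a$-independence of the right-hand side is precisely the final assertion.

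The main obstacle will be matching up the two different parametrisations of the central automorphism, namely the spectral parameter $a$ living in the $\ell$-weight lattice $\cP$ versus the eigenvalue of $z_{\bV(\varpi_i)}$. Translating between them via the Drinfeld series $\phi_i^{\pm}(z)$ requires careful tracking of the exponent $d_i$, and this is where the untwisted/twisted distinction becomes most visible, particularly through the rescaling \eqref{eq:map_t} relating spectral parameters in a twisted type to those of the companion untwisted algebra. A secondary technical point is verifying that $T_{t_{\varpi_i^\vee}} v_{\varpi_i}$ is killed by all Drinfeld operators $x_{j,m}^+$ (and not merely by the Chevalley generators $e_j$), which is handled using the compatibility of Lusztig's braid symmetries with the Drinfeld presentation; this is the main structural input beyond the extremal weight module's universal property.
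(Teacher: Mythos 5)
The paper gives no proof of this proposition at all --- it is simply cited from Kashiwara \cite[Theorem 5.17]{MR1890649} --- so there is no ``paper's route'' to compare your argument against. Evaluating your sketch on its own terms, the outline is reasonable and resembles how the result is actually established: construct an extremal vector of $P$-weight $\iota(\varpi_i)+d_i\gd$, invoke Proposition \ref{Prop:universality} to promote it to a $U_q'$-endomorphism, compute the endomorphism ring, and compare gradings to get (ii). However, the two steps that carry the argument are treated too lightly.

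In (i), the deferral to ``character/crystal theory'' conceals exactly the structural fact that makes the statement true: that the $P$-weight spaces $\bV(\varpi_i)_{\iota(\varpi_i)+m\gd}$ are nonzero precisely for $m\in d_i\Z$ and are one-dimensional when nonzero. Without this you can neither rule out a larger endomorphism algebra (e.g.\ a matrix algebra over the Laurent ring), nor know that the minimal imaginary shift is $d_i\gd$ rather than $\gd$. Pinning down the element of the extended affine Weyl group whose braid operator produces a shift by exactly $d_i\gd$ is itself nontrivial --- for twisted $\fg$ where $d_i>1$, the translation element is not simply $t_{\varpi_i^\vee}$ and the answer involves the length-zero part and sign conventions --- so $T_{t_{\varpi_i^\vee}}v_{\varpi_i}$ needs justification, not assertion.

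In (ii), the phrase ``grading compatibility forces one to adjoin a $d_i$-th root'' is the heart of the proof, not a formal afterthought: $L(\bm{\varpi}_{i,a})_{\aff}$ has nonzero $P$-weight spaces at $\iota(\varpi_i)+m\gd$ for every $m\in\Z$, while $\bV(\varpi_i)$ has them only at $m\in d_i\Z$, and the base change along $\bk[z^{\pm 1}]\hookrightarrow \bk[z^{\pm 1/d_i}]$ is precisely how one interpolates. This needs to be said explicitly. Your identification of the kernel of $\bV(\varpi_i)\twoheadrightarrow L(\bm{\varpi}_{i,a})$ with $(z_{\bV(\varpi_i)}-\gamma)\bV(\varpi_i)$ also presupposes that $\bV(\varpi_i)/(z_{\bV(\varpi_i)}-\gamma)$ is simple, which is a consequence of Kashiwara's structure theorem and not of the universal property alone. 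Finally, the map $\mathbf{t}$ of \eqref{eq:map_t} plays no role here --- it compares $\ell$-weights across the twisted/untwisted pair, whereas the proposition is internal to $U_q'(\fg)$ --- so raising it as a concern is a red herring.
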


\begin{Rem}\normalfont
 If $a \neq b$, $L(\bm{\varpi}_{i,a})_{\aff}$ and $L(\bm{\varpi}_{i,b})_{\aff}$ are not isomorphic as $(U_q,\bk[z^{\pm 1}])$-bimodules.
 Indeed, for a $U_q$-module isomorphism $f\colon L(\bm{\varpi}_{i,1})_{\aff} \stackrel{\sim}{\to} L(\bm{\varpi}_{i,a})_{\aff}$,
 we have
 \begin{equation}\label{eq:isom_of_affinizations}
  z_{L(\bm{\varpi}_{i,1})}^{d_i} = af^{-1}z_{L(\bm{\varpi}_{i,a})}^{d_i}f 
 \end{equation} 
 by (\ref{eq:ell-weight_space}).
\end{Rem}

In the sequel, we normalize the automorphism $z_{\bV(\varpi_i)}$ in Proposition \ref{Prop:extremal_and_affinization} 
so that $z_{\bV(\varpi_i)}$ coincides with $z_{L(\bm{\varpi}_{i,1})}^{d_i}$ through the isomorphism 
$L(\bm{\varpi}_{i,1})_{\aff} \cong \bV(\varpi_i) \otimes_{\bk[z_{\bV(\varpi_i)}^{\pm 1}]} \bk[z_{\bV(\varpi_i)}^{\pm 1/d_i}]$.
For $i \in I_0$ and $a \in \bk^\times$, it follows from (\ref{eq:isom_of_affinizations}) that
\begin{equation}\label{eq:relations_among_fundamental}
 \bV(\varpi_i)/ \langle z_{\bV(\varpi_i)}-a\rangle \cong L(\bm{\varpi}_{i,a}) \ \text{ as $U_q'$-modules},
\end{equation}
where $\langle z_{\bV(\varpi_i)}-a \rangle$ is the ideal of $\mathrm{End}_{U_q'}\big(\bV(\varpi_i)\big)$ generated by $z_{\bV(\varpi_i)}-a$.

For $\gl \in P_0^+$, set 
\begin{equation}\label{eq:Annihilators}
 \mathrm{Ann}_\gl = \{ H \in U_q(0)\mid Hv_{\gl} = 0\},
\end{equation}
where $v_\gl \in \bV(\gl)$ is the generator.
Define a commutative algebra $\mathbf{A}_\gl$ by the quotient algebra $U_q(0)/\mathrm{Ann}_\gl$.
Then we have $\bV(\gl)_\gl \cong \mathbf{A}_\gl$.
It is easily seen from Proposition \ref{Prop:universality} that a $U_q'$-linear right $\mathbf{A}_\gl$-action on $\bV(\gl)$ is defined by
\[ (Xv_\gl)a = Xav_\gl \ \ \text{ for } X \in U_q', \ a \in \mathbf{A}_\gl,
\]
and hence $\bV(\gl)$ becomes a $(U_q',\bA_\gl)$-bimodule.
Moreover, this action induces an isomorphism $\bA_\gl \stackrel{\sim}{\to} \mathrm{End}_{U_q'}\big(\bV(\gl)\big)$ of $\bk$-algebras.
The following lemma is a quantum analog of results in \cite{chari2010categorical} (see also \cite{chari2013prime}).

\begin{Lem}\label{Lem:Chari-Fourier-Khandai} 
 Let $\gl \in P_0^+$, and assume that $M$ is an integrable $U_q'$-module satisfying $M_\mu = 0$ for $\mu \in P_0^+$ such that $\mu > \gl$.\\
 {\normalfont(i)} The ideal $\mathrm{Ann}_\gl$ acts trivially on the weight space $M_\gl$, and hence $M_\gl$ is an $\bA_\gl$-module.\\
 {\normalfont(ii)} Let $T$ be an $\bA_\gl$-module and $f\in \Hom{\bA_\gl}(T,M_{\gl})$. 
  Then the map 
  \begin{equation}\label{eq:induced_map_A_to_W}
   \bV(\gl) \otimes_{\bA_\gl} T \to M \colon Xv_{\gl} \otimes t \mapsto Xf(t) \ \text{ for } X\in U_q', \ t \in T
  \end{equation}
  gives a $U_q'$-module homomorphism, and this induces an isomorphism
  \begin{equation}\label{eq:adjoint}
   \Hom{\bA_\gl}(T,M_\gl) \cong \Hom{U_q'}(\bV(\gl)\otimes_{\bA_\gl}T,M).
  \end{equation}
\end{Lem}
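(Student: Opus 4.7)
The plan is to reduce (i) to the universal property of $\bV(\gl)$ and then deduce (ii) by a standard adjunction. First, I would show that every $w \in M_\gl$ is annihilated by all $x^+_{i,m}$ ($i \in I_0$, $m \in \Z$). Once this is available, Proposition \ref{Prop:universality} supplies a unique $U_q'$-linear map $\phi_w \colon \bV(\gl) \to M$ with $\phi_w(v_\gl) = w$, and since $\mathrm{Ann}_\gl \subseteq U_q(0) \subseteq U_q'$, we get $Hw = H\phi_w(v_\gl) = \phi_w(Hv_\gl) = 0$ for any $H \in \mathrm{Ann}_\gl$, proving (i) and equipping $M_\gl$ with an $\bA_\gl$-module structure through the projection $U_q(0) \twoheadrightarrow \bA_\gl$.

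The key step is to verify $x^+_{i,m}w = 0$. Since $x^+_{i,m}$ has $P_\cl$-weight $\cl(\ga_i)$, we have $x^+_{i,m}w \in M_{\gl+\cl(\ga_i)}$, so it suffices to show this weight space vanishes. By integrability, the dimensions of weight spaces of $M$ are invariant under the finite Weyl group $W_0$ of $\fg_0$ acting on $P_0$. Bringing $\gl + \cl(\ga_i) \in P_0$ into the dominant chamber by a sequence of simple reflections $s_j$ ($j \in I_0$) in negative directions produces a dominant weight $\mu \in P_0^+$; each such reflection replaces a weight $\nu$ by $\nu - \langle h_j, \nu\rangle \cl(\ga_j)$ with $\langle h_j, \nu\rangle < 0$, and therefore adds a nonnegative multiple of some $\cl(\ga_j)$. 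Consequently $\mu - \gl \in \cl(\ga_i) + \sum_{j \in I_0}\Z_{\geq 0}\cl(\ga_j)$, and since $\{\cl(\ga_j)\}_{j \in I_0}$ is linearly independent in $P_\cl \otimes \Q$, this forces $\mu > \gl$. The hypothesis then gives $M_\mu = 0$, and Weyl invariance yields $M_{\gl+\cl(\ga_i)} = 0$ as desired.

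For (ii), applying part (i) to $f(t) \in M_\gl$ produces, for each $t \in T$, a $U_q'$-linear map $\phi_{f(t)} \colon \bV(\gl) \to M$ sending $v_\gl$ to $f(t)$. This shows that $Xv_\gl \otimes t \mapsto Xf(t)$ is well-defined on $\bV(\gl) \otimes_{\bA_\gl} T$ (independent of the choice of representative $X$), and it is automatically $U_q'$-linear; the $\bA_\gl$-balance follows from the identity $v_\gl \cdot a = a v_\gl$ together with the $\bA_\gl$-linearity of $f$. The adjunction \eqref{eq:adjoint} then has the obvious inverse $g \mapsto (t \mapsto g(v_\gl \otimes t))$, which lands in $M_\gl$ by weight considerations and is $\bA_\gl$-linear because $\bA_\gl$ acts on both sides through $U_q(0) \subset U_q'$ while $g$ is $U_q'$-linear; checking that these assignments are mutually inverse is routine.

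The main obstacle is the Weyl-group step in (i): one must ensure that the dominant translate $\mu$ strictly dominates $\gl$ rather than merely $\gl + \cl(\ga_i)$, which relies on the linear independence of $\{\cl(\ga_j)\}_{j \in I_0}$ in $P_\cl \otimes \Q$ (equivalently, that deleting the $0$-th column of the affine Cartan matrix yields a matrix of full rank). Once this is secured, the remainder is a routine invocation of the universal property of $\bV(\gl)$.
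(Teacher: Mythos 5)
Your proof is correct and follows the same strategy as the paper's: both parts reduce to the universal property of the extremal weight module $\bV(\gl)$ recorded in Proposition \ref{Prop:universality}. The only difference is that you explicitly supply the Weyl-invariance argument (bring $\gl + \cl(\ga_i)$ to a dominant weight $\mu>\gl$ and invoke $M_\mu=0$) needed to verify the hypothesis $x^+_{i,m}M_\gl=0$ of that proposition, a step the published proof leaves implicit; your remaining checks of well-definedness, $\bA_\gl$-balance, and the adjunction inverse match the paper's.
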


\begin{proof}
 For any $v \in M_\gl$, there is a $U_q'$-module homomorphism from $\bV(\gl)$ to $M$ mapping $v_\gl$ to $v$ by Proposition \ref{Prop:universality}, and hence the assertion (i) follows.
 By the same proposition, we see that the map $\bV(\gl) \otimes_\bk T \to M$ defined similarly as (\ref{eq:induced_map_A_to_W}) gives a $U_q'$-module homomorphism.
 Since this map sends vectors $Xv_\gl a \otimes t - Xv_\gl \otimes at$ ($X \in U_q'$, $a \in \bA_\gl$, $t\in T$) to zero,
 the map (\ref{eq:induced_map_A_to_W}) is a well-defined $U_q'$-module homomorphism.
 Since the restriction to $(\bV(\gl)\otimes_{\bA_\gl}T)_\gl \cong T$ gives the inverse, we see that the isomorphism (\ref{eq:adjoint}) holds,
 and the assertion (ii) is proved.
\end{proof}



The following important results follow from \cite[Section 4]{MR2066942}.

\begin{Thm}\label{Thm:properties_of_extremal}
 Let $\gl = \sum_{i \in I_0} \gl_i \varpi_i \in P_0^+$.\\
 {\normalfont (i)} There is a unique injective $U_q$-module homomorphism 
 \[ \Phi_\gl \colon \bV(\gl) \hookrightarrow \bigotimes_{i \in I_0} \bV(\varpi_i)^{\otimes \gl_i} 
 \]
 mapping $v_{\gl}$ to $\bigotimes v_{\varpi_i}^{\otimes \gl_i}$. 
 Here we can take any ordering to define the right-hand side.\\
 {\normalfont (ii)} The image $\mathrm{Im}\,\Phi_\gl$ is preserved by the subalgebra 
  \begin{equation}\label{eq:isom_of_A}
   \bigotimes_{i \in I_0} \bk[z_{i1}^{\pm 1},\ldots,z_{i\gl_i}^{\pm 1}]^{\mathfrak{S}_{\gl_i}}\subseteq \End_{U_q'}\Big(\bigotimes_{i \in I_0} \bV(\varpi_i)^{\otimes \gl_i}\Big),
  \end{equation}
  where $z_{ik}=\cdots \otimes 1 \otimes z_{\bV(\varpi_i)} \otimes 1 \otimes \cdots$ acts on the $k$-th factor of $\bV(\varpi_i)^{\otimes \gl_i}$,
  and $\mathfrak{S}_m$ is the symmetric group on $m$ letters.
  Moreover, $\End_{U_q'}\big(\bV(\gl)\big)$ coincides with this subalgebra through $\Phi_\gl$.\\
 {\normalfont(iii)} 
 The module $\mathbb{V}(\gl)$ is free over $\End_{U_q'}\big(\bV(\gl)\big)$ of finite rank.
\end{Thm}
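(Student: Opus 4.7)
The plan is to establish the three assertions in order, relying essentially on the universal property of the extremal weight module (Proposition \ref{Prop:universality}) together with the structural results on the global basis of $\bV(\gl)$ from \cite{MR2066942}.

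For (i), I would first note that the vector $v := \bigotimes_{i \in I_0} v_{\varpi_i}^{\otimes \gl_i}$ has $P$-weight $\iota(\gl) = \sum_i \gl_i\, \iota(\varpi_i)$, using the $\Z$-linearity of $\iota$. I then verify that $v$ is annihilated by $x_{i,m}^+$ for every $i \in I_0$ and $m \in \Z$. This reduces, via the expression for $\gD(x_{i,m}^+)$ on a tensor product of $\ell$-highest-weight type vectors (each $v_{\varpi_i}$ is such), to the vanishing of $x_{i,m}^+ v_{\varpi_j}$ on the respective factors; the fact that $v$ has the maximal available $P$-weight makes all correction terms drop out. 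Proposition \ref{Prop:universality} then produces a unique $U_q'$-linear map $\Phi_\gl$ from $\bV(\gl)$ sending $v_\gl\mapsto v$; because $v$ has a specified $P$-weight and $\bV(\gl)$ is $P$-graded, this map automatically extends to a $U_q$-linear map. Independence of the ordering follows because switching factors in the target defines a $U_q$-linear isomorphism fixing $v$. Injectivity of $\Phi_\gl$ is not immediate from the universal property; my plan is to invoke the crystal and global basis of $\bV(\gl)$ of \cite{MR2066942}, by which $\Phi_\gl$ is matched to a sub-crystal embedding into the crystal of the tensor product of fundamental crystals, forcing injectivity.

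For (ii), the key input is that each $z_{\bV(\varpi_i)}$, and hence each $z_{ik}$, is a $U_q'$-module endomorphism, and the $z_{ik}$ commute pairwise; hence the symmetric Laurent polynomial subalgebra in (\ref{eq:isom_of_A}) acts $U_q'$-linearly on the target. Applying this subalgebra to $v = \Phi_\gl(v_\gl)$ produces $U_q'$-invariant vectors of the same $P_\cl$-weight $\gl$ but various $P$-weights $\iota(\gl) + m\gd$; I would then use Lemma \ref{Lem:Chari-Fourier-Khandai}, the isomorphism $\bA_\gl \cong \End_{U_q'}(\bV(\gl))$, and the characterization of the weight space $\bV(\gl)_\gl$ as generating $\bV(\gl)$ over $U_q'$ to identify these vectors with $\Phi_\gl$-images and conclude invariance of $\mathrm{Im}\,\Phi_\gl$. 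To show the equality $\End_{U_q'}(\bV(\gl)) = \bigotimes_{i \in I_0} \bk[z_{i1}^{\pm 1},\dots,z_{i\gl_i}^{\pm 1}]^{\mathfrak{S}_{\gl_i}}$, I would compare the two sides by examining their action on $v_\gl$: the inclusion is already shown, and the reverse inclusion follows from a dimension count of the $\bk$-vector space $\bV(\gl)_{\iota(\gl)+m\gd}$ for each $m\in\Z$, which on both sides is computed via the global basis as the dimension of the corresponding symmetric-function weight space.

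For (iii), I would exploit the global basis of $\bV(\gl)$ from \cite{MR2066942}, which provides an explicit $\End_{U_q'}(\bV(\gl))$-basis indexed by the finitely many extremal weight orbits in the crystal $B(\gl)/\Z\gd$; finite rank follows from the finiteness of $B(\gl)/\Z\gd$ for $\gl \in P_0^+$.

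The main obstacles are the injectivity in (i) and the matching of endomorphism algebras in (ii); both rest on delicate properties of the global basis of $\bV(\gl)$, and the plan is to cite \cite{MR2066942} rather than reprove these from scratch.
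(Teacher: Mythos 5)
The paper's own proof is a single citation to \cite[Section 4]{MR2066942}; your overall plan likewise reduces to that reference for the substantive parts, so there is no real disagreement at the level of strategy. However, the scaffolding you build around (ii) contains a circularity worth flagging.

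You claim the first half of (ii) --- that the symmetric Laurent subalgebra preserves $\mathrm{Im}\,\Phi_\gl$ --- can be deduced from Lemma \ref{Lem:Chari-Fourier-Khandai} together with $\bA_\gl\cong\End_{U_q'}(\bV(\gl))$, and then later write ``the inclusion is already shown.'' This does not work. Every factor $\bV(\varpi_i)$ has $\varpi_i$ as its unique top $P_\cl$-weight, so the $\gl$-weight space of the target is the \emph{full} Laurent ring $\bigotimes_i\bk[z_{i1}^{\pm 1},\ldots,z_{i\gl_i}^{\pm 1}]$ applied to $v$, whereas $(\mathrm{Im}\,\Phi_\gl)_\gl$ is precisely $\bA_\gl\cdot v$. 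Lemma \ref{Lem:Chari-Fourier-Khandai}(i) only says that $\bA_\gl$ (acting through the coproduct, i.e.\ via $U_q(0)$) acts on that $\gl$-weight space; it says nothing about whether the spectral-shift operators $z_{ik}$ --- which act by an entirely different mechanism, the grading on the affinization --- send $\bA_\gl\cdot v$ back into itself. That $fv\in\bA_\gl\cdot v$ for every symmetric Laurent polynomial $f$ is exactly the nontrivial content underlying the identification $\bA_\gl\cong\bigotimes_i\bk[z_{i1}^{\pm 1},\ldots,z_{i\gl_i}^{\pm 1}]^{\mathfrak{S}_{\gl_i}}$ (the ``moreover'' in (ii)), so it cannot be used as an input to it. In other words, the two halves of (ii) must be proved together, and both require the Beck--Nakajima input you reserve only for what you call the ``delicate'' steps (injectivity and the dimension count). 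The sketches for (i) and (iii) are fine.
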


Since $\bA_{\gl} \cong \End_{U_q'}\big(\bV(\gl)\big)$ as mentioned above, we have
\begin{equation}\label{eq:isomorphism_of_A_End}
 \bA_\gl \cong \bigotimes_{i \in I_0} \bk[z_{i1}^{\pm 1},\ldots,z_{i\gl_i}^{\pm 1}]^{\mathfrak{S}_{\gl_i}}
\end{equation}
by (ii).
For $\gl, \gl' \in P_0^+$, we define an injective homomorphism $\gD_{\gl,\gl'}\colon \bA_{\gl+\gl'} \to \bA_{\gl}\otimes \bA_{\gl'}$ by
\begin{align*}
 \gD_{\gl,\gl'}\colon \bA_{\gl+\gl'} &\cong \bigotimes_{i\in I_0} \bk[z_{i1}^{\pm 1},\ldots, z_{i(\gl_i+\gl_i')}^{\pm 1}]^{\mathfrak{S}_{\gl_i+\gl_i'}}\\ &\hookrightarrow
   \bigotimes_{i \in I_0} \bk[z_{i1}^{\pm 1},\ldots,z_{i\gl_i}^{\pm 1}]^{\mathfrak{S}_{\gl_i}} \otimes \bigotimes_{i \in I_0} \bk[z_{i(\gl_i+1)}^{\pm 1},\ldots,
   z_{i(\gl_i+\gl_i')}^{\pm 1}]^{\mathfrak{S}_{\gl_i'}} \cong \bA_{\gl} \otimes \bA_{\gl'}.
\end{align*}
Similarly we define $\gD_{\gl_1,\ldots,\gl_p}\colon \bA_{\gl_1+\cdots+\gl_p} \to \bA_{\gl_1}\otimes \cdots \otimes \bA_{\gl_p}$ for $\gl_1,\ldots,\gl_p \in P_0^+$ with $p>1$.

\begin{Lem}\label{Lem:coproduct_of_A}
 For $k=1,2$, let $\gl_k \in P_0^+$ and $M_k$ be an integrable $U_q'$-module satisfying $(M_k)_\mu =0$ for $\mu \in P^+_0$ such that $\mu > \gl_k$.
 Then $\bA_{\gl_1+\gl_2}$ acts on $(M_1)_{\gl_1} \otimes (M_2)_{\gl_2}$ via $\gD_{\gl_1,\gl_2}$.
\end{Lem}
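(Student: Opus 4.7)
My strategy is to recognize the statement as asserting that the $\bA_{\gl_1+\gl_2}$-module structure on $(M_1)_{\gl_1}\otimes (M_2)_{\gl_2}$ is obtained by pulling back, along the algebra map $\gD_{\gl_1,\gl_2}$, the natural tensor product action of $\bA_{\gl_1}\otimes \bA_{\gl_2}$, and to verify the two ingredients this involves. First I would apply Lemma~\ref{Lem:Chari-Fourier-Khandai}(i) to each factor separately: the hypothesis that $(M_k)_\mu = 0$ for every $\mu \in P_0^+$ with $\mu > \gl_k$ implies that $\mathrm{Ann}_{\gl_k}$ annihilates the weight space $(M_k)_{\gl_k}$, so the quotient $\bA_{\gl_k} = U_q(0)/\mathrm{Ann}_{\gl_k}$ acts on $(M_k)_{\gl_k}$ for $k=1,2$. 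Tensoring the two actions over $\bk$ then endows $(M_1)_{\gl_1}\otimes (M_2)_{\gl_2}$ with an $\bA_{\gl_1}\otimes \bA_{\gl_2}$-module structure via $(a_1\otimes a_2)\cdot (v_1\otimes v_2) = (a_1v_1)\otimes (a_2v_2)$.

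Second, I would verify that $\gD_{\gl_1,\gl_2}\colon \bA_{\gl_1+\gl_2}\to \bA_{\gl_1}\otimes \bA_{\gl_2}$ is a $\bk$-algebra homomorphism. From its explicit construction in the paragraph preceding the lemma, and using the identification in \eqref{eq:isomorphism_of_A_End}, $\gD_{\gl_1,\gl_2}$ coincides with the canonical inclusion of the ring of polynomials symmetric in $\gl_{1,i}+\gl_{2,i}$ variables into the ring of polynomials symmetric separately in the first $\gl_{1,i}$ and in the remaining $\gl_{2,i}$ variables, taken as a tensor product over all $i\in I_0$. This inclusion is clearly multiplicative and unit-preserving. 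Composing it with the action from the first step yields a $\bk$-algebra homomorphism $\bA_{\gl_1+\gl_2}\to \End_{\bk}\bigl((M_1)_{\gl_1}\otimes (M_2)_{\gl_2}\bigr)$, which is the required $\bA_{\gl_1+\gl_2}$-module structure.

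There is no substantive obstacle here: the heavy lifting has already taken place in Lemma~\ref{Lem:Chari-Fourier-Khandai} and in Theorem~\ref{Thm:properties_of_extremal}(ii), the latter underlying the symmetric-polynomial realization of $\bA_\gl$. The present lemma is essentially a bookkeeping statement assembling these pieces through $\gD_{\gl_1,\gl_2}$, and the small point worth flagging is only that the map $\gD_{\gl_1,\gl_2}$ must be read off from the symmetric-polynomial model in order to make its ring-homomorphism property manifest.
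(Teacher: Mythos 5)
Your proposal misses the actual content of the lemma, which is a nontrivial \emph{compatibility} between two a priori different actions, not a bookkeeping assembly. On one side, $U_q(0)$ acts on $M_1\otimes M_2$ via the Drinfeld--Jimbo coproduct $\gD$, and by Lemma~\ref{Lem:Chari-Fourier-Khandai}(i) applied to $M_1\otimes M_2$ this action on the weight space $(M_1)_{\gl_1}\otimes(M_2)_{\gl_2}\subseteq (M_1\otimes M_2)_{\gl_1+\gl_2}$ factors through $\bA_{\gl_1+\gl_2}$: this is \emph{the} $\bA_{\gl_1+\gl_2}$-action the lemma refers to. On the other side, $\gD_{\gl_1,\gl_2}$ is defined purely through the symmetric-polynomial picture. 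The lemma asserts these coincide, i.e.\ $H(v_1\otimes v_2) = \gD_{\gl_1,\gl_2}(H)(v_1\otimes v_2)$ for $H\in \bA_{\gl_1+\gl_2}$. Your argument only \emph{defines} a pullback action via $\gD_{\gl_1,\gl_2}$ and then declares it ``the required'' one without ever comparing it to the natural coproduct action. That comparison is precisely what needs proof (and is what Lemma~\ref{Lem:specm} subsequently relies on); it is not automatic, in particular because $\gD$ does not send $U_q(0)$ into $U_q(0)\otimes U_q(0)$.

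The paper closes the gap in two steps that your proposal omits. First, it specializes to $M_k=\bV(\gl_k)$ and uses the chain of injective $U_q'$-homomorphisms $\bV(\gl_1+\gl_2)\hookrightarrow \bV(\gl_1)\otimes\bV(\gl_2)\hookrightarrow \bigotimes_{i\in I_0}\bV(\varpi_i)^{\otimes (\gl_1)_i+(\gl_2)_i}$ together with Theorem~\ref{Thm:properties_of_extremal}(ii), where the endomorphism ring is explicitly identified inside the big tensor product; this is what shows the coproduct action on $v_{\gl_1}\otimes v_{\gl_2}$ is given by $\gD_{\gl_1,\gl_2}$. Second, for arbitrary $M_1,M_2$, it applies the universal property (Proposition~\ref{Prop:universality}) to get a $U_q'$-homomorphism $\bV(\gl_1)\otimes\bV(\gl_2)\to M_1\otimes M_2$ sending $v_{\gl_1}\otimes v_{\gl_2}\mapsto v_1\otimes v_2$, and uses that this intertwines the $U_q(0)$-actions to transport the identity. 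Both steps are essential, and neither appears in your proposal.
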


\begin{proof}
 Since there are injective homomorphisms
 \[ \bV(\gl_1+\gl_2) \hookrightarrow \bV(\gl_1)\otimes \bV(\gl_2) \hookrightarrow \bigotimes_{i \in I_0} \bV(\varpi_i)^{\otimes (\gl_1)_i+(\gl_2)_i},
 \] 
 we see from Theorem \ref{Thm:properties_of_extremal} that the assertion holds for $M_1=\bV(\gl_1)$ and $M_2=\bV(\gl_2)$.
 Then for any $v_k \in (M_k)_{\gl_k}$ ($k=1,2$), by considering the $U_q'$-module homomorphism $\bV(\gl_1) \otimes \bV(\gl_2) \to M_1 \otimes M_2$ mapping $v_{\gl_1}\otimes v_{\gl_2}$ to
 $v_1\otimes v_2$, $H(v_1 \otimes v_2) = \gD_{\gl_1,\gl_2}(H)(v_1 \otimes v_2)$ follows for $H\in \bA_{\gl_1+\gl_2}$. The proof is complete.
\end{proof}

For $\bm{\pi} \in \cP^+$ with $\gl = \mathsf{cl}(\bm{\pi})$, let $\bk_{\bm{\pi}}$ be the $1$-dimensional module over $U_q(0)$ corresponding to $\Psi$ in (\ref{eq:ell-weight_space}).
Because of the existence of a homomorphism $\bV(\gl) \twoheadrightarrow L(\bm{\pi})$, $\bk_{\bm{\pi}}$ is also an $\bA_\gl$-module.
Let $\mathfrak{m}_{\bm{\pi}}$ be the corresponding maximal ideal of $\bA_\gl$.
Note that, by (\ref{eq:isomorphism_of_A_End}), $\mathrm{Specm}\, \bA_{\gl}$ is identified with the product $\prod_{i \in I_0} (\bk^\times)^{\gl_i}\big/ \mathfrak{S}_{\gl_i}$ of quotient spaces.

\begin{Lem}\label{Lem:specm}
 If $\bm{\pi} = \sum_{(i,a)} l_{i,a}\bm{\varpi}_{i,a}$, then
 the maximal ideal $\mathfrak{m}_{\bm{\pi}}$ corresponds to the point $\left(\sum_{a \in \bk^\times} l_{i,a} 
 [a]\right)_{i\in I_0} \in \prod_{i \in I_0} (\bk^\times)^{\gl_i}\big/ \mathfrak{S}_{\gl_i} = \mathrm{Specm}\, \bA_{\gl}$.
 Here we identify a point of the space $(\bk^\times)^N\big/\mathfrak{S}_N$ with a $\Z_{\geq 0}$-linear combination of formal symbols $[a]$ 
 $(a \in \bk^\times)$ whose coefficients sum up to $N$.
\end{Lem}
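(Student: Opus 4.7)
The plan is to realize the $\ell$-highest weight line $\bk_{\bm{\pi}}$ of $L(\bm{\pi})$ inside a tensor product of fundamental modules, where the $\bA_\gl$-action is computable directly from (\ref{eq:relations_among_fundamental}), and then transport the result back through the coproduct $\gD$.

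First, choose an enumeration $(i_1,a_1),\ldots,(i_N,a_N)$ of the pairs $(i,a)\in I_0\times \bk^\times$ in which each $(i,a)$ appears exactly $l_{i,a}$ times, so that $N=\sum_{i\in I_0}\gl_i$ and $\bm{\pi}=\sum_{k=1}^N\bm{\varpi}_{i_k,a_k}$. Let $v_{i_k,a_k}$ denote the $\ell$-highest weight vector of $L(\bm{\varpi}_{i_k,a_k})$ and set
\[
 v = v_{i_1,a_1}\otimes\cdots\otimes v_{i_N,a_N} \in M := L(\bm{\varpi}_{i_1,a_1})\otimes\cdots\otimes L(\bm{\varpi}_{i_N,a_N}).
\]
Iterating (\ref{eq:tensor_lweight2}) gives $M_{\bm{\pi}}=\bk v$, so $v$ is a genuine (not merely generalized) $U_q(0)$-eigenvector with eigenvalues prescribed by $\Psi$ in (\ref{eq:ell-weight_space}). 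Since $v$ has classical weight $\gl$ while $M_\mu=0$ for $\mu>\gl$, Lemma \ref{Lem:Chari-Fourier-Khandai}(i) makes $M_\gl$ an $\bA_\gl$-module, and the line $\bk v\subseteq M_\gl$ is stable under this action. Hence $\bk v$ realizes $\bk_{\bm{\pi}}$ as an $\bA_\gl$-module (both structures being determined by the same $\Psi$), and $\mathfrak{m}_{\bm{\pi}}$ is the kernel of the character $\chi\colon\bA_\gl\to\bk$ describing the action of $\bA_\gl$ on $v$.

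Next I would iterate Lemma \ref{Lem:coproduct_of_A} to factor $\chi$ as
\[
 \chi\colon \bA_\gl \xrightarrow{\gD_{\varpi_{i_1},\ldots,\varpi_{i_N}}} \bA_{\varpi_{i_1}}\otimes\cdots\otimes\bA_{\varpi_{i_N}} \xrightarrow{\mathrm{ev}} \bk,
\]
where the $k$-th factor $\bA_{\varpi_{i_k}}$ acts on $\bk v_{i_k,a_k}=\bigl(L(\bm{\varpi}_{i_k,a_k})\bigr)_{\varpi_{i_k}}$. By (\ref{eq:relations_among_fundamental}), the generator $z_{\bV(\varpi_{i_k})}$ of $\bA_{\varpi_{i_k}}\cong \bk[z_{\bV(\varpi_{i_k})}^{\pm 1}]$ acts on $v_{i_k,a_k}$ by the scalar $a_k$, so $\mathrm{ev}$ sends $z_{\bV(\varpi_{i_k})}$ in the $k$-th factor to $a_k$.

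To finish, I would translate this into the language of $\mathrm{Specm}\,\bA_\gl$. Through the identification (\ref{eq:isomorphism_of_A_End}) and the explicit form of $\gD$ recorded just before Lemma \ref{Lem:coproduct_of_A}, for each $i\in I_0$ a symmetric polynomial $p(z_{i1}^{\pm 1},\ldots,z_{i\gl_i}^{\pm 1})\in\bA_\gl$ maps under $\gD_{\varpi_{i_1},\ldots,\varpi_{i_N}}$ to $p$ evaluated at the tensor-factor variables indexed by those $k$ with $i_k=i$; applying $\mathrm{ev}$ then yields $p$ evaluated at the multiset $\{a_k\mid i_k=i\}$, which is exactly $\sum_{a\in\bk^\times}l_{i,a}[a]\in (\bk^\times)^{\gl_i}/\mathfrak{S}_{\gl_i}$. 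This is by definition the character attached to that point, proving the identification. The only mild bookkeeping lies in tracking the coproduct identifications consistently and in confirming that the two $\bA_\gl$-module structures on $\bk_{\bm{\pi}}$ coincide; once these are settled the computation reduces to a one-line evaluation at the level of fundamentals.
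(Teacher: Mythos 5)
Your proposal is correct and follows essentially the same route as the paper: realize $\bk_{\bm{\pi}}$ as the top $\ell$-weight line of a tensor product of fundamental modules via (\ref{eq:tensor_lweight2}), compute the scalar on each factor from (\ref{eq:relations_among_fundamental}), and transport back to $\bA_\gl$ via Lemma \ref{Lem:coproduct_of_A}. You spell out more of the intermediate bookkeeping (in particular, invoking Lemma \ref{Lem:Chari-Fourier-Khandai}(i) and checking that the abstract and concrete $\bA_\gl$-structures on the $\ell$-highest weight line agree because both are governed by the same character $\Psi$), but no genuinely different idea is involved.
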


\begin{proof}
 By (\ref{eq:tensor_lweight2}), we have
 \[ \bk_{\bm{\pi}} \cong \bigotimes_{i,a}\big(L(\bm{\varpi}_{i,a})_{\bm{\varpi}_{i,a}}\big)^{\otimes l_{i,a}} \ \text{ as $\bA_{\gl}$-modules}.
 \]
 Then the assertion follows from (\ref{eq:relations_among_fundamental}) and Lemma \ref{Lem:coproduct_of_A}.
\end{proof}


\begin{Def}[\cite{MR1850556}]\normalfont
 We define the \textit{local Weyl module} $W(\bm{\pi})$ associated with $\bm{\pi} \in \cP^+$ by
 \[ W(\bm{\pi}) = \bV(\gl) \otimes_{\bA_\gl} \bk_{\bm{\pi}}.
 \]
 We set $w_{\bm{\pi}} = v_{\gl} \otimes_{\bA_\gl} 1 \in W(\bm{\pi})$.
\end{Def}

The following lemma is immediate from the construction.

\begin{Lem}
 {\normalfont(i)} The module $W(\bm{\pi})$ is an $\ell$-highest weight module with $\ell$-highest weight vector $w_{\bm{\pi}}$ and $\ell$-highest weight $\bm{\pi}$.
 Moreover, if $M \in \cC$ is an $\ell$-highest weight module with $\ell$-highest weight $\bm{\pi}$, then there is a surjective $U_q'$-module homomorphism 
 from $W(\bm{\pi})$ to $M$.\\
 {\normalfont(ii)} The head of $W(\bm{\pi})$ is isomorphic to $L(\bm{\pi})$.
\end{Lem}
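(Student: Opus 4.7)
The plan is to check the three defining properties of an $\ell$-highest weight module for $W(\bm{\pi})$ directly from its construction, with the adjunction of Lemma \ref{Lem:Chari-Fourier-Khandai}(ii) as the only non-routine tool, and then deduce (ii) from (i).

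First I would observe that $W(\bm{\pi}) \in \cC$: by Theorem \ref{Thm:properties_of_extremal}(iii), $\bV(\gl)$ is free of finite rank over $\bA_\gl = \End_{U_q'}(\bV(\gl))$, so $W(\bm{\pi}) = \bV(\gl) \otimes_{\bA_\gl} \bk_{\bm{\pi}}$ is finite-dimensional, with integrability inherited from $\bV(\gl)$. The embedding $\bV(\gl) \hookrightarrow \bigotimes_{i} \bV(\varpi_i)^{\otimes \gl_i}$ of Theorem \ref{Thm:properties_of_extremal}(i), combined with the fact that each $\bV(\varpi_i)$ has $P_\cl$-weights $\leq \varpi_i$ (which follows from Proposition \ref{Prop:extremal_and_affinization}(ii) by realizing $\bV(\varpi_i)$ as a subspace of $L(\bm{\varpi}_{i,1})_{\aff}$), shows that all $P_\cl$-weights of $\bV(\gl)$ are $\leq \gl$, and hence so are those of $W(\bm{\pi})$. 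Using $\bV(\gl)_\gl \cong \bA_\gl$, one obtains $W(\bm{\pi})_\gl \cong \bA_\gl \otimes_{\bA_\gl} \bk_{\bm{\pi}} = \bk w_{\bm{\pi}}$; since $U_q(0)$ acts on $\bk_{\bm{\pi}}$ by the character $\Psi$ associated with $\bm{\pi}$, this forces $W(\bm{\pi})_{\bm{\pi}} = \bk w_{\bm{\pi}}$. The relation $x_{i,m}^+ w_{\bm{\pi}} = 0$ is immediate from $x_{i,m}^+ v_\gl = 0$ (Proposition \ref{Prop:universality}).

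Next I would prove the universal property in the second part of (i): for an $\ell$-highest weight module $M$ with vector $v$, the line $\bk v = M_{\bm{\pi}} \subseteq M_\gl$ is isomorphic to $\bk_{\bm{\pi}}$ as an $\bA_\gl$-module (since $U_q(0)$ acts on it by $\Psi$), so Lemma \ref{Lem:Chari-Fourier-Khandai}(ii) supplies a $U_q'$-homomorphism $W(\bm{\pi}) \to M$ sending $w_{\bm{\pi}}$ to $v$, which is surjective because $v$ generates $M$. To finish (i), set $U := U_q' w_{\bm{\pi}}$; by the properties of $w_{\bm{\pi}}$ already verified, $U$ is itself $\ell$-highest weight with $\ell$-highest weight $\bm{\pi}$, so the universal property just proved yields a surjection $\phi \colon W(\bm{\pi}) \twoheadrightarrow U$ with $\phi(w_{\bm{\pi}}) = w_{\bm{\pi}}$. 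Composing with the inclusion $U \hookrightarrow W(\bm{\pi})$ gives an endomorphism of $W(\bm{\pi})$ fixing $w_{\bm{\pi}}$; but by the same adjunction (with $T = \bk_{\bm{\pi}}$ and $M = W(\bm{\pi})$) one has $\End_{U_q'}(W(\bm{\pi})) \cong \Hom{\bA_\gl}(\bk_{\bm{\pi}}, \bk_{\bm{\pi}}) = \bk$, so this endomorphism is necessarily the identity, forcing $U = W(\bm{\pi})$.

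Finally, (ii) follows formally: any simple quotient of $W(\bm{\pi})$ inherits the $\ell$-highest weight structure and is therefore isomorphic to $L(\bm{\pi})$; since $\dim W(\bm{\pi})_{\bm{\pi}} = \dim L(\bm{\pi})_{\bm{\pi}} = 1$, the semisimple head contains exactly one copy of $L(\bm{\pi})$. The only step requiring any real thought is the identity $U_q' w_{\bm{\pi}} = W(\bm{\pi})$, which rests crucially on the endomorphism-ring computation $\End_{U_q'}(W(\bm{\pi})) = \bk$ coming from the adjunction; a direct approach via bimodule generation of $\bV(\gl)$ over $(U_q', \bA_\gl)$ does not seem obviously available from the statements already developed in the excerpt.
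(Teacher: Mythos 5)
The paper states this lemma without proof, declaring it ``immediate from the construction'', so there is no official argument against which to compare. Your proof is correct and fills in the details carefully: the adjunction of Lemma \ref{Lem:Chari-Fourier-Khandai}(ii) is exactly the right tool for the universal property, and the endomorphism-ring computation $\End_{U_q'}(W(\bm{\pi})) \cong \Hom{\bA_\gl}(\bk_{\bm{\pi}},\bk_{\bm{\pi}}) = \bk$ correctly forces $U_q'w_{\bm{\pi}} = W(\bm{\pi})$. One remark on the step you flag as requiring ``real thought'': the cyclicity of $W(\bm{\pi})$ over $U_q'$ follows more directly than you give it credit for. The paper states that $\bV(\gl)$ is generated by $v_\gl$ over $U_q$, and $U_q$ is obtained from $U_q'$ by adjoining $q^{\pm D}$ for a single degree element $D$; since $q^D$ normalizes $U_q'$ and acts on each $P$-weight space of $\bV(\gl)$ (hence on $v_\gl$) by a scalar, one has $\bV(\gl) = U_q v_\gl = U_q' v_\gl$. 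Then every element of $W(\bm{\pi}) = \bV(\gl)\otimes_{\bA_\gl}\bk_{\bm{\pi}}$ is of the form $Xv_\gl\otimes t = X(v_\gl\otimes t) \in U_q'w_{\bm{\pi}}$, yielding cyclicity without any appeal to the endomorphism ring. Both routes are valid; yours is a bit more elaborate but perfectly sound, and the rest of the argument (the weight estimate for $\bV(\gl)$ via the embedding into $L(\bm{\varpi}_{i,1})_{\aff}$, the identification $W(\bm{\pi})_\gl \cong \bk_{\bm{\pi}}$, and the deduction of (ii) from the universal property plus the one-dimensionality of the top $\ell$-weight space) is exactly what is needed.
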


For $\bm{\pi}\in \cP^+$, we also define the \textit{dual local Weyl module} $W^\vee(\bm{\pi})$ by
\[ W^\vee(\bm{\pi}) = W({}^*\bm{\pi})^*.
\]

Finally we introduce deformed local Weyl modules, following \cite{fujita2017affine}.
For $\bm{\pi}  \in \cP^+$ with $\mathsf{cl}(\bm{\pi}) = \gl$, define a $\bk$-algebra
\[ \wh{\bA}_{\bm{\pi}} = \varprojlim_N \bA_\gl / \fm_{\bm{\pi}}^N,
\]
and let $\wh{\fm}_{\bm{\pi}}$ be the unique maximal ideal of $\wh{\bA}_{\bm{\pi}}$.
Write $\gl=\sum_{i\in I_0} \gl_i\varpi_i$ and $\bm{\pi}= \sum_{(i,a)} l_{i,a}\bm{\varpi}_{i,a}$,
and let us identify $\bA_{\gl}$ with $\bigotimes_{i \in I_0} \bk[z_{i1}^{\pm 1},\ldots,z_{i\gl_i}^{\pm 1}]^{\mathfrak{S}_{\gl_i}}$ via the isomorphism (\ref{eq:isomorphism_of_A_End}).
Then by Lemma \ref{Lem:specm}, the canonical injection 
\[ \bigotimes_{i \in I_0} \bk[z_{i1}^{\pm 1},\ldots,z_{i\gl_i}^{\pm_1}]^{\mathfrak{S}_{\gl_i}}\hookrightarrow \bigotimes_{i\in I_0} \bigotimes_{a \in \bk^\times} 
   \bk[z_{(i,a),1}^{\pm 1},\ldots,z_{(i,a),l_{i,a}}^{\pm 1}]^{\mathfrak{S}_{l_{i,a}}}
\]
induces an isomorphism
\begin{equation}\label{eq:natural_isomorphism_of_hA}
 \wh{\bA}_{\bm{\pi}} \cong \underset{i \in I_0}{\hat{\bigotimes}}\ \underset{a \in \bk^\times}{\hat{\bigotimes}} \bk\llbracket (z_{(i,a),1} - a),\ldots,
   (z_{(i,a),l_{i,a}}-a)\rrbracket^{\mathfrak{S}_{l_{i,a}}}.
\end{equation}


\begin{Def}[\cite{fujita2017affine}]\normalfont
 For $\bm{\pi} \in \cP^+$ with $\mathsf{cl}(\bm{\pi})=\gl$, we define the \textit{deformed local Weyl module} $\wh{W}(\bm{\pi})$ associated with $\bm{\pi}$ by
 \[ \wh{W}(\bm{\pi}) = \bV(\gl) \otimes_{\bA_\gl} \wh{\bA}_{\bm{\pi}}.
 \]
 We set $\wh{w}_{\bm{\pi}}=v_\gl \otimes 1 \in \wh{W}(\bm{\pi})$.
\end{Def}

Recall the definition of $\wh{M}_{\mathrm{aff}}$ in (\ref{eq:completion}).
Since the natural injection $\bk[z^{\pm d}] \hookrightarrow\bk[z^{\pm 1}]$ induces an isomorphism $\bk\llbracket (z^{d}-1)\rrbracket \stackrel{\sim}{\to} 
\bk\llbracket (z-1)\rrbracket$ for any $d \in \Z_{>0}$, we see from Proposition \ref{Prop:extremal_and_affinization} and (\ref{eq:relations_among_fundamental}) that
\begin{equation}\label{eq:W_and_affinization}
 \wh{W}(\bm{\varpi}_{i,a}) \cong \wh{L(\bm{\varpi}_{i,a})}_{\aff} \ \text{ for } i \in I_0, \ a \in \bk^\times.
\end{equation}

\begin{Lem}\label{Lem:properties_of_deformed_local}
 Let $\bm{\pi} \in \cP^+$, and set $\gl = \mathsf{cl}(\bm{\pi})$.\\
 {\normalfont(i)} If $M \in \cC$ satisfies $M_\mu = 0$ for $\mu \in P_0^+$ such that $\mu > \gl$, taking the image of $\wh{w}_{\bm{\pi}}$ induces a natural isomorphism
 \begin{equation}\label{eq:taking_highest_weight}
  \Hom{U_q'}\big(\wh{W}(\bm{\pi}),M\big) \stackrel{\sim}{\to} M_{\bm{\pi}}.
 \end{equation}
 {\normalfont(ii)} We have $\End_{U_q'}\big(\wh{W}(\bm{\pi})\big)\cong \wh{\bA}_{\bm{\pi}}$, and $\wh{W}(\bm{\pi})$ is free over $\wh{\bA}_{\bm{\pi}}$ of finite rank.\\
 {\normalfont(iii)} We have $\wh{W}(\bm{\pi}) / \wh{\fm}_{\bm{\pi}} \cong W(\bm{\pi})$.
\end{Lem}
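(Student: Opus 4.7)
The plan is to deduce all three parts from the hom--tensor adjunction of Lemma \ref{Lem:Chari-Fourier-Khandai}(ii), leveraging the freeness in Theorem \ref{Thm:properties_of_extremal}(iii) together with elementary properties of completions at a maximal ideal of a commutative Noetherian ring. The key preliminary observation is that, by base change, $\wh{W}(\bm{\pi}) = \bV(\gl)\otimes_{\bA_\gl}\wh{\bA}_{\bm{\pi}}$ inherits from $\bV(\gl)$ both a weight decomposition with $\wh{W}(\bm{\pi})_\gl \cong \wh{\bA}_{\bm{\pi}}$ and the vanishing $\wh{W}(\bm{\pi})_\mu = 0$ for every $\mu \in P_0^+$ with $\mu > \gl$, so that the adjunction can be invoked both with $M \in \cC$ and with $M = \wh{W}(\bm{\pi})$ itself.

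I would handle (iii) first, as a formal consequence of $\bk_{\bm{\pi}} \cong \wh{\bA}_{\bm{\pi}}/\wh{\fm}_{\bm{\pi}}$ and the associativity of tensor products:
\[
\wh{W}(\bm{\pi})/\wh{\fm}_{\bm{\pi}} \cong \bV(\gl)\otimes_{\bA_\gl}(\wh{\bA}_{\bm{\pi}}/\wh{\fm}_{\bm{\pi}}) \cong \bV(\gl)\otimes_{\bA_\gl}\bk_{\bm{\pi}} = W(\bm{\pi}).
\]
For (ii), the freeness of $\wh{W}(\bm{\pi})$ over $\wh{\bA}_{\bm{\pi}}$ is immediate from Theorem \ref{Thm:properties_of_extremal}(iii) by base change, and applying the adjunction with $T = \wh{\bA}_{\bm{\pi}}$ and $M = \wh{W}(\bm{\pi})$ reduces the computation of $\End_{U_q'}(\wh{W}(\bm{\pi}))$ to $\Hom{\bA_\gl}(\wh{\bA}_{\bm{\pi}},\wh{\bA}_{\bm{\pi}})$. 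To identify the latter with $\wh{\bA}_{\bm{\pi}}$, I would take $\phi$ with $a = \phi(1)$, use the identification $\wh{\bA}_{\bm{\pi}}/\wh{\fm}_{\bm{\pi}}^N \cong \bA_\gl/\fm_{\bm{\pi}}^N$ together with $\wh{\fm}_{\bm{\pi}}^N = \fm_{\bm{\pi}}^N \wh{\bA}_{\bm{\pi}}$ to lift an arbitrary $x \in \wh{\bA}_{\bm{\pi}}$ to $\bA_\gl$ modulo $\wh{\fm}_{\bm{\pi}}^N$, and conclude $\phi(x)\equiv xa \pmod{\wh{\fm}_{\bm{\pi}}^N}$; Krull's intersection theorem then forces $\phi(x) = xa$.

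For (i), the same adjunction yields $\Hom{U_q'}(\wh{W}(\bm{\pi}),M) \cong \Hom{\bA_\gl}(\wh{\bA}_{\bm{\pi}},M_\gl)$. Since $M_\gl$ is finite-dimensional, I would split it as $\bigoplus_{\bm{\sigma}} M_{\bm{\sigma}}$ into generalized $\bA_\gl$-eigenspaces indexed by $\bm{\sigma}$ with $\mathsf{cl}(\bm{\sigma}) = \gl$. For $\bm{\sigma} \neq \bm{\pi}$ I would exploit coprimality of $\fm_{\bm{\pi}}$ and $\fm_{\bm{\sigma}}$: writing $1 = a + b$ with $a \in \fm_{\bm{\pi}},\, b \in \fm_{\bm{\sigma}}$ makes $b$ a unit in $\wh{\bA}_{\bm{\pi}}$, while a large power of $b$ annihilates $M_{\bm{\sigma}}$, forcing any $\bA_\gl$-linear $\phi$ to vanish. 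For $\bm{\sigma} = \bm{\pi}$ the lifting trick of part (ii) identifies $\Hom{\bA_\gl}(\wh{\bA}_{\bm{\pi}}, M_{\bm{\pi}})$ with $M_{\bm{\pi}}$ via $\phi \mapsto \phi(1)$, and unwinding the explicit adjunction from Lemma \ref{Lem:Chari-Fourier-Khandai}(ii) shows that the composite correspondence sends $\psi\mapsto\psi(\wh{w}_{\bm{\pi}})$.

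The step I expect to be the most delicate is the identification of $\Hom{\bA_\gl}(\wh{\bA}_{\bm{\pi}},-)$ with the target in (i) and (ii): because $\wh{\bA}_{\bm{\pi}}$ is not finitely generated over $\bA_\gl$, $\bA_\gl$-linearity is a priori strictly weaker than $\wh{\bA}_{\bm{\pi}}$-linearity, and non-$\wh{\bA}_{\bm{\pi}}$-linear $\bA_\gl$-module endomorphisms of $\wh{\bA}_{\bm{\pi}}$ can exist abstractly. The way around is the compatibility $\wh{\fm}_{\bm{\pi}}^N = \fm_{\bm{\pi}}^N \wh{\bA}_{\bm{\pi}}$ together with Krull's intersection theorem on the Noetherian local ring $\wh{\bA}_{\bm{\pi}}$, which let me reduce each problem to the finite-dimensional quotients where the two notions of linearity coincide and then recover the global statement in the limit.
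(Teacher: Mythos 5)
Your proposal is correct and follows essentially the same route as the paper: apply the adjunction from Lemma~\ref{Lem:Chari-Fourier-Khandai}~(ii) to reduce to $\bA_\gl$-linear maps out of $\wh{\bA}_{\bm{\pi}}$, use Theorem~\ref{Thm:properties_of_extremal}~(iii) for freeness, and use the isomorphism $\wh{\bA}_{\bm{\pi}}/\wh{\fm}_{\bm{\pi}}^N \cong \bA_\gl/\fm_{\bm{\pi}}^N$ to reduce to finite quotients. The paper's proof is terser and leaves implicit both the coprimality argument that kills the $\bm{\sigma}\neq\bm{\pi}$ summands of $M_\gl$ and the Krull-intersection step that upgrades $\bA_\gl$-linearity to $\wh{\bA}_{\bm{\pi}}$-linearity for (ii); you supply exactly these missing details correctly.
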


\begin{proof}
 By Lemma \ref{Lem:Chari-Fourier-Khandai} (ii), we have 
 \[   \Hom{U_q'}\big(\wh{W}(\bm{\pi}),M\big) \cong \Hom{\bA_\gl} (\wh{\bA}_{\bm{\pi}}, M_\gl).
 \]
 Since $M_\gl$ is finite-dimensional, there is some $N$ such that the right-hand side coincides with $\Hom{\bA_\gl} (\wh{\bA}_{\bm{\pi}}/\wh{\fm}_{\bm{\pi}}^N, M_\gl)$,
 which is isomorphic to $M_{\bm{\pi}}$ since $\wh{\bA}_{\bm{\pi}}/\wh{\fm}_{\bm{\pi}}^N \cong \bA_{\gl}/\fm_{\bm{\pi}}^N$,
 and hence the assertion (i) is proved.
 The assertion (ii) follows from the same lemma and Theorem \ref{Thm:properties_of_extremal} (iii),
 and the assertion (iii) is obvious from the construction.
\end{proof}


\subsection{$R$-matrices}

For $k=1,2$, given a simple module $M_k \in \cC$ with $\ell$-highest weight vector $v_k\in M_k$, 
there is a unique $(U_q',\bk[z_1^{\pm 1},z_2^{\pm 1}])$-bimodule homomorphism
\begin{align*}
 R_{M_1,M_2}^{\mathrm{norm}}\colon
 (M_1)_{z_1} \otimes (M_2)_{z_2} \to \bk(z_2/z_1) \otimes_{\bk[(z_2/z_1)^{\pm 1}]} \big((M_2)_{z_2}
   \otimes (M_1)_{z_1}\big)
\end{align*}
satisfying $R_{M_1,M_2}^{\mathrm{norm}}(v_1 \otimes v_2) = v_2 \otimes v_1$ (\cite[\S8]{MR1890649}),
which is called the \textit{normalized $R$-matrix}.
The \textit{denominator} of $R_{M_1,M_2}^{\mathrm{norm}}$ is the monic polynomial $d_{M_1,M_2}(z) \in \bk[z]$ with the smallest degree
among polynomials satisfying
\[ d_{M_1,M_2}(z_2/z_1)\mathrm{Im}\,R_{M_1,M_2}^{\mathrm{norm}} \subseteq (M_2)_{z_2}
   \otimes (M_1)_{z_1}.
\]
It is known for any $i,j \in I_0$ that zeros of $d_{L(\bm{\varpi}_{i,1}),L(\bm{\varpi}_{j,1})}(z)$ belong to 
$q^{1/d}\,\ol{\Q}\llbracket q^{1/d}\rrbracket$ for some $d \in \Z_{>0}$ (see \cite[Proposition 9.3]{MR1890649}).

\begin{Thm}\label{Thm:equivalence_for_Weyl_mod}
 Let $(i_1,a_1),\ldots,(i_p,a_p)$ be a sequence of elements of $I_0 \times \bk^\times$.
 Set $\bm{\pi} = \sum_{k=1}^p \bm{\varpi}_{i_k,a_k}$,
 and $L_k = L(\bm{\varpi}_{i_k,a_k})$ for $1\leq k \leq p$.\\[2pt]
 {\normalfont(i)} The following statements are equivalent:
 \begin{itemize}
  \item[(a)] The tensor product $L_1 \otimes \cdots \otimes L_p$ is generated by the tensor product of $\ell$-highest weight vectors;
  \item[(b)] The local Weyl module $W(\bm{\pi})$ is isomorphic to $L_1 \otimes \cdots \otimes L_p$ as $U_q'$-modules;
  \item[(c)] For any $1\leq r < s \leq p$, we have $d_{L_r,L_s}(1) \neq 0$.\\[-8pt]
 \end{itemize}
 {\normalfont(ii)} If $d_{L_r,L_s}(1)\neq 0$ for all $1\leq r < s \leq p$,
 then we have 
 \[ W^{\vee}(\bm{\pi}) \cong L_p \otimes L_{p-1} \otimes \cdots \otimes L_1
 \]
 as $U_q'$-modules.
\end{Thm}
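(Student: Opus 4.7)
My plan is to establish part (i) via the implications $(c)\Rightarrow(b)\Rightarrow(a)\Rightarrow(c)$, and part (ii) by duality.

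\textbf{Setup for the map.} By (\ref{eq:tensor_lweight2}) and the one-dimensionality of each $(L_k)_{\varpi_{i_k}}$, the $\gl$-weight space of $L_1\otimes\cdots\otimes L_p$ is one-dimensional, equals its $\bm{\pi}$-weight space, and is spanned by $v_1\otimes\cdots\otimes v_p$. By Lemmas \ref{Lem:coproduct_of_A} and \ref{Lem:specm}, the $\bA_{\gl}$-action on this line factors through the character $\bk_{\bm{\pi}}$, so Lemma \ref{Lem:Chari-Fourier-Khandai}(ii) produces a canonical $U_q'$-linear map $\psi\colon W(\bm{\pi})\to L_1\otimes\cdots\otimes L_p$ sending $w_{\bm{\pi}}$ to $v_1\otimes\cdots\otimes v_p$, whose image is the cyclic submodule on this vector. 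Thus $(a)$ is equivalent to surjectivity of $\psi$ and $(b)$ to its bijectivity, so $(b)\Rightarrow(a)$ is immediate.

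\textbf{The main step $(c)\Rightarrow(b)$.} I would lift $\psi$ to the deformed setting. Set $\wh{M} = \wh{L_1}_{\aff}\hat{\otimes}\cdots\hat{\otimes}\wh{L_p}_{\aff}$; by (\ref{eq:W_and_affinization}) and Lemma \ref{Lem:properties_of_deformed_local}(ii), $\wh{M}$ is free of rank $\prod_k\dim L_k$ over $\wh{\bA}_{\bm{\varpi}_{i_1,a_1}}\hat{\otimes}\cdots\hat{\otimes}\wh{\bA}_{\bm{\varpi}_{i_p,a_p}}\cong\wh{\bA}_{\bm{\pi}}$, where the isomorphism comes from (\ref{eq:natural_isomorphism_of_hA}). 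Using Proposition \ref{Prop:universality} applied to $\wh{v}_1\otimes\cdots\otimes\wh{v}_p$ (of weight $\gl$ and killed by all $x^+_{i,m}$ since each $\wh{v}_k$ is), and verifying that the right $\bA_{\gl}$-action factors through $\wh{\bA}_{\bm{\pi}}$ via $\gD_{\varpi_{i_1},\ldots,\varpi_{i_p}}$, I obtain a $(U_q',\wh{\bA}_{\bm{\pi}})$-bilinear map $\wh{\psi}\colon\wh{W}(\bm{\pi})\to\wh{M}$ sending $\wh{w}_{\bm{\pi}}$ to $\wh{v}_1\otimes\cdots\otimes\wh{v}_p$. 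Under $(c)$, the normalized $R$-matrices $R^{\mathrm{norm}}_{L_r,L_s}$ are regular at the relevant specialization (since $d_{L_r,L_s}(1)\neq 0$); iterated pairwise swaps via these $R$-matrices produce an inverse to $\wh{\psi}$, making it an isomorphism. Specializing modulo $\wh{\fm}_{\bm{\pi}}$ and invoking Lemma \ref{Lem:properties_of_deformed_local}(iii) yields $W(\bm{\pi})\cong L_1\otimes\cdots\otimes L_p$, which is $(b)$.

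\textbf{$(a)\Rightarrow(c)$ and part (ii).} For $(a)\Rightarrow(c)$ I argue by contraposition: if $d_{L_r,L_s}(1)=0$ for some $r<s$, then $L_r\otimes L_s$ is not $\ell$-highest-weight-cyclic by standard facts relating the denominators to the cyclicity of pairwise tensor products, and a proper submodule containing $v_r\otimes v_s$ extends (after using suitable $R$-matrices to bring positions $r,s$ adjacent and tensoring with the remaining factors) to a proper submodule of $L_1\otimes\cdots\otimes L_p$ containing $v_1\otimes\cdots\otimes v_p$, contradicting $(a)$. For part (ii), I apply part (i) to the reversed dual sequence $({}^*L_p,\ldots,{}^*L_1)$: since ${}^*L_k=L(\bm{\varpi}_{i_k,a_kp_*^{d_{i_k}}})$, and the identity $d_{{}^*M,{}^*N}(z)=d_{N,M}(z)$ transports $(c)$, part (i) gives $W({}^*\bm{\pi})\cong{}^*L_p\otimes\cdots\otimes{}^*L_1$; dualizing via (\ref{eq:tensor_and_dual}) yields $W^\vee(\bm{\pi})\cong L_p\otimes\cdots\otimes L_1$. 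The main obstacle is the iterated $R$-matrix argument inside $(c)\Rightarrow(b)$: constructing the inverse of $\wh{\psi}$ via pairwise swaps requires careful pole-cancellation from the denominators, and this is most delicate when several $(i_k,a_k)$'s coincide, in which case $\wh{\bA}_{\bm{\pi}}$ has a nontrivial symmetric-group structure from (\ref{eq:natural_isomorphism_of_hA}) and the inverse must respect the invariants throughout.
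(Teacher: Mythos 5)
The paper does not attempt a direct proof of part (i) at all: it cites the equivalence of (a) and (b) to Chari--Moura \cite[Theorem 7.5]{chari2005characters} and Hernandez \cite[Theorem 5.5]{MR2576287}, and the equivalence of (a) and (c) to Kashiwara \cite[Proposition 9.4]{MR1890649}; part (ii) is then deduced from (i) and (\ref{eq:tensor_and_dual}) exactly as you do. So your proposal is a genuinely different route for part (i), and the comparison is really whether your sketch can be made into a self-contained proof. As written, I don't think it can, for two reasons.

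First, in the step $(c)\Rightarrow(b)$, the claim that ``iterated pairwise swaps via these $R$-matrices produce an inverse to $\wh{\psi}$'' does not hold up: the normalized $R$-matrices relate different orderings $\wh{L_{\sigma(1)}}_{\aff}\hat{\otimes}\cdots\hat{\otimes}\wh{L_{\sigma(p)}}_{\aff}$ to one another, not a tensor product to the Weyl module, so composing them cannot produce a map landing back in $\wh{W}(\bm{\pi})$; moreover, regularity of $R^{\mathrm{norm}}_{L_r,L_s}$ for $r<s$ says nothing about the opposite direction $R^{\mathrm{norm}}_{L_s,L_r}$, so there is no obvious ``inverse'' even among reorderings. (What actually settles this in the literature is either the $q$-character/dimension computation of Chari--Moura showing $\dim W(\bm{\pi})=\prod_k\dim L_k$, which gives $(a)\Leftrightarrow(b)$ for free, or Kashiwara's argument for $(a)\Leftrightarrow(c)$; neither is recovered by the swap construction.) Second, the contrapositive argument for $(a)\Rightarrow(c)$ invokes ``suitable $R$-matrices to bring positions $r,s$ adjacent,'' but using an $R$-matrix $R^{\mathrm{norm}}_{L_k,L_{k+1}}$ to swap an intermediate adjacent pair already requires $d_{L_k,L_{k+1}}(1)\neq 0$, which is part of condition $(c)$ — precisely what you are trying to prove — so that step is circular. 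The base claim that $d_{L_r,L_s}(1)=0$ forces $L_r\otimes L_s$ to be non-cyclic is itself the $p=2$ case of Kashiwara's Proposition 9.4, so if you are willing to cite that, the cleanest fix is to cite it outright for all $p$ (as the paper does) rather than attempt the bootstrap. Your treatment of (ii) via $(\ref{eq:tensor_and_dual})$ and the relation $d_{{}^*M,{}^*N}=d_{N,M}$ is fine and matches the paper.
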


\begin{proof}
 In assertion (i), the equivalence of (a) and (b) is proved in \cite[Theorem 7.5]{chari2005characters} and \cite[Theorem 5.5]{MR2576287},
 and the equivalence of (a) and (c) is proved in \cite[Proposition 9.4]{MR1890649}.
 The assertion (ii) follows from (i) and (\ref{eq:tensor_and_dual}).
\end{proof}



For any $p >1$ and $\bm{\pi}_1,\ldots,\bm{\pi}_p \in \cP^+$ with $\mathsf{cl}(\bm{\pi}_k)=\gl_k$, 
the map $\gD_{\gl_1,\ldots,\gl_p}\colon \bA_{\gl_1+\cdots+\gl_p} \to \bA_{\gl_1}\otimes \cdots \otimes \bA_{\gl_p}$ naturally extends to 
$\gD=\gD_{\bm{\pi}_1,\ldots,\bm{\pi}_p}\colon \wh{\bA}_{\bm{\pi}_1+\cdots+\bm{\pi}_p} \to \wh{\bA}_{\bm{\pi}_1} \hat{\otimes} \cdots 
\hat{\otimes} \wh{\bA}_{\bm{\pi}_p}$.
The following proposition is proved in \cite[Corollary 2.21]{fujita2017affine}.

\begin{Prop}\label{Prop:Fujita_isom}
 Let $\bm{\pi} = \sum_{k=1}^p l_k\bm{\varpi}_{i_k,a_k} \in \cP^+$ with $(i_r,a_r) \neq (i_s,a_s)$ for $r\neq s$,
 and assume that $d_{L_r,L_s}(1) \neq 0$ for any $1\leq r < s \leq p$, where we write $L_r = L(\bm{\varpi}_{i_r,a_r})$.
 Then we have 
 \[ \wh{W}(\bm{\varpi}_{i_1,a_1})^{\hat{\otimes} l_1}\hat{\otimes} \cdots  \hat{\otimes} \wh{W}(\bm{\varpi}_{i_p,a_p})^{\hat{\otimes} l_p} \cong \wh{W}(\bm{\pi})^{\oplus l_1!\cdots l_p!}
 \]
 as $(U_q',\wh{\bA}_{\bm{\pi}})$-bimodules, where $\wh{\bA}_{\bm{\pi}}$ acts on the left-hand side via $\gD$.
\end{Prop}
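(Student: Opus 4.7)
The plan is to identify the displayed isomorphism as a base-change statement. Set
\[ \wh{\bA}^{\mathrm{big}}:=\wh{\bA}_{\bm{\varpi}_{i_1,a_1}}^{\hat\otimes l_1}\hat\otimes\cdots\hat\otimes\wh{\bA}_{\bm{\varpi}_{i_p,a_p}}^{\hat\otimes l_p}, \]
a complete local $\bk$-algebra. By the description (\ref{eq:natural_isomorphism_of_hA}), the map $\gD\colon\wh{\bA}_{\bm{\pi}}\hookrightarrow\wh{\bA}^{\mathrm{big}}$ identifies $\wh{\bA}_{\bm{\pi}}$ with the ring of $\prod_k\mathfrak{S}_{l_k}$-invariants in $\wh{\bA}^{\mathrm{big}}$, and since a power series ring is free of rank $N!$ over its $\mathfrak{S}_N$-invariants, $\wh{\bA}^{\mathrm{big}}$ is free of rank $l_1!\cdots l_p!$ over $\wh{\bA}_{\bm{\pi}}$ via $\gD$. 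It therefore suffices to produce a $(U_q',\wh{\bA}^{\mathrm{big}})$-bimodule isomorphism
\[ \Phi\colon \wh{W}(\bm{\pi})\otimes_{\wh{\bA}_{\bm{\pi}}}\wh{\bA}^{\mathrm{big}} \stackrel{\sim}{\longrightarrow}\wh{W}^{\otimes\bm{\pi}}:=\wh{W}(\bm{\varpi}_{i_1,a_1})^{\hat\otimes l_1}\hat\otimes\cdots\hat\otimes\wh{W}(\bm{\varpi}_{i_p,a_p})^{\hat\otimes l_p}, \]
because restricting scalars along $\gD$ on the source then recovers $\wh{W}(\bm{\pi})^{\oplus l_1!\cdots l_p!}$ as a $(U_q',\wh{\bA}_{\bm{\pi}})$-bimodule.

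To construct $\Phi$, I would first produce a $U_q'$-linear map $\wh{W}(\bm{\pi})\to\wh{W}^{\otimes\bm{\pi}}$ via the universal property. Each $\wh{W}(\bm{\varpi}_{i_k,a_k})$ has $P_0$-weights bounded above by $\varpi_{i_k}$, so $\wh{W}^{\otimes\bm{\pi}}$ has weights bounded by $\gl:=\mathsf{cl}(\bm{\pi})$, and its top weight space is the completed tensor product of the top weight spaces $(\wh{W}(\bm{\varpi}_{i_k,a_k}))_{\varpi_{i_k}}\cong\wh{\bA}_{\bm{\varpi}_{i_k,a_k}}$; it is therefore free of rank one over $\wh{\bA}^{\mathrm{big}}$, generated by $\wh{w}:=\wh{w}_{\bm{\varpi}_{i_1,a_1}}^{\otimes l_1}\otimes\cdots\otimes\wh{w}_{\bm{\varpi}_{i_p,a_p}}^{\otimes l_p}$. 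Iterating Lemma \ref{Lem:coproduct_of_A}, the $\bA_\gl$-action on $(\wh{W}^{\otimes\bm{\pi}})_\gl$ factors through the iterated coproduct $\bA_\gl\to\bA_{\varpi_{i_1}}^{\otimes l_1}\otimes\cdots\otimes\bA_{\varpi_{i_p}}^{\otimes l_p}\to\wh{\bA}^{\mathrm{big}}$, and by Lemma \ref{Lem:specm} this composition sends the maximal ideal $\fm_{\bm{\pi}}$ into the maximal ideal of $\wh{\bA}^{\mathrm{big}}$, hence factors through the completion $\bA_\gl\to\wh{\bA}_{\bm{\pi}}$. Thus $1\mapsto\wh{w}$ defines an $\bA_\gl$-linear map $\wh{\bA}_{\bm{\pi}}\to(\wh{W}^{\otimes\bm{\pi}})_\gl$, and Lemma \ref{Lem:Chari-Fourier-Khandai}(ii), applied to $M=\wh{W}^{\otimes\bm{\pi}}$ and $T=\wh{\bA}_{\bm{\pi}}$, upgrades this to the sought $U_q'$-map $\wh{W}(\bm{\pi})\to\wh{W}^{\otimes\bm{\pi}}$ sending $\wh{w}_{\bm{\pi}}\mapsto\wh{w}$. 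Base change along $\wh{\bA}_{\bm{\pi}}\to\wh{\bA}^{\mathrm{big}}$ then produces $\Phi$.

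Finally, $\Phi$ is an isomorphism by a Nakayama argument over the complete local ring $\wh{\bA}^{\mathrm{big}}$. By Lemma \ref{Lem:properties_of_deformed_local}(ii)(iii), $\wh{W}(\bm{\varpi}_{i_k,a_k})$ is free of rank $\dim L_k$ over $\wh{\bA}_{\bm{\varpi}_{i_k,a_k}}$, so the target of $\Phi$ is free of rank $\prod_k(\dim L_k)^{l_k}=\dim W(\bm{\pi})$ over $\wh{\bA}^{\mathrm{big}}$ (the last equality by Theorem \ref{Thm:equivalence_for_Weyl_mod}); the source is free of the same rank by Lemma \ref{Lem:properties_of_deformed_local}(ii) applied to $\wh{W}(\bm{\pi})$. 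Reducing $\Phi$ modulo the unique maximal ideal of $\wh{\bA}^{\mathrm{big}}$ yields, using Lemma \ref{Lem:properties_of_deformed_local}(iii), the $U_q'$-map $W(\bm{\pi})\to L_1^{\otimes l_1}\otimes\cdots\otimes L_p^{\otimes l_p}$ sending the cyclic generator to the tensor of $\ell$-highest weight vectors, which under the denominator hypothesis is an isomorphism by Theorem \ref{Thm:equivalence_for_Weyl_mod}. Nakayama's lemma then gives surjectivity of $\Phi$, and equal-rank freeness forces $\Phi$ to be an isomorphism. The main obstacle in this plan is the verification in the middle paragraph that the $\bA_\gl$-action on $\wh{w}$ factors through $\wh{\bA}_{\bm{\pi}}$; this requires carefully tracking the iterated coproduct through the identification (\ref{eq:natural_isomorphism_of_hA}), and it is Lemmas \ref{Lem:coproduct_of_A} and \ref{Lem:specm} that make this tractable.
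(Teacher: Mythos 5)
The paper offers no proof of this proposition — it cites \cite[Corollary 2.21]{fujita2017affine} — so there is no in-text argument to compare against. Your proof is correct and self-contained. The reduction to a base-change isomorphism is the right move: by (\ref{eq:natural_isomorphism_of_hA}), $\gD$ identifies $\wh{\bA}_{\bm{\pi}}$ with the $\prod_k\mathfrak{S}_{l_k}$-invariants of $\wh{\bA}^{\mathrm{big}}$, so $\wh{\bA}^{\mathrm{big}}$ is indeed free of rank $l_1!\cdots l_p!$ over $\wh{\bA}_{\bm{\pi}}$, and establishing a $(U_q',\wh{\bA}^{\mathrm{big}})$-bimodule isomorphism $\Phi$ and restricting scalars does yield the claim. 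Your construction of $\Phi$ via Lemma \ref{Lem:Chari-Fourier-Khandai}(ii) is legitimate here since that lemma imposes only integrability and the weight bound and not finite-dimensionality (the paper uses it the same way in the proof of Lemma \ref{Lem:properties_of_deformed_local}(ii)), and the bookkeeping via Lemmas \ref{Lem:coproduct_of_A} and \ref{Lem:specm} correctly shows that $1\mapsto\wh{w}$ factors through $\wh{\bA}_{\bm{\pi}}$. The Nakayama/rank argument then closes the proof. One small point to make explicit: when you invoke Theorem \ref{Thm:equivalence_for_Weyl_mod} for the full sequence $(i_1,a_1)^{l_1},\ldots,(i_p,a_p)^{l_p}$ — both to compute $\dim W(\bm{\pi})$ and to recognize the reduction of $\Phi$ modulo the maximal ideal as an isomorphism — the sequence has repeats, so beyond the stated hypothesis $d_{L_r,L_s}(1)\neq 0$ for $r<s$ you also implicitly need $d_{L_k,L_k}(1)\neq 0$. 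This holds automatically because zeros of denominators of normalized $R$-matrices lie in $q^{1/d}\,\ol{\Q}\llbracket q^{1/d}\rrbracket$, which never contains $1$.
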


\subsection{Modified quantum affine algebras}\label{Subsection:Modified_QAA}

The \textit{level zero modified quantum affine algebra} denoted by $\widetilde{U}_q=\widetilde{U}_q(\fg)$ is defined by
\[ \widetilde{U}_q= \bigoplus_{\gl \in P_0} U_q'a_\gl, \ \ \ U_q'a_\gl= U_q' \Big/ \sum_{h \in P_\cl^{\vee}} U_q'(q^h-q^{\langle h,\gl\rangle}),
\]
where $a_\gl$ stands for the image of $1$ in the quotient.
The multiplication is given by $a_\gl a_\mu = \gd_{\gl\mu}a_\gl$ and $a_\gl x = x a_{\gl-\mu}$ for $x \in (U_q')_\mu$, where we set
\[ (U_q')_\mu = \{ X \in U_q'\mid q^hXq^{-h} =  q^{\langle h, \mu\rangle} X \text{ for } h \in P_\cl^\vee\}.
\]
By definition an integrable $U_q^0$-module, and in particular a module in $\cC$, is a $\widetilde{U}_q$-module.
We define a $\bk$-algebra anti-involution $\sharp$ on $\wti{U}_q$ by
\[ \sharp(e_i) = f_i, \ \ \ \sharp(f_i) = e_i,\ \ \ \sharp(q^h)=q^h, \ \ \ \sharp(a_\gl)=a_\gl
\]
for $i \in I$, $h \in P_\cl^\vee$, and $\gl \in P_0$.
Suppose that $\gl =\sum_{i\in I_0}\gl_i\varpi_i \in P_0^+$.
By \cite[Proposition 6.9]{MR2066942}, $\sharp$ preserves $U_q(0)a_\gl$ and $(\mathrm{Ann}_\gl) a_\gl$ (see (\ref{eq:Annihilators})),
and hence defines an involution on $\bA_\gl$.
Let $\bV(\gl)^{\sharp}$ be an $(\bA_\gl,\widetilde{U}_q)$-bimodule obtained by twisting the action of $\wti{U}_q\otimes \bA_\gl$ on $\bV(\gl)$ by $\sharp$.

Fix $\gl \in P_0^+$. 
We set $P_{0,\leq \gl}^+ =\{ \mu \in P_0^+\mid \mu \leq \gl\}$, and
define a $\bk$-algebra $U_{\leq \gl}$ by 
\[ U_{\leq \gl} = \wti{U}_q \Big/ \bigcap_{\mu \in P_{0,\leq \gl}^+} \mathrm{Ann}_{\wti{U}_q}\bV(\mu),
\]
where $\mathrm{Ann}_{\wti{U}_q}M$ denotes the annihilator of a $\wti{U}_q$-module $M$.
Fix a numbering $\{\gl_1,\ldots,\gl_l\}$ on the elements of $P_{0,\leq \gl}^+$ such that 
$\gl_l = \gl$ and $r<s$ holds whenever $\gl_r < \gl_s$.
Define two-sided ideals $U_r$ ($0\leq r \leq l$) of $U_{\leq \gl}$ by
\[ U_0 = U_{\leq \gl}, \ \ \ U_r = \bigcap_{1\leq s\leq r} \mathrm{Ann}_{U_{\leq \gl}} \bV(\gl_s) \ \text{ for } 1\leq r \leq l.
\] 
Note that $U_l=\{0\}$.

\begin{Thm}[\cite{cui2015affine,nakajima2015affine}]\label{Thm:affine_cellularity_of_U}
 For any $1\leq r \leq l$, there is a unique $(U_q',U_q')$-bimodule isomorphism
 \[ U_{r-1} / U_r \stackrel{\sim}{\to} \bV(\gl_r) \otimes_{\bA_{\gl_r}} \bV(\gl_r)^{\sharp}
 \]
 mapping $\bar{a}_{\gl_r}$ to $v_{\gl_r} \otimes v_{\gl_r}$, where $\bar{a}_{\gl_r}$ is the image of $a_{\gl_r} \in U_{r-1}$ under the canonical projection. 
\end{Thm}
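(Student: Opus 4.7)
My plan is to construct the map via the natural action of $U_{r-1}/U_r$ on $\bV(\gl_r)$ and then identify the resulting image with the tensor product $\bV(\gl_r) \otimes_{\bA_{\gl_r}} \bV(\gl_r)^{\sharp}$. Uniqueness of the bimodule homomorphism is immediate, since $\bar{a}_{\gl_r}$ generates $U_{r-1}/U_r$ as a $(U_q',U_q')$-bimodule.

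I would first set up the action map. For any $Z\in U_{r-1}$, its left action on $\bV(\gl_r)$ is $\opp{\bA_{\gl_r}}$-linear because the right $\bA_{\gl_r}$-action commutes with $U_q'$. The resulting algebra homomorphism $U_{r-1}\to \End_{\opp{\bA_{\gl_r}}}(\bV(\gl_r))$ has kernel $U_{r-1}\cap \mathrm{Ann}_{U_{\leq \gl}}\bV(\gl_r) = U_r$, yielding an embedding $\psi\colon U_{r-1}/U_r \hookrightarrow \End_{\opp{\bA_{\gl_r}}}(\bV(\gl_r))$. A weight-space argument shows $a_{\gl_r}\in U_{r-1}$: if $a_{\gl_r}$ acted nontrivially on $\bV(\gl_{s'})$ for some $s'<r$, then $\gl_r$ would appear as a (dominant) weight of $\bV(\gl_{s'})$, forcing $\gl_r \leq \gl_{s'}$ and hence $s'\geq r$ by our labeling convention. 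Consequently $\psi(\bar{a}_{\gl_r})$ is the projection onto the weight-$\gl_r$ subspace.

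Second, I would establish a natural $(U_q',U_q')$-bimodule isomorphism $\bV(\gl_r)\otimes_{\bA_{\gl_r}}\bV(\gl_r)^{\sharp} \stackrel{\sim}{\to} \End_{\opp{\bA_{\gl_r}}}(\bV(\gl_r))$ sending $v_{\gl_r}\otimes v_{\gl_r}$ to the weight-$\gl_r$ projection. Since $\bV(\gl_r)$ is free of finite rank over $\bA_{\gl_r}$ by Theorem \ref{Thm:properties_of_extremal}(iii), this is the standard free-module identification $M\otimes_A M^{\vee}\cong \End_A(M)$, with the $\sharp$-twist on the second factor providing the duality via a contravariant pairing: for $v, w \in \bV(\gl_r)$, one obtains an element of $\bA_{\gl_r}$ by expressing $w = Xv_{\gl_r}$ and projecting $\sharp(X)v$ onto $\bV(\gl_r)_{\gl_r}\cong \bA_{\gl_r}$. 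Composing this identification with $\psi$ yields the desired map $\Phi_r\colon U_{r-1}/U_r \to \bV(\gl_r)\otimes_{\bA_{\gl_r}}\bV(\gl_r)^{\sharp}$ sending $\bar{a}_{\gl_r}$ to $v_{\gl_r}\otimes v_{\gl_r}$.

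The main obstacle is surjectivity of $\Phi_r$, equivalently, the density statement that the image of $\psi$ exhausts $\End_{\opp{\bA_{\gl_r}}}(\bV(\gl_r))$. This requires showing every $\opp{\bA_{\gl_r}}$-linear endomorphism of $\bV(\gl_r)$ is realized by the action of some element of $U_{r-1}$. This is a Jacobson-density-type statement in the affine setting, and is the substantive algebraic content of \cite{cui2015affine,nakajima2015affine}; it relies on a detailed analysis of $\bV(\gl_r)$ (e.g.\ through global/canonical bases, or through the explicit description of $\End_{U_q'}\bigl(\bV(\gl_r)\bigr)$ as a Laurent polynomial ring in Theorem \ref{Thm:properties_of_extremal}(ii)) to produce sufficiently many elements of $U_{r-1}$ with prescribed matrix coefficients.
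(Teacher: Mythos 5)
The paper does not prove this theorem itself; it cites \cite{cui2015affine,nakajima2015affine}, where the isomorphism is obtained from the canonical (global) basis of $\widetilde{U}_q$ and the two-sided cell structure of \cite{MR2066942}. Your argument is therefore a reconstruction, and it takes a genuinely different route: rather than exhibiting a cellular basis, you embed $U_{r-1}/U_r$ into $\End_{\opp{\bA_{\gl_r}}}\big(\bV(\gl_r)\big)$ and aim to show the image is everything. The preliminary steps are fine: the kernel identification $U_{r-1}\cap\mathrm{Ann}_{U_{\leq\gl}}\bV(\gl_r)=U_r$, the fact that $\psi(\bar a_{\gl_r})$ is the weight-$\gl_r$ projector, and the membership $a_{\gl_r}\in U_{r-1}$ (the last uses, implicitly, that all $P_0$-weights of $\bV(\gl_s)$ lie $\leq\gl_s$, which follows because $\bV(\gl_s)$ is an inductive limit of local Weyl modules).

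Two points are not adequately supported. First, the contravariant pairing you sketch to identify $\bV(\gl_r)^\sharp$ with the $\bA_{\gl_r}$-linear dual of $\bV(\gl_r)$ — write $w=Xv_{\gl_r}$ and take the top-weight component of $\sharp(X)v$ — is not well-defined as stated: $X$ is not determined by $w$, and you have not checked that $Xv_{\gl_r}=0$ forces $a_{\gl_r}\sharp(X)$ to annihilate $\bV(\gl_r)$. A nondegenerate $\sharp$-contravariant $\bA_{\gl_r}$-valued form on $\bV(\gl_r)$ does exist, but it comes from the global-base theory and is not an elementary consequence of Theorem \ref{Thm:properties_of_extremal}. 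Second, the surjectivity of $\psi$, which you rightly flag as the substantive content, is exactly what \cite{cui2015affine,nakajima2015affine} prove, and the only available arguments produce the required endomorphisms from the canonical basis — there is no known soft Jacobson-density shortcut in this affine setting. Also, the uniqueness claim should really be read off \emph{after} the isomorphism is constructed (since $v_{\gl_r}\otimes v_{\gl_r}$ visibly generates the target), rather than asserted up front. In sum, the architecture is coherent, but both the self-duality of $\bV(\gl_r)$ and the density of the image ultimately rely on the same canonical-base input that the cited proofs use directly.
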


Later we need the following lemmas.

\begin{Lem}\label{Lem:quotient_to_U_gl}
 Let $1\leq r \leq l$, and assume that $M$ is an integrable $U_q'$-module such that $M_\mu = 0$ for $\mu \in P^+_0 \setminus \{\gl_1,\ldots,\gl_{r}\}$.
 Then $M$ is annihilated by $\bigcap_{1\leq s \leq r} \mathrm{Ann}_{\wti{U}_q}\bV(\gl_s)$, and hence 
 $M$ becomes a $U_{\leq \gl}/U_r$-module.
\end{Lem}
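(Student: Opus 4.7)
The plan is to combine the affine cellular structure of $U_{\leq\gl}$ from Theorem \ref{Thm:affine_cellularity_of_U} with the universal property of extremal weight modules (Proposition \ref{Prop:universality} and Lemma \ref{Lem:Chari-Fourier-Khandai}). First I would establish several preliminary facts: (i) by $W_0$-invariance of the character of any integrable module, every $P_0$-weight of $M$ lies in the finite set $\bigcup_{s=1}^r W_0\gl_s$, so in particular the element $a_{\gl_s}\in \wti U_q$, which acts on any integrable module as the projection onto the $\gl_s$-weight space, annihilates $M$ for each $s > r$; (ii) using the finite-dimensional representation theory of $U_q(\fg_0)$, every vector of $M$ lies in a $U_q(\fg_0)$-submodule with a $\fg_0$-dominant weight generator of weight necessarily in $\{\gl_1,\ldots,\gl_r\}$, so $M = \sum_{s=1}^r U_q' M_{\gl_s}$; and (iii) whenever $\gl_s$ is maximal in $\mathrm{supp}(M) := \{t\mid M_{\gl_t}\neq 0\}$, every vector in $M_{\gl_s}$ is automatically $\ell$-highest weight (if some $x_{i,m}^+v\neq 0$, then $(\gl_s+\ga_i)^+ > \gl_s$ would be a dominant weight of $M$, violating maximality), so by Lemma \ref{Lem:Chari-Fourier-Khandai} the submodule $U_q' M_{\gl_s}$ is a quotient of $\bV(\gl_s)\otimes_{\bA_{\gl_s}} M_{\gl_s}$ and is annihilated by $\mathrm{Ann}_{\wti U_q}\bV(\gl_s) \supseteq \mathcal{I}_r := \bigcap_{s=1}^r \mathrm{Ann}_{\wti U_q}\bV(\gl_s)$.

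The core of the argument is to lift the cellular chain $U_{\leq\gl}\supsetneq U_1 \supsetneq \cdots \supsetneq U_l = 0$ to a chain of two-sided ideals $\wti U_q \supseteq \mathcal{I}_1 \supseteq \cdots \supseteq \mathcal{I}_l = \mathcal{K}$ in $\wti U_q$, where $\mathcal{I}_s := \bigcap_{s'\leq s}\mathrm{Ann}_{\wti U_q}\bV(\gl_{s'})$ and $\mathcal{K} = \ker(\wti U_q \twoheadrightarrow U_{\leq\gl})$. The cellular isomorphism $\mathcal{I}_{s-1}/\mathcal{I}_s \cong \bV(\gl_s)\otimes_{\bA_{\gl_s}} \bV(\gl_s)^\sharp$ (with $\bar a_{\gl_s}\mapsto v_{\gl_s}\otimes v_{\gl_s}$) allows any $X\in \mathcal{I}_r$ to be iteratively decomposed as $X = \alpha_{r+1} + \cdots + \alpha_l + X^{(l-r)}$, where each $\alpha_s$ for $s > r$ lifts an element of $\bV(\gl_s)\otimes\bV(\gl_s)^\sharp$ representable as a finite sum $\sum_i Y_{s,i}\,a_{\gl_s}\,Z_{s,i}$ with $Y_{s,i},Z_{s,i}\in U_q'$, and $X^{(l-r)}\in \mathcal{K}$. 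For any $v\in M$, each term $\alpha_s v = \sum_i Y_{s,i}\cdot\mathrm{proj}_{M_{\gl_s}}(Z_{s,i}v) = 0$ by (i). Hence the action of $\mathcal{I}_r$ on $M$ coincides with that of $\mathcal{K}$, reducing the goal to $\mathcal{K}M = 0$.

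The main obstacle I foresee is this final step $\mathcal{K}M = 0$: the naive induction on $|\mathrm{supp}(M)|$ that peels off maximal dominant weight strata one at a time via (iii) yields only $\mathcal{K}^{|\mathrm{supp}(M)|}M = 0$. Bridging the gap to $\mathcal{K}M = 0$ requires either a sharper universal-property argument realizing $M$ simultaneously as a quotient of $\bigoplus_s \bV(\gl_s)\otimes_{\bA_{\gl_s}} T_s$ (rather than constructing such a presentation layer by layer), or exploiting the equality $\mathcal{I}_r M = \mathcal{K}M$ obtained above together with a Noetherian/Nakayama-type argument applied inside the cyclic submodules $\wti U_q\cdot v$ for $v\in M$. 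Once $\mathcal{K}M = 0$ is established, $M$ becomes a $U_{\leq\gl}$-module, and the cellular chain calculation immediately upgrades to $U_r M = 0$, giving the final assertion that $M$ is a $U_{\leq\gl}/U_r$-module.
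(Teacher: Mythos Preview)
Your decomposition $X = \alpha_{r+1}+\cdots+\alpha_l + X'$ with $\alpha_s \in \wti{U}_q a_{\gl_s}\wti{U}_q$ and $X'\in\mathcal{K}$ is correct, and so is the conclusion $\mathcal{I}_r M = \mathcal{K}M$. The gap you yourself flag at $\mathcal{K}M=0$ is real, and neither of your proposed fixes closes it. The layer-by-layer induction genuinely only yields $\mathcal{K}^{n}M=0$: when you pass to $M/U_q'M_{\gl_s}$ for maximal $\gl_s$, the $\ell$-highest weight vectors in the quotient need not lift to $\ell$-highest weight vectors in $M$ (the obstruction $x_{i,m}^+v$ lives in a weight space that has not been killed, only pushed into the submodule), so you cannot realize $M$ as a single quotient of $\bigoplus_s \bV(\gl_s)\otimes T_s$. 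The Nakayama idea also fails: you get $\mathcal{K}N=\mathcal{I}_rN$ for $N=\mathcal{K}M$, but there is no finiteness over a local ring and no reason for $\mathcal{K}$ to be idempotent or to lie in a Jacobson radical.

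The paper's proof sidesteps this entirely and does \emph{not} use Theorem~\ref{Thm:affine_cellularity_of_U}. Instead it extends the numbering $\{\gl_s\}$ from $P_{0,\leq\gl}^+$ to all of $P_0^+$, passes to the completion $\wh{U}_q=\varprojlim_N \wti{U}_q/\sum_{s\geq N}\wti{U}_q a_{\gl_s}\wti{U}_q$, and invokes \cite[Proposition~6.27]{MR2066942} to obtain
\[
\bigcap_{1\leq s\leq r}\mathrm{Ann}_{\wti{U}_q}\bV(\gl_s)\ \subseteq\ \sum_{s>r}\wti{U}_q a_{\gl_s}\wti{U}_q
\]
inside $\wh{U}_q$. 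Since $M$ has only finitely many $P_0$-weights, it is naturally a $\wh{U}_q$-module, and every $a_{\gl_s}$ with $s>r$ kills $M$ by your observation~(i); hence so does the right-hand side. In effect the Beck--Nakajima result supplies an \emph{infinite} cellular-type decomposition that leaves no remainder $\mathcal{K}$, which is exactly what your finite chain inside $U_{\leq\gl}$ cannot provide.
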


\begin{proof}
 Extend the above numbering of elements of $P_{0,\leq \gl}^+$ to all the elements of $P_0^+$ as $P_0^+ = \{\gl_1,\gl_2,\ldots,\}$,
 and consider the completion $\wh{U}_q = \underset{N}{\varprojlim}\,\, \wti{U}_q/ \sum_{s \geq N} \wti{U}_q a_{\gl_s} \wti{U}_q$, which has a natural $\bk$-algebra structure.
 Obviously $M$ extends to a $\wh{U}_q$-module, and it follows from \cite[Proposition 6.27]{MR2066942} that 
 \[ \bigcap_{1\leq s \leq r} \mathrm{Ann}_{\wti{U}_q}\bV(\gl_s) \subseteq \sum_{s> r} \wti{U}_q a_{\gl_s}\wti{U}_q
 \]
 in $\wh{U}_q$, which acts trivially on $M$.
 The assertion is proved.
\end{proof}
\begin{Lem}\label{Lem:dual_deformed}
 Let $\bm{\pi} = \sum_{k=1}^p l_k\bm{\varpi}_{i_k,a_k} \in \cP^+$ with $(i_r,a_r) \neq (i_s,a_s)$ for $r\neq s$, and set $\gl = \mathsf{cl}(\bm{\pi})$.
 Assume that $d_{L_r,L_s}(1) \neq 0$ for any $1\leq r < s \leq p$, where we write $L_r=L(\bm{\varpi}_{i_r,a_r})$. 
 Then we have
 \begin{equation}\label{eq:isom_of_dual} 
  \Hom{\wh{\bA}_{\bm{\pi}}}\big(\wh{W}(l_p\bm{\varpi}_{i_p,a_p}) \hat{\otimes} \cdots \hat{\otimes} \wh{W}(l_1\bm{\varpi}_{i_1,a_1}),\wh{\bA}_{\bm{\pi}}\big)
    \cong \wh{\bA}_{\bm{\pi}} \otimes_{\bm{\bA}_{\gl}} \bV(\gl)^{\sharp}
 \end{equation}
 as $(\wh{\bA}_{\bm{\pi}},U_q')$-bimodules.
\end{Lem}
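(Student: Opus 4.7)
The plan is to establish the claimed identification by chaining together natural isomorphisms, with the final step reducing to a self-duality of the extremal weight module $\bV(\gl)$ under the anti-involution $\sharp$. Writing $T = \wh{W}(l_p\bm{\varpi}_{i_p,a_p}) \hat{\otimes} \cdots \hat{\otimes} \wh{W}(l_1\bm{\varpi}_{i_1,a_1})$, a first observation is that both sides are free $\wh{\bA}_{\bm{\pi}}$-modules of the same finite rank: the RHS via freeness of $\bV(\gl)$ over $\bA_\gl$ from Theorem \ref{Thm:properties_of_extremal}(iii), and the LHS via Lemma \ref{Lem:properties_of_deformed_local}(ii) applied to each tensor factor together with flatness of the coproduct map $\gD\colon \wh{\bA}_{\bm{\pi}} \hookrightarrow \wh{\bA}_{l_p\bm{\varpi}_{i_p,a_p}} \hat{\otimes} \cdots \hat{\otimes} \wh{\bA}_{l_1\bm{\varpi}_{i_1,a_1}}$.

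First I would compute $\Hom{\wh{\bA}_{\bm{\pi}}}\big(\wh{W}(\bm{\pi}),\wh{\bA}_{\bm{\pi}}\big)$ using the identification $\wh{W}(\bm{\pi}) = \bV(\gl) \otimes_{\bA_\gl} \wh{\bA}_{\bm{\pi}}$ from the definition, the Hom-tensor adjunction, and the flatness of $\wh{\bA}_{\bm{\pi}}$ over $\bA_\gl$; this turns the problem into establishing the self-duality $\Hom{\bA_\gl}(\bV(\gl), \bA_\gl) \cong \bV(\gl)^\sharp$ as $(\bA_\gl, U_q')$-bimodules. Such a self-duality can be extracted from Theorem \ref{Thm:affine_cellularity_of_U}: the top cell of the affine heredity chain of $U_{\leq\gl}$ takes the form $\bV(\gl) \otimes_{\bA_\gl} \bV(\gl)^\sharp$, and this cell structure manifests a non-degenerate $\sharp$-invariant $\bA_\gl$-valued pairing on $\bV(\gl)$, which is precisely what is needed.

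Next I would bridge $T$ and $\wh{W}(\bm{\pi})$ by constructing an explicit $\wh{\bA}_{\bm{\pi}}$-bilinear, $U_q'$-invariant pairing $T \otimes_{\wh{\bA}_{\bm{\pi}}} \big(\wh{\bA}_{\bm{\pi}} \otimes_{\bA_\gl} \bV(\gl)^\sharp\big) \to \wh{\bA}_{\bm{\pi}}$. The idea is to start from the embedding $\bV(\gl) \hookrightarrow \bV(l_1\varpi_{i_1}) \otimes \cdots \otimes \bV(l_p\varpi_{i_p})$ of Theorem \ref{Thm:properties_of_extremal}(i), pass to its $\sharp$-twist (which naturally reverses the tensor order) to get a comparison between $\bV(\gl)^\sharp$ and a reversed tensor product of the $\bV(l_k\varpi_{i_k})^\sharp$'s, and then combine with the pairings coming from the self-duality of each factor $\bV(l_k\varpi_{i_k})$. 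This yields a natural $(\wh{\bA}_{\bm{\pi}}, U_q')$-bimodule homomorphism from the RHS into the LHS.

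The main obstacle will be verifying that the resulting pairing is non-degenerate. By Lemma \ref{Lem:properties_of_deformed_local}(i), non-degeneracy reduces to a computation on the $\gl$-weight space, where both sides are free of rank one over $\wh{\bA}_{\bm{\pi}}$ and the pairing becomes multiplication by some scalar in $\wh{\bA}_{\bm{\pi}}$. It is here that the denominator hypothesis $d_{L_r,L_s}(1) \neq 0$ for $r<s$ enters essentially: through Proposition \ref{Prop:Fujita_isom} (combined with its $p=1$ specialization and Krull--Schmidt) the forward tensor product $\wh{W}(l_1\bm{\varpi}_{i_1,a_1}) \hat{\otimes} \cdots \hat{\otimes} \wh{W}(l_p\bm{\varpi}_{i_p,a_p})$ becomes isomorphic to $\wh{W}(\bm{\pi})$, and this lets us compare the constructed pairing on the $\gl$-weight space with the tautological one on $\wh{W}(\bm{\pi})$ (generated by $\wh{w}_{\bm{\pi}}$), concluding that the scalar is a unit.
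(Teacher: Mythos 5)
Your proposal takes a genuinely different route from the paper's, and while the overall architecture (construct a bimodule map, use freeness of both sides, reduce to the $\gl$-weight space) is sound, there is a real gap in the key step. The paper's proof is more direct: it views the $\gl$-weight projection $a_\gl$ of $T = \wh{W}(l_p\bm{\varpi}_{i_p,a_p})\hat\otimes\cdots\hat\otimes\wh{W}(l_1\bm{\varpi}_{i_1,a_1})$ as a specific element of the left-hand side of \eqref{eq:isom_of_dual}, invokes Lemma~\ref{Lem:Chari-Fourier-Khandai} (in its $\sharp$-twisted, right-module form) to produce a bimodule map from the RHS to the LHS sending $1\otimes v_\gl\mapsto a_\gl$, checks surjectivity after reducing mod $\wh{\fm}_{\bm{\pi}}$ using the cyclicity result \cite[Prop.~9.4(ii)]{MR1890649} (namely, that $\Hom{\bk}(L_p^{\otimes l_p}\otimes\cdots\otimes L_1^{\otimes l_1},\bk)$ is generated as a right $U_q'$-module by $a_\gl$), and concludes by Nakayama. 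Note that the denominator hypothesis enters to guarantee that the reversed tensor product has simple socle, which is what makes the cyclicity of its dual available.

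The gap in your argument is the claim that the self-duality $\Hom{\bA_\gl}(\bV(\gl),\bA_\gl)\cong\bV(\gl)^\sharp$ ``can be extracted from Theorem~\ref{Thm:affine_cellularity_of_U}.'' That theorem gives a $(U_q',U_q')$-bimodule isomorphism $U_{l-1}\cong\bV(\gl)\otimes_{\bA_\gl}\bV(\gl)^\sharp$, i.e., a description of a two-sided ideal as a tensor product. This by itself does \emph{not} assert that the natural pairing $\bV(\gl)^\sharp\otimes\bV(\gl)\to\bA_\gl$ is non-degenerate, nor that $\bV(\gl)^\sharp$ is generated as a right $U_q'$-module by $v_\gl$ (for cellular or affine-cellular algebras, the bilinear form on a cell module may well be degenerate). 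That generation/cyclicity statement is exactly the content of Kashiwara's \cite[Prop.~9.4(ii)]{MR1890649}, which your route never invokes; without it neither the self-duality nor the non-degeneracy of your constructed pairing can be established. Your subsequent ``bridging'' step via the $\sharp$-twisted tensor embedding and Proposition~\ref{Prop:Fujita_isom} is plausible in outline but does not supply this missing input, since Proposition~\ref{Prop:Fujita_isom} concerns the forward-order tensor product and the point at issue is the generation of the reversed one. If you fill the gap by citing the cyclicity result directly (as the paper does), the rest of your structure — freeness of both sides, reduction to the $\gl$-weight space, Nakayama — goes through and aligns with the paper's Nakayama endgame.
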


\begin{proof}
 Since $\big(\wh{W}(l_p\bm{\varpi}_{i_p,a_p}) \hat{\otimes} \cdots \hat{\otimes} \wh{W}(l_1\bm{\varpi}_{i_1,a_1})\big)_\gl \cong \wh{\bA}_{\bm{\pi}}$,
 the operator $a_\gl$ can be considered as an element of the left-hand side of (\ref{eq:isom_of_dual}).
 Then by applying Lemma \ref{Lem:Chari-Fourier-Khandai}, we see that there is an $(\wh{\bA}_{\bm{\pi}},U_q')$-bimodule homomorphism from the right-hand side of 
 (\ref{eq:isom_of_dual}) to the left-hand side mapping $1 \otimes v_\gl$ to $a_\gl$. 
 By taking the quotient by $\wh{\fm}_{\bm{\pi}}$, we obtain a right $U_q'$-module homomorphism
 \[ \bk_{\bm{\pi}} \otimes_{\bA_{\gl}} \bV(\gl)^{\sharp} \to \Hom{\bk}\big(L(\bm{\varpi}_{i_p,a_p})^{\otimes l_p}\otimes \cdots \otimes L(\bm{\varpi}_{i_1,a_1})^{\otimes l_1},\bk\big),
 \]
 which is surjective since the right-hand side is generated by $a_\gl$ by \cite[Proposition 9.4 (ii)]{MR1890649}.
 Since the both sides of (\ref{eq:isom_of_dual}) are free $\wh{\bA}_{\bm{\pi}}$-modules of same rank, the assertion follows from Nakayama's lemma. 
\end{proof}

\section{Quiver Hecke algebras}\label{section:QHA}

In this section we shall review symmetric quiver Hecke algebras of type $ADE$.

\subsection{Symmetric quiver Hecke algebras}\label{Subsection:SQHA}

Throughout the rest of this paper, let $\sg$ denote a finite-dimensional simple Lie algebra of type $ADE$. 
Let $(A^{\sg},P_{\sg},\Pi_{\sg},P_\sg^{\vee},\Pi_\sg^\vee)$ be a Cartan datum for $\sg$
with index set $J=\{1,\ldots,N\}$, $A^{\sg}=(a_{\imath \jmath}^{\sg})_{\imath,\jmath \in J}$, $\Pi_{\sg}=\{\ga_1^{\sg},\ldots,\ga_N^{\sg}\}$, and $\Pi^\vee_\sg=\{h_1^\sg,\ldots,h_N^{\sg}\}$.
In the following, we mainly use the symbols $\imath,\jmath$ for elements of $J$, in order to save the symbols $i,j$ for elements of $I$.
Denote by $\gL_\imath^\sg$ ($\imath \in J$) the fundamental weights, and let $(\ , \ )$ denote the bilinear form on $P_\sg$ defined by $(\gL_\imath^\sg,\ga^\sg_{\jmath}) 
= \gd_{\imath\jmath}$ for $\imath,\jmath \in J$.
Let  $Q_\sg=\bigoplus_{\imath \in J} \Z\ga_{\imath}^\sg$ be the root lattice, and $Q^+_\sg=\sum_{\imath \in J} \Z_{\geq 0}\ga_{\imath}^\sg$.
We take a family of polynomials $(\mathsf{Q}_{\imath \jmath})_{\imath,\jmath\in J}$ in $\bk[u,v]$ which are of the form
\[ \mathsf{Q}_{\imath \jmath}(u,v) = \begin{cases} c_{\imath \jmath}(u-v)^{-a^\sg_{\imath \jmath}} & \text{if } \imath \neq \jmath,\\
                               0 & \text{if } \imath = \jmath,
                 \end{cases}
\]
where $c_{\imath \jmath} \in \bk^\times$ and we assume that $\mathsf{Q}_{\imath \jmath}(u,v) = \mathsf{Q}_{\jmath \imath}(v,u)$ for all $\imath,\jmath \in J$.
For $\gb =\sum_{\imath \in J} m_\imath \ga_\imath \in Q_\sg^+$, we set $|\gb| = \sum_{\imath \in J} m_\imath \in \Z_{\geq 0}$.
For $\gb\in Q_\sg^+$ such that $|\gb| = m$, we set
\[ J^\gb = \{\bm{\imath} =(\imath_1,\ldots,\imath_m) \in J^m \mid \ga_{\imath_1}^\sg+\cdots + \ga_{\imath_m}^\sg=\gb\}.
\]
We denote by $\gs_i$ the transposition of $i$ and $i+1$ in the symmetric group $\mathfrak{S}_m$.

\begin{Def}\normalfont
 For $\gb \in Q_\sg^+$ with $|\gb|=m$, the \textit{symmetric quiver Hecke algebra} $R(\gb)$ at $\gb$ associated with $\sg$ and $(Q_{\imath\jmath})_{\imath,\jmath\in J}$ 
 is the $\bk$-algebra with generators 
 \[ \{e(\bm{\imath})\mid \bm{\imath} \in J^\gb\}, \ \ \ \{x_k \mid 1\leq k \leq m\}, \ \ \ \{\tau_l \mid 1\leq l <m\}
 \]
 with the following relations:
 \begin{gather*}
  e(\bmi)e({\bm{\imath}}')=\gd_{\bmi\bmi'}e(\bmi), \ \ \sum_{\bmi \in J^\gb}e(\bmi)=1, \ \ x_kx_l=x_lx_k, \ \ x_ke(\bmi)=e(\bmi)x_k,\\
  \tau_le(\bmi)=e(\gs_l(\bmi))\tau_l, \ \ \tau_k\tau_l=\tau_l\tau_k \text{ if } |k-l| > 1,\ \ \tau_l^2e(\bmi)=Q_{\imath_l\imath_{l+1}}(x_l,x_{l+1})e(\bmi),\\
  (\tau_kx_l-x_{\gs_k(l)}\tau_k)e(\bmi)=\begin{cases} -e(\bmi) & \text{if $l=k$, $\imath_k=\imath_{k+1}$},\\
                                                   e(\bmi) & \text{if $l=k+1$, $\imath_k=\imath_{k+1}$},\\
                                                   0      & \text{otherwise},
                                     \end{cases}\\
  (\tau_{k+1}\tau_k\tau_{k+1}-\tau_k\tau_{k+1}\tau_k)e(\bmi) = \gd_{\imath_k,\imath_{k+2}}\frac{Q_{\imath_k\imath_{k+1}}(x_k,x_{k+1})-Q_{\imath_k\imath_{k+1}}(x_{k+2},x_{k+1})
   }{x_k-x_{k+2}}e(\bmi).
 \end{gather*}
\end{Def}

Let $\bP_\gb$ be a commutative subalgebra of $R(\gb)$ defined by 
\[ \bP_\gb = \bigoplus_{\bmi \in J^\gb} \bk[x_1,\ldots,x_m]e(\bmi).
\]
For each $w \in \mathfrak{S}_m$, we fix a reduced expression $w=\gs_{i_1}\cdots \gs_{i_{\ell}}$, and set $\tau_w = \tau_{i_1}\cdots \tau_{i_{\ell}}$.
Note that this $\tau_w$ depends on the choice of the reduced expression.
It is known (see \cite{khovanov2009diagrammatic}) that
\begin{equation}\label{eq:freeness}
 R(\gb) = \bigoplus_{w\in \mathfrak{S}_m} \bP_\gb\tau_w= \bigoplus_{w\in \mathfrak{S}_m} \tau_w \bP_\gb.
\end{equation}
The symmetric group $\mathfrak{S}_m$ acts on $\bP_\gb$ by 
\[ w(f(x_1,\ldots,x_m)e(\imath_1,\ldots,\imath_m)) = f(x_{w(1)},\ldots,x_{w(m)})e(\imath_{w^{-1}(1)},\ldots,\imath_{w^{-1}(m)}),
\]
and the center of $R(\gb)$ coincides with the invariant part $\left(\bP_\gb\right)^{\mathfrak{S}_m}$ (\cite{khovanov2009diagrammatic}).
Write $\gb = \sum_{\imath \in J} m_\imath\ga_\imath^\sg$, and define a commutative $\bk$-algebra $\bS_{\gb}$ by
\[ \bS_{\gb} = \bigotimes_{\imath=1}^N \bk[w_{\imath,1},\ldots,w_{\imath,m_\imath}]^{\mathfrak{S}_{m_\imath}}.
\]
Then the following $\bk$-algebra isomorphism holds:
\begin{align*}
  \bS_\gb \stackrel{\sim}{\to} &\left(\bP_\gb\right)^{\mathfrak{S}_m} \colon\\ &f_1\otimes \cdots\otimes f_N \mapsto \prod_{\imath=1}^N \big(m_\imath!\big)^{-1}\cdot \sum_{w \in \mathfrak{S}_m} 
  w\Big(\prod_{\imath=1}^N f(x_{\imath,1},\ldots,x_{\imath,m_\imath})e(1^{m_1}\cdots N^{m_N})\Big),
\end{align*}
where we put $x_{\imath,k} = x_{m_1+\cdots +m_{\imath-1}+k}$.
In the sequel we identify the center $\left(\bP_\gb\right)^{\mathfrak{S}_m}$ with $\bS_\gb$ via this isomorphism.
By (\ref{eq:freeness}), $R(\gb)$ is a free module over $\bS_\gb$ of rank $(m!)^2$.

The algebra $R(\gb)$ is equipped with a $\Z$-grading by assigning 
\[ \deg(e(\bmi))=0,\ \ \  \deg {x_k}e(\bmi) = (\ga^\sg_{\imath_k},\ga^\sg_{\imath_k}),\ \ \  \deg \tau_l e(\bmi) = -(\ga^\sg_{\imath_l},\ga^\sg_{\imath_{l+1}}).
\]
Let $\wh{R}(\gb)$ (resp.~$\wh{\bP}_\gb$) denote the completion of $R(\gb)$ (resp.~$\bP_\gb$) with respect to the grading.
We have $\wh{\bP}_\gb = \bigoplus_{\bmi \in J^\gb} \bk\llbracket x_1,\ldots,x_m\rrbracket e(\bmi)$, and it follows from (\ref{eq:freeness}) that
\[ \wh{R}(\gb) = \bigoplus_{w \in \mathfrak{S}_m} \wh{\bP}_\gb \tau_w = \bigoplus_{w \in \mathfrak{S}_m} \tau_w \wh{\bP}_\gb.
\]
Set 
\[ \wh{\bS}_{\gb} = \underset{1\leq \imath \leq N}{\hat{\bigotimes}} \bk\llbracket w_{\imath,1},\ldots,w_{\imath,m_{\imath}}\rrbracket^{\mathfrak{S}_{m_\imath}}.
\]
The center of $\wh{R}(\gb)$ is the $\mathfrak{S}_{m}$-invariant part $(\wh{\bP}_\gb)^{\mathfrak{S}_m}$ of $\wh{\bP}_\gb$, which is identified with $\wh{\bS}_{\gb}$.
Denote by $\modfd{\wh{R}(\gb)}$ the full subcategory of finite-dimensional modules in $\modfg{\wh{R}(\gb)}$.

\begin{Rem}\normalfont\label{Rem:completion}
 (1) The completion $\wh{R}(\gb)$ is naturally isomorphic to the central completion $R(\gb) \otimes_{\bS_\gb} \wh{\bS}_\gb$
 along the trivial central character.
 Hence $\modfd{\wh{R}(\gb)}$ is identified with the category of finite-dimensional $R(\gb)$-modules on which $x_k$'s act nilpotently.
 In particular, there is a forgetful functor from $R(\gb)\mathrm{\mathchar`-gmod}_{\mathrm{fd}}$, the category of finite-dimensional graded $R(\gb)$-modules, to $\modfd{\wh{R}(\gb)}$.\\
 (2) Let $\wh{\bS}_{\gb}^+$ denote the unique maximal ideal of $\wh{\bS}_{\gb}$. 
  Then for any $M \in \modfg{\wh{R}(\gb)}$, we have $M \cong \underset{k}{\varprojlim}\, M/\big(\wh{\bS}_\gb^+\big)^k$,
  and hence $M$ can be written as a projective limit of finite-dimensional modules.
\end{Rem}

For $\gb, \ggg\in Q_\sg^+$ with $|\gb|=m$, $|\ggg|=m'$, set
\[ e(\gb,\ggg) = \sum_{\begin{smallmatrix} \bmi \in J^{\gb+\ggg};\\ (\imath_1,\ldots,\imath_m)\in J^\gb\end{smallmatrix}} e(\bmi) \in R(\gb+\ggg).
\]
There is a $\bk$-algebra homomorphism $R(\gb)\otimes R(\ggg) \to e(\gb,\ggg)R(\gb+\ggg)e(\gb,\ggg)$ such that
\begin{gather*}
 e(\bmi) \otimes e(\bmi') \mapsto e(\bmi*\bmi'), \ \ \ \tau_k\otimes 1 \mapsto \tau_ke(\gb,\ggg),\ \ \ 1\otimes \tau_k \mapsto \tau_{m+k}e(\gb,\ggg),\\
 x_k \otimes 1 \mapsto x_ke(\gb,\ggg), \ \ \ 1\otimes x_k \mapsto x_{m+k}e(\gb,\ggg),
\end{gather*}
where $\bmi*\bmi'$ is the concatenation.
Similarly, a $\bk$-algebra homomorphism $\wh{R}(\gb) \otimes \wh{R}(\ggg) \to e(\gb,\ggg)\wh{R}(\gb+\ggg)e(\gb,\ggg)$ is defined.
For $M \in \modfg{\wh{R}(\gb)}$ and $N \in \modfg{\wh{R}(\ggg)}$, define the convolution product $M\circ N$ by
\[ M\circ N = \wh{R}(\gb+\ggg) e(\gb,\ggg) \otimes_{\wh{R}(\gb)\otimes \wh{R}(\ggg)}(M\otimes N),
\]
which belongs to $\modfg{\wh{R}(\gb+\ggg)}$.
For any $L \in \modfg{\wh{R}(\gb+\ggg)}$ there is a functorial isomorphism 
\begin{equation}\label{eq:induction}
 \Hom{\wh{R}(\gb+\ggg)} (M \circ N, L) \cong \Hom{\wh{R}(\gb)\otimes \wh{R}(\ggg)}( M  \otimes N, e(\gb,\ggg)L).
\end{equation}
We also have the following.
\begin{Prop}\label{Prop:coinduction}
 For $\gb,\ggg \in Q_\sg^+$ and $M \in \modfg{\wh{R}(\gb)}$, $N \in \modfg{\wh{R}(\ggg)}$, and $L \in \modfg{\wh{R}(\gb+\ggg)}$, there is a functorial isomorphism
 \begin{equation}\label{eq:coinduction}
  \Hom{\wh{R}(\gb+\ggg)} (L, N\circ M) \cong \Hom{\wh{R}(\gb)\hat{\otimes}\wh{R}(\ggg)}(e(\gb,\ggg)L, M \hat{\otimes} N).
 \end{equation}
\end{Prop}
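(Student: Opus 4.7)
My plan is to derive \eqref{eq:coinduction} by combining standard tensor--Hom adjunction with the Mackey-type ``induction equals coinduction'' identification for quiver Hecke algebras.

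First, by tensor--Hom adjunction applied to the $(\wh{R}(\gb)\hat\otimes\wh{R}(\ggg),\wh{R}(\gb+\ggg))$-bimodule $e(\gb,\ggg)\wh{R}(\gb+\ggg)$, the right-hand side of \eqref{eq:coinduction} is naturally isomorphic to
\[
\Hom{\wh{R}(\gb+\ggg)}\!\bigl(L,\, \Hom{\wh{R}(\gb)\hat\otimes\wh{R}(\ggg)}(e(\gb,\ggg)\wh{R}(\gb+\ggg),\, M\hat\otimes N)\bigr).
\]
Hence it suffices to construct a natural isomorphism of left $\wh{R}(\gb+\ggg)$-modules
\[
\Theta:\; N\circ M \;\xrightarrow{\ \sim\ }\; \Hom{\wh{R}(\gb)\hat\otimes\wh{R}(\ggg)}\!\bigl(e(\gb,\ggg)\wh{R}(\gb+\ggg),\, M\hat\otimes N\bigr),
\]
which reduces the proposition to a bimodule statement independent of $L$.

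Second, I would build $\Theta$ using the longest $(\gb,\ggg)$-shuffle $w_{0}\in\mathfrak{S}_{m+m'}$, characterized among minimal shuffles by the relation $\tau_{w_{0}}e(\gb,\ggg)=e(\ggg,\gb)\tau_{w_{0}}$. The completed analog of \eqref{eq:freeness} decomposes $\wh{R}(\gb+\ggg)e(\ggg,\gb)$ as a free right $\wh{R}(\ggg)\hat\otimes\wh{R}(\gb)$-module with basis $\{\tau_{w}e(\ggg,\gb)\}$, and $e(\gb,\ggg)\wh{R}(\gb+\ggg)$ as a free left $\wh{R}(\gb)\hat\otimes\wh{R}(\ggg)$-module with basis $\{e(\gb,\ggg)\tau_{w}\}$, both indexed by the set of minimal shuffles. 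Given $\xi = r\,e(\ggg,\gb)\otimes(n\hat\otimes m)\in N\circ M$, I define $\Theta(\xi)$ to send $e(\gb,\ggg)s$ to $a\cdot(m\hat\otimes n)\in M\hat\otimes N$, where $a\in\wh{R}(\gb)\hat\otimes\wh{R}(\ggg)$ is obtained as the $\tau_{w_{0}}$-coefficient in the shuffle-basis expansion of $(e(\gb,\ggg)s)\cdot(re(\ggg,\gb))\in e(\gb,\ggg)\wh{R}(\gb+\ggg)e(\ggg,\gb)$, followed by the canonical tensor-swap induced by $w_{0}$ between $\wh{R}(\ggg)\hat\otimes\wh{R}(\gb)$ and $\wh{R}(\gb)\hat\otimes\wh{R}(\ggg)$. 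Bijectivity follows from the non-degeneracy of this pairing, since $w_{0}$ is the unique shuffle that produces the fully transposed block configuration.

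Finally, the completed version reduces to its non-completed counterpart by Remark \ref{Rem:completion}~(2): both sides of $\Theta$ are projective limits of their finite-dimensional quotients modulo powers of $\wh{\bS}_{\gb+\ggg}^{+}$, and the shuffle basis is compatible with the central completion, so $\Theta$ may be defined level-by-level and assembled in the limit; at each finite level, the identification is a standard consequence of Frobenius reciprocity for ungraded KLR algebras due to Khovanov--Lauda \cite{khovanov2009diagrammatic}. The main obstacle will be to correctly track the swap of tensor factors ($N\hat\otimes M\leadsto M\hat\otimes N$) induced by $w_{0}$, and to verify that the non-degenerate pairing between the two shuffle bases survives passage to the completion; both points rest on the fact that $w_{0}$ realizes the block transposition, so its contribution to the shuffle-basis expansion produces a unit in the relevant completed Hecke algebra up to lower-order corrections that can be inverted by the triangularity of the shuffle filtration.
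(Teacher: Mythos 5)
Your opening reduction is correct and matches what the paper's one-line proof does implicitly: the tensor--Hom adjunction applied to the $(\wh{R}(\gb)\hat{\otimes}\wh{R}(\ggg),\wh{R}(\gb+\ggg))$-bimodule $e(\gb,\ggg)\wh{R}(\gb+\ggg)$ reduces \eqref{eq:coinduction} to the bimodule isomorphism
\[
N\circ M \;\cong\; \Hom{\wh{R}(\gb)\hat{\otimes}\wh{R}(\ggg)}\bigl(e(\gb,\ggg)\wh{R}(\gb+\ggg),\, M\hat{\otimes} N\bigr),
\]
and the passage to the uncompleted case via Remark~\ref{Rem:completion}~(2) is also the paper's route.

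The gap is in how you justify this displayed isomorphism. It is precisely (the completed form of) \cite[Theorem~2.2]{lauda2011crystals}, which the paper cites; your attribution to Khovanov--Lauda is misplaced, since \cite{khovanov2009diagrammatic} establishes the Mackey/shuffle filtration and the adjunction $\mathrm{Ind}\dashv\mathrm{Res}$ but not the coinduction-equals-induction statement. Your explicit sketch of $\Theta$ via the $\tau_{w_0}$-coefficient is the right idea (it is the Frobenius pairing underlying the Lauda--Vazirani proof), but all the substantive steps are asserted rather than verified: you do not check that extracting the $\tau_{w_0}$-coefficient descends to the tensor product over $\wh{R}(\ggg)\hat{\otimes}\wh{R}(\gb)$, that the resulting $\Theta$ is left $\wh{R}(\gb+\ggg)$-linear, or that the pairing is nondegenerate --- and the latter is delicate because when $\gb$ and $\ggg$ share simple roots there are several minimal shuffles $w$ with $e(\gb,\ggg)\tau_w e(\ggg,\gb)\neq 0$, so the uniqueness of $w_0$ among \emph{length-maximal} shuffles does not by itself control the expansion nor the $\wh{\bP}_{\gb+\ggg}$-coefficients it produces. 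These verifications are exactly the content of the cited theorem; as written there is a genuine gap, fixable either by citing Lauda--Vazirani directly (as the paper does) or by actually carrying out the three checks.
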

\begin{proof}
 The assertion follows from \cite[Theorem 2.2]{lauda2011crystals} and Remark \ref{Rem:completion} (2).
\end{proof}
These definitions and isomorphisms (\ref{eq:induction}) and (\ref{eq:coinduction}) are extended in an obvious way to situations where there are more than two factors.



Let $\psi$ be the $\bk$-algebra anti-involution on $\wh{R}(\gb)$ which fixes all the generators $e(\bmi)$, $x_k$, and $\tau_l$.
For any $M \in \modfd{\wh{R}(\gb)}$, a left $\wh{R}(\gb)$-module structure is given on the dual space $M^* = \Hom{\bk}(M,\bk)$ via $\psi$ as in (\ref{eq:module_on_dual}).
If $M$ is simple, we have $M^* \cong M$ as $\wh{R}(\gb)$-modules, see \cite{khovanov2009diagrammatic}.

\subsection{Affine highest weight structures}\label{Subsection:Affine_highest_weight_category}


Let $R_\sg$ be the root system of $\sg$, $R_\sg^+ \subseteq R_\sg$ the set of positive roots,
$W_\sg \subseteq \mathrm{Aut}_\Z(P_\sg)$ the Weyl group with simple reflections $s_\imath$ ($\imath \in J$), and $w_0 \in W_\sg$ the longest element.
Denote by $\ell(w)$ the length of $w \in W_\sg$, and set $L = \ell(w_0)$.
We say a sequence $\bmi=(\imath_1,\ldots,\imath_l)$ of elements of $J$ is a \textit{reduced word} of $w_0$ if $l=L$ and $w_0=s_{\imath_1}\cdots s_{\imath_l}$.
Two reduced words $\bmi$ and $\bmj$ of $w_0$ are said to be \textit{commutation equivalent} if $\bmj$ is obtained from $\bmi$ by applying
a sequence of operations which transform some adjacent component $(\imath,\jmath)$ such that $a_{\imath\jmath}^\sg=0$ into $(\jmath,\imath)$.
This is an equivalence relation.

For $\gb \in Q_\sg^+$, let $KP(\gb) \subseteq \Z_{\geq 0}^{R_\sg^+}$ be the set of maps $\mathbf{m}$ from $R^+_\sg$ to nonnegative integers
such that $\sum_{\ga \in R_\sg^+} \bfm(\ga)\ga =\gb$.
We call $\bfm$ a \textit{Kostant partition} of $\gb$.
Throughout the rest of this section, we fix a reduced word $\bmi=(\imath_1,\ldots,\imath_L)$ of $w_0$,
and set
\begin{equation}\label{eq:def_of_bk}
 \gb_k = s_{\imath_1}\cdots s_{\imath_{k-1}} (\ga_{\imath_k}^\sg) \ \ \ \text{for } 1\leq k \leq L.
\end{equation}
Note that $R_\sg^+ = \{\gb_1,\ldots,\gb_L\}$, and this numbering is convex, that is,
\[ \gb_k+\gb_l = \gb_m \text{ implies } k < m < l \text{ or } l < m < k.
\]
We identify $\bfm \in KP(\gb)$ with an $L$-tuple $(\bfm_1,\ldots,\bfm_L)$ of nonnegative integers via $\bfm_k=\bfm(\gb_k)$.
Once we fix the reduced word $\bmi$, by \cite{kato2014poincare,mcnamara2015finite}, simple modules in $\modfg{\wh{R}(\gb)}$ are parametrized by Kostant partitions of $\gb$.
We denote the simple module corresponding to $\mathbf{m}$ by $S_{\bmi}(\mathbf{m})$,
or $S(\bfm)$ when no confusion is likely.
For $\ga \in R^+_\sg$, we denote by $\bfm_\ga \in KP(\ga)$ the Kostant partition such that $\mathbf{m}_\ga(\ga')=\gd_{\ga\ga'}$ for $\ga' \in R_\sg^+$,
and write $S(\ga)$ for $S(\mathbf{m}_\ga)$ for ease of notation.
For $\bfm \in KP(\gb)$, define a module $\bar{\gD}(\bfm)=\bar{\gD}_{\bmi}(\bfm) \in \modfd{\wh{R}(\gb)}$ by
\[ \bar{\gD}(\bfm) = S(\gb_1)^{\circ \bfm_1}\circ \cdots \circ S(\gb_L)^{\circ \bfm_L}.
\]
Then $S(\bfm)$ is isomorphic to the head of $\bar{\gD}(\bfm)$.
In addition, we define $\bar{\nabla}(\bfm)=\bar{\nabla}_{\!\bmi}(\bfm) \in \modfd{\wh{R}(\gb)}$ by $\bar{\nabla}(\bfm) = S(\gb_L)^{\circ \bfm_L} \circ \cdots \circ S(\gb_1)^{\circ \bfm_1}$.

Let $M \in \modfd{\wh{R}(\gb)}$, and $w$ be an indeterminate.
Set $\wh{M}_{\mathrm{aff}}= M \otimes \bk\llbracket w \rrbracket$, and define an action of $R(\gb)$ on $\wh{M}_{\mathrm{aff}}$ by
\[ e(\bmj)(v\otimes a) = \big(e(\bmj)v\big)\otimes a, \ \ x_k(v\otimes a) = v\otimes (wa)+(x_kv)\otimes a, \ \ \tau_k(v\otimes a) = (\tau_k v)\otimes a
\]
for $\bmj \in J^\gb$, $a\in \bk\llbracket w \rrbracket$ and $v \in M$. 
This action naturally extends to an action of $\wh{R}(\gb)$, and then $\wh{M}_{\mathrm{aff}}$ belongs to $\modfg{\wh{R}(\gb)}$.
For $\ga \in R_\sg^+$, let $\gD(\ga) = \gD_{\bmi}(\ga)$ denote the affinization $\wh{S(\ga)}_{\mathrm{aff}}$.

\begin{Rem}\normalfont
 The module $\gD(\ga)$ coincides with the completion of the module in \cite[Theorem 3.3]{brundan2014homological}. 
 This is easily shown using the uniqueness of extensions in [\textit{loc.~cit.}, Lemma 3.2].
\end{Rem}

For $\ga \in R^+_\sg$ and $m \in \Z_{>0}$, $\gD(\ga)^{\circ m}$ is isomorphic to the direct sum of $m!$-copies of an indecomposable module $\gD(m\ga)$ 
\cite[Theorem 3.11]{brundan2014homological}.
For $\bfm \in KP(\gb)$, define $\gD(\bfm)=\gD_{\bmi}(\bfm)$ by $\gD(\bfm)=\gD(\bfm_1\gb_1) \circ \cdots \circ \gD(\bfm_L\gb_L)$, which is also indecomposable.
Note that 
\[ \gD(\gb_1)^{\circ \bfm_1} \circ \cdots \circ \gD(\gb_L)^{\circ \bfm_L} \cong \gD(\bfm)^{\oplus \bfm_1!\cdots\bfm_L!}.
\]
For $\bfm \in \Z_{\geq 0}^{R^+_{\sg}}$, define a commutative $\bk$-algebra $\wh{\bS}_{\bfm}$ by
\[ \wh{\bS}_{\bfm} = \underset{1\leq k \leq L}{\hat{\bigotimes}} \bk\llbracket w_{k,1},\ldots,w_{k,\bfm_k}\rrbracket^{\mathfrak{S}_{\bfm_k}}.
\]
Note that, if $\gb = \sum_{\imath=1}^N m_{\imath} \ga_{\imath}^{\sg}$, then $\wh{\bS}_{\gb} = \wh{\bS}_{\bfm}$ with $\bfm = \sum_{\imath} m_\imath \bfm_{\ga_{\imath}^\sg}$.
For $\ga \in R^+_\sg$, let $\iota=\iota_{\bfm_\ga}\colon \wh{\bS}_\ga \to \wh{\bS}_{\bfm_\ga}=\bk\llbracket w\rrbracket$ denote the $\bk$-algebra homomorphism $f \mapsto f(w,w,\ldots,w)$.
Then for $\bfm \in KP(\gb)$, set $\iota=\iota_{\bfm} \colon \wh{\bS}_\gb \to \wh{\bS}_{\bfm}$ to be the $\bk$-algebra homomorphism obtained by restricting 
$\hat{\bigotimes}_{1\leq k \leq L}\iota_{\bfm_{\gb_k}}^{\hat{\otimes} \bfm_k}$ to $\wh{\bS}_\gb$.

\begin{Lem}\label{Lem:action_of_centers}
 For any $\bfm \in KP(\gb)$, the endomorphism $\bk$-algebra $\mathrm{End}_{\wh{R}(\gb)}\big(\gD(\bfm)\big)$ is isomorphic to $\wh{\bS}_{\bfm}$,
 and the action of the center $\wh{\bS}_\gb$ is given via $\iota$.
\end{Lem}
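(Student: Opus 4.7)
The plan is to identify $\End_{\wh{R}(\gb)}(\gD(\bfm))$ with $\wh{\bS}_\bfm$ by first computing the endomorphism ring of each convolution factor $\gD(\bfm_k\gb_k)$, then showing that endomorphisms of $\gD(\bfm)$ decompose as the completed tensor product over these factors, and finally tracking the action of the central generators $x_j$ through the explicit construction of the affinization.

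First I would verify that for each $\ga \in R_\sg^+$ and each $m\geq 1$,
\[
\End_{\wh{R}(m\ga)}\bigl(\gD(m\ga)\bigr) \cong \bk\llbracket w_1,\ldots,w_m\rrbracket^{\mathfrak{S}_m}.
\]
When $m=1$ this is immediate from $\gD(\ga) = S(\ga) \otimes \bk\llbracket w\rrbracket$ together with the absolute simplicity $\End_{\wh{R}(\ga)}(S(\ga))=\bk$. For $m>1$ this is essentially \cite[Theorem 3.11]{brundan2014homological}, proved there in the graded setting; the transfer to the complete setting is routine via Remark \ref{Rem:completion}.

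Next I would show that the canonical algebra homomorphism
\[
\underset{1\leq k \leq L}{\hat{\bigotimes}} \End_{\wh{R}(\bfm_k\gb_k)}\bigl(\gD(\bfm_k\gb_k)\bigr) \longrightarrow \End_{\wh{R}(\gb)}\bigl(\gD(\bfm)\bigr)
\]
is an isomorphism. Applying the induction adjunction (\ref{eq:induction}) iteratively in the factors identifies the right-hand side with
\[
\Hom{\underset{k}{\hat{\bigotimes}} \wh{R}(\bfm_k\gb_k)}\Bigl(\underset{k}{\hat{\bigotimes}}\,\gD(\bfm_k\gb_k),\ e(\bfm_1\gb_1,\ldots,\bfm_L\gb_L)\,\gD(\bfm)\Bigr),
\]
and the Mackey filtration on $e(\bfm_1\gb_1,\ldots,\bfm_L\gb_L)\,\gD(\bfm)$ writes this Hom-space as a sum indexed by shuffles of the sequence $(\bfm_1\gb_1,\ldots,\bfm_L\gb_L)$. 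Each nontrivial shuffle produces an $e$-truncation of some $\gD(\bfm_i\gb_i)$ whose residue sequence is incompatible with the cuspidal support of $S(\gb_i)$; combined with convexity of the numbering $(\gb_1,\ldots,\gb_L)$, all such terms vanish. Hence only the trivial shuffle survives, yielding the desired isomorphism and thereby $\End_{\wh{R}(\gb)}(\gD(\bfm)) \cong \wh{\bS}_\bfm$.

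For the second assertion, I would compute the action of the central generators $x_j$ directly. On each cuspidal affinization $\gD(\gb_k) = \wh{S(\gb_k)}_{\mathrm{aff}}$, the variable $x_j$ acts as $w + (\text{nilpotent})$ where $w$ is the affinization parameter of that factor. Consequently, under the factorization of $\gD(\bfm)$ through its convolution pieces, a variable $x_{\imath,t}\in \wh{\bS}_\gb$ maps to the $w$-parameter of whichever factor its position belongs to, modulo nilpotents. Symmetrizing and comparing with the definition of $\iota=\iota_\bfm$ as the restriction of $\hat{\bigotimes}_k \iota_{\bfm_{\gb_k}}^{\hat{\otimes}\bfm_k}$ to $\wh{\bS}_\gb$ confirms that the central action factors as claimed. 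The main obstacle is the Mackey/cuspidality step above: one must verify carefully, in the completed setting, that convexity of the numbering combined with cuspidality of each $S(\ga)$ kills every nontrivial shuffle contribution. Once this vanishing is secured by adapting \cite[\S\S\,3.3--3.4]{brundan2014homological} to $\modfg{\wh{R}(\gb)}$, the remainder is routine bookkeeping.
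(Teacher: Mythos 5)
Your proposal is a correct and detailed unpacking of what the paper handles by a one-line citation (``The assertion follows from the construction of $\gD(\bfm)$ in \cite{brundan2014homological}; see also \cite[Theorem~6.11]{mcnamara2017representation}''). So in substance you are following essentially the same route as the cited sources: reduce to the indecomposable pieces $\gD(m\ga)$, identify their endomorphism rings, and then show that convolution of the factors induces a tensor-product decomposition of $\End(\gD(\bfm))$ via the Mackey filtration together with cuspidality and convexity of the chosen PBW order. Your concluding remark about the central action of $x_k = w + (\text{nilpotent})$ on each affinized factor, and how this produces $\iota_\bfm$ after symmetrization, is also the right bookkeeping.

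One caveat worth making explicit, which you already half-acknowledge: the Mackey filtration is a \emph{filtration} of $e(\bfm_1\gb_1,\ldots,\bfm_L\gb_L)\gD(\bfm)$, not a direct sum, so the vanishing of $\Hom$ from $\hat{\bigotimes}_k\gD(\bfm_k\gb_k)$ into the non-trivial layers must be argued layer-by-layer before you can conclude that the endomorphism ring is concentrated on the trivial shuffle. Concretely, for a nontrivial shuffle with matrix $(a_{kl})$ one needs that $\mathrm{Res}_{a_{k1},\ldots,a_{kL}}\gD(\bfm_k\gb_k) = 0$ unless $a_{kl} = \delta_{kl}\bfm_k\gb_k$; this follows from the cuspidal support of $S(\gb_k)$ and the convexity of $(\gb_1,\ldots,\gb_L)$ exactly as in \cite[\S 3]{brundan2014homological}, and the transfer to $\modfg{\wh{R}(\gb)}$ is indeed routine via the remark on central completion. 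Writing that reduction cleanly is the part that \cite{mcnamara2017representation} records as Theorem 6.11, and it is the only genuinely technical step in an otherwise straightforward verification.
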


\begin{proof}
 The assertion follows from the construction of $\gD(\bfm)$ in \cite{brundan2014homological} (see also the proof of \cite[Theorem 6.11]{mcnamara2017representation}).
\end{proof}

Following \cite[Section 6]{mcnamara2017representation}, we define a partial order on $KP(\gb)$.
For each $1\leq k \leq L$, set 
\begin{equation}\label{eq:def_of_ggg}
 \nu_k = -s_{\imath_1}\cdots s_{\imath_k}(\gL^\sg_{\imath_k}) \in P_\sg.
\end{equation}
Although $\nu_k$ depends on the reduced word $\bmi$, we suppress this dependence.
For $\bfm,\bfn\in KP(\gb)$, we write $\bfm \preceq_{\bmi} \bfn$ (or $\bfm \preceq \bfn$) if and only if
\[ \sum_{t=1}^k \bfm_t( \nu_k, \gb_t) \leq \sum_{t=1}^k \bfn_t ( \nu_k, \gb_t) \ \ \text{ for all $1\leq k \leq L$}.
\]
The following lemma is easily checked from the definition.

\begin{Lem}[\cite{mcnamara2017representation}]\label{Lem:McNamara_order}
 {\normalfont(i)} For any $1\leq k \leq L$, we have 
  \[ (\nu_k, \gb_l) \geq 0 \ \text{ for } 1\leq l <k, \ \ \ (\nu_k,\gb_k) = 1, \ \text{ and } \ 
     (\nu_k,\gb_l) \leq 0 \ \text{ for } k< l \leq L.
  \]
 {\normalfont(ii)} Let $\rho=\rho^{\bmi}\colon \Z^L \to \Z^{L}$ denote a map defined by
  \[ \rho (\bfm) = \Big(\sum_{t=1}^k \bfm_t( \nu_k,\gb_t) \Big)_{1\leq k \leq L}.
  \]
  Then $\rho$ is injective.\\
 {\normalfont(iii)} Assume that $a_{\imath_k,\imath_{k+1}}^\sg=0$ for some $1\leq k \leq L-1$, and set $\bmj = \gs_{k}(\bmi)$, where $\gs_k \in \mathfrak{S}_L$ is the transposition of $k$ and $k+1$.
  Then for any $\bfm \in KP(\gb)$, $\rho^{\bmj}(\bfm) = \gs_{k}\big(\rho^{\bmi}(\bfm)\big)$.
  In particular, the partial order $\preceq_{\bmi}$ on $KP(\gb)$ depends only on the commutation equivalence class of $\bmi$.
\end{Lem}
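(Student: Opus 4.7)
The plan is to prove the three assertions of the lemma directly from definitions, using standard facts about reduced expressions.

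For (i), the key observation is that $\nu_k=-s_{\imath_1}\cdots s_{\imath_k}(\gL_{\imath_k}^\sg)$, so setting $v_k=s_{\imath_k}\cdots s_{\imath_1}$ gives $(\nu_k,\gb_l)=-(\gL_{\imath_k}^\sg,v_k(\gb_l))$. Direct simplification using $v_k\cdot s_{\imath_1}\cdots s_{\imath_{l-1}}=s_{\imath_k}\cdots s_{\imath_l}$ (for $l\leq k$) and its $l>k$ analogue reduces the computation to applying shorter subexpressions of the fixed reduced expression of $w_0$ to a simple root. Since every contiguous subword of a reduced expression is reduced, one gets $v_k(\gb_k)=-\ga_{\imath_k}^\sg$ (giving $(\nu_k,\gb_k)=(\gL_{\imath_k}^\sg,\ga_{\imath_k}^\sg)=1$ in the simply-laced case), $v_k(\gb_l)=-\gamma$ for some positive root $\gamma$ when $l<k$ (giving $(\nu_k,\gb_l)=(\gL_{\imath_k}^\sg,\gamma)\geq 0$), and $v_k(\gb_l)=\gamma'$ for some positive root $\gamma'$ when $l>k$ (giving $(\nu_k,\gb_l)=-(\gL_{\imath_k}^\sg,\gamma')\leq 0$). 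The nonnegativity of $(\gL_{\imath_k}^\sg,\gamma)$ for any positive root is immediate from $(\gL_\imath^\sg,\ga_\jmath^\sg)=\gd_{\imath\jmath}$.

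For (ii), the map $\rho$ is represented by the $L\times L$ matrix with entries $(\nu_k,\gb_t)$ for $t\leq k$ and $0$ otherwise. By part (i) this matrix is lower-triangular with $1$'s on the diagonal, hence invertible over $\Z$, so $\rho$ is injective.

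For (iii), suppose $a_{\imath_k,\imath_{k+1}}^\sg=0$. First I would verify by direct computation—using the commutativity $s_{\imath_k}s_{\imath_{k+1}}=s_{\imath_{k+1}}s_{\imath_k}$ and that each $s_{\imath}$ fixes $\gL_{\jmath}^\sg$ and $\ga_\jmath^\sg$ when $a_{\imath\jmath}^\sg=0$—that $\gb^{\bmj}_l=\gb^{\bmi}_l$ and $\nu^{\bmj}_l=\nu^{\bmi}_l$ for $l\notin\{k,k+1\}$, while $\gb^{\bmj}_k=\gb^{\bmi}_{k+1}$, $\gb^{\bmj}_{k+1}=\gb^{\bmi}_k$, and similarly $\nu^{\bmj}_k=\nu^{\bmi}_{k+1}$, $\nu^{\bmj}_{k+1}=\nu^{\bmi}_k$. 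Correspondingly $\bfm^{\bmj}_l=\bfm^{\bmi}_l$ for $l\neq k,k+1$ and $\bfm^{\bmj}_k=\bfm^{\bmi}_{k+1}$, $\bfm^{\bmj}_{k+1}=\bfm^{\bmi}_k$. Plugging these into the definitions of $\rho^{\bmj}(\bfm)_k$ and $\rho^{\bmj}(\bfm)_{k+1}$, the discrepancies with $\rho^{\bmi}(\bfm)_{k+1}$ and $\rho^{\bmi}(\bfm)_k$ respectively are exactly $\bfm_k^{\bmi}(\nu^{\bmi}_{k+1},\gb^{\bmi}_k)$ and $\bfm_{k+1}^{\bmi}(\nu^{\bmi}_k,\gb^{\bmi}_{k+1})$. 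Both vanish, because $(\nu^{\bmi}_{k+1},\gb^{\bmi}_k)=-(\gL_{\imath_{k+1}}^\sg,\ga_{\imath_k}^\sg)+(\ga_{\imath_{k+1}}^\sg,\ga_{\imath_k}^\sg)$, in which both summands are zero by our assumptions, and similarly for the other. This yields $\rho^{\bmj}(\bfm)=\gs_k(\rho^{\bmi}(\bfm))$. The final assertion then follows because the condition defining $\preceq_{\bmi}$ is exactly componentwise $\leq$ after applying $\rho^{\bmi}$, and this condition is preserved under simultaneous permutation of components by $\gs_k$; iterating over a commutation-equivalence chain of swaps completes the proof.

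The only place where there is any real content is the reduced-expression manipulation in part (i); the rest is bookkeeping. The main thing to be careful about is ensuring the positivity/negativity argument for $v_k(\gb_l)$ uses the right subwords—but once one observes that $s_{\imath_l}\cdots s_{\imath_k}$ and $s_{\imath_{k+1}}\cdots s_{\imath_{l-1}}$ (for $l>k$) are contiguous subwords of the chosen reduced expression of $w_0$ and hence reduced, the conclusions follow immediately from the characterization of roots inverted by reduced expressions.
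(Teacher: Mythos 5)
Your proof is correct. The paper gives no proof of this lemma — it simply cites McNamara and says "easily checked from the definition" — so there is no written argument in the paper to compare against, but your elaboration is exactly the intended direct verification: applying $W_\sg$-invariance of the form to reduce to $(\gL^\sg_{\imath_k},\cdot)$ paired against images of simple roots under contiguous (hence reduced) subwords for (i), reading (ii) off the resulting unitriangular matrix, and the explicit swap bookkeeping for (iii). One tiny cosmetic point: the caveat "in the simply-laced case" for $(\gL^\sg_{\imath_k},\ga^\sg_{\imath_k})=1$ is unnecessary, since in this paper the form on $P_\sg$ is by definition $(\gL^\sg_\imath,\ga^\sg_\jmath)=\gd_{\imath\jmath}$ and $\sg$ is always of type $ADE$.
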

Now we state the following theorem.

\begin{Thm}\label{Thm:affine_hw_cat_of_R}
 For any reduced word $\bmi$ of $w_0$ and $\gb \in Q_\sg^+$, the category $\modfg{\wh{R}(\gb)}$ is an affine highest weight category with respect to the poset $(KP(\gb),\preceq_{\bmi})$.
 For $\bfm \in KP(\gb)$, the associated standard module is $\gD_{\bmi}(\bfm)$, the proper standard module is $\bar{\gD}_{\bmi}(\bfm)$, and the proper costandard module is $\bar{\nabla}_{\!\bmi}(\bfm)$.
\end{Thm}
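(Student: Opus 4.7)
The plan is to reduce the statement to its graded analog due to Brundan--Kleshchev--McNamara \cite{brundan2014homological} and McNamara \cite{mcnamara2017representation}. By Remark \ref{Rem:completion}(1), $\modfg{\wh{R}(\gb)}$ coincides with the central completion of the category of $R(\gb)$-modules on which the $x_k$'s act nilpotently, and all axioms of an affine highest weight category (standard filtrations, Ext-vanishing for proper costandards, formal-power-series endomorphism rings, freeness over them) transfer faithfully under this passage. Thus I would first fix a reduced word $\bmi$, establish the statement for the graded/nilpotent version of $R(\gb)$, and then deduce the complete-setting version.

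For the bulk of the argument, I would proceed as follows. First, recalling the cuspidal theory of \cite{kato2014poincare, mcnamara2015finite}, the convex order on $R^+_\sg$ induced by $\bmi$ produces cuspidal simples $S(\ga)$ parametrizing the simples of $\modfg{\wh{R}(\gb)}$ by Kostant partitions as $S(\bfm) = \mathrm{head}\bigl(\bar{\gD}_\bmi(\bfm)\bigr)$. Second, Lemma \ref{Lem:action_of_centers} supplies the required endomorphism-ring computation $\End_{\wh{R}(\gb)}\bigl(\gD_\bmi(\bfm)\bigr)\cong \wh{\bS}_\bfm$, and the freeness of $\gD_\bmi(\bfm)$ over it of finite rank follows from the graded results of \cite{brundan2014homological}. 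Third, I would verify the filtration axiom: construct a projective cover $P(\bfm)$ and a surjection $P(\bfm)\twoheadrightarrow \gD_\bmi(\bfm)$, and then, by a Mackey-style analysis applied to the convolution structure of $\gD_\bmi(\bfm)=\gD(\bfm_1\gb_1)\circ\cdots\circ\gD(\bfm_L\gb_L)$, exhibit a filtration of $\mathcal{O}^{\preceq \bfm}(P(\bfm))$ whose sections are standard modules $\gD_\bmi(\bfn)$ with $\bfn \succ_\bmi \bfm$. The order relation $\preceq_\bmi$ would be matched to the Mackey layers using the convexity statement Lemma \ref{Lem:McNamara_order}(i) together with the injectivity in (ii), which ensures that the control provided by the weights $\nu_k$ suffices to pin down the combinatorial order uniquely.

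The identifications of $\bar\gD_\bmi(\bfm)$ and $\bar\nabla_\bmi(\bfm)$ as proper standard and proper costandard modules would then follow: the former since $\bar\gD_\bmi(\bfm) \cong \gD_\bmi(\bfm)/\wh{\fm}_\bfm\cdot \gD_\bmi(\bfm)$ by compatibility of convolution with the quotients $S(m\ga)\cong \gD(m\ga)/\wh{\bS}_{m\ga}^+\gD(m\ga)$; and the latter by applying the duality induced by the anti-involution $\psi$, together with $S(\bfm)^*\cong S(\bfm)$ from Subsection \ref{Subsection:SQHA} and the adjunction \eqref{eq:coinduction}, to convert the head filtration of $\bar\gD_\bmi(\bfm)$ into a socle filtration of $\bar\nabla_\bmi(\bfm)$, and then deducing the characterizing Ext-vanishing from upper-triangularity of composition multiplicities. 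I expect the main obstacle to lie in the filtration step: aligning the combinatorics of the partial order $\preceq_\bmi$ with the Mackey-layer combinatorics of iterated convolutions of cuspidal modules, which is precisely where the convexity of the numbering of $R_\sg^+$ and the explicit form of $\nu_k$ in Lemma \ref{Lem:McNamara_order} are indispensable.
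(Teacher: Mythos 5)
Your high-level route — establish the graded version of the theorem, then transfer to the completed setting — is the same as the paper's, which simply cites \cite[Theorem 6.11]{mcnamara2017representation} (or \cite{kato2014poincare,brundan2014homological}) for the graded affine highest weight structure and then invokes \cite[Theorem 4.6]{fujita2018tilting} for the passage to $\modfg{\wh{R}(\gb)}$. The bulk of your sketch (cuspidal simples, the Mackey-filtration argument for the standard filtration axiom, Lemma \ref{Lem:action_of_centers} for the endomorphism rings, the duality argument for proper (co)standards) is a reconstruction of McNamara's proof rather than a new argument, so it is not wrong, just redone.

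The one genuine gap is the transfer step. You appeal to Remark \ref{Rem:completion}(1) and assert that ``all axioms of an affine highest weight category transfer faithfully under this passage.'' But that remark only identifies $\modfd{\wh{R}(\gb)}$ with the category of finite-dimensional $R(\gb)$-modules on which the $x_k$ act nilpotently; it says nothing about the finitely generated categories, and it is certainly not automatic that the projective covers, standard filtrations, the $\mathrm{Ext}$-vanishing characterizing proper costandards, and the freeness/endomorphism-ring conditions survive completion. That is precisely the content of Fujita's \cite[Theorem 4.6]{fujita2018tilting}, which gives a careful comparison between a graded affine highest weight category and its completion along a central character. Without invoking that result (or reproving it), your deduction from the graded case to $\modfg{\wh{R}(\gb)}$ is an assertion, not an argument. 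I would replace the sentence around Remark \ref{Rem:completion}(1) with a citation to \cite[Theorem 4.6]{fujita2018tilting} to close the gap.
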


\begin{proof}
 It is proved in \cite[Theorem 6.11]{mcnamara2017representation} (or essentially in \cite{kato2014poincare} and \cite{brundan2014homological}) 
 that the category of finitely generated graded $R(\gb)$-modules
 is an affine highest weight category with respect to the order $\preceq_{\bmi}$.
 By applying \cite[Theorem 4.6]{fujita2018tilting}, these results also hold for $\modfg{\wh{R}(\gb)}$.
\end{proof}

\begin{Rem}\label{Rem:independent_of_word}\normalfont
 By Lemma \ref{Lem:McNamara_order},
 the affine highest weight structure of $\modfg{\wh{R}(\gb)}$ in the theorem, 
 and in particular the isomorphism classes of the modules $S_{\bmi}(\bfm)$, $\gD_{\bmi}(\bfm)$, $\bar{\gD}_{\bmi}(\bfm)$ 
 and $\bar{\nabla}_{\!\bmi}(\bfm)$,  depend only on the commutation equivalence class of $\bmi$.
\end{Rem}

\section{Generalized quantum affine Schur--Weyl duality}

\subsection{$Q$-data}\label{Subsection:Q-data}

In untwisted $ADE$ types, an important full subcategory $\cC_Q$ of $\mathcal{C}$ associated with a Dynkin quiver $Q$ was introduced in \cite{hernandez2015quantum},
and then generalized to the other types in \cite{kang2016symmetric,kashiwara2019categorical,oh2019categorical}.
Fujita and Oh \cite{fujita2020q} gave a unified framework for these categories using the notion of $Q$-data,
which we recall here.

In this subsection, \textit{we assume that $\fg$ is of untwisted affine type}.
Associated with the type of $\fg_0$, we define a pair ($\sg,\gs)$ of a simple Lie algebra $\sg$ of type $ADE$ and its Dynkin diagram automorphism $\gs$ as follows.
If $\fg_0$ is of type $ADE$, then set $\sg=\fg_0$ and $\gs=\id$. 
In other types, set $(\sg,\gs)$ as in Figure \ref{figure}.
We keep the notation for $\fg$ and $\sg$ in the previous sections. 
\begin{figure}[t]
\begin{align*}
 \raisebox{-20pt}{$\fg_0\colon B_n$}\ {\footnotesize \raisebox{-16pt}{
\xymatrix@C=10pt@R=8pt@M=0pt{
 \circ \ar@{-}[r] & \circ \ar@{.}[r] & \circ \ar@{-}[r] & \circ  \ar@{=}[r]|(.7)@{>} &\circ \\
 {\ \ \ 1 \ \ }& {\ \ \ 2 \ \ } & n-2 & n-1 & n}}
} \ \ \ \raisebox{-19pt}{$\Rightarrow$}\ \ 
 \raisebox{-18pt}{$\sg \colon A_{2n-1}$} \ {\footnotesize \raisebox{35pt}{
\xymatrix@C=10pt@R=20pt@M=0pt{
  \raisebox{-30pt}{$2n-1$} &  \raisebox{-30pt}{$2n-2$} &  \raisebox{-30pt}{$n+2$} &  \raisebox{-30pt}{$n+1$} \\
 \circ \ar@/_8pt/@{<-->}[d]_-\gs \ar@{-}[r]& \circ\ar@{.}[r] \ar@/_8pt/@{<-->}[d]& \circ\ar@{-}[r]\ar@/_8pt/@{<-->}[d]& \circ\ar@{-}[rd]\ar@/_8pt/@{<-->}[d] & \\
 \circ \ar@{-}[r]& \circ\ar@{.}[r] & \circ\ar@{-}[r]& \circ\ar@{-}[r]\ar@{-}[r] &\circ \\
 \raisebox{24pt}{ \ \ \ \ $1$\ \ \ \ } & \raisebox{24pt}{\ \ $2$\ \ } & \raisebox{24pt}{$n-2$}& \raisebox{24pt}{$n-1$} &\raisebox{24pt}{$n$} }}
}\end{align*}\ \\[-45pt]
\begin{align*}
 \raisebox{-20pt}{$\fg_0\colon C_n$} {\footnotesize \raisebox{-16pt}{
\xymatrix@C=10pt@R=8pt@M=0pt{
 \circ \ar@{-}[r] & \circ \ar@{.}[r] & \circ \ar@{-}[r] & \circ  \ar@{=}[r]|(.3)@{<} &\circ \\
 {\ \ \ 1 \ \ }& {\ \ \ 2 \ \ } & n-2 & n-1 & n}}
} \ \ \ \ \  \raisebox{-19pt}{$\Rightarrow$}\ \ 
\raisebox{-18pt}{$\sg \colon D_{n+1}$} {\footnotesize \raisebox{36pt}{
\xymatrix@C=10pt@R=20pt@M=0pt{
  &  &  &  & \raisebox{-30pt}{$n+1$} \\
 & & & & \circ \ar@/^8pt/@{<-->}[d]^-\gs\\
 \circ \ar@{-}[r]& \circ\ar@{.}[r] & \circ\ar@{-}[r]& \circ\ar@{-}[r]\ar@{-}[ru] &\circ \\
 \raisebox{24pt}{ \ \ \ \ $1$\ \ \ \ } & \raisebox{24pt}{\ \ $2$\ \ } & \raisebox{24pt}{$n-2$}& \raisebox{24pt}{$n-1$} &\raisebox{24pt}{$n$}} }
}\end{align*}\ \\[-50pt]
\begin{align*}
 \raisebox{-53pt}{$\fg_0\colon F_4$} \hspace{15pt}{\footnotesize \raisebox{-52pt}{
\xymatrix@C=26pt@R=8pt@M=0pt{
 \circ \ar@{-}[r] & \circ \ar@{=}[r]|(.7)@{>} & \circ \ar@{-}[r]  & \circ \\
  1  & 2 & 3 & 4}}
} \hspace{23pt}\raisebox{-52pt}{$\Rightarrow$}\ \
\raisebox{-51pt}{$\sg \colon E_6$} \hspace{15pt} {\footnotesize
\xymatrix@C=28pt@R=20pt@M=0pt{
 \raisebox{-27pt}{$5$} &  \raisebox{-27pt}{$4$}& &  &  \\
 \circ\ar@{-}[r]\ar@/_8pt/@{<-->}[d]_-\gs& \circ\ar@/_8pt/@{<-->}[d]\ar@{-}[rd]& &  \\
 \circ\ar@{-}[r] & \circ \ar@{-}[r] &\circ \ar@{-}[r] & \circ \\
 \raisebox{25pt}{$1$} & \raisebox{25pt}{$2$} & \raisebox{25pt}{$3$} & \raisebox{25pt}{$6$}}
}\end{align*}\ \\[-30pt]
\begin{align*}
 \raisebox{-23pt}{$\fg_0\colon G_2$}\hspace{20pt} {\footnotesize \raisebox{-22pt}{
\xymatrix@C=26pt@R=8pt@M=0pt{
 \circ \ar@3{-}[r]|(.7)@{>} & \circ  \\
  1  & 2}}
} \hspace{35pt} \raisebox{-22pt}{$\Rightarrow$}\ \ 
\raisebox{-21pt}{$\sg \colon D_4$} \ \ \ \ \ \  {\footnotesize
\xymatrix@C=28pt@R=20pt@M=0pt{
 \raisebox{-4pt}{$1$ $\circ$} \ar@<10pt>@/_20pt/@{<--}[dd]_-\gs  \ar@<-1pt>@{-}[rd]&\\
 3 \circ \ar@<0pt>@/_7pt/@{<--}[u] \ar@<0pt>@/^7pt/@{-->}[d] \hspace{-1pt}\ar@{-}[r]& \circ \ 2\\
 \raisebox{5pt}{$4$ $\circ$} \ar@{-}[ru] & }\hspace{30pt}
}\end{align*}
\caption{$\sg$ and $\gs$ for nonsimply-laced $\fg_0$}\label{figure}
\end{figure}

For $\imath,\jmath \in J$ (resp.~$i,j \in I_0$), we write $\imath \sim \jmath$ (resp.~$i \sim j$) if $a_{\imath\jmath}^{\sg} < 0$ (resp.~$a_{ij}<0$).
Let $J^\gs$ denote the set of $\gs$-orbits of $J$,
and define a matrix $(c_{ST})_{S,T \in J^\gs}$ by
\begin{equation}\label{eq:matrixB}
 c_{ST}=\begin{cases} 2 & \text{if $S =T $}, \\
                                - \max\{1, |T|/|S|\} & \text{if there are $\imath \in S$, $\jmath \in T$ such that $\imath \sim \jmath$},\\ 

                                0                                  & \text{otherwise}, \end{cases}
\end{equation}
where $|S|$ denotes the cardinality of $S$.
Then the matrix $(c_{ST})$ coincides with the Cartan matrix $(a_{ij})_{i,j \in I_0}$ of $\fg_0$.
If $\gs\neq \id$, we identify $J^\gs$ with $I_0$ via the unique bijection $I_0 \ni i \mapsto S_i\in J^\gs$ 
such that $c_{S_iS_j}= a_{ij}$ for $i,j \in I_0$.
Otherwise, we have $J^\gs=J=I_0$ naturally.
Let $J \ni \imath \mapsto \bar{\imath} \in I_0$ be the canonical map, and set
\[ J^{(i)} = \{ \imath \in J \mid \bar{\imath} = i\}  \ \ \text{ for } i \in I_0.
\]
We easily see that $|J^{(i)}|=\ss_i$.

\begin{Def}\normalfont
 A function $\xi\colon J \to \Z$ is called a \textit{height function} on $(J,\gs)$ if the following conditions are satisfied:
 \begin{itemize}
  \item[(i)] For $\imath,\jmath \in J$ such that $\imath \sim \jmath$ and $\ss_{\bar{\imath}}=\ss_{\bar{\jmath}}$, we have $|\xi_{\imath}-\xi_{\jmath}|=\ss_{\bar{\imath}}$.
  \item[(ii)] For $i,j \in I_0$ such that $i \sim j$ and $\ss_{i}<\ss_{j}$, there exists a unique element $\jmath^{\circ} \in J^{(j)}$ such that
   $|\xi_{\imath}-\xi_{\jmath^\circ}| =1$ and $\xi_{\gs^k(\jmath^\circ)}=\xi_{\jmath^\circ}-2k$ for $0\leq k < |\gs|$, where $\imath \in J^{(i)}$ is the unique element
   and $|\gs|$ is the order of $\gs$.
 \end{itemize}
 Such a triple $\mathbf{Q}=(J, \gs, \xi)$ is called a \textit{$Q$-datum}.
\end{Def}

The following assertions are easily seen from the definition, or by a case-by-case check.
\begin{Lem}[{\cite[Lemma 2.9]{fujita2020q}}] \label{Lem:fundamental_Q_data}
 Let $\bQ = (J,\gs,\xi)$ be a $Q$-datum and $i \in I_0$.\\
 {\normalfont(i)} For any $\imath,\imath' \in J^{(i)}$, we have $\xi_{\imath}\equiv \xi_{\imath'}$ $(\mathrm{mod} \ 2)$.\\
 {\normalfont(ii)} For any $\imath \in J^{(i)}$ and $l \in \Z$, we have $\xi_{\gs^l(\imath)} \equiv \xi_{\imath} -2l \ (\mathrm{mod} \ 2\ss_i)$.\\
 {\normalfont(iii)} Assume that $j \in I_0$ is such that $i\sim j$ and $\ss_i = \ss_j$, and $\imath \in J^{(i)}$.
   Then $\jmath \in J^{(j)}$ satisfies $\xi_\jmath \equiv \xi_\imath+\ss_{i}$ $(\mathrm{mod} \ 2\ss_i)$ if and only if $\imath \sim \jmath$.
\end{Lem}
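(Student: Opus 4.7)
The plan is to prove (ii) first, since (i) follows immediately from it by reducing modulo $2$, and the proof of (iii) will invoke (ii) for uniqueness.

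For (ii) I would split on the value of $\ss_i$. If $\ss_i=1$, then $|J^{(i)}|=1$ forces $\gs$ to fix the unique element of $J^{(i)}$, and the congruence $\xi_{\gs^l(\imath)}\equiv \xi_\imath-2l\pmod{2}$ is trivial. If $\ss_i>1$, then $\fg_0$ is one of $B_n,C_n,F_4,G_2$, and by inspection of Figure \ref{figure} we have $|\gs|=\ss_i=\max_{k\in I_0}\ss_k$, so $J^{(i)}$ is a single full $\gs$-orbit of size $\ss_i$. In the cases $C_n$ and $G_2$ there is a unique $i$ with $\ss_i>1$, and that vertex is adjacent to a vertex of strictly smaller defining integer; condition (ii) in the definition of height function applied to this adjacency directly supplies an $\imath^\circ\in J^{(i)}$ with $\xi_{\gs^k(\imath^\circ)}=\xi_{\imath^\circ}-2k$ for $0\le k<\ss_i$, which gives the asserted congruence for every $\imath\in J^{(i)}=\{\gs^k(\imath^\circ)\}$. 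In the cases $B_n$ and $F_4$ the same argument handles the vertex incident to the multi-edge (namely $n-1$ and $2$), and for the remaining long-root vertices I would propagate along the chain of simple edges by induction: given an edge $i\sim i+1$ with $\ss_i=\ss_{i+1}=2$ and a pair $\imath\sim\jmath$ with $\bar\imath=i,\bar\jmath=i+1$, the $\gs$-equivariance of edges in $\sg$ gives $\gs(\imath)\sim \gs(\jmath)$; condition (i) of the height function then forces $\xi_\jmath-\xi_\imath=\pm 2$ and $\xi_{\gs(\jmath)}-\xi_{\gs(\imath)}=\pm 2$, and combining with the inductive hypothesis $\xi_{\gs(\jmath)}\equiv \xi_\jmath-2\pmod{4}$ yields $\xi_{\gs(\imath)}\equiv \xi_\imath-2\pmod{4}$.

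For (iii) the $(\Leftarrow)$ direction is immediate from condition (i) of the height function: an adjacency $\imath\sim\jmath$ with $\ss_{\bar\imath}=\ss_{\bar\jmath}=\ss_i$ forces $|\xi_\imath-\xi_\jmath|=\ss_i$, so $\xi_\jmath\equiv \xi_\imath+\ss_i\pmod{2\ss_i}$. For $(\Rightarrow)$, I would use (ii) together with a uniqueness argument. A direct case-by-case inspection of the foldings in Figure \ref{figure} shows that, for any $\imath\in J^{(i)}$ and any $j\in I_0$ with $i\sim j$ and $\ss_i=\ss_j$, there is exactly one element $\jmath^*\in J^{(j)}$ adjacent to $\imath$ in $J$. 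By $(\Leftarrow)$ this $\jmath^*$ satisfies $\xi_{\jmath^*}\equiv \xi_\imath+\ss_i\pmod{2\ss_i}$. On the other hand, by (ii) applied to $J^{(j)}$, the residues $\{\xi_{\jmath'}\bmod 2\ss_j\mid \jmath'\in J^{(j)}\}$ form an arithmetic progression with common difference $-2$ of length $\ss_j=\ss_i$, so they are pairwise distinct. Hence the congruence class of $\xi_\jmath$ modulo $2\ss_i$ determines $\jmath\in J^{(j)}$ uniquely, forcing $\jmath=\jmath^*$ and therefore $\imath\sim\jmath$.

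The main technical hurdle is the inductive step in part (ii) for $B_n$ and $F_4$, which requires tracking signs carefully through the chain of long-root vertices; everything else is either an immediate consequence of the definition or a finite check against Figure \ref{figure}.
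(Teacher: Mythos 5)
The paper itself supplies no proof: it simply cites \cite[Lemma~2.9]{fujita2020q} and remarks that the assertions ``are easily seen from the definition, or by a case-by-case check.'' Your proposal carries out precisely that check, and it is correct: the base case in (ii) comes from condition (ii) in the definition of a height function applied at the multi-edge, the propagation along simple edges is valid, and the argument for (iii) via the injectivity of $\jmath'\mapsto \xi_{\jmath'}\bmod 2\ss_j$ on $J^{(j)}$ works. Two small streamlinings are worth noting. In (iii), the case-by-case verification that $\jmath^*$ is \emph{unique} is superfluous: since $J^{(i)}$ and $J^{(j)}$ are single $\gs$-orbits and adjacency in $\sg$ is $\gs$-equivariant, $i\sim j$ already furnishes at least one $\jmath^*\in J^{(j)}$ adjacent to any given $\imath\in J^{(i)}$, while the injectivity supplied by (ii) simultaneously forces $\jmath=\jmath^*$ and rules out a second adjacent element. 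Likewise the ``sign bookkeeping'' you flag in the $B_n/F_4$ inductive step of (ii) is painless: from
\[
  \xi_{\gs(\imath)}-\xi_\imath \;=\; \bigl(\xi_{\gs(\imath)}-\xi_{\gs(\jmath)}\bigr)+\bigl(\xi_{\gs(\jmath)}-\xi_\jmath\bigr)+\bigl(\xi_\jmath-\xi_\imath\bigr),
\]
the outer summands are each $\pm 2$ and the middle is $\equiv -2 \pmod 4$, so the total is $\equiv -2\pmod 4$ regardless of signs, because $2\equiv -2\pmod 4$.
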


Fix a $Q$-datum $\bQ=(J,\gs,\xi)$. 
We define a subset $\hat{J} =\hat{J}_\bQ \subseteq J \times \Z$ by
\[ \hat{J} = \{ (\imath, p) \mid p \equiv \xi_\imath \ \text{(mod $2\ss_{\bar{\imath}}$)}\}.
\]
We also define $\hat{I}_0=\hat{I}_{0,\bQ} \subseteq I_0 \times \Z$ by $\hat{I}_{0}=\{ (\bar{\imath},p)\mid (\imath,p) \in \hat{J}\}$.
The canonical map $\hat{J} \to \hat{I}_0$ is bijective by Lemma \ref{Lem:fundamental_Q_data}, and we have
\[ \hat{I}_0 = \{(i,p) \in I_0 \times \Z \mid p \equiv \gee_i \ (\text{mod $2$})\}, 
\]
where $\gee\colon I_0 \to \{0,1\}$ is the unique function satisfying $\gee_{\bar{\imath}} \equiv \xi_{\imath}$ (mod $2$) for all $\imath \in J$.

An element $\imath \in J$ is called a \textit{source} of $\mathbf{Q}$ if we have $\xi_{\imath} > \xi_{\jmath}$ for all $\jmath \in J$ with $\imath \sim \jmath$.
Given a source $\imath \in J$ of $\mathbf{Q}$, a new height function $s_{\imath}\xi\colon J \to \Z$ is defined by
\[ (s_{\imath}\xi)_{\jmath} = \xi_\jmath- 2\ss_{\bar{\imath}}\gd_{\imath\jmath} \ \ \ \text{for } \jmath \in J,
\]
and we write $s_{\imath}\bQ = (J,\gs,s_{\imath}\xi)$.
We say a reduced word $(\imath_1,\ldots, \imath_L)$ of $w_0$ is \textit{adapted to} $\bQ$ 
if $\imath_k$ is a source of $s_{\imath_{k-1}}\cdots s_{\imath_1}\bQ$ for $1\leq k \leq L$.
\begin{Lem}[{\cite[Theorem 2.24]{fujita2020q}}]
 The set of all the reduced words of $w_0$ adapted to $\bQ$ forms a single commutation equivalence class.
\end{Lem}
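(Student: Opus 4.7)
The plan is to prove, by induction on $\ell(w)$, the slightly more general statement that for any $w \in W_\sg$ and any $Q$-datum $\bQ$, the reduced words of $w$ adapted to $\bQ$ (in the obvious generalization of the definition) form a single commutation equivalence class; taking $w = w_0$ recovers the lemma. The base case $\ell(w) = 0$ is trivial. Before entering the induction, I would establish the following pivotal observation: \emph{if $\imath \neq \jmath$ are both sources of $\bQ$, then $a^{\sg}_{\imath\jmath} = 0$, the flips $s_{\imath}$ and $s_{\jmath}$ commute on $\bQ$, and $\jmath$ (resp.\ $\imath$) remains a source of $s_\imath \bQ$ (resp.\ $s_\jmath \bQ$).} Indeed, if $\imath \sim \jmath$ held, sourceness would force the incompatible strict inequalities $\xi_\imath > \xi_\jmath$ and $\xi_\jmath > \xi_\imath$; the commutativity and the stability of sourceness then follow immediately from the formula defining $s_\imath \xi$ and from the local nature of the source condition.

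For the inductive step, let $\bmi = (\imath_1, \ldots, \imath_l)$ and $\bmj = (\jmath_1, \ldots, \jmath_l)$ be reduced words of $w$ adapted to $\bQ$. If $\imath_1 = \jmath_1$, then the respective tails are adapted reduced words of $s_{\imath_1} w$ with respect to $s_{\imath_1}\bQ$, and the inductive hypothesis concludes. If $\imath_1 \neq \jmath_1$, then both are sources of $\bQ$, hence Dynkin-disjoint by the observation. Let $k$ be the first position at which $\jmath_1$ occurs in $\bmi$. A secondary induction on $0 \leq r \leq k-1$, repeatedly applying the observation to $\jmath_1$ and $\imath_{r+1}$ (which are both sources of $\bQ_r := s_{\imath_r} \cdots s_{\imath_1}\bQ$), shows that $\jmath_1$ is a source of every $\bQ_r$ for $r < k$, and in particular that $\jmath_1 \not\sim \imath_r$ for all $1 \leq r \leq k-1$. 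This permits us to commute $\jmath_1$ leftward past $\imath_{k-1}, \ldots, \imath_1$, yielding a reduced word commutation equivalent to $\bmi$ of the form $(\jmath_1, \imath_1, \ldots, \imath_{k-1}, \imath_{k+1}, \ldots, \imath_l)$. Since $s_{\imath_r} s_{\jmath_1} \bQ = s_{\jmath_1} s_{\imath_r} \bQ$ for $r < k$, its tail is an adapted reduced word of $s_{\jmath_1} w$ with respect to $s_{\jmath_1}\bQ$; the tail of $\bmj$ has the same property, so the inductive hypothesis gives commutation equivalence of the two tails, completing the argument.

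The primary obstacle is the secondary induction showing that sourceness of $\jmath_1$ propagates through the sequence of flips $\bQ = \bQ_0 \to \bQ_1 \to \cdots \to \bQ_{k-1}$, together with the verification that after the commutation moves the resulting shortened word is still adapted to $s_{\jmath_1}\bQ$. Both reduce to the principle that two distinct sources of a common $Q$-datum are never Dynkin-adjacent, which is the structural property of $Q$-data responsible for the entire result.
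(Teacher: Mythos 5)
Your proof is correct, and since the paper only cites \cite[Theorem 2.24]{fujita2020q} for this lemma rather than proving it, what you have produced is a self-contained verification of the imported result. The pivotal observation — that two distinct sources of a $Q$-datum are never Dynkin-adjacent, because $\imath\sim\jmath$ together with double sourceness would force $\xi_\imath>\xi_\jmath>\xi_\imath$ — is the correct structural fact, and the commutation of the flips and the stability of sourceness under a disjoint flip follow immediately from the pointwise formula for $s_\imath\xi$. The strengthened inductive statement over arbitrary $w\in W_\sg$ with the evident extension of ``adapted'' is the right scaffolding, and the secondary induction propagating sourceness of $\jmath_1$ through $\bQ_0,\dots,\bQ_{k-1}$ is the crux; I checked that the resulting word $(\jmath_1,\imath_1,\dots,\imath_{k-1},\imath_{k+1},\dots,\imath_l)$ really is adapted to $\bQ$, using at each step $r<k$ that $\imath_r$ stays a source after the disjoint flip $s_{\jmath_1}$ of $\bQ_{r-1}$, and that beyond position $k$ nothing changes because $s_{\imath_{k-1}}\cdots s_{\imath_1}s_{\jmath_1}\bQ = \bQ_k$. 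This is essentially the classical sink/source argument from Dynkin quiver theory transported to the $Q$-datum setting, and matches what one expects the cited proof to be. One small omission you should make explicit: you take $k$ to be the first position at which $\jmath_1$ occurs in $\bmi$ without first arguing that $\jmath_1$ occurs at all; this follows from the standard Coxeter-group fact that the support of $w$ (the set of simple reflections appearing in a reduced expression) is independent of the chosen reduced word, so $\jmath_1\in\mathrm{supp}(w)$ because it appears in $\bmj$.
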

Let $\bmi=(\imath_1,\ldots, \imath_L)$ be a reduced word of $w_0$ adapted to $\bQ$,
%
%
%
%
and set 
\[ \gb_k =s_{\imath_1}\cdots s_{\imath_{k-1}}(\ga_{\imath_k}^\sg)\in R_\sg^+ \ \text{ for $1\leq k \leq L$}.
\]
Define a map $\wti{\Omega}_{\bQ}\colon R_\sg^+ \to \hat{J}$ by
\[ \wti{\Omega}_{\bQ}(\gb_k) = \big(\imath_k, (s_{\imath_{k-1}}\cdots s_{\imath_1}\xi)_{\imath_{k}}\big).
\]

\begin{Lem}[{\cite[Theorem 2.24]{fujita2020q}}]\label{Lem:image_of_Omega}\ \\
 {\normalfont(i)} The map $\wti{\Omega}_{\mathbf{Q}}$ does not depend on the choice of $\bmi$.\\
 {\normalfont(ii)} The image $\mathrm{Im}\, \wti{\Omega}_{\bQ} \subseteq \hat{J}$ is described as follows:
 \[ \mathrm{Im}\,\wti{\Omega}_{\bQ} = \{ (\imath,p) \mid \xi_{\imath^*}-|\gs|h^\vee < p \leq \xi_{\imath}, \ p \equiv \xi_{\imath} \ (\mathrm{mod} \ 2\ss_{\bar{\imath}})\},
 \]
 where $\imath^* \in J$ is the index determined by $w_0(\ga_{\imath}^\sg)=-\ga_{\imath^*}^\sg$, and $h^\vee$ is the dual Coxeter number of $\fg_0$.
\end{Lem}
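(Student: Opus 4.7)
The plan is to prove (i) by reducing to invariance under an elementary commutation in the set of adapted reduced words, and (ii) by combining an explicit upper bound and congruence with injectivity and a cardinality count.

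For (i), the preceding lemma asserts that all reduced words of $w_0$ adapted to $\bQ$ form a single commutation equivalence class. It therefore suffices to check that $\wti{\Omega}_\bQ$ is invariant under swapping two adjacent commuting letters $\imath_k, \imath_{k+1}$ with $a^\sg_{\imath_k \imath_{k+1}} = 0$. Such a swap interchanges the positive roots $\gb_k$ and $\gb_{k+1}$; the two corresponding heights also interchange, since by definition the source operation $\xi \mapsto s_\jmath \xi$ modifies only the $\jmath$-th coordinate of the height function, so $s_{\imath_k}$ and $s_{\imath_{k+1}}$ act on disjoint coordinates. This yields (i).

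For (ii), write $\wti{\Omega}_\bQ(\gb_k) = (\imath_k, h_k)$ where $h_k = (s_{\imath_{k-1}}\cdots s_{\imath_1}\xi)_{\imath_k}$. The congruence $h_k \equiv \xi_{\imath_k} \pmod{2\ss_{\bar{\imath_k}}}$ is immediate from the form of the source operation, and the upper bound $h_k \leq \xi_{\imath_k}$ holds since each source operation either fixes or decreases $\xi_{\imath_k}$. At each position $k$ with $\imath_k = \imath$, the index $\imath$ must be a source of the current $Q$-datum (by the definition of adapted), so the height $h_k$ strictly decreases by $2\ss_{\bar\imath}$ at each reappearance of $\imath$ in $\bmi$. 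Consequently $\wti{\Omega}_\bQ$ is injective, $|\mathrm{Im}\,\wti{\Omega}_\bQ| = L$, and for each $\imath$ the heights $\{h_k : \imath_k = \imath\}$ form a consecutive arithmetic progression $\xi_\imath, \xi_\imath - 2\ss_{\bar\imath}, \xi_\imath - 4\ss_{\bar\imath}, \ldots$.

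It then remains to identify the smallest value of each such progression with $\xi_{\imath^*} - |\gs|h^\vee + 2\ss_{\bar\imath}$, equivalently, to show that the multiplicity of $\imath$ in $\bmi$ equals $m_\imath := (\xi_\imath - \xi_{\imath^*} + |\gs|h^\vee)/(2\ss_{\bar\imath})$. The main obstacle is this multiplicity identity: its left-hand side is a combinatorial invariant of any reduced word of $w_0$ (depending only on $\sg$ and $\imath$), whereas the right-hand side depends on the $Q$-datum. A conceptual proof would use the fact that the adapted source sequence realizes a folded analog of the Coxeter element acting on the $\sg$-root system, whose orbit structure governs both sides. As a consistency check for the full count, one verifies $\sum_\imath m_\imath = L$ by noting that $\ss_{\bar\imath} = \ss_{\bar{\imath^*}}$ forces the $\xi$-terms to cancel in pairs under $\imath \leftrightarrow \imath^*$, the identity $|J^{(i)}| = \ss_i$ reduces the remaining sum to $|\gs|h^\vee|I_0|/2$, and this equals $|R^+_\sg| = L$ by a case-by-case inspection against Figure \ref{figure}.
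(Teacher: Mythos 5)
The paper does not prove this lemma: it is stated with the citation \cite[Theorem 2.24]{fujita2020q} and the burden of proof is placed there. So the question is whether your argument is actually a valid independent proof, and there is a genuine gap.

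Part (i) is fine: invariance under a single commutation move, combined with the fact that adapted reduced words form one commutation class, does the job, and your verification that both the root $\gb_k \leftrightarrow \gb_{k+1}$ and the height $h_k \leftrightarrow h_{k+1}$ swap under an elementary commutation is correct (the source operation $s_{\imath_k}$ changes only the $\imath_k$-th coordinate of the height function, and $\imath_k \not\sim \imath_{k+1}$ when $a^\sg_{\imath_k\imath_{k+1}}=0$).

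For (ii), the congruence, the bound $h_k \le \xi_{\imath_k}$, injectivity, and the fact that the heights at positions with letter $\imath$ form the consecutive progression $\xi_\imath, \xi_\imath-2\ss_{\bar\imath},\dots$ are all correctly derived. But the entire content of the lemma then reduces to the identity $m_\imath = (\xi_\imath - \xi_{\imath^*} + |\gs|h^\vee)/(2\ss_{\bar\imath})$ for the multiplicity $m_\imath$ of $\imath$ in the adapted word, and you explicitly leave this unproved (``the main obstacle \dots A conceptual proof would use \dots''). The consistency check $\sum_\imath m_\imath = L$ does not establish the individual identities; it only confirms that the proposed right-hand sides have the right total, and the left-hand total is trivially $L$. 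In addition, your remark that the multiplicity of $\imath$ is ``a combinatorial invariant of any reduced word of $w_0$ (depending only on $\sg$ and $\imath$)'' is false: already in type $A_2$ the words $s_1 s_2 s_1$ and $s_2 s_1 s_2$ assign multiplicities $(2,1)$ and $(1,2)$ to the two letters, so the multiplicity genuinely depends on the commutation class (equivalently on the $Q$-datum), which is exactly why the right-hand side can and must depend on $\xi$. The missing piece is a proof that the first repainted height $h$ that falls below the window, i.e.\ $h \le \xi_{\imath^*} - |\gs|h^\vee$, corresponds to a position where $\imath$ ceases to appear in the adapted word; this is precisely the nontrivial combinatorial input (a twisted Coxeter-element/orbit-length argument in Fujita--Oh) that your sketch gestures at but does not supply.
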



Let $\tau_{\bQ} \in W_\sg\gs$ be the \textit{generalized $\gs$-Coxeter element} associated with $\bQ$ introduced in \cite[Definition 2.33]{fujita2020q}.
For $\imath \in J$, set
\[ \ggg_{\imath}^{\bQ} = (1-\tau_{\bQ}^{\ss_{\bar{\imath}}})\gL_{\imath}^\sg,
\]
which belongs to $R_\sg^+$.
The properties of $\tau_{\bQ}$ needed in this paper are the following.

\begin{Prop}\label{Prop:Coxeter_element}
 Let $1\leq k \leq L$, and assume that $\wti{\Omega}_{\bQ}(\gb_k) = (\imath_k,p)$.\\
 {\normalfont(i)} We have $\gb_k = \tau_{\bQ}^{(\xi_{\imath_k}-p)/2}(\ggg_{\imath_k}^{\bQ})$.\\
 {\normalfont(ii)} We have $s_{\imath_1}\cdots s_{\imath_k} (\gL_{\imath_k}^\sg) = \tau_{\bQ}^{(\xi_{\imath_k}-p+2\ss_{\bar{\imath}_k})/2}(\gL_{\imath_k}^\sg)$.
\end{Prop}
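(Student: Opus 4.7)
The plan is to reduce both statements to standard properties of the generalized $\gs$-Coxeter element $\tau_{\bQ}$ and then run a simultaneous induction on $k$.

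First I would compute the exponent explicitly. Setting $\xi^{(j)} = s_{\imath_{j-1}}\cdots s_{\imath_1}\xi$, the source reflection $s_{\imath_j}$ on a height function only alters its value at $\imath_j$, subtracting $2\ss_{\bar{\imath}_j}$. Hence
\[ p = \xi^{(k)}_{\imath_k} = \xi_{\imath_k} - 2\ss_{\bar{\imath}_k}\, n_k,\qquad n_k := \#\{\,1\leq j < k : \imath_j = \imath_k\,\}, \]
so $(\xi_{\imath_k}-p)/2 = \ss_{\bar{\imath}_k}\, n_k$ and $(\xi_{\imath_k}-p+2\ss_{\bar{\imath}_k})/2 = \ss_{\bar{\imath}_k}(n_k+1)$. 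Thus (i) and (ii) become
\[ \gb_k = \tau_{\bQ}^{\ss_{\bar{\imath}_k} n_k}\ggg_{\imath_k}^{\bQ}, \qquad s_{\imath_1}\cdots s_{\imath_k}\gL_{\imath_k}^\sg = \tau_{\bQ}^{\ss_{\bar{\imath}_k}(n_k+1)}\gL_{\imath_k}^\sg, \]
respectively. The unpacking $\ggg_{\imath_k}^{\bQ} = \gL_{\imath_k}^\sg - \tau_{\bQ}^{\ss_{\bar{\imath}_k}}\gL_{\imath_k}^\sg$ together with $\gb_k = s_{\imath_1}\cdots s_{\imath_{k-1}}(\gL_{\imath_k}^\sg) - s_{\imath_1}\cdots s_{\imath_{k-1}} s_{\imath_k}(\gL_{\imath_k}^\sg)$ shows that (i) is an immediate consequence of (ii) applied to both $k$ and to the prefix length obtained by deleting the factor $s_{\imath_k}$ from $s_{\imath_1}\cdots s_{\imath_k}$; so it suffices to prove (ii).

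I would prove (ii) by induction on $k$, using the key compatibility
\[ s_{\imath_1}\circ \tau_{s_{\imath_1}\bQ} = \tau_{\bQ}\circ s_{\imath_1} \]
valid for a source $\imath_1$ of $\bQ$, which is part of the source-reflection formalism established in \cite{fujita2020q} for $\tau_{\bQ}$ (this rewrites $\tau_{\bQ}$ as the result of ``absorbing'' a source reflection in its natural factorization through $W_\sg\gs$). The base case $k=1$ reduces to the identity $s_{\imath_1}\gL_{\imath_1}^\sg = \tau_{\bQ}^{\ss_{\bar{\imath}_1}}\gL_{\imath_1}^\sg$, which is precisely the definition of $\ggg_{\imath_1}^{\bQ} = \ga_{\imath_1}^\sg$ at a source (and is essentially how $\tau_{\bQ}$ is pinned down by its action on fundamental weights at sources). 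For the step, the reduced word $(\imath_2,\ldots,\imath_L)$ is adapted to $s_{\imath_1}\bQ$, so the induction hypothesis for this shifted datum gives
\[ s_{\imath_2}\cdots s_{\imath_k}\gL_{\imath_k}^\sg = \tau_{s_{\imath_1}\bQ}^{\ss_{\bar{\imath}_k}(n_k'+1)}\gL_{\imath_k}^\sg, \]
where $n_k'$ equals $n_k$ if $\imath_1 \neq \imath_k$ and $n_k-1$ otherwise; applying $s_{\imath_1}$ to both sides and invoking the compatibility above converts $s_{\imath_1}\tau_{s_{\imath_1}\bQ}^{m}$ into $\tau_{\bQ}^{m}s_{\imath_1}$, and a short case analysis on whether $\imath_1 = \imath_k$ (reconciling the shift in $n_k'$ versus $n_k$, and using that $s_{\imath_1}$ fixes $\gL_{\imath_k}^\sg$ when $\imath_1 \neq \imath_k$) yields the desired formula for $k$.

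The main obstacle is the last bookkeeping step: correctly absorbing the extra $s_{\imath_1}$ when $\imath_1 = \imath_k$, which forces the exponent to jump by exactly $\ss_{\bar{\imath}_k}$; this has to be matched against the identity $s_{\imath_1}\gL_{\imath_1}^\sg = \tau_{\bQ}^{\ss_{\bar{\imath}_1}}\gL_{\imath_1}^\sg$ from the base case, and confirming that the conjugation relation between $\tau_{\bQ}$ and $\tau_{s_{\imath_1}\bQ}$ holds with the correct power (not just up to a power of $|\gs|$) is the delicate point. All of this is already controlled by the source-reflection calculus for $Q$-data developed in \cite{fujita2020q}, so the induction goes through uniformly in the twisted case $\gs \neq \id$ as well.
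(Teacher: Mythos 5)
Your proof is correct, but it takes a genuinely different route from the paper's. The paper treats both parts as citations: part (i) is exactly \cite[Theorem 2.35]{fujita2020q} and part (ii) is \cite[Lemma 8.2 (3)]{FHOO21}, so the paper's ``proof'' is two sentences. You instead re-derive these statements by induction on $k$, using the source-reflection compatibility $s_{\imath_1}\tau_{s_{\imath_1}\bQ}=\tau_{\bQ}s_{\imath_1}$ together with the base identity $\ggg_{\imath}^{\bQ}=\ga_{\imath}^\sg$ at a source $\imath$. Your unpacking of the exponent as $\ss_{\bar{\imath}_k}n_k$, the reduction of (i) to (ii) via $\ga_{\imath_k}^\sg=(1-s_{\imath_k})\gL_{\imath_k}^\sg$, and the case analysis on $\imath_1\stackrel{?}{=}\imath_k$ in the inductive step are all sound; in the reduction step the precise mechanism is that $s_{\imath_1}\cdots s_{\imath_{k-1}}\gL_{\imath_k}^\sg$ equals either $\gL_{\imath_k}^\sg$ (if $n_k=0$) or $s_{\imath_1}\cdots s_{\imath_{k'}}\gL_{\imath_{k'}}^\sg$ for $k'$ the last index $<k$ with $\imath_{k'}=\imath_k$, to which (ii) applies directly — slightly more than ``deleting the factor $s_{\imath_k}$,'' but the idea is clear. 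The one caveat is that the two facts you lean on (the conjugation relation and $\ggg_{\imath}^{\bQ}=\ga_{\imath}^\sg$ at a source) are not ``precisely the definition'' of $\tau_{\bQ}$ but nontrivial properties; they are nevertheless exactly the tools established in \cite{fujita2020q}'s source-reflection calculus, so your argument is in effect a self-contained re-derivation of the two cited results from more primitive ingredients of that framework. The paper's approach buys brevity; yours buys a transparent, uniform induction that makes the structure of the exponent visible and handles the twisted case $\gs\neq\id$ by the same mechanism.
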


\begin{proof}
 The assertion (i) follows from \cite[Theorem 2.35]{fujita2020q}, and the assertion (ii) follows from \cite[Lemma 8.2 (3)]{FHOO21}.
\end{proof}

%
%

Let $z$ be an indeterminate. 
The \textit{quantum Cartan matrix} of $\fg_0$ is the $\Z[z^{\pm 1}]$-valued $(I_0 \times I_0)$-matrix $A(z)=\big(a_{ij}(z)\big)_{i,j\in I_0}$ defined by
\[ a_{ij}(z) = \begin{cases} z^{\ss_i} + z^{-\ss_i} & \text{if $i = j$},\\
                             [a_{ij}]_z & \text{if $i\neq j$},\\ \end{cases}
\]
where $[k]_z = (z^k-z^{-k})/(z-z^{-1})$ for $k \in \Z$.
%
%
Denote by $\wti{A}(z) = \big(\wti{A}_{ij}(z)\big)_{i,j\in I_0} \in \mathrm{GL}_{I_0}\big(\Q(z)\big)$ the inverse of $A(z)$, and let
\[ \wti{A}_{ij}(z) = \sum_{u \in \Z} \wti{a}_{ij}(u) z^u
\]
denote the formal Laurent expansion of the $(i,j)$-entry $\wti{A}_{ij}(z)$ at $z=0$.

\begin{Prop}\label{Prop:inverse_Cartan}
 Let $i,j \in I_0$, and take $\imath \in J^{(i)}$ and $\jmath \in J^{(j)}$ arbitrarily.\\
 {\normalfont (i)} For any $u \in \Z$, we have
 \[ \wti{a}_{ij}(u) -\wti{a}_{ij}(-u) = \begin{cases} \big(\gL^\sg_{\imath}, \tau_{\bQ}^{(u+\xi_{\jmath}-\xi_{\imath}-\ss_i)/2}(\ggg_{\jmath}^{\bQ})\big) & 
                                                      \text{if $u+\xi_\jmath -\xi_\imath-\ss_i \in 2\Z$},\\
                                    0 & \text{otherwise}.
                      \end{cases}
 \]
 {\normalfont(ii)} Set $\ss_{ij} = \min(\ss_{i},\ss_j)$. Then we have
  \[ \wti{a}_{ij}(\xi_{\jmath}-\xi_{\imath}-\ss_i - 2l\ss_{ij}) = 0 = \wti{a}_{ij}(\xi_{\imath}-\xi_{\jmath}-\ss_i-2l\ss_{ij}) \ \ \text{ for any $l \in \Z_{\geq 0}$}.
  \] 
\end{Prop}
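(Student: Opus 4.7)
The plan is to reduce both assertions to the combinatorics of the generalized Coxeter element $\tau_{\bQ}$ acting on the root lattice of $\sg$. In the simply-laced case ($\gs = \id$) such Coxeter-theoretic formulas for $\wti{a}_{ij}(u)$ are essentially due to Hernandez--Leclerc and Frenkel--Reshetikhin; their generalization to arbitrary $Q$-data is developed in \cite{fujita2020q} and \cite{FHOO21}, which I would invoke here.

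I would begin from the matrix identity $A(z)\wti{A}(z) = I$, which yields a two-term recursion relating $\wti{a}_{ij}(u+\ss_i) + \wti{a}_{ij}(u-\ss_i)$ to values of $\wti{a}_{kj}$ at shifted arguments for $k\neq i$, together with the initial condition $\wti{a}_{ij}(u) = 0$ for $u\leq 0$ coming from the convention of Laurent expansion at $z=0$. This pair of data uniquely determines the coefficients, so it suffices to verify that the right-hand side of (i) satisfies the same recursion and vanishing. Using $\ggg_\jmath^{\bQ} = (1 - \tau_{\bQ}^{\ss_{\bar{\jmath}}})\gL_\jmath^\sg$, the right-hand side of (i) is a telescoping difference of pairings $(\gL_\imath^\sg, \tau_{\bQ}^m(\gL_\jmath^\sg))$, and applying Proposition~\ref{Prop:Coxeter_element} together with the decomposition of $\tau_{\bQ}$ into simple reflections in $W_\sg\gs$ allows one to match the recursion node by node: after a single simple reflection $s_\imath$, the two-term sum picks up exactly the adjacent-node contributions encoded by the off-diagonal entries $[a_{ik}]_z$. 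The parity condition $u + \xi_\jmath - \xi_\imath - \ss_i \in 2\Z$ in~(i) arises from Lemma~\ref{Lem:fundamental_Q_data}(i), which guarantees integrality of the exponent of $\tau_{\bQ}$.

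Part (ii) then follows from (i) by examining the $\tau_{\bQ}$-orbit of $\ggg_\jmath^{\bQ}$: for the listed values of $u$, the exponent $(u+\xi_\jmath-\xi_\imath-\ss_i)/2$ becomes sufficiently negative (respectively, positive in sign when paired with $\gL_\imath^\sg$) that $\tau_{\bQ}^{(u+\xi_\jmath-\xi_\imath-\ss_i)/2}(\ggg_\jmath^{\bQ})$ falls outside the image $\mathrm{Im}\,\wti{\Omega}_{\bQ}$ described by Lemma~\ref{Lem:image_of_Omega}, and hence pairs trivially with $\gL_\imath^\sg$; the second vanishing additionally uses that $\wti{a}_{ij}(-u)=0$ for $-u\leq 0$, reducing to a pairing that has already been shown to vanish.

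The main obstacle will be the uniform verification, across all affine types, of the correspondence between the shift $\ss_i$ on the quantum Cartan side and the $\tau_{\bQ}^{\ss_{\bar{\jmath}}}$-periodicity on the Coxeter side. In the non-simply-laced case, the $\gs$-twist in $\tau_{\bQ}$ must be reconciled with the asymmetry $\ss_i \neq \ss_j$ via the identification $J^\gs \simeq I_0$, and this bookkeeping—together with a clean treatment of the parity conditions in Lemma~\ref{Lem:fundamental_Q_data}—constitutes the technical heart of the argument. Once the matching is pinned down, both parts of the proposition follow mechanically.
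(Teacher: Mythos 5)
The paper's own ``proof'' of Proposition~\ref{Prop:inverse_Cartan} is a pure citation: part (i) is \cite[Theorem 4.13]{FHOO21} and part (ii) is \cite[Lemma 8.3]{FHOO21}. You correctly identify the relevant sources, but you then go further and sketch an independent argument; the comparison is therefore between your sketch and those cited results, not between your sketch and anything in this paper.

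Your strategy for part (i) — expand $A(z)\wti{A}(z)=I$ coefficientwise to obtain a two-term recursion for $\wti{a}_{ij}(u)$ in $u$ with step $\ss_i$, use the vanishing $\wti{a}_{ij}(u)=0$ for $u\leq 0$, and then match the recursion against the $\tau_{\bQ}$-orbit formula via the telescoping identity $\ggg_{\jmath}^{\bQ}=(1-\tau_{\bQ}^{\ss_{\bar{\jmath}}})\gL^\sg_{\jmath}$ — is the right conceptual skeleton and, as far as I can tell, is consistent with how such formulas are proved in the literature. It is, however, only a sketch; the node-by-node matching across non-simply-laced types (reconciling $\ss_i\neq\ss_j$, the $\gs$-twist, and the parity conditions) is nontrivial bookkeeping and you acknowledge as much.

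The argument you offer for part (ii) has a genuine gap. You claim that for the listed values of $u$ the element $\tau_{\bQ}^{(u+\xi_\jmath-\xi_\imath-\ss_i)/2}(\ggg_{\jmath}^{\bQ})$ ``falls outside the image $\mathrm{Im}\,\wti{\Omega}_{\bQ}$ \ldots\ and hence pairs trivially with $\gL^\sg_{\imath}$.'' Neither step is valid. First, the premise is unverified: whether $\tau_{\bQ}^m(\ggg_{\jmath}^{\bQ})$ with $m=\xi_\jmath-\xi_\imath-\ss_i-l\ss_{ij}$ corresponds to a pair $(\jmath,p)$ inside or outside the window $\xi_{\jmath^*}-|\gs|h^\vee < p\leq \xi_{\jmath}$ of Lemma~\ref{Lem:image_of_Omega} depends on the sign and size of $\xi_\jmath-\xi_\imath$, which is not constrained a priori (vertices $\imath,\jmath$ may be far apart, and either may be ``higher''). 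Second, and more seriously, the inference itself is wrong: being outside $\mathrm{Im}\,\wti{\Omega}_{\bQ}$ only means that $\tau_{\bQ}^m(\ggg_{\jmath}^{\bQ})$ is a \emph{negative} root of $\sg$ rather than one of the finitely many positive roots in the window, but a negative root does not generally pair to zero with a fundamental weight (e.g.\ $(\gL^\sg_\imath,-\ga^\sg_\imath)=-1$). The actual vanishing in (ii) is a genuine orthogonality property of the $\tau_{\bQ}$-orbit tied to the specific structure of height functions, not a consequence of the element being negative; this is precisely what \cite[Lemma 8.3]{FHOO21} establishes and what you would need to supply. The way (ii) is used downstream in Lemma~\ref{Lem:Lem_for_comparing_orders} — where it is invoked with the \emph{modified} height functions $s_{\imath_{t-1}}\cdots s_{\imath_1}\xi$ — also makes clear that the statement must be a uniform identity valid for \emph{every} height function, which rules out a quick ``it's out of range'' argument.
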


\begin{proof}
 The assertion (i) follows from \cite[Theorem 4.13]{FHOO21}, and (ii) follows from [\textit{loc.~cit.}, Lemma 8.3].
\end{proof}

\subsection{Full subcategories of $\cC$}\label{subsection:Full_subcategories}

From now on, we again assume that $\fg$ is of general (untwisted or twisted) affine type.
When $\fg$ is of untwisted type, set $(\sg,\gs)$ to be as in the previous subsection.
When $\fg$ is of twisted type $X_N^{(t)}$ ($t \in \{2,3\}$), on the other hand, set $\sg$ to be the simple Lie algebra of type $X_N$ and $\gs = \id$. 
Hence we do not identify $J^\gs(=J)$ with $I_0$ in twisted types, although we do in untwisted types as in the previous subsection.

In the following, when $\fg$ is untwisted (resp.~twisted) 
we only consider $\ell$-weights of the forms $\bm{\varpi}_{i,q^p}$ (resp.~ $\mathbf{t}(\bm{\varpi}_{\imath,q^p}^\sg)$) with $i \in I_0$ (resp.~$\imath \in J$) and $p\in \Z$,
where $\mathbf{t}\colon \mathcal{P}_\sg \to \mathcal{P}$ is the map given in (\ref{eq:map_t}).
Hence by abuse of notation, we will use the simplified notation $\bm{\varpi}_{i,p}$ (resp.~$\mathbf{t}(\bm{\varpi}_{\imath,p}^\sg)$) and $\bm{\ga}_{i,p}$ 
(resp.~$\mathbf{t}(\bm{\ga}_{\imath,p}^\sg)$), instead of $\bm{\varpi}_{i,q^p}$ (resp.~$\mathbf{t}(\bm{\varpi}_{\imath,q^p}^\sg)$) and $\bm{\ga}_{i,q^p}$ 
(resp.~$\mathbf{t}(\bm{\ga}_{\imath,q^p}^\sg)$).

Fix a $Q$-datum $\bQ=(J,\gs,\xi)$.
We define the following sublattices of $\cP$ and $\cQ$ respectively:
\[ \cP_\Z = \begin{cases} \displaystyle \bigoplus_{(i,p) \in \hat{I}_0} \Z\bm{\varpi}_{i,p} & \text{$\fg\colon$untwisted},\\
                          \displaystyle \bigoplus_{(\imath,p) \in \hat{J}} \Z\,\mathbf{t}(\bm{\varpi}^\sg_{\imath,p}) & \text{$\fg\colon$twisted},\end{cases} \ \ \ 
   \cQ_\Z = \begin{cases} \displaystyle \bigoplus_{(i,p+\ss_i) \in \hat{I}_0} \Z\bm{\ga}_{i,p} & \text{$\fg\colon$untwisted},\\
                          \displaystyle \bigoplus_{(\imath,p+1) \in \hat{J}} \Z\,\mathbf{t}(\bm{\ga}^\sg_{\imath,p}) 
                          & \text{$\fg\colon$twisted}.\end{cases}\]
It is easily seen from Lemma \ref{Lem:fundamental_Q_data} that $\cQ_\Z =\cP_\Z \cap \cQ$.
Set $\cP_\Z^+ = \cP_\Z \cap \cP^+$ and $\cQ_\Z^+ = \cQ_\Z \cap \cQ^+$.
We define a $\Z$-linear map $\Omega_\bQ$ from the set $\Z^{R^+_\sg}$ of $\Z$-valued functions on $R_\sg^+$ to $\mathcal{P}_\Z$ as follows:
for $\ga \in R_\sg^+$ such that 
$\tilde{\Omega}_{\bQ}(\ga) = (\imath,p) \in \hat{J}$, set
\[ \Omega_\bQ(\bfm_\ga) = \begin{cases} \bm{\varpi}_{\bar{\imath},p} &\text{if $\fg$ is untwisted}, \\
                                   \mathbf{t}(\bm{\varpi}^{\sg}_{\imath,p}) & \text{if $\fg$ is twisted},\end{cases}
\]
where $\bfm_\ga \in \Z^{R^+_\sg}$ is defined as in Subsection \ref{Subsection:Affine_highest_weight_category}.
Set
\[ \cP_{\bQ} = \mathrm{Im}\, \Omega_{\bQ}\subseteq \cP_\Z \ \text{ and } \ \cP_{\bQ}^+ = \Omega_{\bQ} (\Z_{\geq 0}^{R_\sg^+}) \subseteq \cP_\Z^+.
\]

\begin{Def}\normalfont
 Let $\mathcal{C}_{\bQ}$ be the full subcategory of $\mathcal{C}$ consisting of objects $M$ such that any simple subquotient of $M$ is isomorphic to
 $L(\bm{\pi})$ for some $\bm{\pi} \in \mathcal{P}^+_{\bQ}$.
\end{Def}

\begin{Lem}\label{Lem:coincidence_of_lattices}
 Assume that $\fg$ is untwisted, and define a subset $\hat{K} \subseteq \hat{I}_0$ by
 \[ \hat{K} = \{ (\bar{\imath},p) \mid \imath \in J, \ (\imath,p-\ss_{\bar{\imath}}) \in \mathrm{Im}\, \wti{\Omega}_{\bQ} \text{ and } 
   (\imath,p+\ss_{\bar{\imath}}) \in \mathrm{Im}\, \wti{\Omega}_{\bQ}\}.
 \]
 Then we have
 \begin{equation}\label{eq:coincidence_of_lattices}
  \cP_{\bQ} \cap \cQ = \bigoplus_{(i,p) \in \hat{K}} \Z \bm{\ga}_{i,p}.
 \end{equation}
\end{Lem}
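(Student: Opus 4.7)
The plan is to prove the two inclusions of \eqref{eq:coincidence_of_lattices} separately.

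For the inclusion $\supseteq$, I will fix $(i, p) \in \hat{K}$ together with the accompanying $\imath \in J^{(i)}$ furnished by the definition of $\hat{K}$, so that both $(\imath, p - \ss_i)$ and $(\imath, p + \ss_i)$ lie in $\mathrm{Im}\,\wti{\Omega}_{\bQ}$. To show $\bm{\ga}_{i,p} \in \cP_{\bQ}$, it suffices to verify that every fundamental weight $\bm{\varpi}_{k, r}$ appearing with nonzero coefficient in the expansion \eqref{eq:l_root} of $\bm{\ga}_{i,p}$ equals $\bm{\varpi}_{\bar{\jmath}, r}$ for some $(\jmath, r) \in \mathrm{Im}\,\wti{\Omega}_{\bQ}$. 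The two diagonal terms $\bm{\varpi}_{i, p \pm \ss_i}$ are immediate from the definition of $\hat{K}$. For each off-diagonal term $\bm{\varpi}_{j, r}$ with $j \sim i$, I will exhibit the unique $\jmath \in J^{(j)}$ satisfying $\xi_{\jmath} \equiv r \pmod{2\ss_j}$ and check the range bounds $\xi_{\jmath^*} - |\gs|h^\vee < r \leq \xi_{\jmath}$ from Lemma \ref{Lem:image_of_Omega}. The verification is a combinatorial calculation using the height function axioms and Lemma \ref{Lem:fundamental_Q_data}, performed by cases according to the relative sizes of $\ss_i$ and $\ss_j$.

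For the reverse inclusion $\subseteq$, the idea is to exploit the inverse quantum Cartan matrix via Proposition \ref{Prop:inverse_Cartan}. At the level of formal power series, matching coefficients in
\[
 \sum_{p} \bm{\varpi}_{i, p}\, z^{-p} = \sum_{j \in I_0} \wti{A}_{ij}(z) \sum_{p} \bm{\ga}_{j, p}\, z^{-p}
\]
yields the formal identity $\bm{\varpi}_{i, p} = \sum_{j, u} \wti{a}_{ij}(u)\, \bm{\ga}_{j, p+u}$. Hence for $\bm{\pi} = \sum_{(\imath, p) \in \mathrm{Im}\,\wti{\Omega}_{\bQ}} n_{\imath, p}\, \bm{\varpi}_{\bar{\imath}, p} \in \cP_{\bQ} \cap \cQ$ written as a finite $\Z$-combination $\sum m_{j, q}\, \bm{\ga}_{j, q}$, the coefficient $m_{j, q}$ is, heuristically, $\sum_{(\imath, p)} n_{\imath, p}\, \wti{a}_{\bar{\imath}, j}(q - p)$. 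Proposition \ref{Prop:inverse_Cartan}(ii) supplies the vanishings $\wti{a}_{ij}(\xi_{\jmath} - \xi_{\imath} - \ss_i - 2l\, \ss_{ij}) = 0 = \wti{a}_{ij}(\xi_{\imath} - \xi_{\jmath} - \ss_i - 2l\, \ss_{ij})$ for $l \geq 0$, and these vanishings together with the explicit range for $\mathrm{Im}\,\wti{\Omega}_{\bQ}$ are designed to cut the support of $\{m_{j, q}\}$ down to $\hat{K}$.

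To turn this heuristic into a rigorous argument, I will argue by contradiction and induction. Assuming $m_{j', q'} \neq 0$ for some $(j', q') \notin \hat{K}$, by the definition of $\hat{K}$ at least one of $(\imath', q' \pm \ss_{j'})$ lies outside $\mathrm{Im}\,\wti{\Omega}_{\bQ}$; since the points of $\mathrm{Im}\,\wti{\Omega}_{\bQ}$ with first coordinate $\imath'$ form a single arithmetic progression bounded above by $\xi_{\imath'}$ and below by $\xi_{{\imath'}^*} - |\gs|h^\vee$ (Lemma \ref{Lem:image_of_Omega}), an entire half-line of values $q' + (2k+1)\ss_{j'}$ (for $k \geq 0$ or $k \leq -1$) then lies outside $\mathrm{Im}\,\wti{\Omega}_{\bQ}$. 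Matching the corresponding $\bm{\varpi}_{j', \bullet}$-coefficients of $\sum m_{j, q}\, \bm{\ga}_{j, q}$ to zero yields a cascade of linear relations; choosing $(j', q')$ to be extremal in the support of $\{m_{j, q}\}$ produces the required contradiction. The main obstacle will be the careful bookkeeping of the non-$j'$ cross terms in these linear relations for the nonsimply-laced untwisted types $B, C, F, G$, which is precisely what the vanishings of Proposition \ref{Prop:inverse_Cartan}(ii) are tailored to control.
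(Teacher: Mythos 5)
Your $\supseteq$ inclusion is essentially the paper's argument: fix $(i,p)\in\hat{K}$, expand $\bm{\ga}_{i,p}$ via \eqref{eq:l_root}, and check membership of every appearing $\bm{\varpi}$-term in $\cP_{\bQ}$ by cases on $\ss_i$ vs.\ $\ss_j$ using the height-function axioms and Lemma \ref{Lem:image_of_Omega}. This part is fine.

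The $\subseteq$ inclusion has a genuine gap. Your inverse quantum Cartan matrix idea is presented, but only as a ``heuristic'': you observe yourself that the identity $\bm{\varpi}_{i,p}=\sum_{j,u}\wti{a}_{ij}(u)\bm{\ga}_{j,p+u}$ is formal (infinitely many nonzero $\wti{a}_{ij}(u)$), and you never establish that the resulting convolution formula computes the actual finite coefficients $m_{j,q}$ of an element of $\cP_\Z\cap\cQ$. Making this precise would itself be a nontrivial argument about the compatibility of the expansion at $z=0$ with finiteness. You then abandon this route and fall back on ``contradiction and induction, choosing $(j',q')$ extremal.'' But this fallback is where the entire content of the lemma lives, and you leave it unspecified: you do not say what the induction is on, what extremality means precisely, or how the off-diagonal contributions are bounded. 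The paper's proof pins all of this down: it writes the element as $\sum l_{\imath,p}\bm{\ga}_{\bar\imath,p}$, defines $p(\imath)=\max\{p\mid l_{\imath,p}\neq 0\}$, and runs a \emph{downward} induction on $p(\imath)+\ss_{\bar\imath}$ to prove $p(\imath)+\ss_{\bar\imath}\leq\xi_\imath$, at each step producing a neighbour $\jmath$ whose contribution forces the chain of inequalities $\xi_\jmath\geq p(\jmath)+\ss_{\bar\jmath}\geq p(\imath)+\ss_{\bar\imath}+\ss_{\bar\jmath}+a_{\bar\imath\bar\jmath}+1\geq\xi_\imath+2\ss_{\bar\imath}+\ss_{\bar\jmath}+a_{\bar\imath\bar\jmath}+1$, which violates the height-function axioms; a mirror argument handles the lower bound, and Lemma \ref{Lem:image_of_Omega} then concludes. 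Your sketch flags exactly this bookkeeping for types $B,C,F,G$ as ``the main obstacle'' but does not resolve it, and moreover assigns this job to Proposition \ref{Prop:inverse_Cartan}(ii), which the paper does not invoke here at all (that proposition is used in Lemma \ref{Lem:Lem_for_comparing_orders}, a different statement). So the upper/lower bound on the support, which is the substance of the $\subseteq$ direction, is missing from your proposal.
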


\begin{proof}
 Let $(i,p) \in \hat{K}$. 
 We can check from the definition of height functions and Lemma \ref{Lem:image_of_Omega} that for any $j \in I_0$ such that $i \sim j$, 
 $\bm{\varpi}_{j,p} \in \cP_{\bQ}$ (resp.~$\bm{\varpi}_{j,p\pm 1} \in \cP_{\bQ}$, $\bm{\varpi}_{j,p\pm 2}, \bm{\varpi}_{j,p} \in \cP_{\bQ}$) if $\ss_i\leq \ss_j$ (resp.~$2=\ss_i>\ss_j$, 
 $3=\ss_i>\ss_j$),
 which implies $\bm{\ga}_{i,p} \in \cP_{\bQ}$.
 Hence the containment $\supseteq$ is proved.
 Let us prove the opposite containment.
 Since $\cQ_\Z=\cP_\Z \cap\cQ$, the left-hand side of (\ref{eq:coincidence_of_lattices}) coincides with $\cP_{\bQ} \cap \cQ_\Z$.
 Assume that 
 \begin{equation}\label{eq:containment_in_Q}
  \sum_{(\imath,p+\ss_{\bar{\imath}}) \in \hat{J}} l_{\imath,p} \bm{\ga}_{\bar{\imath},p} \in \cP_{\bQ}\cap \cQ_\Z
 \end{equation}
 with $l_{\imath,p} \in \Z$ (recall that the canonical map $\hat{J} \to \hat{I}_0$ is bijective).
 It suffices to show that $(\bar{\imath},p) \in \hat{K}$ for all $(\imath,p)$ such that $l_{\imath,p}\neq 0$.
 Set $J' = \{\imath \in J \mid l_{\imath,p} \neq 0 \text{ for some } p \}$.
 For each $\imath \in J'$, set $p(\imath)=\max\{ p\mid l_{\imath,p}\neq 0\}$,
 and let $\imath_0 \in J'$ be an element such that 
 \[ p(\imath_0) +\ss_{\bar{\imath}_0}= \max\{p(\imath) +\ss_{\bar{\imath}} \mid \imath \in J'\}.
 \]
 We will prove $p(\imath) +\ss_{\bar{\imath}} \leq \xi_\imath$ for all $\imath \in J'$ by downward induction on $p(\imath)+\ss_{\bar{\imath}}$.
 Write the sum in (\ref{eq:containment_in_Q}) as $\sum_{(\imath,p) \in \hat{J}} m_{\imath,p} \bm{\varpi}_{\bar{\imath},p}$.
 Since $m_{\imath_0,p(\imath_0)+\ss_{\bar{\imath}_0}} =l_{\imath_0,p(\imath_0)} \neq 0$,
 $p(\imath_0)+\ss_{\bar{\imath}_0}\leq \xi_{\imath_0}$ follows from (\ref{eq:containment_in_Q}) and Lemma \ref{Lem:image_of_Omega}. 
 Assume that $\imath \in J'\setminus \{\imath_0\}$ satisfies $p(\imath)+\ss_{\bar{\imath}} > \xi_{\imath}$.
 Since (\ref{eq:containment_in_Q}) forces $m_{\imath,p(\imath)+\ss_{\bar{\imath}}} =0$, there exists $\jmath \in J'$ satisfying $\bar{\imath} \sim \bar{\jmath}$ 
 and 
 \[ l_{\jmath,p(\imath)+\ss_{\bar{\imath}}-a_{\bar{\imath}\bar{\jmath}} +1-2k}\neq 0 \ \ \text{ for some $1\leq k \leq -a_{\bar{\imath}\bar{\jmath}}$}.
 \]
 Then by the induction hypothesis, we have
 \[ \xi_{\jmath} \geq p(\jmath)+\ss_{\bar{\jmath}} \geq p(\imath)+\ss_{\bar{\imath}}+\ss_{\bar{\jmath}}+a_{\bar{\imath}\bar{\jmath}}+1 
    \geq \xi_{\imath} + 2\ss_{\bar{\imath}} +\ss_{\bar{\jmath}} + a_{\bar{\imath}\bar{\jmath}}+1,
 \]
 where the third inequality follows from $p(\imath)+\ss_{\bar{\imath}} \in \hat{J}$ and $p(\imath)+\ss_{\bar{\imath}} > \xi_{\imath}$.
 We easily see that this contradicts the defining conditions of $\xi$, and hence $p(\imath) +\ss_{\bar{\imath}} \leq \xi_{\imath}$ is proved for all $\imath \in J'$.
 By a similar argument we can also show that
 \[ \min\{p \mid l_{\imath,p} \neq 0\} -\ss_{\bar{\imath}} > \xi_{\imath^*} -|\gs|h^\vee \ \ \text{ for all $\imath \in J'$}.
 \] 
 Then $\{(\bar{\imath},p)\mid l_{\imath,p} \neq 0 \}\subseteq \hat{K}$ follows from Lemma \ref{Lem:image_of_Omega}. 
 The proof is complete.
\end{proof}


Fix a reduced word $\bmi=(\imath_1,\ldots,\imath_L)$ of $w_0$ adapted to $\bQ$, and set $\gb_k = s_{\imath_1}\cdots s_{\imath_{k-1}}(\ga_{\imath_k}^\sg)$ for $1\leq k \leq L$.

\begin{Lem}\label{Lem:compatible_with_words}
 Set $L_k = L\big(\Omega_{\bQ}(\bfm_{\gb_k})\big) \in \cC_{\bQ}$ for $1\leq k \leq L$.
 Then for any $1\leq k < l \leq L$, we have $d_{L_k,L_l}(1) \neq 0$.
\end{Lem}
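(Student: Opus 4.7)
The plan is to convert the non-vanishing $d_{L_k, L_l}(1) \neq 0$ into a vanishing statement about coefficients of the inverse quantum Cartan matrix $\wti{A}(z)$, and then invoke Proposition~\ref{Prop:inverse_Cartan}~(ii). The bridge between the denominators of $R$-matrices between fundamental modules and the inverse Cartan data is a classical formula developed in untwisted types by Akasaka--Kashiwara and Frenkel--Mukhin and extended to twisted types subsequently; in the $Q$-datum framework it has been streamlined in \cite{FHOO21}. I intend to apply such a denominator formula directly to each pair $(L_k, L_l)$.

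Set $\wti{\Omega}_{\bQ}(\gb_r) = (\imath_r, p_r)$ for $r = k, l$, so that $L_r \cong L(\bm{\varpi}_{\bar{\imath}_r, p_r})$ in the untwisted case and $L_r \cong L(\mathbf{t}(\bm{\varpi}^{\sg}_{\imath_r, p_r}))$ in the twisted case. Via the denominator formula, the zeros of $d_{L_k, L_l}(z)$ occur at values $z = q^u$ for which a specific coefficient $\wti{a}_{\bar{\imath}_k \bar{\imath}_l}(u')$ is nonzero, with $u'$ linearly determined by $u$ and by $p_l - p_k$. Evaluating at $z = 1$ then translates the condition $d_{L_k, L_l}(1) \neq 0$ into the vanishing of $\wti{a}_{\bar{\imath}_k \bar{\imath}_l}$ at an argument of the form $\pm(p_l - p_k) - \ss_{\bar{\imath}_k}$, up to the precise convention one fixes.

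Next I would control the difference $p_l - p_k$ using the structure of the adapted reduced word. By Lemma~\ref{Lem:image_of_Omega}~(ii), each $p_r$ satisfies $\xi_{\imath_r^*} - |\gs| h^\vee < p_r \leq \xi_{\imath_r}$ and $p_r \equiv \xi_{\imath_r} \pmod{2\ss_{\bar{\imath}_r}}$. Moreover, for an adapted reduced word one computes directly that $p_r = \xi_{\imath_r} - 2\ss_{\bar{\imath}_r}\cdot |\{s < r \mid \imath_s = \imath_r\}|$, so the difference $p_l - p_k$ has controlled parity modulo $2\ss_{\bar{\imath}_k \bar{\imath}_l}$ and lies in a bounded range. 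Proposition~\ref{Prop:inverse_Cartan}~(ii) asserts that $\wti{a}_{\bar{\imath}_k \bar{\imath}_l}$ vanishes at arguments of the form $\pm(\xi_{\imath_l} - \xi_{\imath_k}) - \ss_{\bar{\imath}_k} - 2m \ss_{\bar{\imath}_k \bar{\imath}_l}$ for $m \geq 0$; combining this with the above bounds on $p_l - p_k$ should place the argument coming from the denominator formula into one of these vanishing ranges for every $k < l$.

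The main obstacle I anticipate is matching the precise spectral-parameter shifts and sign conventions between the denominator formula and Proposition~\ref{Prop:inverse_Cartan}~(ii). In the twisted case this requires threading everything through the map $\mathbf{t}\colon \cP_{\sg} \to \cP$ of \eqref{eq:map_t}, which modifies how spectral parameters of $U_q'(\fg)$-modules correspond to inverse Cartan data of $\fg_0$, and in particular introduces factors of $d_i$ into the exponents. Treating both untwisted and twisted cases uniformly, without splitting into separate case analyses, is likely where the bulk of the technical work lies.
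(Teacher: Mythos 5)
The paper's own proof of this lemma is purely a citation: it appeals to the case-by-case verifications in \cite[Lemma 5.2]{kang2016symmetric}, \cite[Lemma 6.4]{kashiwara2019categorical}, and \cite{oh2019categorical}, where the nonvanishing is checked against explicit denominator tables for each affine type. Your plan — to derive it uniformly from Proposition~\ref{Prop:inverse_Cartan} via a denominator formula — is therefore a genuinely different, more conceptual route, and it is precisely the kind of argument one would like to have. But as written it has a real gap.

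The bridge you invoke, that ``the zeros of $d_{L_k,L_l}(z)$ occur at values $z=q^u$ for which a specific coefficient $\wti{a}_{\bar{\imath}_k\bar{\imath}_l}(u')$ is nonzero,'' is not the form the denominator formula takes. The formula established in \cite{FHOO21} (it is Fujita's conjecture, not a classical statement of Akasaka--Kashiwara or Frenkel--Mukhin, who only locate zeros up to order of magnitude in $q$) expresses the \emph{order} of vanishing of $d_{ij}(z)$ at a given $q^u$ as a \emph{difference} of two coefficients of $\wti{A}_{ij}(z)$, not as a single coefficient. Consequently, converting $d_{L_k,L_l}(1)\neq 0$ into a vanishing statement usable with Proposition~\ref{Prop:inverse_Cartan}~(ii) is subtler than sketched, and the crucial combinatorial step — that for every $k<l$ the value $p_l-p_k$ (which is not sign-definite: it can be positive or negative depending on $\bQ$) lands exactly in a vanishing range supplied by Proposition~\ref{Prop:inverse_Cartan}~(ii) — is the entire content of the proof and is left as ``should.'' Finally, there is a circularity concern you only half-acknowledge: the $Q$-datum framework and \cite{FHOO21} treat untwisted $\fg$, so for twisted $\fg$ the relation between $d_{L_k,L_l}$ (a denominator for $U_q'(\fg)$) and the inverse quantum Cartan matrix of $\fg_0$ is not available off the shelf; the sources where it \emph{is} worked out for twisted types are exactly \cite{kang2016symmetric} and \cite{oh2019categorical}, i.e.\ the same papers the authors cite. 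To make your argument genuinely independent of the case-by-case literature, you would need a uniform proof of the denominator formula across all affine types, or a proof that $\mathbf{t}$ transports the untwisted formula to the twisted side.
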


\begin{proof}
 See \cite[Lemma 5.2]{kang2016symmetric}, \cite[Lemma 6.4]{kashiwara2019categorical}, and \cite{oh2019categorical}.
\end{proof}

For $\gb \in Q_\sg$, set $\widetilde{KP}(\gb) = \{\bfm \in \Z^{R^+_\sg}\mid \sum_{\ga \in R^+_\sg} \bfm(\ga)\ga=\gb\}$.
Note that $\widetilde{KP}(\gb) \cap \Z_{\geq 0}^{R^+_\sg} = KP(\gb)$ for $\gb \in Q^+_\sg$.
As in Subsection \ref{Subsection:Affine_highest_weight_category}, we identify $\bfm \in \widetilde{KP}(\gb)$ with $(\bfm_1,\ldots,\bfm_L) \in \Z^L$ via $\bfm_k=\bfm(\gb_k)$.
Define $\nu_k \in P_\sg$ ($1\leq k \leq L$) as in (\ref{eq:def_of_ggg}) using the given reduced word $\bmi$.

\begin{Lem}\label{Lem:Lem_for_comparing_orders}
 Assume that $\fg$ is untwisted, and $(\imath,p) \in \hat{J}$ satisfies $(\imath, p - \ss_{\bar{\imath}}) \in \mathrm{Im}\, \wti{\Omega}_{\bQ}$ and 
  $(\imath,p+\ss_{\bar{\imath}}) \in \mathrm{Im}\,\wti{\Omega}_{\bQ}$. Set $\bfm = \Omega_{\bQ}^{-1}(\bm{\ga}_{\bar{\imath},p}) \in \Z^{R^+_\sg}$.\\
 {\normalfont(i)} We have $\bfm  \in \widetilde{KP}(0)$.\\
 {\normalfont(ii)} Let $1\leq k_0 \leq L$ be such that $\wti{\Omega}_{\bQ}(\gb_{k_0}) = (\imath,p+\ss_{\bar{\imath}})$. Then we have 
  \begin{equation}\label{eq:order_of_alpha}
   \sum_{t=1}^k \bfm_t (\nu_k, \gb_t) = \gd_{k,k_0} \ \ \text{ for } 1\leq k \leq L.
  \end{equation}
\end{Lem}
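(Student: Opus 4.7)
By Lemma~\ref{Lem:coincidence_of_lattices}, the hypotheses guarantee $\bm{\ga}_{\bar{\imath}, p} \in \cP_{\bQ}$, so setting $i_0 = \bar{\imath}$, the element $\bfm = \Omega_{\bQ}^{-1}(\bm{\ga}_{i_0, p}) \in \Z^{R_{\sg}^+}$ is well-defined. Expanding $\bm{\ga}_{i_0, p}$ via \eqref{eq:l_root} and pulling back through the bijection $\wti{\Omega}_{\bQ}$, the nonzero components of $\bfm$ are $+1$ at the indices $k^{\pm}$ with $\wti{\Omega}_{\bQ}(\gb_{k^{\pm}}) = (\imath^{\pm}, p \pm \ss_{i_0})$ (so $k_0 = k^+$) and $-1$ at each index whose image under $\wti{\Omega}_{\bQ}$ has the form $(\jmath, p - a_{j i_0} + 1 - 2k)$ with $j \sim i_0$, $j \neq i_0$, and $1 \leq k \leq -a_{j i_0}$. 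Here $\imath^{\pm} \in J^{(i_0)}$ and $\jmath \in J^{(j)}$ denote the unique lifts placing these pairs in $\mathrm{Im}\,\wti{\Omega}_{\bQ}$.

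For (i), since $\sum_t \bfm_t \gb_t \in Q_{\sg}$ and $(\gL_{\imath}^{\sg}, \ga_{\jmath}^{\sg}) = \gd_{\imath\jmath}$, it suffices to verify $(\gL_{\imath}^{\sg}, \sum_t \bfm_t \gb_t) = 0$ for every $\imath \in J$. By Proposition~\ref{Prop:Coxeter_element}(i), $\gb_t = \tau_{\bQ}^{(\xi_{\imath_t} - p_t)/2}(\ggg_{\imath_t}^{\bQ})$, and Proposition~\ref{Prop:inverse_Cartan}(i) (whose parity hypothesis is automatic since $p_t \equiv \xi_{\imath_t} \pmod{2\ss_{\bar{\imath}_t}}$) then gives
\[ (\gL_{\imath}^{\sg}, \gb_t) = \wti{a}_{\bar{\imath}, \bar{\imath}_t}(\xi_{\imath} + \ss_{\bar{\imath}} - p_t) - \wti{a}_{\bar{\imath}, \bar{\imath}_t}(p_t - \xi_{\imath} - \ss_{\bar{\imath}}). \]
Setting $w = \xi_{\imath} + \ss_{\bar{\imath}} - p$, the first half $\sum_t \bfm_t\, \wti{a}_{\bar{\imath}, \bar{\imath}_t}(\xi_{\imath} + \ss_{\bar{\imath}} - p_t)$ is, upon substituting the explicit $\bfm_t$, exactly the coefficient of $z^w$ in $\sum_{j' \in I_0} \wti{A}_{\bar{\imath}, j'}(z)\, a_{j', i_0}(z)$; by the quantum Cartan identity $\wti{A}(z) A(z) = I$ this equals $\gd_{\bar{\imath}, i_0}\gd_{w, 0}$. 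The mirror half equals $\gd_{\bar{\imath}, i_0}\gd_{-w, 0}$, which agrees with the first, so their difference vanishes.

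For (ii), Proposition~\ref{Prop:Coxeter_element}(ii) together with $\tau_{\bQ}$-invariance of the bilinear form yields
\[ (\nu_k, \gb_t) = \wti{a}_{\bar{\imath}_k, \bar{\imath}_t}(p_t - p_k + \ss_{\bar{\imath}_k}) - \wti{a}_{\bar{\imath}_k, \bar{\imath}_t}(p_k - p_t - \ss_{\bar{\imath}_k}). \]
The same $\wti{A}(z)A(z) = I$ computation, now centered at $w_k = p - p_k + \ss_{\bar{\imath}_k}$, shows that both full sums $\sum_{t=1}^L \bfm_t \wti{a}_{\bar{\imath}_k, \bar{\imath}_t}(p_t - p_k + \ss_{\bar{\imath}_k})$ and $\sum_{t=1}^L \bfm_t \wti{a}_{\bar{\imath}_k, \bar{\imath}_t}(p_k - p_t - \ss_{\bar{\imath}_k})$ equal $\gd_{\bar{\imath}_k, i_0}\gd_{p_k, p + \ss_{i_0}} = \gd_{k, k_0}$, so the full-sum version of $\sum_t \bfm_t (\nu_k, \gb_t)$ vanishes (consistent with (i)). To descend to the truncated sum I will establish the two vanishings
\[ \wti{a}_{\bar{\imath}_k, \bar{\imath}_t}(p_t - p_k + \ss_{\bar{\imath}_k}) = 0 \text{ for } t > k, \qquad \wti{a}_{\bar{\imath}_k, \bar{\imath}_t}(p_k - p_t - \ss_{\bar{\imath}_k}) = 0 \text{ for } t \leq k \]
via Proposition~\ref{Prop:inverse_Cartan}(ii) together with the fact that along an adapted reduced word the Coxeter heights $(\xi_{\imath_t} - p_t)/2$ are weakly nondecreasing in $t$. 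Granting these, the truncated sum $\sum_{t \leq k} \bfm_t (\nu_k, \gb_t)$ collapses to $\sum_{t \leq k} \bfm_t \wti{a}_{\bar{\imath}_k, \bar{\imath}_t}(p_t - p_k + \ss_{\bar{\imath}_k})$, which by the first vanishing extends to the full sum and hence equals $\gd_{k, k_0}$.

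The main obstacle is verifying the two vanishing assertions. Each unfolds through Proposition~\ref{Prop:inverse_Cartan}(ii) into the claim that the relevant argument lies in one of two arithmetic progressions indexed by $\pm(\xi_{\imath_t} - \xi_{\imath_k}) - \ss_{\bar{\imath}_k}$ with common difference $2\ss_{\bar{\imath}_k \bar{\imath}_t}$. Since the positions $t$ with $\bfm_t \neq 0$ form only a small and explicit set of points $(\imath_t, p_t) \in \hat{J}$ in the immediate neighborhood of $(\imath, p)$ (as prescribed by \eqref{eq:l_root}), and the ordering on $\mathrm{Im}\,\wti{\Omega}_{\bQ}$ induced by an adapted reduced word is controlled by these Coxeter heights, the required inequalities and divisibility conditions should follow from a direct case check using the height-function axioms of the $Q$-datum and the description of $\mathrm{Im}\,\wti{\Omega}_{\bQ}$ in Lemma~\ref{Lem:image_of_Omega}.
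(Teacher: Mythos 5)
Your argument for part (i) is correct and in fact more self-contained than the paper's, which simply cites \cite[Eq.~(4.3)]{fujita2020q}; the direct computation via $\wti{A}(z)A(z)=I$ is valid (and in particular the parity hypothesis of Proposition~\ref{Prop:inverse_Cartan}(i) does hold since $\xi_{\imath_t}-p_t\in 2\ss_{\bar\imath_t}\Z$). Your setup for part (ii) is also the same as the paper's: you derive the formula $(\nu_k,\gb_t)=\wti{a}_{\bar\imath_k\bar\imath_t}(p_t-p_k+\ss_{\bar\imath_k})-\wti{a}_{\bar\imath_k\bar\imath_t}(p_k-p_t-\ss_{\bar\imath_k})$, and you correctly identify the two vanishings needed to pass from $\sum_{t\le k}$ to $\sum_{t=1}^L$.

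However, the vanishing claims are the entire technical content of part (ii), and your proposal does not actually prove them; it merely says they ``should follow from a direct case check.'' Moreover, the route you sketch is unlikely to work as stated. The membership you want — that $p_k-p_t-\ss_{\bar\imath_k}$ (for $t\le k$) and $p_t-p_k+\ss_{\bar\imath_k}$ (for $t>k$) land in the vanishing arithmetic progressions of Proposition~\ref{Prop:inverse_Cartan}(ii) — is not naturally expressed in terms of the original height function $\xi$, since $p_k$ and $p_t$ are values of the partially reflected height functions $s_{\imath_{k-1}}\cdots s_{\imath_1}\xi$ and $s_{\imath_{t-1}}\cdots s_{\imath_1}\xi$ respectively, not of $\xi$ itself. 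The key observation, which your proposal does not use, is that $\wti{A}(z)$ depends only on the Cartan matrix of $\fg_0$ and not on the height function, so Proposition~\ref{Prop:inverse_Cartan}(ii) may be applied with \emph{any} height function on $(J,\gs)$. The paper exploits this by applying it with the height function $s_{\imath_{t-1}}\cdots s_{\imath_1}\xi$ when $t\le k$ (for which $\xi'_{\imath_t}=p_t$ and $\xi'_{\imath_k}-p_k\in 2\ss_{\bar\imath_k}\Z_{\ge 0}\subseteq 2\ss_{\bar\imath_k\bar\imath_t}\Z_{\ge 0}$, since applying reflections only lowers heights), and with $s_{\imath_k}\cdots s_{\imath_1}\xi$ when $t>k$. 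This gives the vanishing uniformly for all $t$, not only for the handful of $t$ with $\bfm_t\ne 0$, and avoids any case analysis. Your proposal is structurally on the right track but stops short precisely at the step where the main idea lives, and the fallback case-check route it gestures toward, based on $\xi$ alone, has no clear path to completion.
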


\begin{proof}
 The assertion (i) follows from \cite[Eq.~(4.3)]{fujita2020q}.
 Let us prove the assertion (ii).
 For each $1\leq t \leq L$, write $\wti{\Omega}_{\bQ}(\gb_t) = (\imath_t,p_t) \in \hat{J}$.
 Then for each $1\leq k,t\leq L$, it follows from Propositions \ref{Prop:Coxeter_element} and \ref{Prop:inverse_Cartan} (i) that
 \begin{align}\label{eq:nugb}
  (\nu_k,\gb_t) &=  -\big(\tau_{\bQ}^{(\xi_{\imath_k}-p_k+2\ss_{\bar{\imath}_k})/2}(\gL_{\imath_k}^\sg),\tau_{\bQ}^{(\xi_{\imath_t}-p_t)/2}(\ggg_{\imath_t}^{\bQ})\big)\nonumber\\
  & = -\big(\gL_{\imath_k}^\sg,\tau_{\bQ}^{(\xi_{\imath_t}-\xi_{\imath_k}+p_k-p_t-2\ss_{\bar{\imath}_k})/2}(\ggg_{\imath_t}^{\bQ})\big)\nonumber\\
  &  = \wti{a}_{\bar{\imath}_k\bar{\imath}_t}(p_t-p_k+\ss_{\bar{\imath}_k}) - \wti{a}_{\bar{\imath}_k\bar{\imath}_t}(p_k-p_t-\ss_{\bar{\imath}_k}).
 \end{align}
 If $t\leq k$ (resp.~$t>k$), by applying Proposition \ref{Prop:inverse_Cartan} (ii) to the height function $s_{\imath_{t-1}}\cdots s_{\imath_1}\xi$ (resp.~$s_{\imath_k}\cdots s_{\imath_1}\xi$), 
 it follows that 
 \begin{equation}\label{eq;inverse_Cartan}
  \wti{a}_{\bar{\imath}_k\bar{\imath}_t}(p_k-p_t-\ss_{\bar{\imath}_k})=0 \ \ \ \ \text{ (resp.}~\wti{a}_{\bar{\imath}_k\bar{\imath}_t}(p_t-p_k+\ss_{\bar{\imath}_k})=0).
 \end{equation}
 Fix $1\leq k \leq L$ arbitrarily, and set $\jmath = \imath_k$ and $j=\bar{\jmath}$. 
 Setting $i=\bar{\imath}$, it follows from (\ref{eq:nugb}), (\ref{eq;inverse_Cartan}) and the definition of $\ell$-roots in (\ref{eq:l_root}) that
 \begin{align*}
  &\sum_{t=1}^k \bfm_t (\nu_k, \gb_t) = \sum_{t=1}^k \bfm_t \wti{a}_{j\bar{\imath}_t}(p_t-p_k+\ss_{j})
   =\sum_{t=1}^L \bfm_t \wti{a}_{j\bar{\imath}_t}(p_t-p_k+\ss_{j})\\
  &= \wti{a}_{ji}(p-\ss_{i}-p_k+\ss_j)+\wti{a}_{j i}(p+\ss_i-p_k+\ss_j)- \sum_{i'\neq i}\!
  \sum_{k=1}^{-a_{i'i}}\wti{a}_{ji'}(p-a_{i'i}+1-2k-p_k+\ss_j)\\
  &=\Big[ \wti{A}(z)A(z)\Big]_{(j,i),z^{p-p_k+\ss_{j}}} = \gd_{ij}\gd_{p_k,p+\ss_j},
 \end{align*}
 where $\Big[ C(z)\Big]_{(j,i),z^r}$ denotes the coefficient of $z^r$ of the $(j,i)$-entry of $C(z)$.
 This implies (\ref{eq:order_of_alpha}), and the assertion is proved.
\end{proof}

For each $\gb \in Q_\sg^+$, we define the finite subset
\[ \cP_{\bQ,\gb}^+ = \Omega_\bQ\big(KP(\gb)\big) \subseteq \cP_{\bQ}^+.
\]
Obviously we have $\cP_{\bQ}^+ = \bigsqcup_{\gb \in Q^+_\sg} \cP_{\bQ,\gb}^+$.

\begin{Prop}\label{Prop:isom_of_posets}
 For any $\gb \in R^+_\sg$, the map $\Omega_\bQ$ induces an isomorphism of posets from $(KP(\gb), \preceq_{\bmi})$ to $(\cP_{\bQ,\gb}^+, \leq)$.
\end{Prop}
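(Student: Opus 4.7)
My plan is to prove the assertion in two steps: first, that $\Omega_{\bQ}$ restricts to a bijection $KP(\gb) \to \cP^+_{\bQ,\gb}$; second, that this bijection intertwines the two partial orders. Surjectivity is automatic from the definition $\cP^+_{\bQ,\gb} = \Omega_{\bQ}(KP(\gb))$, and injectivity follows because $\wti{\Omega}_{\bQ}\colon R_\sg^+ \to \hat{J}$ is a bijection onto its image (Lemma \ref{Lem:image_of_Omega}) combined with the injectivity of the subsequent assignment $(\imath,p) \mapsto \bm{\varpi}_{\bar{\imath},p}$ in the untwisted case (using bijectivity of $\hat{J} \to \hat{I}_0$) or $(\imath,p) \mapsto \mathbf{t}(\bm{\varpi}^{\sg}_{\imath,p})$ in the twisted case (using injectivity of $\mathbf{t}$ on the relevant sublattice).

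For the order preservation I would focus on the untwisted case first, expecting the twisted case to reduce to it via $\mathbf{t}$. By Lemma \ref{Lem:McNamara_order}(ii), $\bfm \preceq_{\bmi} \bfn$ is equivalent to $\rho(\bfn - \bfm) \in \Z_{\geq 0}^L$, while $\Omega_{\bQ}(\bfm) \leq \Omega_{\bQ}(\bfn)$ says $\Omega_{\bQ}(\bfn - \bfm) \in \cQ^+$. Since $\bfn - \bfm \in \widetilde{KP}(0)$, its image sits inside $\cP_{\bQ} \cap \cQ$; by Lemma \ref{Lem:coincidence_of_lattices}, this intersection is freely $\Z$-spanned by $\{\bm{\ga}_{i,p} : (i,p) \in \hat{K}\}$, so write $\Omega_{\bQ}(\bfn - \bfm) = \sum_{(i,p) \in \hat{K}} c_{i,p} \bm{\ga}_{i,p}$; the containment in $\cQ^+$ then amounts to simply $c_{i,p} \geq 0$ for all $(i,p)$.

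The heart of the proof is to match these two sign conditions using Lemma \ref{Lem:Lem_for_comparing_orders}. That lemma produces, for each $(\bar{\imath},p) \in \hat{K}$, the preimage $\mathbf{v}_{(\imath,p)} := \Omega_{\bQ}^{-1}(\bm{\ga}_{\bar{\imath},p}) \in \widetilde{KP}(0)$ and asserts $\rho(\mathbf{v}_{(\imath,p)})_k = \gd_{k,k_0(\imath,p)}$, where $k_0(\imath,p)$ is the unique index with $\wti{\Omega}_{\bQ}(\gb_{k_0}) = (\imath, p + \ss_{\bar{\imath}})$. Since $\wti{\Omega}_{\bQ}$ is injective on its image, the map $(\bar{\imath},p) \mapsto k_0(\imath,p)$ is itself injective. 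Using injectivity of $\Omega_{\bQ}$ on $\Z^{R^+_\sg}$ we obtain $\bfn - \bfm = \sum c_{i,p} \mathbf{v}_{(\imath,p)}$; applying $\rho$ and $\Z$-linearity, $\rho(\bfn - \bfm)_k$ equals $c_{\bar{\imath}(k), p(k)}$ when $k$ lies in the image of $k_0$ and equals $0$ otherwise. Hence $\rho(\bfn - \bfm) \in \Z_{\geq 0}^L$ if and only if all $c_{i,p} \geq 0$, completing the untwisted case.

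For the twisted case the strategy is the same, applying the injective $\Z$-linear map $\mathbf{t}$ to transport the decomposition of $\cP_{\bQ} \cap \cQ$ and the $\rho$-computation from the simply-laced $\sg$ to $\cP$. The main obstacle I anticipate is not conceptual but rather checking the twisted analogs of Lemmas \ref{Lem:coincidence_of_lattices} and \ref{Lem:Lem_for_comparing_orders}—a careful but mechanical exercise in tracking how $\mathbf{t}$ and the exponents $d_i$ interact with the $\ell$-roots $\bm{\ga}_{i,a}$ and the generalized Coxeter element $\tau_{\bQ}$.
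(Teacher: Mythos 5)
Your proposal follows essentially the same route as the paper and invokes the same key lemmas (Lemmas \ref{Lem:McNamara_order}, \ref{Lem:coincidence_of_lattices}, \ref{Lem:Lem_for_comparing_orders}), but there is a genuine gap in the order-preservation argument, at the step where you assert that $\Omega_{\bQ}(\bfn - \bfm)$ ``sits inside $\cP_{\bQ} \cap \cQ$'' for $\bfn - \bfm \in \widetilde{KP}(0)$. That $\Omega_{\bQ}$ carries $\widetilde{KP}(0)$ into $\cQ$ is not obvious: the $\Z$-linear relation $\sum_t (\bfn_t - \bfm_t)\gb_t = 0$ in $Q_\sg$ is not a priori reflected by a relation among the corresponding $\bm{\varpi}_{\bar{\imath},p}$ modulo $\cQ$. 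It is precisely to secure this point that the paper's proof introduces the index set $\mathcal{L} = \{k \mid \text{there is no } k' > k \text{ with } \imath_{k'} = \imath_k\}$ and observes that $(\nu_k, \gb_l) = 0$ for $k \in \mathcal{L}$ and $l > k$, so that for every $k \in \mathcal{L}$
\[ \rho(\bfn - \bfm)_k = \sum_{t=1}^L (\bfn_t - \bfm_t)(\nu_k,\gb_t) = \Big(\nu_k, \sum_t(\bfn_t - \bfm_t)\gb_t\Big) = 0. \]
Combined with Lemma \ref{Lem:Lem_for_comparing_orders}(ii), which gives $\rho(\mathbf{v}_{(\imath,p)})_k = \gd_{k,k_0(\imath,p)}$ with the indices $k_0(\imath,p)$ running exactly over $\{1,\ldots,L\}\setminus\mathcal{L}$ as $(\bar{\imath},p)$ runs over $\hat{K}$, the injectivity of $\rho$ then forces $\bfn - \bfm = \sum_{k\notin\mathcal{L}} c_k\,\mathbf{v}_{(\imath_k,p_k)}$ with $c_k = \rho(\bfn-\bfm)_k \in \Z$; in particular $\Omega_{\bQ}(\bfn - \bfm) \in \cQ$, which fills your gap.

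Once this is in place, your sign-matching argument and the paper's are the same: the $c_k$ above coincide with your $c_{i,p}$, and $\bfm \preceq \bfn$ iff $\rho(\bfn-\bfm)$ has nonnegative entries iff all $c_k \geq 0$ iff $\Omega_{\bQ}(\bfn-\bfm) \in \cQ^+$. The paper does not isolate the containment $\Omega_{\bQ}(\widetilde{KP}(0)) \subseteq \cQ$ as a separate claim; it constructs the coefficients $c_k$ directly within each direction of the equivalence, which is a presentational difference. Also, a small correction on the twisted case: the reduction is by \emph{definition}, since $\cP_{\bQ}$, $\cQ_\Z$, etc.\ in twisted type are the images under $\mathbf{t}$ of the corresponding objects for the untwisted $\wh{\sg}$, so no twisted analogues of Lemmas \ref{Lem:coincidence_of_lattices} or \ref{Lem:Lem_for_comparing_orders} need to be established.
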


\begin{proof}
 By definition, the assertion in twisted types follows from the one for $\wh{\sg}$, and hence we may assume that $\fg$ is untwisted.
 The bijectivity is obvious. 
 It remains to show for $\bfm, \bfn \in KP(\gb)$ that $\Omega_\bQ(\bfm) \leq \Omega_\bQ(\bfn)$ if and only if $\bfm \preceq \bfn$.
 Set 
 \[ \mathcal{L} = \{ 1\leq k \leq L \mid \text{there is no $k'$ such that $k'>k$ and $\imath_{k'}=\imath_k$}\}.
 \] 
 If $k \in \mathcal{L}$, it follows from the definition of $\nu_k$ that $( \nu_k, \gb_l) = 0$ for $k<l\leq L$.
 Hence it follows for any $\bfm \in KP(\gb)$ that
 \[ \sum_{t=1}^k \bfm_t(\nu_k,\gb_t) = \sum_{t=1}^L \bfm_t( \nu_k,\gb_t) = ( \nu_k,\gb) \ \text{ for } k \in \mathcal{L}.
 \]
 Therefore for $\bfm,\bfn \in KP(\gb)$ such that $\bfm \preceq \bfn$, if we set
 \[ c_k = \sum_{t=1}^k (\bfn_t-\bfm_t) ( \nu_k,\gb_t) \in \Z_{\geq 0} \ \text{ for $k \notin \mathcal{L}$},
 \] 
 then we have  
 \[ \bfn = \bfm + \sum_{k \notin \mathcal{L}} c_k\Omega_{\bQ}^{-1}(\bm{\ga}_{\bar{\imath}_k,p_k})
 \] 
 by Lemmas \ref{Lem:Lem_for_comparing_orders} and \ref{Lem:McNamara_order} (ii), 
 where we set $\wti{\Omega}_{\bQ}(\gb_k)=(\imath_k,p_k+\ss_{\bar{\imath}_k})$ (note that $k \notin \mathcal{L}$ implies that $(\imath_k,p_k-\ss_{\bar{\imath}_k})$ also belongs to 
 $\mathrm{Im}\, \wti{\Omega}_{\bQ}$). 
 Hence $\Omega_{\bQ}(\bfm) \leq \Omega_{\bQ}(\bfn)$ is proved. 
 On the other hand, the `only if' part is proved from Lemmas \ref{Lem:coincidence_of_lattices} and \ref{Lem:Lem_for_comparing_orders}.
 The proof is complete.
\end{proof}

For $\gb \in Q^+_\sg$, let $\mathcal{C}_{\bQ,\gb}$ be the full subcategory of $\cC_{\bQ}$ consisting of modules whose simple subquotients are isomorphic to 
$L(\bm{\pi})$ for some $\bm{\pi} \in \cP^+_{\bQ,\gb}$.

\begin{Lem}\label{Lem:direct_sum_of_categoryC}
 The category $\cC_{\bQ}$ has a direct sum decomposition 
 \[ \cC_{\bQ} \cong \bigoplus_{\gb \in Q_\sg^+} \cC_{\bQ,\gb}.
 \]
 Moreover, we have $\cC_{\bQ,\gb_1} \otimes \cC_{\bQ,\gb_2}\subseteq \cC_{\bQ,\gb_1+\gb_2}$ for $\gb_1,\gb_2 \in Q^+_\sg$,
 and hence $\cC_{\bQ}$ is stable under taking tensor products.
\end{Lem}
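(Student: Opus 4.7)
The plan is to reduce both claims to the following combinatorial fact: if $\bm{\pi}_k \in \cP^+_{\bQ,\gb_k}$ for $k = 1, 2$ and $\bm{\pi}_1 - \bm{\pi}_2 \in \cQ$, then $\gb_1 = \gb_2$. To prove it, write $\bm{\pi}_k = \Omega_\bQ(\bfm_k)$ with $\bfm_k \in KP(\gb_k)$. The map $\Omega_\bQ$ is $\Z$-linear and injective (the injectivity being immediate from the bijectivity of $\wti{\Omega}_\bQ$ onto its image and the identification $\hat{J}\cong\hat{I}_0$), so the question reduces to showing $\Omega_\bQ^{-1}(\cP_\bQ \cap \cQ) \subseteq \widetilde{KP}(0)$, which is the combination of Lemma \ref{Lem:coincidence_of_lattices} (the $\Z$-span description of $\cP_\bQ \cap \cQ$) with Lemma \ref{Lem:Lem_for_comparing_orders}(i) (each generator $\bm{\ga}_{\bar\imath,p}$ pulls back to $\widetilde{KP}(0)$) in untwisted types. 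For twisted $\fg$ I would pull the statement back through the $\Z$-linear map $\mathbf{t}$ of (\ref{eq:map_t}) to the untwisted $\sg$-setting. As an immediate set-theoretic consequence, $\cP^+_\bQ = \bigsqcup_\gb \cP^+_{\bQ,\gb}$.

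The direct sum decomposition is then immediate. Given an indecomposable $M \in \cC_\bQ$, Theorem \ref{Thm:Chari_Moura} furnishes $\bm{\pi}_0 \in \cP^+$ such that every composition factor of $M$ has $\ell$-highest weight in $(\bm{\pi}_0 + \cQ)\cap \cP^+_\bQ$; the combinatorial fact forces all of these to lie in a single $\cP^+_{\bQ,\gb}$, whence $M \in \cC_{\bQ,\gb}$. Decomposing a general $M$ into indecomposables gives the decomposition $\cC_\bQ = \bigoplus_\gb \cC_{\bQ,\gb}$.

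For the tensor inclusion, Jordan--H\"older reduces the claim to the case $M_k = L(\bm{\tau}_k)$ with $\bm{\tau}_k \in \cP^+_{\bQ,\gb_k}$. The submodule of $L(\bm{\tau}_1)\otimes L(\bm{\tau}_2)$ generated by the tensor product of the $\ell$-highest weight vectors is an $\ell$-highest weight module of $\ell$-highest weight $\bm{\tau}_1 + \bm{\tau}_2$ by (\ref{eq:tensor_lweight2}), and the $\Z$-linearity of $\Omega_\bQ$ together with $KP(\gb_1)+KP(\gb_2)\subseteq KP(\gb_1+\gb_2)$ places $\bm{\tau}_1 + \bm{\tau}_2$ in $\cP^+_{\bQ,\gb_1+\gb_2}$. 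Any other composition factor $L(\bm{\pi})$ of $L(\bm{\tau}_1)\otimes L(\bm{\tau}_2)$ satisfies $\bm{\pi}\leq \bm{\tau}_1+\bm{\tau}_2$, in particular $\bm{\pi} - (\bm{\tau}_1+\bm{\tau}_2)\in \cQ$; once one knows $\bm{\pi}\in\cP^+_\bQ$, the combinatorial fact immediately gives $\bm{\pi}\in\cP^+_{\bQ,\gb_1+\gb_2}$.

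The main obstacle I anticipate is exactly this last input: showing that every composition factor of a tensor product of two simples of $\cC_\bQ$ remains in $\cC_\bQ$. The cleanest route is first to verify that every $\ell$-weight of a fundamental module $L(\bm{\varpi}_{i,a})$ with $\bm{\varpi}_{i,a}\in\cP_\bQ$ lies in $\cP_\bQ$ (a $q$-character-style check depending on the type), and then to propagate this $\ell$-weight containment to all objects of $\cC_\bQ$ and to their tensor products using Theorem \ref{Thm:equivalence_for_Weyl_mod} together with (\ref{eq:tensor_lweight}); alternatively one may simply appeal to the tensor closure of $\cC_\bQ$ already recorded in \cite{hernandez2015quantum,kang2016symmetric,kashiwara2019categorical,oh2019categorical,fujita2020q}.
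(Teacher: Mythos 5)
Your proof of the direct sum decomposition is the paper's own argument, phrased through the auxiliary ``combinatorial fact'': Theorem~\ref{Thm:Chari_Moura} reduces the claim to showing that two $\ell$-dominant $\ell$-weights in $\cP^+_\bQ$ which differ by an element of $\cQ$ lie over the same $\gb$, and this you deduce, exactly as the paper does, from Lemma~\ref{Lem:coincidence_of_lattices} together with Lemma~\ref{Lem:Lem_for_comparing_orders}(i), handling the twisted case by pulling back through $\mathbf{t}$. This part matches the paper step for step and is correct.

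For the tensor inclusion the paper simply writes that it ``follows from (\ref{eq:tensor_lweight})'', whereas you flag, correctly, that (\ref{eq:tensor_lweight}) alone only tells you that an $\ell$-dominant $\ell$-weight $\bm{\pi}$ of $L(\bm{\tau}_1)\otimes L(\bm{\tau}_2)$ satisfies $\bm{\pi}\in(\bm{\tau}_1+\bm{\tau}_2+\cQ)\cap\cP^+$; to invoke the combinatorial fact one still needs to know $\bm{\pi}\in\cP^+_\bQ$, i.e.\ that the $\ell$-weight support of simples in $\cC_\bQ$ stays inside $\cP_\bQ$ (equivalently, the already-known monoidal stability of $\cC_\bQ$). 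You are right that this input is genuinely required; it is established in the cited sources \cite{hernandez2015quantum,kang2016symmetric,kashiwara2019categorical,oh2019categorical,fujita2020q}, and the paper is implicitly relying on it. So your proposal is essentially the paper's proof, with the one terse step made explicit and its missing hypothesis identified and pointed to the appropriate references rather than re-proved. If you wanted to close the gap internally rather than by citation, the cleanest route is the one you sketch: verify the $\ell$-weight support containment for fundamental modules $L(\bm{\varpi}_\ga)$ with $\ga\in R^+_\sg$, then propagate via Theorem~\ref{Thm:equivalence_for_Weyl_mod}, Lemma~\ref{Lem:compatible_with_words} and (\ref{eq:tensor_lweight}) to local Weyl modules, whose $q$-characters dominate those of the simples.
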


\begin{proof}
 For the proof of the former assertion, by Lemma \ref{Thm:Chari_Moura}  
 it is enough to show for any $\gb \in Q^+_\sg$ that if $\bm{\pi} \in \cP_{\bQ,\gb}^+$ and $\bm{\gs} \in (\bm{\pi}+\mathcal{Q})\cap \cP^+_{\bQ}$, then 
 $\bm{\gs} \in \cP^+_{\bQ,\gb}$. 
 By definition, this assertion in twisted types follows from the one for $\hat{\sg}$, and therefore we may assume that $\fg$ is untwisted. 
 Since $\bm{\pi}-\bm{\gs} \in \cP_{\bQ} \cap \mathcal{Q}$, $\bm{\gs} \in \cP^+_{\bQ,\gb}$ follows from Lemmas \ref{Lem:coincidence_of_lattices} and \ref{Lem:Lem_for_comparing_orders} (i),
 and hence the former assertion is proved.
 The latter assertion follows from (\ref{eq:tensor_lweight}).
\end{proof}


\subsection{Generalized quantum affine Schur--Weyl duality functor}\label{subsection:GQASWD}

In the rest of this paper, we fix a $Q$-datum $\bQ=(J,\gs,\xi)$ and a reduced word $\bmi = (\imath_1,\ldots,\imath_L)$ of $w_0$ adapted to $\bQ$.
For $\bfm \in \Z_{\geq 0}^{R^+_{\sg}}$, we will write $\bm{\pi}_{\bfm} \in \cP^+_{\bQ}$ for $\Omega_{\bQ}(\bfm)$.
When $\bfm=\bfm_\ga$ for $\ga \in R^+_{\sg}$, we will also write $\bm{\varpi}_{\ga}$ for $\Omega_{\bQ}(\bfm_\ga)$.
We set $\gb_k = s_{\imath_1}\cdots s_{\imath_{k-1}} (\ga_{\imath_k}^{\sg})$ for $1\leq k \leq L$.

First we recall the following proposition.

\begin{Prop}\label{Prop:quiver_for_R}
 Set $L_\imath = L\big(\bm{\varpi}_{\ga_{\imath}^\sg}\big)$ for $\imath \in J$,
 and let $d_{\imath\jmath} \in \Z_{\geq 0}$ $(\imath,\jmath \in J)$ denote the order of the zero of the denominator $d_{L_{\imath},L_{\jmath}}(z)$ at $z=1$. 
 Let $\mathscr{S}$ be a quiver whose vertex set is $J$ such that for $\imath,\jmath \in J$, 
 there are $d_{\imath\jmath}$-many arrows from $\imath$ to $\jmath$.
 Then the underlying graph of $\mathscr{S}$ coincides with the Dynkin diagram of $\sg$, and for any $\imath,\jmath \in J$ such that $\imath \sim \jmath$ we have 
 $\imath \to \jmath$ in $\mathscr{S}$ if $\xi_{\imath} < \xi_{\jmath}$. 
\end{Prop}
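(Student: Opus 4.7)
My strategy is to reduce the multiplicities $d_{\imath\jmath}$ to the known denominator formulas for fundamental $R$-matrices in all affine types, using the $Q$-datum combinatorics to translate between the two settings.

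The first step is to identify $L_\imath$ concretely. Since $\widetilde{\Omega}_{\bQ}$ is independent of the choice of adapted reduced word by Lemma~\ref{Lem:image_of_Omega}, by picking an adapted reduced word that takes $\imath$ as the very first source one computes
\[ \widetilde{\Omega}_{\bQ}(\ga_{\imath}^{\sg}) = (\imath,\xi_{\imath}). \]
Hence $L_{\imath} \cong L(\bm{\varpi}_{\bar\imath,\xi_{\imath}})$ in untwisted types and $L_{\imath} \cong L(\mathbf{t}(\bm{\varpi}^{\sg}_{\imath,\xi_{\imath}}))$ in twisted types, pinning down the $\ell$-weights of the relevant fundamental modules in terms of the height function.

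Next, by the spectral-shift covariance of normalized $R$-matrices, the order of the zero of $d_{L_\imath,L_\jmath}(z)$ at $z=1$ equals the order of the zero of the ``universal'' denominator $d_{L(\bm{\varpi}_{\bar\imath,0}),L(\bm{\varpi}_{\bar\jmath,0})}(z)$ at $z=q^{\xi_\jmath-\xi_\imath}$ (with an appropriate $d_i$-power adjustment in twisted types, inherited through the map $\mathbf{t}$). The question thus becomes a purely combinatorial comparison: is $q^{\xi_\jmath-\xi_\imath}$ a zero of the universal denominator, and with what multiplicity?

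I would then invoke the explicit formulas for these denominators collected in the papers cited in the text, notably \cite{kang2016symmetric, kashiwara2019categorical, oh2019categorical, fujita2020q}. The defining axioms (i) and (ii) of a height function force $\xi_\imath - \xi_\jmath$ (when $\imath \sim \jmath$) to take exactly the value for which $q^{\xi_\jmath - \xi_\imath}$ is the ``nearest'' zero of the corresponding universal denominator, and this zero has multiplicity one; for $\imath \not\sim \jmath$ no allowed height-function shift meets a zero. Combining these two facts yields $d_{\imath\jmath} = 1$ precisely when $\imath\sim\jmath$ and $\xi_\imath < \xi_\jmath$, and $d_{\imath\jmath} = 0$ otherwise, which is exactly the assertion. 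The main obstacle is that verifying the matching between height-function gaps and denominator zeros is type-by-type and is delicate in the twisted and exceptional affine types; however, the $Q$-datum axioms are engineered precisely to encode this matching uniformly, so the work reduces to reading off the denominator tables already established in the literature.
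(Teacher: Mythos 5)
The paper's own ``proof'' of this proposition is simply a pointer to the literature (\cite{kang2015symmetric,kang2016symmetric,kashiwara2019categorical,oh2019categorical,fujita2018tilting}), where the assertion was established type by type; it does not re-derive the statement. Your proposal attempts an actual derivation, which is more ambitious, but the first step already contains a genuine error: the identity $\widetilde{\Omega}_{\bQ}(\ga_{\imath}^{\sg}) = (\imath,\xi_{\imath})$ holds only when $\imath$ is a source of $\bQ$. For a non-source $\imath$, no adapted reduced word can begin with $\imath$, so the move you invoke (``picking an adapted reduced word that takes $\imath$ as the very first source'') is unavailable. Concretely, in type $A_2$ with $\xi_1 = 0$, $\xi_2 = 1$, the unique adapted commutation class is $(2,1,2)$, and one computes $\widetilde{\Omega}_{\bQ}(\ga_1^{\sg}) = (2,-1)$, not $(1,0)$; hence $L_1 \cong L(\bm{\varpi}_{2,-1})$ rather than $L(\bm{\varpi}_{1,\xi_1})$. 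More generally, what sits at $(\imath,\xi_{\imath})$ is $\ggg_{\imath}^{\bQ} = (1-\tau_{\bQ}^{\ss_{\bar\imath}})\gL_{\imath}^{\sg}$ (Proposition~\ref{Prop:Coxeter_element}), which equals $\ga_{\imath}^{\sg}$ only when $\imath$ is a source; otherwise even the first coordinate of $\widetilde{\Omega}_{\bQ}(\ga_{\imath}^{\sg})$ can differ from $\imath$. Everything downstream in your plan — the spectral shift, which power of $q$ the universal denominator is evaluated at, and the matching with the $Q$-datum axioms — is built on this identification, so the computation of $d_{\imath\jmath}$ would come out wrong whenever $\imath$ or $\jmath$ is not a source.

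Even if the first step were repaired using the correct formula for $\widetilde{\Omega}_{\bQ}(\ga_{\imath}^{\sg})$, the remainder of your plan ultimately defers to the same explicit denominator tables from \cite{kang2015symmetric,kang2016symmetric,kashiwara2019categorical,oh2019categorical}; the paper is content to cite those sources directly for the full statement rather than reassembling it from the denominator formulas.
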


\begin{proof}
 In untwisted $ADE$ types, the assertion was proved in \cite{kang2015symmetric} under some conjecture, 
 which was proved later in \cite{fujita2018tilting,oh2019categorical}.
 In twisted types, this was proved in \cite{kang2016symmetric,oh2019categorical}. 
 In the remaining types, this was proved in \cite{kashiwara2019categorical,oh2019categorical}.
\end{proof}

For $\gb \in Q_\sg^+$, let $R(\gb)$ be the symmetric quiver Hecke algebra at $\gb$ associated with $\sg$ and polynomials $(Q_{\imath\jmath})_{\imath,\jmath \in J}$ defined by
\[ Q_{\imath \jmath}(u,v)=\begin{cases} (u-v)^{d_{\imath \jmath}}(v-u)^{d_{\jmath\imath}} & \text{if } \imath \neq \jmath, \\
                             0 & \text{if } \imath=\jmath,\end{cases}
\]
where $d_{\imath\jmath}$ are as in the above proposition.
Let $\wh{R}(\gb)$ be its completion.

Write $L_\imath = L(\bm{\varpi}_{\ga_\imath^\sg})$ for $\imath \in J$.
We write $\wh{W}_\imath$ for the completion of the affinization $(L_\imath)_{\aff}$ with indeterminate $w_\imath$, namely,
\[ \wh{W}_\imath = (L_\imath)_{\aff}\otimes_{\bk[z_{L_\imath}^{\pm 1}]} \bk\llbracket w_\imath\rrbracket,
\]
where $z_{L_\imath}$ acts on $\bk\llbracket w_i\rrbracket$ via $z_{L_\imath}=1 + w_\imath$.
We have $\wh{W}_\imath \cong \wh{W}(\bm{\varpi}_{\ga_\imath^\sg})$ as mentioned in (\ref{eq:W_and_affinization}).
For $\gb \in Q^+_\sg$ with $|\gb|=m$,
we set
\[ \wh{W}_{\bmj} = \wh{W}_{\jmath_1} \hat{\otimes} \cdots \hat{\otimes} \wh{W}_{\jmath_m} \ \text{ for $\bmj=(\jmath_1,\ldots,\jmath_m) \in J^\gb$},\ \ 
   \text{ and } \ \  \wh{W}^{\otimes \gb} = \bigoplus_{\bmj \in J^\gb} \wh{W}_{\bmj}.
\]
Let $e(\bmj)$ denote the projection $\wh{W}^{\otimes \gb} \to \wh{W}_{\bmj}$ for $\bmj \in J^\gb$,
and regard $\wh{W}^{\otimes \gb}$ as a right module over $\wh{\bP}_\gb=\bigoplus_{\bmj \in J^\gb}e(\bmj)\bk\llbracket x_1,\ldots,x_m\rrbracket \subseteq \wh{R}(\gb)$,
where $\bk\llbracket x_1,\ldots,x_m\rrbracket$ acts on $\wh{W}_{\bmj}$ via the natural isomorphism $\bk\llbracket x_1,\ldots,x_m\rrbracket \stackrel{\sim}{\to} \bk\llbracket
w_{\jmath_1},\ldots,w_{\jmath_m} \rrbracket$.
By extending this to a right $\wh{R}(\gb)$-action by defining suitable actions of $\tau_l$ using normalized $R$-matrices,
we can define a $(U_q',\wh{R}(\gb))$-bimodule structure on $\wh{W}^{\otimes \gb}$ (see \cite{kang2018symmetric}).
Then a functor 
\[ \cF_\gb\colon \modfg{\wh{R}(\gb)} \to U_q'\mathrm{\mathchar`-mod}, \ \ \ \ M \mapsto \wh{W}^{\otimes \gb} \otimes_{\wh{R}(\gb)} M
\]
is defined, where $U_q'\mathrm{\mathchar`-mod}$ denotes the category of $U_q'$-modules.
Recall that the center of $\wh{R}(\gb)$ is identified with $\wh{\bS}_\gb = \bigotimes_{\imath=1}^N\bk\llbracket w_{\imath, 1},\ldots,w_{\imath, m_{\imath}}\rrbracket^{\mathfrak{S}_{m_{\imath}}}$,
where we set $\gb= \sum_\imath m_\imath \ga_{\imath}^\sg$.
The center $\wh{\bS}_\gb$ also acts on $\cF_\gb(M)$, and hence we often consider $\cF_{\gb}$ as a functor to the category of $(U_q',\wh{\bS}_\gb)$-bimodules.
Set 
\[ \cF = \bigoplus_{\gb \in Q_\sg^+} \cF_\gb\colon \bigoplus_{\gb \in Q^+_\sg} \modfg{\wh{R}(\gb)} \to U_q'\mathrm{\mathchar`-mod},
\]
which we call the \textit{generalized quantum affine Schur--Weyl duality functor}.
This functor has the following distinguished properties.

\begin{Thm}[\cite{kang2018symmetric}]\label{Thm:Properties_of_F}
 {\normalfont(i)} For any $\gb$, $\wh{W}^{\otimes \gb}$ is a flat right $\wh{R}(\gb)$-module, and hence the functor $\cF$ is exact.\\
 {\normalfont(ii)} For any $M_1 \in \modfg{\wh{R}(\gb_1)}$ and $M_2 \in \modfg{\wh{R}(\gb_2)}$, we have
  \[ \cF(M_1 \circ M_2) \cong \cF(M_1)\hat{\otimes} \cF(M_2)
  \]
  as $U_q'$-modules.
\end{Thm}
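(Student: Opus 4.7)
The plan for part (i) is to reduce flatness of $\wh{W}^{\otimes \gb}$ over $\wh{R}(\gb)$ to freeness over the commutative subalgebra $\wh{\bP}_\gb \subseteq \wh{R}(\gb)$, which is much more accessible. By Lemma \ref{Lem:properties_of_deformed_local} (ii), each deformed local Weyl module $\wh{W}_\imath = \wh{W}(\bm{\varpi}_{\ga_\imath^\sg})$ is free of finite rank over its endomorphism ring $\bk\llbracket w_\imath\rrbracket$. Taking completed tensor products, the summand $\wh{W}_\bmj$ is free of finite rank over $\bk\llbracket w_{\jmath_1},\ldots,w_{\jmath_m}\rrbracket = e(\bmj)\wh{\bP}_\gb e(\bmj)$ for each $\bmj \in J^\gb$, so $\wh{W}^{\otimes \gb}$ is a free right $\wh{\bP}_\gb$-module of finite rank. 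Using the PBW-type decomposition $\wh{R}(\gb) = \bigoplus_{w \in \mathfrak{S}_m} \tau_w \wh{\bP}_\gb$ together with the fact that the right action of $\tau_l$ (defined via normalized $R$-matrices) maps $\wh{W}_\bmj$ into $\wh{W}_{\gs_l(\bmj)}$, I would then construct a splitting of the right-multiplication map $\wh{W}^{\otimes \gb} \otimes_{\wh{\bP}_\gb} \wh{R}(\gb) \to \wh{W}^{\otimes \gb}$ that exhibits $\wh{W}^{\otimes \gb}$ as a direct summand of a free right $\wh{R}(\gb)$-module; projectivity, and hence flatness, follows, and exactness of $\cF$ is then immediate.

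For part (ii), the argument is largely formal. Unraveling the definition of the convolution product yields
\[ \cF(M_1 \circ M_2) \cong \wh{W}^{\otimes (\gb_1+\gb_2)} e(\gb_1,\gb_2) \otimes_{\wh{R}(\gb_1) \otimes \wh{R}(\gb_2)} (M_1 \otimes M_2). \]
The concatenation bijection $J^{\gb_1} \times J^{\gb_2} \stackrel{\sim}{\to} \{\bmj \in J^{\gb_1+\gb_2} \mid (\jmath_1,\ldots,\jmath_{|\gb_1|}) \in J^{\gb_1}\}$ induces a $(\wh{R}(\gb_1)\otimes \wh{R}(\gb_2))$-linear identification
\[ \wh{W}^{\otimes(\gb_1+\gb_2)} e(\gb_1,\gb_2) \cong \wh{W}^{\otimes \gb_1} \hat{\otimes} \wh{W}^{\otimes \gb_2}, \]
after which commuting the completed tensor product past the ordinary tensor over $\wh{R}(\gb_1) \otimes \wh{R}(\gb_2)$ yields the desired isomorphism with $\cF(M_1) \hat{\otimes} \cF(M_2)$.

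The main obstacle, as expected, lies in the intermediate step of part (i): verifying that the right action of $\tau_l$, built from the normalized $R$-matrix $R^{\mathrm{norm}}_{L_{\jmath_l},L_{\jmath_{l+1}}}$ together with corrections involving the denominator $d_{L_{\jmath_l},L_{\jmath_{l+1}}}$, is well-defined on the completions $\wh{W}_\bmj$ and behaves well enough with respect to the $\wh{\bP}_\gb$-module structure to produce the required splitting. This ultimately rests on showing that the correction factors are units in the appropriate completion, which should follow from the nonvanishing $d_{L_\imath, L_\jmath}(1) \neq 0$ of Lemma \ref{Lem:compatible_with_words} together with the braid relations for the $R$-matrices.
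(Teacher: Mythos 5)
The paper does not reprove this result: its proof is a single paragraph citing \cite{kang2018symmetric} for the finite-dimensional case and then invoking Remark~\ref{Rem:completion}~(2) (every object of $\modfg{\wh{R}(\gb)}$ is a projective limit of finite-dimensional modules) to pass to the completed setting. Your proposal is therefore a genuinely different route---an attempt at a self-contained proof of the underlying KKK result---rather than the paper's argument.

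For part~(ii), your sketch is essentially the KKK argument. The two points you would still need to verify are (a) that the concatenation identification $\wh{W}^{\otimes(\gb_1+\gb_2)} e(\gb_1,\gb_2) \cong \wh{W}^{\otimes \gb_1} \hat{\otimes} \wh{W}^{\otimes \gb_2}$ is simultaneously $(U_q',\wh{R}(\gb_1)\otimes\wh{R}(\gb_2))$-bilinear, which boils down to the hexagon/coproduct compatibility of the $R$-matrices, and (b) that the completed tensor $\hat{\otimes}$ commutes with the tensor over $\wh{R}(\gb_1)\otimes\wh{R}(\gb_2)$, which requires a finiteness argument. Neither is free, but both are standard.

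For part~(i) there is a real gap. Beyond the fact that the crucial splitting of $\wh{W}^{\otimes\gb}\otimes_{\wh{\bP}_\gb}\wh{R}(\gb)\to\wh{W}^{\otimes\gb}$ is asserted rather than constructed (and a $\wh{\bP}_\gb$-linear section such as $v\mapsto v\otimes 1$ will not be $\wh{R}(\gb)$-linear, so this is not just a matter of writing things out), the proposed resolution via ``the nonvanishing $d_{L_\imath,L_\jmath}(1)\neq 0$ of Lemma~\ref{Lem:compatible_with_words}'' misapplies that lemma. Lemma~\ref{Lem:compatible_with_words} concerns only the specific ordered sequence $L_1,\dots,L_L$ built from the $\bQ$-adapted reduced word and says $d_{L_k,L_l}(1)\neq 0$ for $k<l$. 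For generic pairs $(\imath,\jmath)\in J^2$ appearing in the components $\wh{W}_{\bmj}$ of $\wh{W}^{\otimes\gb}$, the denominator $d_{L_\imath,L_\jmath}(z)$ \emph{does} vanish at $z=1$: indeed $d_{\imath\jmath}$ is by definition the order of this zero (Proposition~\ref{Prop:quiver_for_R}), and it is nonzero whenever $\imath\sim\jmath$ with $\imath\to\jmath$ in $\mathscr{S}$; these zeros are exactly what produce the quiver Hecke relation $Q_{\imath\jmath}(u,v)=(u-v)^{d_{\imath\jmath}}(v-u)^{d_{\jmath\imath}}$. The whole delicacy of the KKK construction, and of the flatness proof, is to show that the $\tau_l$-action is nonetheless regular and that $\wh{W}^{\otimes\gb}$ is flat \emph{despite} these poles; the ``correction factors'' are not units, and dismissing them as such removes the genuine content of the theorem.
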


\begin{proof} 
 For finite-dimensional modules, the assertions are proved in \cite{kang2018symmetric}.
 Hence the assertions hold in general by Remark \ref{Rem:completion} (2).
 It is also easy to check that the proofs in [\textit{loc.~cit.}] also work in our completed setting. 
\end{proof}

In the rest of this paper, we will write $S(\bfm)$ for $S_{\bmi}(\bfm)$, $\bar{\gD}(\bfm)$ for $\bar{\Delta}_{\bmi}(\bfm)$, and so on.

\begin{Thm}[\cite{kang2015symmetric,kashiwara2019categorical,oh2019categorical}]\label{Thm:correspondence_of_simples}
 Let $\gb \in Q^+_\sg$.\\
 {\normalfont(i)} For any $\bfm\in KP(\gb)$, we have $\cF\big(S(\bfm)\big) \cong L\big(\bm{\pi}_{\bfm}\big)$.
  In particular, $\cF$ induces a functor from $\modfd{\wh{R}(\gb)}$ to $\cC_{\bQ,\gb}$.\\
 {\normalfont(ii)} For any $\bfm \in KP(\gb)$, $\cF\big(\bar{\gD}(\bfm)\big) \cong W\big(\bm{\pi}_{\bfm}\big)$ and $\cF\big(\bar{\nabla}(\bfm)\big) \cong W^\vee(\bm{\pi}_{\bfm}\big)$.
\end{Thm}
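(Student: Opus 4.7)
The plan is to prove part (ii) first and then deduce part (i) from it, using the exactness and monoidality of $\cF$ from Theorem \ref{Thm:Properties_of_F}, the characterization of local Weyl modules in Theorem \ref{Thm:equivalence_for_Weyl_mod}, the denominator compatibility of Lemma \ref{Lem:compatible_with_words}, and the poset isomorphism of Proposition \ref{Prop:isom_of_posets}. The first step is to identify the image of each cuspidal module $S(\ga)$ for $\ga\in R_\sg^+$. For a simple root $\ga=\ga_\imath^\sg$, the computation is immediate: $\wh{R}(\ga_\imath^\sg)\cong\bk\llbracket x\rrbracket$ acts on $\wh{W}_\imath$ via $x\leftrightarrow w_\imath$, so $\cF(S(\ga_\imath^\sg))=\wh{W}_\imath/w_\imath\wh{W}_\imath\cong L_\imath=L(\bm{\varpi}_{\ga_\imath^\sg})$. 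For a non-simple positive root $\ga$, I would induct on the height by realizing $S(\ga)$ as a distinguished subquotient of a convolution $S(\ga')\circ S(\ga'')$ for an appropriate minimal pair $\ga=\ga'+\ga''$ from the convex order, and then transferring this description through $\cF$ to a subquotient of $L(\bm{\varpi}_{\ga'})\otimes L(\bm{\varpi}_{\ga''})$; matching the result with $L(\bm{\varpi}_\ga)$ ultimately rests on the compatibility between the zeros of the $R$-matrix denominators and the polynomials $\mathsf{Q}_{\imath\jmath}$ defining $\wh{R}(\gb)$, encoded in Proposition \ref{Prop:quiver_for_R}. I expect this cuspidal identification to be the main obstacle, and anticipate it will require the case-by-case analysis underlying \cite{kang2015symmetric,kang2016symmetric,kashiwara2019categorical,oh2019categorical}.

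Once the cuspidals are handled, part (ii) is essentially formal. From the definition $\bar{\gD}(\bfm)=S(\gb_1)^{\circ\bfm_1}\circ\cdots\circ S(\gb_L)^{\circ\bfm_L}$ and Theorem \ref{Thm:Properties_of_F}(ii), one obtains
\[
\cF(\bar{\gD}(\bfm))\cong L(\bm{\varpi}_{\gb_1})^{\otimes\bfm_1}\otimes\cdots\otimes L(\bm{\varpi}_{\gb_L})^{\otimes\bfm_L}.
\]
Since Lemma \ref{Lem:compatible_with_words} guarantees $d_{L_r,L_s}(1)\neq 0$ for all $1\leq r<s\leq L$, Theorem \ref{Thm:equivalence_for_Weyl_mod}(i) identifies the right-hand side with $W(\bm{\pi}_\bfm)$. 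The parallel computation on $\bar{\nabla}(\bfm)=S(\gb_L)^{\circ\bfm_L}\circ\cdots\circ S(\gb_1)^{\circ\bfm_1}$ combined with Theorem \ref{Thm:equivalence_for_Weyl_mod}(ii) yields $\cF(\bar{\nabla}(\bfm))\cong W^\vee(\bm{\pi}_\bfm)$, completing (ii).

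For part (i), I would argue by induction on $\bfm$ in the poset $(KP(\gb),\preceq_\bmi)$. Assume $\cF(S(\bfn))\cong L(\bm{\pi}_\bfn)$ for every $\bfn\prec\bfm$. The kernel $K$ of $\bar{\gD}(\bfm)\twoheadrightarrow S(\bfm)$ has a composition series whose factors are $S(\bfn)$ with $\bfn\prec\bfm$, so by the inductive hypothesis and Proposition \ref{Prop:isom_of_posets} the composition factors of $\cF(K)$ are $L(\bm{\pi}_\bfn)$ with $\bm{\pi}_\bfn<\bm{\pi}_\bfm$; in particular $L(\bm{\pi}_\bfm)$ does not occur in $\cF(K)$. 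The short exact sequence
\[
0\to\cF(K)\to W(\bm{\pi}_\bfm)\to\cF(S(\bfm))\to 0
\]
therefore forces $L(\bm{\pi}_\bfm)$ to appear as a composition factor of $\cF(S(\bfm))$, and since $\cF(S(\bfm))$ is a quotient of $W(\bm{\pi}_\bfm)$ its head is $L(\bm{\pi}_\bfm)$. Dually, applying $\cF$ to $S(\bfm)\hookrightarrow\bar{\nabla}(\bfm)$ embeds $\cF(S(\bfm))$ into $W^\vee(\bm{\pi}_\bfm)$, whose socle is $L(\bm{\pi}_\bfm)$, and the analogous cokernel analysis shows the socle of $\cF(S(\bfm))$ is also $L(\bm{\pi}_\bfm)$. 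Because the $\bm{\pi}_\bfm$-weight space of $W^\vee(\bm{\pi}_\bfm)$ is one-dimensional, $L(\bm{\pi}_\bfm)$ appears in $\cF(S(\bfm))$ with multiplicity exactly one, so $\cF(S(\bfm))\cong L(\bm{\pi}_\bfm)$. The restriction of $\cF$ to a functor $\modfd{\wh{R}(\gb)}\to\cC_{\bQ,\gb}$ then follows because any finite-dimensional $\wh{R}(\gb)$-module is an iterated extension of the $S(\bfm)$ with $\bfm\in KP(\gb)$.
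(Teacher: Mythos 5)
Your proposal tracks the paper's own proof almost exactly: both you and the paper derive part (ii) from Theorems \ref{Thm:Properties_of_F} and \ref{Thm:equivalence_for_Weyl_mod} together with Lemma \ref{Lem:compatible_with_words}, and both defer the cuspidal identification $\cF(S(\ga))\cong L(\bm{\varpi}_\ga)$ to the case-by-case analysis in the cited references — the paper simply quotes \cite[Theorem 4.3.4]{kang2015symmetric}, \cite[Theorem 5.1]{kang2016symmetric}, etc., while you sketch the inductive-on-height route that underlies those results. For part (i) the paper merely cites ``the argument in \cite[Theorem 5.4]{kang2016symmetric}''; the argument you spell out (quotient of $W(\bm{\pi}_\bfm)$ gives head $L(\bm{\pi}_\bfm)$, submodule of $W^\vee(\bm{\pi}_\bfm)$ gives socle $L(\bm{\pi}_\bfm)$, and the one-dimensional $\ell$-highest weight space of $W^\vee(\bm{\pi}_\bfm)$ forces $[\cF(S(\bfm)):L(\bm{\pi}_\bfm)]=1$, whence equality) is a correct, self-contained version of that citation, so your write-up is essentially the paper's proof with the omitted details restored.
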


\begin{proof}
 For any $\ga \in R^+_\sg$, $\cF\big(S(\ga)\big) \cong L(\bm{\varpi}_\ga)$ follows from \cite[Theorem 4.3.4]{kang2015symmetric}, \cite[Theorem 5.1]{kang2016symmetric},
 \cite[Theorem 6.3]{kashiwara2019categorical}, and \cite[Remark 5.12, Lemma 6.10]{oh2019categorical}.
 Then the assertion (ii) is proved from Theorem \ref{Thm:equivalence_for_Weyl_mod} and Theorem \ref{Thm:Properties_of_F} (ii)
 by using Lemma \ref{Lem:compatible_with_words}.
 By the argument in \cite[Theorem 5.4]{kang2016symmetric}, $\cF\big(S(\bfm))$ is proved to be isomorphic to the head of $W\big(\bm{\pi}_{\bfm}\big)$, namely, $L\big(\bm{\pi}_{\bfm}\big)$.
 Hence the assertion (i) is also proved.
\end{proof}


\subsection{Image of standard modules under the functor}\label{Subsection:Image}

The purpose of this subsection is to prove the following.

\begin{Prop}\label{Prop:correspondence of standard}
 For any $\ga \in R^+_\sg$, we have 
 \[ \cF_\ga\big(\gD(\ga)\big) \cong \wh{L(\bm{\varpi}_{\ga})}_{\aff}
 \]
 as $(U_q',\wh{\bS}_{\ga})$-bimodules, where $\wh{\bS}_{\ga}$ acts on the right-hand side via the composition of $\iota\colon \wh{\bS}_\ga \to \wh{\bS}_{\bfm_\ga}$ 
 given in Subsection \ref{Subsection:Affine_highest_weight_category}
 and the natural isomorphism $\wh{\bS}_{\bfm_\ga} = \bk\llbracket w \rrbracket \stackrel{\sim}{\to} \bk\llbracket (z_{L(\bm{\varpi}_\ga)} -1)\rrbracket$.
\end{Prop}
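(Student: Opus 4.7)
The plan rests on the observation that, by the definition of $\Omega_{\bQ}$, for every positive root $\ga \in R_{\sg}^+$ the $\ell$-weight $\bm{\varpi}_\ga = \Omega_{\bQ}(\bfm_\ga)$ is a \emph{fundamental} $\ell$-weight (namely $\bm{\varpi}_{\bar{\imath},p}$ in the untwisted case or $\mathbf{t}(\bm{\varpi}^{\sg}_{\imath,p})$ in the twisted case). Consequently~(\ref{eq:W_and_affinization}) yields $\wh{L(\bm{\varpi}_\ga)}_{\aff} \cong \wh{W}(\bm{\varpi}_\ga)$ as $(U_q', \bk\llbracket w \rrbracket)$-bimodules, under the natural identifications $\wh{\bA}_{\bm{\varpi}_\ga} \cong \bk\llbracket w \rrbracket$ from~(\ref{eq:natural_isomorphism_of_hA}) and $w \leftrightarrow z_{L(\bm{\varpi}_\ga)}-1$. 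Hence it is enough to prove
\[ \cF_\ga(\gD(\ga)) \cong \wh{W}(\bm{\varpi}_\ga) \]
as $(U_q', \bk\llbracket w \rrbracket)$-bimodules, where on the left the $\bk\llbracket w \rrbracket$-action comes by functoriality from $\End_{\wh{R}(\ga)}(\gD(\ga)) = \bk\llbracket w \rrbracket$ (Lemma~\ref{Lem:action_of_centers}). The simple-root case $\ga = \ga_\imath^{\sg}$ is immediate: $S(\ga_\imath^{\sg})$ is one-dimensional, $\wh{R}(\ga_\imath^{\sg}) = \bk\llbracket x_1 \rrbracket$, and $\gD(\ga_\imath^{\sg}) \cong \bk\llbracket w \rrbracket$ with $x_1$ acting as multiplication by $w$, so
\[ \cF_{\ga_\imath^{\sg}}\bigl(\gD(\ga_\imath^{\sg})\bigr) = \wh{W}_\imath \otimes_{\bk\llbracket x_1 \rrbracket} \bk\llbracket w \rrbracket \cong \wh{W}_\imath = \wh{W}(\bm{\varpi}_{\ga_\imath^{\sg}}). \]

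For a general $\ga$, I will construct a $U_q'$-homomorphism $\Phi \colon \wh{W}(\bm{\varpi}_\ga) \to \cF_\ga(\gD(\ga))$ via the universal property of Lemma~\ref{Lem:properties_of_deformed_local}(i), i.e., by exhibiting an element in the $\ell$-weight space $\cF_\ga(\gD(\ga))_{\bm{\varpi}_\ga}$. Combining exactness of $\cF$ (Theorem~\ref{Thm:Properties_of_F}(i)) with $\gD(\ga)/w \cong S(\ga)$ and Theorem~\ref{Thm:correspondence_of_simples}(i) gives $\cF_\ga(\gD(\ga))/w \cong L(\bm{\varpi}_\ga)$, so the weight space $\cF_\ga(\gD(\ga))_\gl$ for $\gl = \mathsf{cl}(\bm{\varpi}_\ga)$ is a nonzero finitely generated $\bk\llbracket w \rrbracket$-module reducing modulo $w$ to $L(\bm{\varpi}_\ga)_\gl = \bk$. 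The formulae~(\ref{eq:tensor_lweight}) and~(\ref{eq:tensor_lweight2}) applied to the factors of $\wh{W}^{\otimes \ga}$ describe the $U_q(0)$-action on this space and let me pick a lift of the $\ell$-highest weight vector of $L(\bm{\varpi}_\ga)$ living in the generalised $U_q(0)$-eigenspace of eigencharacter $\bm{\varpi}_\ga$.

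To conclude that $\Phi$ is an isomorphism, observe that modulo $w$ it is the $U_q'$-homomorphism $\wh{W}(\bm{\varpi}_\ga)/w = W(\bm{\varpi}_\ga) \to L(\bm{\varpi}_\ga)$ sending $w_{\bm{\varpi}_\ga}$ to the chosen $\ell$-highest weight vector; this map is an isomorphism since Theorem~\ref{Thm:correspondence_of_simples}(i),(ii) applied to $\bfm = \bfm_\ga$ yields $W(\bm{\varpi}_\ga) = L(\bm{\varpi}_\ga)$. Since $\wh{W}(\bm{\varpi}_\ga)$ is free of finite rank over $\bk\llbracket w \rrbracket$ (Lemma~\ref{Lem:properties_of_deformed_local}(ii)), and $\cF_\ga(\gD(\ga))$ is finitely generated over $\bk\llbracket w \rrbracket$ by combining freeness of $\gD(\ga)$ over $\bk\llbracket w \rrbracket$ (Theorem~\ref{Thm:affine_hw_cat_of_R}) with flatness of $\wh{W}^{\otimes \ga}$ over $\wh{R}(\ga)$ (Theorem~\ref{Thm:Properties_of_F}(i)), a Nakayama argument applied to $\ker \Phi$ and $\mathrm{coker}\,\Phi$ concludes. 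The $\bk\llbracket w \rrbracket$-linearity of $\Phi$ follows from Lemma~\ref{Lem:Chari-Fourier-Khandai} together with Lemma~\ref{Lem:action_of_centers}, since on both sides the action factors through $\iota_{\bfm_\ga}$. The main obstacle is step~(1): exhibiting a vector of the correct $\ell$-weight and verifying that the $\bk\llbracket w \rrbracket$-action from $\End_{\wh{R}(\ga)}(\gD(\ga))$ matches the one from $\wh{\bA}_{\bm{\varpi}_\ga}$ on that vector, which ultimately amounts to identifying the spectral parameters $w_{\imath,k}$ appearing in $\wh{W}^{\otimes \ga}$ with the single deformation parameter $w$ of $\gD(\ga)$ via $\iota_{\bfm_\ga}$.
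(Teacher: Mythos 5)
Your high-level framework — reduce to $\wh{W}(\bm{\varpi}_\ga)$, construct a $\bk\llbracket w\rrbracket$-linear $U_q'$-morphism $\Phi$, check it is an isomorphism modulo $w$, and conclude by Nakayama using freeness — is a sensible strategy. But there are two genuine gaps, and the second is exactly where the paper must (and does) do real work.

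First, Lemma~\ref{Lem:properties_of_deformed_local}(i) only applies to $M\in\cC$, i.e.\ finite-dimensional modules (its proof uses finite-dimensionality of $M_\gl$ to truncate the adic filtration). The module $\cF_\ga(\gD(\ga))$ is free of finite rank over $\bk\llbracket w\rrbracket$, hence infinite-dimensional, so you cannot invoke that lemma. You would need Lemma~\ref{Lem:Chari-Fourier-Khandai}(ii) instead, which requires not just a vector of the right $\ell$-weight but an $\bA_\gl$-module map $\wh{\bA}_{\bm{\varpi}_\ga}\to \cF_\ga(\gD(\ga))_\gl$. (Similarly, your appeal to~(\ref{eq:tensor_lweight}) and~(\ref{eq:tensor_lweight2}) for the factors of $\wh{W}^{\otimes\ga}$ is not literally licensed, as those are stated for modules in $\cC$.)

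Second, and more fundamentally: you explicitly flag but do not resolve the identification of the two $\bk\llbracket w\rrbracket$-actions on $\cF_\ga(\gD(\ga))$ — the one from $\End_{\wh{R}(\ga)}(\gD(\ga))\cong\bk\llbracket w\rrbracket$ through the right $\wh{R}(\ga)$-module structure of $\wh{W}^{\otimes\ga}$, and the one from $\bA_\gl$ acting on the $\gl$-weight space. Your claim that this ``follows from Lemma~\ref{Lem:Chari-Fourier-Khandai} together with Lemma~\ref{Lem:action_of_centers}'' is a non-sequitur: Lemma~\ref{Lem:action_of_centers} tells you how the center $\wh{\bS}_\ga$ acts on $\gD(\ga)$ (through $\iota_{\bfm_\ga}$), but says nothing about how $\bA_\gl$ acts on $\cF_\ga(\gD(\ga))_\gl$; showing these two actions agree via $\kappa\circ\iota$ is precisely the assertion of the proposition, so at this point you are assuming what is to be proved. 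The paper handles it by a genuinely different argument: induction on $|\ga|$ using a minimal pair $(\gb,\ggg)$ for $\ga$ and the exact sequence $\gD(\gb)\circ\gD(\ggg)\to\gD(\ggg)\circ\gD(\gb)\to\gD(\ga)\to 0$ of \cite[Thm.~4.10]{brundan2014homological}. Applying the exact functor $\cF$, the inductive hypothesis, and the fact that $d_{L(\bm{\varpi}_\ggg),L(\bm{\varpi}_\gb)}(z)$ has a simple zero at $z=1$ (so that $(x-y)R^{\mathrm{norm}}$ is defined), the paper identifies $\cF_\ga(\gD(\ga))$ concretely as the quotient of $\big(L(\bm{\varpi}_\ggg)\otimes L(\bm{\varpi}_\gb)\big)_{\aff}\sphat$ by the submodule generated by the top weight space; this is where the spectral parameters are actually matched. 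Without an argument of this kind, your proposal does not close the gap.
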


Let $\ga \in R^+_{\sg}$, and let $i \in I_0$ be such that $\mathsf{cl}(\bm{\varpi}_{\ga}) = \varpi_i$.
Let $\kappa\colon \wh{\bS}_{\bfm_\ga} \to \wh{\bA}_{\bm{\varpi}_{\ga}}$ be the $\bk$-algebra isomorphism given by the composition
\[ \wh{\bS}_{\bfm_\ga} \stackrel{\sim}{\to} \bk\llbracket (z_{L(\bm{\varpi}_\ga)}-1) \rrbracket \stackrel{\sim}{\to}
   \bk\llbracket (z_{L(\bm{\varpi}_\ga)}^{d_i}-1)\rrbracket = \wh{\bA}_{\bm{\varpi}_{\ga}},
\]
where the second isomorphism comes from the natural injection $\bk[z^{\pm d_i}] \hookrightarrow \bk[z^{\pm 1}]$
(recall the isomorphism (\ref{eq:W_and_affinization})).
For $\bfm \in \Z_{\geq 0}^{R^+_{\sg}}$, we naturally extend this to $\kappa\colon \wh{\bS}_{\bfm} \stackrel{\sim}{\to} \wh{\bA}_{\bm{\pi}_{\bfm}}$.
The above proposition, together with Theorem \ref{Thm:Properties_of_F}, Lemma \ref{Lem:compatible_with_words} and Proposition \ref{Prop:Fujita_isom}, implies the following.

\begin{Cor}\label{Cor:correspondence_of_standard}
 For any $\gb \in Q^+_{\sg}$ and $\bfm \in KP(\gb)$, we have
 \[ \cF_\gb\big(\gD(\bfm)\big) \cong \wh{W}(\bm{\pi}_{\bfm})
 \]
 as $(U_q',\wh{\bS}_{\gb})$-bimodules, where $\wh{\bS}_{\gb}$ acts on the right-hand side via $\kappa\circ \iota$.
\end{Cor}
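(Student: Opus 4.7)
The plan is to reduce the corollary to the single-root case, which is exactly Proposition \ref{Prop:correspondence of standard}, by exploiting the convolution decomposition of $\gD(\bfm)$ together with the monoidality of $\cF$.

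First, I would invoke the relation
\[
\gD(\gb_1)^{\circ \bfm_1} \circ \cdots \circ \gD(\gb_L)^{\circ \bfm_L} \cong \gD(\bfm)^{\oplus \bfm_1!\cdots \bfm_L!}
\]
recorded in Subsection \ref{Subsection:Affine_highest_weight_category}, and apply $\cF_\gb$ to both sides. By Theorem \ref{Thm:Properties_of_F} (ii), the left-hand side is transported to $\cF(\gD(\gb_1))^{\hat{\otimes} \bfm_1} \hat{\otimes} \cdots \hat{\otimes} \cF(\gD(\gb_L))^{\hat{\otimes} \bfm_L}$, and each factor $\cF(\gD(\gb_k))$ is identified with the deformed local Weyl module $\wh{W}(\bm{\varpi}_{\gb_k})$ by Proposition \ref{Prop:correspondence of standard} combined with (\ref{eq:W_and_affinization}).

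Next, I would invoke Proposition \ref{Prop:Fujita_isom} to collapse the resulting completed tensor product into $\wh{W}(\bm{\pi}_{\bfm})^{\oplus \bfm_1!\cdots \bfm_L!}$. Its hypothesis $d_{L_r,L_s}(1)\neq 0$ for $r<s$ is precisely the content of Lemma \ref{Lem:compatible_with_words}, while the required distinctness of the fundamental $\ell$-weights $\{\bm{\varpi}_{\gb_k}\}_{\bfm_k > 0}$ is guaranteed by the injectivity of $\wti{\Omega}_{\bQ}$ (a counting consequence of Lemma \ref{Lem:image_of_Omega}). Combining these identifications gives
\[
\cF_\gb(\gD(\bfm))^{\oplus \bfm_1!\cdots \bfm_L!} \cong \wh{W}(\bm{\pi}_{\bfm})^{\oplus \bfm_1!\cdots \bfm_L!}.
\]
I would then cancel the common multiplicity via Krull--Schmidt: Lemma \ref{Lem:properties_of_deformed_local} (ii) shows that $\wh{W}(\bm{\pi}_{\bfm})$ is finitely generated over the complete local ring $\wh{\bA}_{\bm{\pi}_{\bfm}}$ with endomorphism ring $\wh{\bA}_{\bm{\pi}_{\bfm}}$ itself, hence indecomposable, so cancellation yields $\cF_\gb(\gD(\bfm)) \cong \wh{W}(\bm{\pi}_{\bfm})$ as $U_q'$-modules.

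The remaining task, and the main obstacle, is to promote this to an isomorphism of $(U_q',\wh{\bS}_\gb)$-bimodules with $\wh{\bS}_\gb$ acting on the right-hand side via $\kappa\circ\iota$. My plan is to trace the $\wh{\bS}_\gb$-action through each step: on $\cF_\gb(\gD(\bfm))$ the center $\wh{\bS}_\gb$ of $\wh{R}(\gb)$ acts by functoriality of $\cF_\gb$, and by Lemma \ref{Lem:action_of_centers} this action factors through $\iota\colon \wh{\bS}_\gb \to \wh{\bS}_{\bfm}$. Under the collapsing isomorphism of Proposition \ref{Prop:Fujita_isom}, whose bimodule structure is encoded by the coproduct $\gD_{\bm{\varpi}_{\gb_1},\ldots,\bm{\varpi}_{\gb_L}}$ extended with multiplicities, this $\iota$-action is matched with the action of $\wh{\bA}_{\bm{\pi}_{\bfm}}$ on $\wh{W}(\bm{\pi}_{\bfm})$ via $\kappa$, since $\kappa$ is precisely the change of variables from the $w$-coordinates on $\wh{\bS}_{\bfm}$ to the $(z-1)$-coordinates on $\wh{\bA}_{\bm{\pi}_{\bfm}}$ built into the definitions. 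The delicate point is ensuring this compatibility at every link of the chain, but once each individual isomorphism is confirmed to be $\wh{\bS}_\gb$-linear, the stated bimodule identification follows.
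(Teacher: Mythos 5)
Your proposal is correct and follows essentially the same route the paper intends: the paper's proof of this corollary is a one-line citation of exactly the four ingredients you use (Proposition \ref{Prop:correspondence of standard}, Theorem \ref{Thm:Properties_of_F}, Lemma \ref{Lem:compatible_with_words}, Proposition \ref{Prop:Fujita_isom}), and your Krull--Schmidt cancellation and the tracing of the $\wh{\bS}_\gb$-action through $\iota$ and $\kappa$ are precisely the details the paper leaves implicit.
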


We shall prove Proposition \ref{Prop:correspondence of standard} by the induction on the height $|\ga|$.
The case $|\ga|=1$ follows from \cite[Proposition 3.5]{kang2018symmetric}.
Assume that $|\ga|>1$.
Let $(\gb_k,\gb_l) \in (R^+_{\sg})^2$ be a pair with $k<l$ such that $\ga=\gb_k+\gb_l$ and there is no other pair $(\gb_{k'},\gb_{l'})$ with $k<k'<l'<l$ and $\gb_{k'}+\gb_{l'}=\ga$.
Such a pair is called a \textit{minimal pair} for $\ga$.
Write $\gb=\gb_k$ and $\ggg=\gb_l$.
By \cite[Theorem 4.10]{brundan2014homological}, there is an exact sequence
\[ \gD(\gb) \circ \gD(\ggg) \stackrel{\phi}{\to} \gD(\ggg)\circ \gD(\gb) \stackrel{\psi}{\to} \gD(\ga) \to 0
\]
of $\wh{R}(\ga)$-modules satisfying $\phi(w\circ \id) = (\id \circ w)\phi$ and $\phi(\id \circ w) = (w \circ \id)\phi$.
By applying $\cF_\ga$ and using the induction hypothesis, we obtain an exact sequence
\[ \wh{L(\bm{\varpi}_\gb)}_x \hat{\otimes} \wh{L(\bm{\varpi}_\ggg)}_y \stackrel{\phi'}{\to} \wh{L(\bm{\varpi}_\ggg)}_y \hat{\otimes} \wh{L(\bm{\varpi}_\gb)}_x
   \stackrel{\psi'}{\to} \cF_\ga\big(\gD(\ga)\big) \to 0
\]
of $(U_q',\bS_\ga)$-bimodules, where 
\[ \wh{L(\bm{\varpi}_\gb)}_x = L(\bm{\varpi}_\gb)_{\aff} \otimes_{\bk[z_{L(\bm{\varpi}_\gb)}^{\pm 1}]} \bk\llbracket x\rrbracket
\]
and $z_{L(\bm{\varpi}_\gb)}$ acts on $\bk\llbracket x \rrbracket$ as $z_{L(\bm{\varpi}_\gb)}=1+x$, and $\wh{L(\bm{\varpi}_\ggg)}_y$ is defined similarly.
Note that $\phi'$ is $\bk\llbracket x,y\rrbracket$-linear.
Write $W_{\gb,\ggg} = \wh{L(\bm{\varpi}_\gb)}_x \hat{\otimes} \wh{L(\bm{\varpi}_\ggg)}_y$ and $W_{\ggg,\gb}=\wh{L(\bm{\varpi}_\ggg)}_y \hat{\otimes} \wh{L(\bm{\varpi}_\gb)}_x$.
The module $W_{\gb,\ggg}$ is a deformed local Weyl module by Proposition \ref{Prop:Fujita_isom} and Lemma \ref{Lem:compatible_with_words},
which is generated by the $P_\cl$-weight space $(W_{\gb,\ggg})_{\mathsf{cl}(\bm{\varpi}_{\gb}+\bm{\varpi}_{\ggg})}$ as a $U_q'$-module.
By taking the quotient by $\wh{\fm}_{\bm{\varpi}_{\gb}+\bm{\varpi}_{\ggg}}$, $\phi'$ induces a nonzero homomorphism from the local Weyl module 
$L(\bm{\varpi}_\gb) \otimes L(\bm{\varpi}_\ggg)$ to $L(\bm{\varpi}_\ggg) \otimes L(\bm{\varpi}_\gb)$, which is a scalar multiplication of the normalized $R$-matrix.
Hence we may assume that $\phi' = R^{\mathrm{norm}}_{L(\bm{\varpi}_\gb),L(\bm{\varpi}_\ggg)}$.
Let $M\subseteq W_{\ggg,\gb}$ be the $U_q'$-submodule generated by the weight space $(W_{\ggg,\gb})_{\mathsf{cl}(\bm{\varpi}_\gb+\bm{\varpi}_{\ggg})}$.
It follows that $\cF_\ga\big(\gD(\ga)\big)$ is isomorphic to $W_{\ggg,\gb}/M$.

\begin{Lem}
 The order of zero of $d_{L(\bm{\varpi}_\ggg),L(\bm{\varpi}_{\gb})}(z)$ at $z=1$ is one.
\end{Lem}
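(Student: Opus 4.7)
The plan is to establish the bounds \(\mathrm{ord}_{z=1}\,d_{L(\bm{\varpi}_\ggg),L(\bm{\varpi}_\gb)}(z) \geq 1\) and \(\leq 1\) separately.

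For the lower bound, Lemma \ref{Lem:compatible_with_words} gives \(d_{L(\bm{\varpi}_\gb),L(\bm{\varpi}_\ggg)}(1)\neq 0\), so Theorem \ref{Thm:equivalence_for_Weyl_mod}(i) yields \(L(\bm{\varpi}_\gb)\otimes L(\bm{\varpi}_\ggg)\cong W(\bm{\varpi}_\gb+\bm{\varpi}_\ggg)\). On the quiver Hecke side, the cokernel of the Brundan--Kleshchev map is \(\gD(\ga)\), and the same reasoning taken mod the maximal ideal of the center shows that \(\bar{\gD}(\bfm_\gb)\circ\bar{\gD}(\bfm_\ggg)\) has \(S(\ga)\) as a composition factor in addition to its head \(S(\bfm_\gb+\bfm_\ggg)\). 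Applying the exact functor \(\cF\) and invoking Theorem \ref{Thm:correspondence_of_simples} together with Theorem \ref{Thm:Properties_of_F}(ii), this transfers to the statement that \(L(\bm{\varpi}_\gb)\otimes L(\bm{\varpi}_\ggg)\) admits both \(L(\bm{\varpi}_\gb+\bm{\varpi}_\ggg)\) and \(L(\bm{\varpi}_\ga)\) as composition factors, so it is not simple. Since the simplicity of a tensor product of two simple modules requires both orderings to be cyclic (by the symmetric version of Theorem \ref{Thm:equivalence_for_Weyl_mod}(i)), it follows that \(d_{L(\bm{\varpi}_\ggg),L(\bm{\varpi}_\gb)}(1)=0\).

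For the upper bound, the key is to exhibit the containment \(\phi'(W_{\gb,\ggg})\supseteq (x-y)W_{\ggg,\gb}\) by tracking central actions. By construction \(\gD(\ga)=\wh{S(\ga)}_{\mathrm{aff}}\) carries an action of \(\mathrm{End}_{\wh{R}(\ga)}\gD(\ga)\cong \bk\llbracket w\rrbracket\), and the action of the center \(\wh{\bS}_\ga\) of \(\wh{R}(\ga)\) on \(\gD(\ga)\) factors through the homomorphism \(\iota\colon \wh{\bS}_\ga\to \wh{\bS}_{\bfm_\ga}\cong\bk\llbracket w\rrbracket\) of Lemma \ref{Lem:action_of_centers}, which sends every generator \(w_{\imath,k}\) to the single variable \(w\). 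Through the functor \(\cF_\ga\), this same factorization governs the action of \(\wh{\bS}_\ga\) on \(\cF_\ga(\gD(\ga))\). On the other side, the \(\bk\llbracket x,y\rrbracket\)-module structure on \(W_{\ggg,\gb}\) (inherited from the shift parameters of the two tensor factors) agrees with the restriction of the \(\wh{\bS}_\ga\)-action along the inclusion \(\wh{\bS}_\gb\otimes\wh{\bS}_\ggg\hookrightarrow\wh{\bS}_\ga\). Since \(\iota\) identifies the generators coming from \(\wh{\bS}_\gb\) and \(\wh{\bS}_\ggg\) in \(\bk\llbracket w\rrbracket\), the element \(x-y\) annihilates \(\cF_\ga(\gD(\ga))\), giving the desired containment.

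To convert this into the order bound, set \(z_1=1+x\), \(z_2=1+y\), so that \(z_1/z_2-1\) is a unit multiple of \(x-y\) in \(\bk\llbracket x,y\rrbracket\). Because \(\phi'\) coincides with \(R^{\mathrm{norm}}_{L(\bm{\varpi}_\gb),L(\bm{\varpi}_\ggg)}(z_2/z_1)\) and the composition \(R^{\mathrm{norm}}_{L(\bm{\varpi}_\ggg),L(\bm{\varpi}_\gb)}(z_1/z_2)\circ \phi'\) equals multiplication by a rational function that is regular and nonzero at \(z_1/z_2=1\) (the poles coming only from the inverse \(R\)-matrix, which is the quantity we are analyzing), the containment \(\phi'(W_{\gb,\ggg})\supseteq (x-y)W_{\ggg,\gb}\) implies that \((z_1/z_2-1)\cdot R^{\mathrm{norm}}_{L(\bm{\varpi}_\ggg),L(\bm{\varpi}_\gb)}(z_1/z_2)\) is regular at \(z_1/z_2=1\). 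Hence \(R^{\mathrm{norm}}_{L(\bm{\varpi}_\ggg),L(\bm{\varpi}_\gb)}\) has a pole of order at most one at \(z=1\), completing the proof. The hardest step is the careful compatibility check between the two routes by which \(\wh{\bS}_\ga\) acts on \(\cF_\ga(\gD(\ga))\); this is where the factorization of the center action through \(\iota\) must be exploited in a way that uses only the induction hypothesis and does not circularly invoke Proposition \ref{Prop:correspondence of standard}.
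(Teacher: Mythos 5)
The paper itself offers no argument here; it simply cites \cite[Theorems 3.11 and 4.31]{oh2019categorical}, which establish the claim through explicit computations of denominator formulas. Your proposal is therefore a genuinely different, more structural route: derive both bounds internally from the exact sequence $\gD(\gb)\circ\gD(\ggg)\stackrel{\phi}{\to}\gD(\ggg)\circ\gD(\gb)\to\gD(\ga)\to 0$ together with the compatibility of center actions, using only the induction hypothesis $\cF(\gD(\gb))\cong\wh{L(\bm{\varpi}_\gb)}_{\aff}$ (and the analogous statement for $\ggg$) and Lemma \ref{Lem:action_of_centers}. This sidesteps the case-by-case verifications in \cite{oh2019categorical}, and your circularity worry is correctly resolved: nothing in the argument presupposes Proposition \ref{Prop:correspondence of standard}. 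The overall strategy is sound, but several details need repair.

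For the lower bound, you invoke an unstated fact (that a tensor product $L_1\otimes L_2$ of simples is simple if and only if both orderings are cyclic). This does follow from \cite[Prop.\ 9.4]{MR1890649}, but it is not in the paper, and there is a shorter route available from the setup already at hand: $\cF_\ga(\gD(\ga))$ surjects onto $L(\bm{\varpi}_\ga)\neq 0$, hence $\phi'$ is not surjective; but if $R^{\mathrm{norm}}_{L(\bm{\varpi}_\ggg),L(\bm{\varpi}_\gb)}$ were regular at $z=1$, it and $\phi'=R^{\mathrm{norm}}_{L(\bm{\varpi}_\gb),L(\bm{\varpi}_\ggg)}$ would be mutually inverse isomorphisms (their composite is the identity by uniqueness of normalized $R$-matrices), contradiction. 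For the upper bound, the direction of the inclusion is written backwards: it is $\wh{\bS}_\ga\hookrightarrow\wh{\bS}_\gb\hat{\otimes}\wh{\bS}_\ggg$, and the relevant homomorphism is $\iota_\bfm\colon\wh{\bS}_\ga\to\wh{\bS}_{\bfm_\gb}\hat{\otimes}\wh{\bS}_{\bfm_\ggg}\cong\bk\llbracket x,y\rrbracket$. More importantly, the assertion that ``$\iota$ identifies the generators'' does not by itself show $(x-y)$ kills $\cF_\ga(\gD(\ga))$. You need to exhibit $f\in\wh{\bS}_\ga$ with $\iota_\bfm(f)=x-y$; this is possible precisely because $\gb$ and $\ggg$, being distinct positive roots of a reduced root system, are non-proportional, which forces the linear span of $\{\iota_\bfm(\textstyle\sum_k w_{\imath,k})\}_\imath=\{\gb_\imath x+\ggg_\imath y\}_\imath$ to be all of $\bk x\oplus\bk y$ and hence $\iota_\bfm$ to be surjective. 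Since $\iota_{\bfm_\ga}$ is obtained from $\iota_\bfm$ by setting $x=y=w$, one then has $\iota_{\bfm_\ga}(f)=0$, and the factorization of the $\wh{\bS}_\ga$-action on $\cF_\ga(\gD(\ga))$ through $\iota_{\bfm_\ga}$ gives the desired vanishing. Finally, the composite $R^{\mathrm{norm}}_{L(\bm{\varpi}_\gb),L(\bm{\varpi}_\ggg)}\circ R^{\mathrm{norm}}_{L(\bm{\varpi}_\ggg),L(\bm{\varpi}_\gb)}$ is identically the identity, not merely ``a rational function regular and nonzero at $z_1/z_2=1$''; this stronger statement is what makes the containment $(x-y)W_{\ggg,\gb}\subseteq\mathrm{Im}\,\phi'$ translate cleanly into regularity of $(x-y)R^{\mathrm{norm}}_{L(\bm{\varpi}_\ggg),L(\bm{\varpi}_\gb)}$ near the diagonal.
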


\begin{proof}
 See \cite[Theorems 3,11 and 4.31]{oh2019categorical}.
\end{proof}

Hence $(x-y)R^{\mathrm{norm}}_{L(\bm{\varpi}_\ggg),L(\bm{\varpi}_{\gb})}\colon W_{\ggg,\gb} \to W_{\gb,\ggg}$ is well-defined.
Then, since the image of 
\[ R^{\mathrm{norm}}_{L(\bm{\varpi}_{\gb}),L(\bm{\varpi}_{\ggg})} \circ (x-y)R^{\mathrm{norm}}_{L(\bm{\varpi}_{\ggg}),L(\bm{\varpi}_{\gb})} = (x-y)\colon W_{\ggg,\gb} \to W_{\ggg,\gb}
\]
is contained in $M$, $\cF_\ga\big(\gD(\ga)\big)$ is also isomorphic to the quotient of
\[ W_{\ggg,\gb}/(x-y) \cong \big(L(\bm{\varpi}_\ggg) \otimes L(\bm{\varpi}_\gb)\big)_\aff\sphat
\]
by the submodule generated by the weight space with weight $\mathsf{cl}(\bm{\varpi}_\gb+\bm{\varpi}_\ggg)$.
Now Proposition \ref{Prop:correspondence of standard} is easily proved from this,
since $L(\bm{\varpi}_\ga)$ is isomorphic to the quotient of $L(\bm{\varpi}_{\ggg}) \otimes L(\bm{\varpi}_{\gb})$ by the submodule generated by the weight space with weight 
$\mathsf{cl}(\bm{\varpi}_\gb+\bm{\varpi}_\ggg)$.


\section{Endomorphism algebra of $\wh{W}^{\otimes \gb}$}

\subsection{Main theorem and corollary}\label{subsection:Main theorems and corollaries}

For $\gb \in Q_\sg^+$, define a $\bk$-algebra $\mathbb{E}^\gb$ by
\[ \bE^\gb= \End_{\wh{R}(\gb)^{\mathrm{opp}}}(\wh{W}^{\otimes \gb}),
\]
and regard $\cF_\gb= \wh{W}^{\otimes \gb} \otimes_{\wh{R}(\gb)} -$ as a functor from $\modfg{\wh{R}(\gb)}$ to $\modfg{\bE^\gb}$.
Write $\gb = \sum_{\imath} m_{\imath} \ga_{\imath}^\sg$, and set $\gl = \sum_{\imath} m_\imath \mathsf{cl}(\varpi_{\ga_\imath^\sg})$.
Then, since $\wh{W}^{\otimes \gb}$ is integrable and $(\wh{W}^{\otimes \gb})_\mu =0$ for $\mu \in P_0^+ \setminus P^+_{0,\leq \gl}$, $\wh{W}^{\otimes \gb}$ is a 
$(U_{\leq \gl},\wh{R}(\gb))$-bimodule by Lemma \ref{Lem:quotient_to_U_gl},
and hence we obtain a $\bk$-algebra homomorphism
\[ \Phi_\gb \colon U_{\leq \gl} \to \bE^{\gb}.
\]
Obviously the center $\wh{\bS}_\gb$ of $\wh{R}(\gb)$
acts faithfully on $\wh{W}^{\otimes \gb}$, and hence we may (and do) consider $\wh{\bS}_\gb$ as a central subalgebra of $\bE^\gb$.
Note that $\bE^\gb$ is finitely generated as an $\wh{\bS}_\gb$-module since so is $\wh{W}^{\otimes \gb}$.
By the isomorphisms
\begin{gather*}
 L(\bm{\pi}_{\bfm}) \cong \cF_\gb\big(S(\bfm)\big), \ \ \ W(\bm{\pi}_{\bfm}) \cong \cF_\gb\big(\bar{\gD}(\bfm)\big),\\ W^\vee(\bm{\pi}_{\bfm}) \cong \cF_\gb\big(\bar{\nabla}(\bfm)\big), \ \ \
   \wh{W}(\bm{\pi}_{\bfm}) \cong \cF_\gb\big(\gD(\bfm)\big)
\end{gather*}
for $\bfm \in KP(\gb)$ given in Theorem \ref{Thm:correspondence_of_simples} and Corollary \ref{Cor:correspondence_of_standard}, 
we regard these modules as $\bE^{\gb}$-modules hereafter.

In the next subsection, we will prove the following proposition.

\begin{Prop}\label{Prop:Main_Prop}
 Let $\gb \in Q^+_\sg$.\\
 {\normalfont(i)} The pullback functor $\Phi_\gb^*$ gives an equivalence 
  \[ \Phi_\gb^*\colon \modfd{\bE^\gb}\ni M \mapsto \Phi_\gb^*M \in \cC_{\bQ,\gb}
  \]
  from the category of finite-dimensional $\bE^\gb$-modules to $\cC_{\bQ,\gb}$.\\
 {\normalfont(ii)} The category $\modfg{\bE^\gb}$ is affine highest weight with respect to the poset $(\cP^+_{\bQ,\gb},\leq )$,
  and its standard modules are $\wh{W}(\bm{\pi})$, proper standard modules are $W(\bm{\pi})$, and proper costandard modules are $W^\vee(\bm{\pi})$ with $\bm{\pi} \in \cP^+_{\bQ,\gb}$.
\end{Prop}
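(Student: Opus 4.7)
The plan is to import the affine highest weight structure of $\modfg{\wh{R}(\gb)}$ into $\modfg{\bE^\gb}$ through the exact functor $\cF_\gb$, and then deduce the finite-dimensional equivalence by matching the resulting affine cellular chain on $\bE^\gb$ against the one on $U_{\leq\gl}$ provided by Theorem~\ref{Thm:affine_cellularity_of_U}.

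First, fix a total order $\bfm^{(1)},\dots,\bfm^{(p)}$ on $KP(\gb)$ refining $\preceq_{\bmi}$. By Theorems~\ref{Thm:affine_hw_cat_of_R} and~\ref{Thm:hereditary_corresp}(i), this yields an affine heredity chain $\wh{R}(\gb)=R_0\supsetneq\cdots\supsetneq R_p=\{0\}$ with $R_{k-1}/R_k\cong\gD(\bfm^{(k)})^{\oplus m_k}$. Since $\wh{W}^{\otimes\gb}$ is flat over $\wh{R}(\gb)$ by Theorem~\ref{Thm:Properties_of_F}(i), combining with Corollary~\ref{Cor:correspondence_of_standard} gives a filtration $\wh{W}^{\otimes\gb}=\wh{W}_0\supseteq\cdots\supseteq\wh{W}_p=\{0\}$ of $(U_q',\wh{R}(\gb))$-bimodules with $\wh{W}_{k-1}/\wh{W}_k\cong\wh{W}(\bm{\pi}_{\bfm^{(k)}})^{\oplus m_k}$. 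Setting $\bE_k=\mathrm{Hom}_{\wh{R}(\gb)^{\mathrm{opp}}}(\wh{W}^{\otimes\gb},\wh{W}_k)$ produces a descending chain of two-sided ideals $\bE^\gb=\bE_0\supseteq\cdots\supseteq\bE_p=\{0\}$.

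For part~(ii), I verify that $\{\bE_k\}$ is an affine heredity chain; together with Proposition~\ref{Prop:isom_of_posets}, which matches $(KP(\gb),\preceq_{\bmi})$ with $(\cP^+_{\bQ,\gb},\leq)$, Theorem~\ref{Thm:hereditary_corresp}(ii) then produces the affine highest weight structure on $\modfg{\bE^\gb}$. The affine cellular presentation of $R_{k-1}/R_k$ supplied by Theorem~\ref{Thm:affine_cellular}, combined with Proposition~\ref{Prop:Fujita_isom} and Lemma~\ref{Lem:properties_of_deformed_local}(ii), should produce an idempotent $\tilde e\in\bE^\gb/\bE_k$ with $(\bE^\gb/\bE_k)\tilde e$ a projective cover of $L(\bm{\pi}_{\bfm^{(k)}})$ isomorphic to $\wh{W}(\bm{\pi}_{\bfm^{(k)}})$, with endomorphism ring $\wh{\bA}_{\bm{\pi}_{\bfm^{(k)}}}$, over which it is free of finite rank. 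This establishes the three axioms of Definition~\ref{Def:affine_heredity_ideal}. The identification of standard, proper standard, and proper costandard modules as $\wh{W}(\bm{\pi})$, $W(\bm{\pi})$, $W^\vee(\bm{\pi})$ follows by applying $\cF_\gb$ to $\gD(\bfm)$, $\bar{\gD}(\bfm)$, $\bar{\nabla}(\bfm)$ and invoking Theorem~\ref{Thm:correspondence_of_simples} with Corollary~\ref{Cor:correspondence_of_standard}.

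For part~(i), density of $\Phi_\gb$ in the sense that $\Phi_\gb(U_{\leq\gl})+\bE^\gb\cdot(\wh{\bS}_\gb^+)^N=\bE^\gb$ for every $N\geq 1$ will imply both that $\Phi_\gb^*$ is fully faithful and that every finite-dimensional $\bE^\gb$-module is determined by its pulled-back $U_q'$-structure; combined with the fact that the simple $\bE^\gb$-modules are exactly the $L(\bm{\pi})$ with $\bm{\pi}\in\cP^+_{\bQ,\gb}$ (from part~(ii)), this gives the equivalence with $\cC_{\bQ,\gb}$. To prove density, I enumerate $P_{0,\leq\gl}^+$ compatibly; then $\Phi_\gb$ maps the cellular chain $\{U_r\}$ of Theorem~\ref{Thm:affine_cellularity_of_U} into $\{\bE_k\}$, and I show the induced maps on subquotients are dense. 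On the $U_{\leq\gl}$ side the subquotient is $\bV(\gl_r)\otimes_{\bA_{\gl_r}}\bV(\gl_r)^\sharp$; using Lemma~\ref{Lem:dual_deformed} together with the bimodule description of $\bE_{k-1}/\bE_k$ from part~(ii), this map surjects onto the quotient of the $\bE^\gb$-side subquotient by any power of the augmentation ideal of $\wh{\bS}_\gb$, which suffices.

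The main obstacle is the affine heredity verification in the second paragraph, where I must identify $\bE_{k-1}/\bE_k$ as an $(\bE^\gb,\bE^\gb)$-bimodule tensor product over $\wh{\bA}_{\bm{\pi}_{\bfm^{(k)}}}$ and check condition~(iii) of Definition~\ref{Def:affine_heredity_ideal} (the freeness requirement). This amounts to constructing a ``Schur-algebra''-level affine cellular factorization for $\bE^\gb$ parallel to the one for $\wh{R}(\gb)$, which is then bijectively matched with the affine cellular structure of $U_{\leq\gl}$ to close the density argument in part~(i).
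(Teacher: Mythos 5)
Your overall strategy matches the paper's in outline: import the affine heredity chain from $\wh{R}(\gb)$ through flatness of $\wh{W}^{\otimes\gb}$, match it against the cellular chain on $U_{\leq\gl}$ from Theorem~\ref{Thm:affine_cellularity_of_U}, and use that density to close part~(i). However, there are two genuine gaps, one structural and one logical.

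First, the order of your argument (establish the heredity chain on $\bE^\gb$ ``from the $\wh{R}(\gb)$-side alone,'' then use it for the density in~(i)) does not work as stated, because the bimodule identification $\bE_{k-1}/\bE_k \cong \wh{W}(\bm{\pi}) \otimes_{\wh{\bA}_{\bm{\pi}}} \wh{W}^\sharp(\bm{\pi})$ is exactly the place where the $U_{\leq\gl}$-comparison is indispensable. The natural map
\[ \bE_{k-1}/\bE_k \longrightarrow \Hom{\wh{R}(\gb)^{\mathrm{opp}}}\big(\wh{W}^{\otimes\gb},\,\wh{W}_{k-1}/\wh{W}_k\big) \]
is injective, and the target is identified (via coinduction, Proposition~\ref{Prop:coinduction}, and Lemma~\ref{Lem:dual_deformed}) with the displayed tensor product; but the map is not obviously surjective, since $\wh{W}^{\otimes\gb}$ is flat, not projective, over $\wh{R}(\gb)^{\mathrm{opp}}$ and lifting maps to the quotient $\wh{W}_{k-1}/\wh{W}_k$ is exactly the issue. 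In the paper, surjectivity is established only by observing that the image of the idempotent $a_{\gl_k}$ (coming from Theorem~\ref{Thm:affine_cellularity_of_U}) lands on a unit multiple of the generator $\wh{w}_{\bm{\pi}}\otimes\wh{w}^\sharp_{\bm{\pi}}$ and then running the composition $U_{k-1}/U_k\to\bE_{k-1}/\bE_k\hookrightarrow\wh{W}(\bm{\pi})\otimes\wh{W}^\sharp(\bm{\pi})$ modulo $(\wh{\bS}_\gb^+)^m$. In other words, your part~(ii) cannot precede the density argument of part~(i); they are established together, and the paper's eventual proof of~(ii) explicitly invokes~(i) to conclude that $\Hom{\bE^\gb}$ may be computed as a limit of $\Hom{U_q'}$'s when checking projectivity of $\wh{W}(\bm{\pi})$. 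You should also be aware that your chain $\{\bE_k\}$ indexed by $KP(\gb)$ is finer than the chain $\{U_r\}$ indexed by $P^+_{0,\leq\gl}$, so the comparison requires grouping Kostant partitions by their classical weight first and only refining within each $\gl_k$-block afterward.

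Second, the argument you give for essential surjectivity of $\Phi_\gb^*$ is not sufficient: exactness, full faithfulness, and a bijection on simples do not imply that every object of $\cC_{\bQ,\gb}$ lies in the image, since the image is a full subcategory closed under subquotients but not automatically under extensions. What is actually needed is that every $M\in\cC_{\bQ,\gb}$ is annihilated by $\Ker\Phi_\gb^m$ for some $m$; this holds for simples by Theorem~\ref{Thm:correspondence_of_simples}, and the inductive extension to arbitrary composition length requires the Artin--Rees-type containment $\Ker\Phi_\gb^m\subseteq(\Ker\Phi_\gb^{m_1})(\Ker\Phi_\gb^{m_2})$ for suitable $m$, which the paper proves as a separate lemma by descending along the cellular chain. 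Your proposal omits this step entirely.
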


\begin{Rem}\normalfont
 Proposition \ref{Prop:Main_Prop} is an algebraic analog of \cite[Theorem 4.9]{fujita2017affine}, in which these statements are proved in untwisted $ADE$ types 
 with $\bE^\gb$ replaced by a completion of the convolution algebra of the equivariant $K$-groups of quiver varieties.
\end{Rem}

Now we give a proof of our main theorem, which is a direct consequence of Proposition \ref{Prop:Main_Prop}.

\begin{Thm}\label{Thm}
 For any $Q$-datum $\bQ$, the associated generalized quantum affine Schur--Weyl duality functor $\cF\colon \bigoplus_{\gb \in Q^+_\sg} \modfd{\wh{R}(\gb)} \to \cC_{\bQ}$ gives an
 equivalence of monoidal categories.
\end{Thm}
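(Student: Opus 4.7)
The plan is to derive Theorem \ref{Thm} from Proposition \ref{Prop:Main_Prop} by invoking Fujita's criterion (Theorem \ref{Thm:Fujita_theorem}) block-by-block, then assembling the blocks over $\gb \in Q_\sg^+$ and verifying that the assembled functor is monoidal.

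First I would fix $\gb \in Q_\sg^+$ and apply Theorem \ref{Thm:Fujita_theorem} to the functor $\cF_\gb\colon \modfg{\wh{R}(\gb)} \to \modfg{\bE^\gb}$. All hypotheses are in hand: exactness is Theorem \ref{Thm:Properties_of_F} (i); both algebras are finitely generated over their centers (the former being free of rank $(m!)^2$ over $\wh{\bS}_\gb$ by (\ref{eq:freeness}), the latter as noted immediately after the definition of $\bE^\gb$); the indexing posets $(KP(\gb),\preceq_{\bmi})$ and $(\cP^+_{\bQ,\gb},\leq)$ are in order-preserving bijection via $\Omega_\bQ$ by Proposition \ref{Prop:isom_of_posets}; and under this bijection $\cF_\gb$ sends standards to standards by Corollary \ref{Cor:correspondence_of_standard} and proper costandards to proper costandards by Theorem \ref{Thm:correspondence_of_simples} (ii), using that $\modfg{\wh{R}(\gb)}$ is affine highest weight by Theorem \ref{Thm:affine_hw_cat_of_R} and $\modfg{\bE^\gb}$ is by Proposition \ref{Prop:Main_Prop} (ii). Theorem \ref{Thm:Fujita_theorem} then yields an equivalence $\cF_\gb\colon \modfg{\wh{R}(\gb)} \stackrel{\sim}{\to} \modfg{\bE^\gb}$.

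Next I would restrict $\cF_\gb$ to finite-dimensional modules. Since simples correspond under the equivalence and all simples on both sides are finite-dimensional, an object is finite-dimensional if and only if its image is, so $\cF_\gb$ restricts to an equivalence $\modfd{\wh{R}(\gb)} \stackrel{\sim}{\to} \modfd{\bE^\gb}$. Composing with the equivalence $\Phi_\gb^*\colon \modfd{\bE^\gb} \stackrel{\sim}{\to} \cC_{\bQ,\gb}$ from Proposition \ref{Prop:Main_Prop} (i) produces an equivalence $\modfd{\wh{R}(\gb)} \stackrel{\sim}{\to} \cC_{\bQ,\gb}$ which, since $\Phi_\gb^*$ is just the pullback along $\Phi_\gb$, agrees with $\cF_\gb$ regarded as a functor into $U_q'$-modules. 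Summing over $\gb$ and invoking the block decomposition $\cC_\bQ = \bigoplus_\gb \cC_{\bQ,\gb}$ of Lemma \ref{Lem:direct_sum_of_categoryC} assembles this into an equivalence $\cF\colon \bigoplus_\gb \modfd{\wh{R}(\gb)} \stackrel{\sim}{\to} \cC_\bQ$. Monoidality is then immediate from Theorem \ref{Thm:Properties_of_F} (ii): on finite-dimensional modules the completed tensor product $\hat{\otimes}$ coincides with $\otimes$, so $\cF(M_1 \circ M_2) \cong \cF(M_1) \otimes \cF(M_2)$ naturally, and the unit (the trivial module over $\wh{R}(0)=\bk$) is sent to the trivial $U_q'$-module; the latter is the tensor unit of $\cC_\bQ$.

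The main obstacle is not the derivation itself, which is routine once Proposition \ref{Prop:Main_Prop} is granted, but rather the verification of that proposition in the next subsection. The heart of the matter is to establish that $\modfg{\bE^\gb}$ is affine highest weight with standard modules $\wh{W}(\bm{\pi})$, proper standard modules $W(\bm{\pi})$, and proper costandard modules $W^\vee(\bm{\pi})$, and that $\modfd{\bE^\gb}$ coincides with $\cC_{\bQ,\gb}$. Following the strategy outlined in the introduction, this requires comparing the affine cellular chain on $\bE^\gb$ induced from the affine heredity chain on $\wh{R}(\gb)$ via $\wh{W}^{\otimes \gb}$ with the chain on $U_{\leq \gl}$ from Theorem \ref{Thm:affine_cellularity_of_U}, so as to deduce a density property for the homomorphism $\Phi_\gb$ — the technical core of the paper.
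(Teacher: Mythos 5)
Your proposal is correct and follows the same route as the paper's own proof: apply Theorem \ref{Thm:Fujita_theorem} block-by-block using Proposition \ref{Prop:Main_Prop} (ii), Theorem \ref{Thm:affine_hw_cat_of_R}, and Proposition \ref{Prop:isom_of_posets}, then restrict to finite-dimensional objects, match blocks with $\cC_{\bQ,\gb}$ via Proposition \ref{Prop:Main_Prop} (i) and Lemma \ref{Lem:direct_sum_of_categoryC}, and finally cite Theorem \ref{Thm:Properties_of_F} (ii) for monoidality. You spell out several steps the paper leaves implicit (the verification of Fujita's hypotheses, the finite-dimensionality transfer, the identification of $\hat{\otimes}$ with $\otimes$), but the argument is identical in substance.
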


\begin{proof}
 Thanks to Theorem \ref{Thm:affine_hw_cat_of_R}, Proposition \ref{Prop:isom_of_posets} and Proposition \ref{Prop:Main_Prop} (ii),
 for any $\gb \in Q^+_\sg$ it follows from Theorem \ref{Thm:Fujita_theorem} that $\cF_\gb\colon \modfg{\wh{R}(\gb)} \to \modfg{\bE^\gb}$  
 gives an equivalence of categories, which induces an equivalence between full subcategories of finite-dimensional modules.
 Hence by Proposition \ref{Prop:Main_Prop} (i) and Lemma \ref{Lem:direct_sum_of_categoryC}, we see that $\cF\colon \bigoplus_{\gb \in Q^+_\sg} \modfd{\wh{R}(\gb)} \to \cC_{\bQ}$ gives an equivalence.
 Since $\cF$ is a monoidal functor by Theorem \ref{Thm:Properties_of_F}, the theorem is proved. 
\end{proof}

By the theorem, we obtain equivalences among several subcategories of $\cC$ corresponding to various choices of $Q$-data.
More importantly, the theorem also gives equivalences between categories of \textit{different types}, which we state as a corollary here.
Let $\fg^{(1)}$ be an affine Kac-Moody Lie algebra of type $X_n^{(1)}$ ($X_n \in \{A_n, D_n,E_6\}$), and let $\fg^{(t)}$ ($t \in \{2,3\}$) denote the algebra of a twisted type $X_n^{(t)}$.
Let ${}^L\fg^{(t)}$ be the Langlands dual Lie algebra of $\fg^{(t)}$.
Take $Q$-data $\bQ^{(1)}$, $\bQ^{(t)}$, $\bQ^L$ of respective types, and let $\cC_{\bQ^{(1)}}$, $\cC_{\bQ^{(t)}}$, $\cC_{\bQ^{L}}$ be the corresponding subcategories.
Note that these are categories of modules over different quantum affine algebras.

\begin{Cor}\label{Cor}
 The monoidal categories $\cC_{\bQ^{(1)}}$, $\cC_{\bQ^{(t)}}$, and $\cC_{\bQ^{L}}$ are mutually equivalent.
\end{Cor}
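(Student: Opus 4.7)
The plan is to reduce each of the three categories to a category of modules over symmetric quiver Hecke algebras of a common $ADE$ type via Theorem \ref{Thm}. First, I would verify that the underlying simple Lie algebra $\sg$ of type $ADE$ associated with each case coincides, namely $\sg = X_n$. For the simply laced $\fg^{(1)} = X_n^{(1)}$, this holds with $\gs = \mathrm{id}$ by the convention $\sg = \fg_0$. For the twisted $\fg^{(t)} = X_n^{(t)}$, this holds by the convention in Subsection \ref{subsection:Full_subcategories} that $\sg$ is the simple Lie algebra of type $X_n$. For the Langlands dual ${}^L\fg^{(t)}$, which is one of the untwisted non-simply laced affine types, the standard Langlands dualities $A_{2n-1}^{(2)} \leftrightarrow B_n^{(1)}$, $D_{n+1}^{(2)} \leftrightarrow C_n^{(1)}$, $E_6^{(2)} \leftrightarrow F_4^{(1)}$, and $D_4^{(3)} \leftrightarrow G_2^{(1)}$, combined with Figure \ref{figure}, confirm that the associated $\sg$ is again $X_n$ in each case.

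Next, applying Theorem \ref{Thm} to each of the three $Q$-data yields monoidal equivalences from each of $\cC_{\bQ^{(1)}}$, $\cC_{\bQ^{(t)}}$, and $\cC_{\bQ^{L}}$ to $\bigoplus_{\gb \in Q_\sg^+} \modfd{\wh{R}(\gb)}$ for suitable symmetric quiver Hecke algebras $R(\gb)$ of the common type $\sg$. The defining polynomials have the form $Q_{\imath\jmath}(u,v) = (u-v)^{d_{\imath\jmath}}(v-u)^{d_{\jmath\imath}}$ with integers $d_{\imath\jmath}$ determined by Proposition \ref{Prop:quiver_for_R}. Since $\sg$ is simply laced, $d_{\imath\jmath} + d_{\jmath\imath} = -a^\sg_{\imath\jmath} \in \{0,1\}$, and for each adjacent pair $\imath \sim \jmath$ the only freedom is the orientation of the arrow in the associated quiver $\mathscr{S}$, which can a priori differ among the three cases (and among different $Q$-data within the same case).

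The remaining point is the well-known fact that symmetric quiver Hecke algebras of type $ADE$ defined by polynomials of the above form are mutually isomorphic as $\bk$-algebras regardless of the orientation; the isomorphism is realized by a sign rescaling of the generators $\tau_k$, absorbing the change of sign in the defining relations, and is compatible with the convolution product so that it induces a monoidal equivalence of $\bigoplus_\gb \modfd{\wh{R}(\gb)}$. Composing these orientation-change equivalences with the equivalences of Theorem \ref{Thm} gives the claimed mutual equivalences $\cC_{\bQ^{(1)}} \simeq \cC_{\bQ^{(t)}} \simeq \cC_{\bQ^{L}}$. The main obstacle in this plan is not truly deep but bookkeeping in nature: carefully verifying the case-by-case identification of $\sg$ and confirming that the orientation-change isomorphism is indeed monoidal after passing to the completed setting. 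Both tasks are straightforward given Theorem \ref{Thm} and the classical theory of quiver Hecke algebras.
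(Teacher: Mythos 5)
Your proposal is correct and follows the same route as the paper, which in fact gives only the one-line argument that ``the corresponding quiver Hecke algebras in the theorem are mutually isomorphic.'' You usefully unpack the two points the paper leaves implicit: that the simple Lie algebra $\sg$ attached to each of the three quantum affine algebras is of the same simply-laced type $X_n$ (checked case by case via Figure~\ref{figure} and the conventions of Subsection~\ref{subsection:Full_subcategories}), and that the only remaining ambiguity in the parameters $(d_{\imath\jmath})$ is the orientation of the quiver $\mathscr{S}$, which does not affect the isomorphism class of the (completed) quiver Hecke algebra. One minor imprecision: the orientation-change isomorphism is not literally a sign rescaling of the $\tau_k$ alone, since $\tau_l^2 e(\bmi)$ only picks up $\epsilon^2$ under such a rescaling; the standard construction rescales $\tau_l e(\bmi)$ by signs depending on the pair $(\imath_l,\imath_{l+1})$ (equivalently, twists by characters of the groupoid of vertices/edges), and one must check the braid relation is preserved. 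This is indeed well known and compatible with the convolution product, so it does not affect the validity of the argument.
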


\begin{proof}
 This follows since the corresponding quiver Hecke algebras in the theorem are mutually isomorphic.
\end{proof}

\begin{Rem}\normalfont
 The equivalence between $\cC_{\bQ^{(1)}}$ and $\cC_{\bQ^{(t)}}$ preserves the dimensions of modules by \cite{kang2016symmetric,oh2019categorical}.
 This is not the case for the equivalence between $\cC_{\bQ^{L}}$ and $\cC_{\bQ^{(1)}}$, or $\cC_{\bQ^{L}}$ and $\cC_{\bQ^{(t)}}$.
 This equivalence does not preserve even fundamental modules (for an example of such a phenomenon, see \cite[Theorem 12.9]{hernandez2019quantum}).
\end{Rem}


\subsection{Proof of Proposition \ref{Prop:Main_Prop}}\label{subsection:proof_of_main}

As in Subsection \ref{Subsection:Modified_QAA}, fix a numbering $\{\gl_1,\ldots,\gl_l\}$ on the elements of $P_{0,\leq \gl}^+$ such that 
$\gl_l = \gl$ and $r<s$ holds whenever $\gl_r < \gl_s$.
Note that $\mathsf{cl}(\bm{\pi}) \leq \gl$ holds for any $\bm{\pi} \in \cP^+_{\bQ,\gb}$, since $L(\bm{\pi})$ is a subquotient of $\wh{W}^{\otimes\gb}$.
For $1\leq k\leq l$, set
\[ \cP^{+}_k = \{ \bm{\pi} \in \cP^+_{\bQ,\gb} \mid \mathsf{cl}(\bm{\pi}) = \gl_k\},
\]
and $KP_{k} = \Omega_{\bQ}^{-1} (\cP^+_k) \subseteq KP(\gb)$.
By Proposition \ref{Prop:isom_of_posets}, if $\bfm \prec \bfn$ holds for some $\bfm \in KP_{k}$ and $\bfn \in KP_{r}$, then we have $k<r$.
For $0\leq k \leq l$, we also set $KP_{\leq k} = \bigsqcup_{r\leq k} KP_r$, 
and define a two-sided ideal $R_k = \mathcal{O}^{KP_{\leq k}}\big(\wh{R}(\gb)\big)$ of $\wh{R}(\gb)$ (see Section \ref{Section:affine_quasi_hereditary}), which gives a chain of ideals
\[ \wh{R}(\gb)=R_0 \supseteq R_1 \supseteq \cdots \supseteq R_l = \{0\}.
\]
Set $R^k = \wh{R}(\gb)/R_k$.
For each $k$, fix a numbering $KP_k = \{\bfm_{k,1},\ldots,\bfm_{k,s_k}\}$.
By Theorem \ref{Thm:hereditary_corresp}, $\wh{R}(\gb)$ is an affine quasihereditary algebra. 
Then, by definition, the quotient $R^k$ is also an affine quasihereditary algebra, and for each $1\leq j \leq s_k$ an affine heredity ideal $R^k_{j}$ of $R^k$ is obtained by setting
\[ R^k_j = \mathcal{O}^{KP_{\leq k}\setminus \{\bfm_{k,j}\}}(R^k),
\]
since $\bfm_{k,j}$ is a maximal element of $KP_{\leq k}$.
Since $\mathrm{Hom}_{\modfg{\wh{R}(\gb)}}\!(R_j^k,R_{j'}^k) = 0$ for $j\neq j'$ (see Definition \ref{Def:affine_heredity_ideal}), we have 
\begin{equation}\label{eq:direct_sum}
 \bigoplus_{j=1}^{s_k} R^k_{j} = \mathcal{O}^{KP_{\leq k-1}}(R^k)=R_{k-1}/R_k.
\end{equation}
Moreover, thanks to the last paragraph of Subsection \ref{Subsection:SQHA}, we can apply Theorem \ref{Thm:affine_cellular} with $\cA=R^k$, $\mathcal{J}=R^k_j$, 
and $\pi=\bfm_{k,j}$.
Hence $R^k_{j}$ is $\psi$-invariant, and there is an idempotent $e_{k,j} \in R^k$ satisfying
\begin{align}\label{eq:affine_cellular_of_R}
 \begin{aligned}
  R^k_{j} = R^k e_{k,j}R^k\cong \gD(\bfm_{k,j}) \otimes_{\wh{\bS}_{\bfm_{k,j}}} \gD(\bfm_{k,j})^\psi.
 \end{aligned}
\end{align}
In the following we sometimes write $\wh{\bS}$ for $\wh{\bS}_{\bfm}$, when $\bfm$ is obvious from the context.

For each $0\leq k \leq l$, set $\wh{W}_k = \wh{W}^{\otimes\gb} \otimes_{\wh{R}(\gb)} R_{k}$.
Since $\wh{W}^{\otimes\gb}$ is a flat right $\wh{R}(\gb)$-module, we may regard $\wh{W}_k$ as an $(\bE^\gb,\wh{R}(\gb))$-submodule of $\wh{W}^{\otimes\gb}$.
We write $\bm{\pi}_{k,j}$ for $\bm{\pi}_{\bfm_{k,j}}$ hereafter.
For each $1\leq k \leq l$, it follows from (\ref{eq:direct_sum}), (\ref{eq:affine_cellular_of_R}) and Corollary \ref{Cor:correspondence_of_standard} that
\begin{align}\label{eq:subquotient_of_W}
 \wh{W}_{k-1}/\wh{W}_{k} &\cong \wh{W}^{\otimes \gb} \otimes_{\wh{R}(\gb)} R_{k-1}/R_k \cong \bigoplus_{j=1}^{s_k} \wh{W}^{\otimes \gb} \otimes_{\wh{R}(\gb)} 
 (\gD(\bfm_{k,j}) \otimes_{\wh{\bS}} \gD(\bfm_{k,j})^\psi)\nonumber\\
 &\cong \bigoplus_{j=1}^{s_k} \wh{W}(\bm{\pi}_{k,j}) \otimes_{\wh{\bS}} \gD(\bfm_{k,j})^\psi
\end{align} 
as $(\bE^\gb,\wh{R}(\gb))$-bimodules.
For each $\bm{\pi} \in \cP_{\bQ,\gb}^+$, setting $\mu=\mathsf{cl}(\bm{\pi})$, we write 
\[ \wh{W}^{\sharp}(\bm{\pi})=\wh{\bA}_{\bm{\pi}} \otimes_{\bA_{\mu}} \bV(\mu)^{\sharp}, \ \ \text{and} \ \ \wh{w}^\sharp_{\bm{\pi}} = 1 \otimes v_\mu \in \wh{W}^{\sharp}(\bm{\pi}).
\]
By Lemma \ref{Lem:dual_deformed}, we have $\wh{W}^{\sharp}(\bm{\pi}) \cong \Hom{\wh{\bA}_{\bm{\pi}}}(\cF_\gb(\gD(\bfm_L\gb_L) \circ \cdots \circ \gD(\bfm_1\gb_1)),\wh{\bA}_{\bm{\pi}})$
with $\Omega^{-1}_{\bQ}(\bm{\pi}) = \bfm$, and hence we can regard $\wh{W}^{\sharp}(\bm{\pi})$ as an $(\wh{\bA}_{\bm{\pi}},\bE^{\gb})$-bimodule.

\begin{Lem}\label{Lem:bimodule_isom}
 For each $1\leq k \leq l$, we have 
 \begin{equation}\label{eq:bimodule_isom}
  \Hom{\wh{R}(\gb)^{\mathrm{opp}}}(\wh{W}^{\otimes \gb},\wh{W}_{k-1}/\wh{W}_{k}) \cong \bigoplus_{j=1}^{s_k} \wh{W}(\bm{\pi}_{k,j}) \otimes_{\wh{\bA}_{\bm{\pi}_{k,j}}} \wh{W}^{\sharp}(\bm{\pi}_{k,j})
 \end{equation}
 as $(\bE^{\gb},\bE^{\gb})$-bimodules.
\end{Lem}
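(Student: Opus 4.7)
The plan is to apply $\Hom{\wh{R}(\gb)^{\mathrm{opp}}}(\wh{W}^{\otimes\gb}, -)$ to the decomposition (\ref{eq:subquotient_of_W}) so that the problem reduces, for each $j$, to establishing an $(\bE^\gb, \bE^\gb)$-bimodule isomorphism
\[ \Hom{\wh{R}(\gb)^{\mathrm{opp}}}\bigl(\wh{W}^{\otimes\gb},\; \wh{W}(\bm{\pi}) \otimes_{\wh{\bS}} \gD(\bfm)^\psi\bigr) \cong \wh{W}(\bm{\pi}) \otimes_{\wh{\bA}_{\bm{\pi}}} \wh{W}^\sharp(\bm{\pi}), \]
where $\bm{\pi} = \bm{\pi}_{k,j}$ and $\bfm = \bfm_{k,j}$.

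The first step is to rewrite the tensor product on the left side. Since $\psi$ fixes the generators $x_k$, it fixes the central subalgebra $\wh{\bS}_\gb$, so the right $\wh{\bS}$-action on $\gD(\bfm)^\psi$ is just the central action. By Lemma \ref{Lem:action_of_centers} this factors through $\iota\colon \wh{\bS}_\gb \to \wh{\bS}_{\bfm}$, and by Corollary \ref{Cor:correspondence_of_standard} together with the definition of $\kappa$ it identifies with the right $\wh{\bA}_{\bm{\pi}}$-action on $\wh{W}(\bm{\pi})$. Thus $\wh{W}(\bm{\pi})\otimes_{\wh{\bS}}\gD(\bfm)^\psi = \wh{W}(\bm{\pi})\otimes_{\wh{\bA}_{\bm{\pi}}}\gD(\bfm)^\psi$. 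The second step uses that $\wh{W}(\bm{\pi})$ is free of finite rank over $\wh{\bA}_{\bm{\pi}}$ by Lemma \ref{Lem:properties_of_deformed_local}(ii): choosing a basis allows $\wh{W}(\bm{\pi})$ to be pulled out of the Hom, yielding $\wh{W}(\bm{\pi}) \otimes_{\wh{\bA}_{\bm{\pi}}} \Hom{\wh{R}(\gb)^{\mathrm{opp}}}(\wh{W}^{\otimes\gb}, \gD(\bfm)^\psi)$. It therefore suffices to produce an $(\wh{\bA}_{\bm{\pi}}, \bE^\gb)$-bimodule isomorphism $\Hom{\wh{R}(\gb)^{\mathrm{opp}}}(\wh{W}^{\otimes\gb}, \gD(\bfm)^\psi) \cong \wh{W}^\sharp(\bm{\pi})$.

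The third step is to establish this remaining identification. I would use the anti-involution $\psi$ together with the general fact $(M_1 \circ M_2)^\psi \cong M_2^\psi \circ M_1^\psi$ to recognize $\gD(\bfm)^\psi$ as a reverse-order right-module convolution $\gD(\bfm_L\gb_L)^\psi \circ \cdots \circ \gD(\bfm_1\gb_1)^\psi$, matching the ordering convention used in Lemma \ref{Lem:dual_deformed}. Then the co-induction adjunction of Proposition \ref{Prop:coinduction}, applied in the completed setting (cf.\ Remark \ref{Rem:completion}(2)) to right modules, converts the Hom into a completed tensor product of Homs over the smaller algebras $\wh{R}(\bfm_k\gb_k)$. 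Invoking Theorem \ref{Thm:Properties_of_F}(ii) to pass $\wh{W}^{\otimes\gb}$ through the convolution, together with Proposition \ref{Prop:correspondence of standard} and Proposition \ref{Prop:Fujita_isom} to identify each factor with the appropriate completed tensor power of $\cF_\gb(\gD(\bfm_k\gb_k))$, the description of $\wh{W}^\sharp(\bm{\pi})$ furnished by Lemma \ref{Lem:dual_deformed} yields the desired isomorphism.

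The main obstacle lies in this third step. One must carefully reconcile the anti-involution $\psi$ on $\wh{R}(\gb)$ with the anti-involution $\sharp$ on $\wti{U}_q$ so that the induced right $U_q'$-module structure on $\Hom{\wh{R}(\gb)^{\mathrm{opp}}}(\wh{W}^{\otimes\gb}, \gD(\bfm)^\psi)$ agrees with the $\sharp$-twisted action on $\wh{W}^\sharp(\bm{\pi}) = \wh{\bA}_{\bm{\pi}}\otimes_{\bA_\gl}\bV(\gl)^\sharp$, and one must track orientation conventions (forward in $\gD(\bfm)$, reverse in Lemma \ref{Lem:dual_deformed}) as well as the left and right $\bE^\gb$-actions at every step. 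A smaller technical point is that pulling the free module $\wh{W}(\bm{\pi})$ out of the Hom in Step 2 should be justified in the completed setting, but this is straightforward given the finite-rank freeness over the Noetherian local ring $\wh{\bA}_{\bm{\pi}}$.
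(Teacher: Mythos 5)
Your Steps~1 and~2 match the paper's argument exactly: apply (\ref{eq:subquotient_of_W}), observe that the $\wh{\bS}$-action on $\gD(\bfm_{k,j})^\psi$ is identified with the $\wh{\bA}_{\bm{\pi}_{k,j}}$-action via $\kappa\circ\iota$, and pull the free finite-rank module $\wh{W}(\bm{\pi}_{k,j})$ out of the Hom to reduce to showing
$\Hom{\wh{R}(\gb)^{\mathrm{opp}}}(\wh{W}^{\otimes\gb},\gD(\bfm_{k,j})^\psi)\cong\wh{W}^{\sharp}(\bm{\pi}_{k,j})$ as $(\wh{\bA}_{\bm{\pi}_{k,j}},\bE^\gb)$-bimodules. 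You also correctly locate the tools for Step~3 --- Proposition~\ref{Prop:coinduction} to break the Hom into a completed tensor product over the algebras $\wh{R}(m_s\gb_s)$, and Lemma~\ref{Lem:dual_deformed} to reassemble the result as $\wh{W}^\sharp(\bm{\pi}_{k,j})$.

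The genuine gap lies precisely where you flag the "main obstacle," but it is not merely a matter of reconciling $\psi$ with $\sharp$ or tracking orientation conventions. After coinduction you are left, for each factor $s$, with the bimodule $\mathrm{Hom}_{\wh{R}(m_s\gb_s)^{\mathrm{opp}}}\big(\wh{W}^{\otimes m_s\gb_s},\gD(m_s\gb_s)^\psi\big)$, and you still have to compute it. Your plan of invoking Theorem~\ref{Thm:Properties_of_F}(ii), Proposition~\ref{Prop:correspondence of standard}, and Proposition~\ref{Prop:Fujita_isom} does not accomplish this: those statements concern the covariant functor $\cF_\gb=\wh{W}^{\otimes\gb}\otimes_{\wh{R}(\gb)}-$ and tell you what $\wh{W}^{\otimes m_s\gb_s}$ looks like after tensoring, but they do not convert a Hom over $\wh{R}(m_s\gb_s)^{\mathrm{opp}}$ into the Hom over $\wh{\bS}$ that Lemma~\ref{Lem:dual_deformed} requires as input. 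The missing idea is to use the affine cellular structure of $\wh{R}(m_s\gb_s)$ itself: since $m_s\bfm_{\gb_s}$ is the minimal element of $KP(m_s\gb_s)$, Theorem~\ref{Thm:affine_cellular} supplies a two-sided ideal $I_s$ with $\wh{R}(m_s\gb_s)/I_s\cong\gD(m_s\gb_s)\otimes_{\wh{\bS}}\gD(m_s\gb_s)^\psi$. Because $\gD(m_s\gb_s)^\psi$ is a module over this quotient, the Hom factors through $\wh{W}^{\otimes m_s\gb_s}/I_s$, which Corollary~\ref{Cor:correspondence_of_standard} (together with flatness) identifies with $\wh{W}(m_s\bm{\varpi}_{\gb_s})\otimes_{\wh{\bS}}\gD(m_s\gb_s)^\psi$; a Hom-tensor adjunction then yields $\mathrm{Hom}_{\wh{\bS}}\big(\wh{W}(m_s\bm{\varpi}_{\gb_s}),\mathrm{End}_{\wh{R}(m_s\gb_s)^{\mathrm{opp}}}(\gD(m_s\gb_s)^\psi)\big)$, and Lemma~\ref{Lem:action_of_centers} identifies the endomorphism ring with $\wh{\bS}$. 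Only after this reduction does the completed tensor product match the left-hand side of Lemma~\ref{Lem:dual_deformed}. Without this cellular computation at the level of each $\wh{R}(m_s\gb_s)$, Step~3 does not close.
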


\begin{proof}
 By (\ref{eq:subquotient_of_W}), we have
 \begin{align}\label{eq:first_isom}
  \Hom{\wh{R}(\gb)^{\mathrm{opp}}}(\wh{W}^{\otimes \gb}, \wh{W}_{k-1}/\wh{W}_k)
  \cong \bigoplus_{j} \wh{W}(\bm{\pi}_{k,j}) \otimes_{\wh{\bS}} \Hom{\wh{R}(\gb)^{\mathrm{opp}}}(\wh{W}^{\otimes \gb}, \gD(\bfm_{k,j})^{\psi}).
 \end{align}
 For a moment fix $j$ arbitrarily, write $\bfm_{k,j}(\gb_r) = m_r$ for $1\leq r \leq L$, and set $R=\wh{R}(m_L\gb_L) \hat{\otimes} \cdots \hat{\otimes} \wh{R}(m_1\gb_1)$. 
 By Proposition \ref{Prop:coinduction}, we have
 \begin{align*}
  &\Hom{\wh{R}(\gb)^{\mathrm{opp}}}(\wh{W}^{\otimes \gb}, \gD(\bfm_{k,j})^{\psi})\\
  &\cong \mathrm{Hom}_{R^{\mathrm{opp}}}
  \Big(\wh{W}^{\otimes \gb}e(m_L\gb_L,\ldots,m_1\gb_1),\big(\gD(m_L\gb_L)
    \hat{\otimes} \cdots \hat{\otimes} \gD(m_1\gb_1)\big)^\psi\Big)\\
  &\cong \mathrm{Hom}_{R^{\mathrm{opp}}}\Big(\wh{W}^{\otimes m_L\gb_L} \hat{\otimes} \cdots \hat{\otimes} \wh{W}^{\otimes m_1\gb_1},\big(\gD(m_L\gb_L)
    \hat{\otimes} \cdots \hat{\otimes} \gD(m_1\gb_1)\big)^\psi\Big)\\
  &\cong \mathrm{Hom}_{\wh{R}(m_L\gb_L)^{\mathrm{opp}}}\big(\wh{W}^{\otimes m_L\gb_L},\gD(m_L\gb_L)^\psi\big) \hat{\otimes} \cdots \hat{\otimes} 
   \mathrm{Hom}_{\wh{R}(m_1\gb_1)^{\mathrm{opp}}}\big(\wh{W}^{\otimes m_1\gb_1},\gD(m_1\gb_1)^\psi\big)
 \end{align*}
 as $(\wh{\bS}_{\gb},\bE^{\gb})$-bimodules.
 For each $1\leq s \leq L$, $m_s\bfm_{\gb_s}$ is a minimal element of $KP(m_s\gb_s)$, 
 and hence there is a two-sided ideal $I_s$ of $\wh{R}(m_s\gb_s)$ such that $\wh{R}(m_s\gb_s)/I_s \cong \gD(m_s\gb_s) \otimes_{\wh{\bS}}
 \gD(m_s\gb_s)^\psi$ (note that $\gD(m\ga)=\gD(m\bfm_\ga)$ for $m \in \Z_{> 0}$ and $\ga \in R^+_\sg$ in our notation).
 Hence we have 
 \begin{align*}
  \mathrm{Hom}_{\wh{R}(m_s\gb_s)^{\mathrm{opp}}}&(\wh{W}^{\otimes m_s\gb_s},\gD(m_s\gb_s)^\psi )\cong \mathrm{Hom}_{\wh{R}(m_s\gb_s)^{\mathrm{opp}}}(\wh{W}^{\otimes m_s\gb_s}/
  I_s, \gD(m_s\gb_s)^\psi)\\
  \cong &\mathrm{Hom}_{\wh{R}(m_s\gb_s)^{\mathrm{opp}}}(\wh{W}(m_s\bm{\varpi}_{\gb_s})\otimes_{\wh{\bS}} \gD(m_s\gb_s)^\psi,\gD(m_s\gb_s)^\psi)\\
  \cong &\mathrm{Hom}_{\wh{\bS}}\big(\wh{W}(m_s\bm{\varpi}_{\gb_s}), \mathrm{End}_{\wh{R}(m_s\gb_s)^{\mathrm{opp}}}(\gD(m_s\gb_s)^{\psi})\big) \cong 
  \mathrm{Hom}_{\wh{\bS}}\big(\wh{W}(m_s\bm{\varpi}_{\gb_s}), \wh{\bS})
 \end{align*}
 as $(\wh{\bS}_{m_s\bfm_{\gb_s}},\bE^{m_s\gb_s})$-bimodules,
 where the last isomorphism follows from Lemma \ref{Lem:action_of_centers}.
 Hence we have 
 \begin{align*}
  \Hom{\wh{R}(\gb)^{\mathrm{opp}}}(\wh{W}^{\otimes \gb}, \gD(\bfm_{k,j})^{\psi}) &\cong \mathrm{Hom}_{\wh{\bS}_{\bfm_{k,j}}}(\wh{W}(m_L\bm{\varpi}_{\gb_L}) \hat{\otimes}\cdots \hat{\otimes} 
  \wh{W}(m_1\bm{\varpi}_{\gb_1}), \wh{\bS}_{\bfm_{k,j}})\\
  &\cong \wh{W}^{\sharp}(\bm{\pi}_{k,j}),
 \end{align*}
 and the assertion is proved.
\end{proof}

For $0\leq k \leq l$, set
\[ \bE_k =\{ f \in \bE^\gb\mid \mathrm{Im}\, f \subseteq \wh{W}_k\} = \mathrm{Ann}_{\bE^{\gb}}\big(\wh{W}^{\otimes \gb}/\wh{W}_k\big) ,
\]
which defines a chain of ideals
\[ \bE^{\gb} = \bE_0 \supseteq \bE_1 \supseteq \cdots \supseteq \bE_l = \{0\}.
\]
For $\mu \in P_0$ such that $(\wh{W}^{\otimes \gb})_\mu \neq \{0\}$, we will simply write $a_\mu$ for $\Phi_\gb(a_\mu)$. 
Let $U_{\leq \gl} = U_0 \supseteq \cdots \supseteq U_l = \{0\}$ be the chain of ideals given in Subsection \ref{Subsection:Modified_QAA}.

\begin{Lem}\label{Lem:comparing_subquotient}
 {\normalfont(i)} For each $1\leq k \leq l$, $\Phi_\gb\colon U_{\leq \gl} \to \bE^{\gb}$ induces a $(U_q',U_q')$-bimodule homomorphism
  \[ \Phi_{\gb,k}\colon U_{k-1}/U_k \to \bE_{k-1}/\bE_k.
  \]
 {\normalfont(ii)} For each $1\leq k \leq l$, we have $a_{\gl_k} \in \bE_{k-1}$.
\end{Lem}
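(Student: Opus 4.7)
The plan is to prove both parts by the same mechanism: control the $P_0^+$-dominant weights of the successive quotients of the filtration $\wh{W}^{\otimes \gb} = \wh{W}_0 \supseteq \wh{W}_1 \supseteq \cdots \supseteq \wh{W}_l = 0$, then invoke Lemma \ref{Lem:quotient_to_U_gl}. The key claim I would establish first is: for every $0 \leq r \leq l$, the $P_0^+$-dominant weights of $\wh{W}^{\otimes \gb}/\wh{W}_r$ lie in $\{\gl_1, \ldots, \gl_r\}$. By the isomorphism (\ref{eq:subquotient_of_W}), each subquotient $\wh{W}_{s-1}/\wh{W}_s$ is, as a $U_q'$-module, a direct sum of copies of deformed local Weyl modules $\wh{W}(\bm{\pi}_{s,j})$ whose weights are bounded above by $\mathsf{cl}(\bm{\pi}_{s,j}) = \gl_s$. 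Since the chosen numbering of $P_{0,\leq \gl}^+ = \{\gl_1, \ldots, \gl_l\}$ satisfies that $\gl_t < \gl_s$ implies $t < s$, any $P_0^+$-dominant weight $\mu \in P_{0,\leq \gl}^+$ with $\mu \leq \gl_s$ must equal some $\gl_t$ with $t \leq s$; iterating through $s \leq r$ in the filtration of $\wh{W}^{\otimes \gb}/\wh{W}_r$ yields the claim. (Integrability of $\wh{W}^{\otimes \gb}/\wh{W}_r$ is immediate, since $\wh{W}^{\otimes \gb}$ is integrable as a sum of tensor products of affinizations of finite-dimensional integrable modules.)

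Part (i) is then an immediate application of Lemma \ref{Lem:quotient_to_U_gl}: the module $\wh{W}^{\otimes \gb}/\wh{W}_k$ is integrable with $P_0^+$-dominant weights contained in $\{\gl_1, \ldots, \gl_k\}$, and is therefore annihilated by $U_k$. This says exactly that $\Phi_\gb(U_k)(\wh{W}^{\otimes \gb}) \subseteq \wh{W}_k$, i.e., $\Phi_\gb(U_k) \subseteq \bE_k$. Applying the same argument with $k$ replaced by $k-1$ gives $\Phi_\gb(U_{k-1}) \subseteq \bE_{k-1}$, so $\Phi_\gb$ descends to the desired $(U_q', U_q')$-bimodule homomorphism $\Phi_{\gb,k}\colon U_{k-1}/U_k \to \bE_{k-1}/\bE_k$.

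For part (ii), the same weight bound with $r = k-1$ yields $(\wh{W}^{\otimes \gb}/\wh{W}_{k-1})_{\gl_k} = 0$, since $\gl_k$ is distinct from $\gl_1, \ldots, \gl_{k-1}$. But $a_{\gl_k}$ acts on any integrable $\wti{U}_q$-module as the projection onto the $\gl_k$-weight space, so $\Phi_\gb(a_{\gl_k})(\wh{W}^{\otimes \gb}) \subseteq \wh{W}_{k-1}$ follows at once, giving $a_{\gl_k} \in \bE_{k-1}$. The only nontrivial ingredient in the whole argument is the weight bound of the first paragraph, which is a routine consequence of the isomorphism (\ref{eq:subquotient_of_W}) and the weight structure of deformed local Weyl modules; beyond this, I do not foresee a real obstacle.
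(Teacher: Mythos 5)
Your proof is correct and follows essentially the same route as the paper's: it derives the weight bound for $\wh{W}^{\otimes \gb}/\wh{W}_k$ from the isomorphism (\ref{eq:subquotient_of_W}) and then invokes Lemma \ref{Lem:quotient_to_U_gl}, exactly as the paper does. You merely spell out the details that the paper leaves implicit, in particular the argument for part (ii), which the paper dispatches as "obvious."
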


\begin{proof}
 We see from (\ref{eq:subquotient_of_W}) that $\big(\wh{W}^{\otimes \gb}/\wh{W}_k\big)_\mu=0$ for $\mu \in P^+_0 \setminus \{\gl_1,\ldots,\gl_k\}$, 
 and hence we have $\Phi_\gb(U_k) \subseteq
 \mathrm{Ann}_{\bE^{\gb}}\big(\wh{W}^{\otimes \gb}/\wh{W}_k\big) = \bE_k$ by Lemma \ref{Lem:quotient_to_U_gl}, which implies the assertion (i).
 Now the assertion (ii) is obvious.
\end{proof}

\begin{Prop}\label{Prop:Pretend_to_Fujita}
 {\normalfont(i)} For each $1\leq k \leq L$, we have
 \[ \bE_{k-1}/\bE_k \cong \bigoplus_j \wh{W}(\bm{\pi}_{k,j}) \otimes_{\wh{\bA}_{\bm{\pi}_{k,j}}} \wh{W}^{\sharp}(\bm{\pi}_{k,j})
 \]
 as $(\bE^{\gb},\bE^{\gb})$-bimodules.\\
 {\normalfont(ii)} For any $m \in \Z_{>0}$, the composition 
  \[ \Phi_{\gb}^m\colon U_{\leq \gl} \stackrel{\Phi_\gb}{\to} \bE^{\gb} \twoheadrightarrow \bE^{\gb} \Big/ \big(\wh{\bS}_{\gb}^+\big)^m 
  \]
  is surjective.
\end{Prop}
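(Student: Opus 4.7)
The plan is to prove part (i) first as the structural core, and then derive part (ii) by upward induction on $k$ that exploits the compatibility of the chains $\{U_k\}$ and $\{\bE_k\}$ through $\Phi_\gb$. For part (i), I would start from the tautological $(\bE^\gb,\bE^\gb)$-bimodule injection
\[ \bE_{k-1}/\bE_k \hookrightarrow \Hom{\wh{R}(\gb)^{\mathrm{opp}}}\big(\wh{W}^{\otimes\gb},\wh{W}_{k-1}/\wh{W}_k\big), \]
which holds because $\bE_k=\Hom{\wh{R}(\gb)^{\mathrm{opp}}}\big(\wh{W}^{\otimes\gb},\wh{W}_k\big)$ by definition, and apply Lemma~\ref{Lem:bimodule_isom} to identify the target with $\bigoplus_j \wh{W}(\bm{\pi}_{k,j}) \otimes_{\wh{\bA}_{\bm{\pi}_{k,j}}} \wh{W}^\sharp(\bm{\pi}_{k,j})$. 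The remaining content is surjectivity: every right $\wh{R}(\gb)$-linear map $\wh{W}^{\otimes\gb}\to\wh{W}_{k-1}/\wh{W}_k$ must lift to an endomorphism of $\wh{W}^{\otimes\gb}$ with image in $\wh{W}_{k-1}$. I would work summand-by-summand, exploiting the affine cellular description $R^k_j\cong\gD(\bfm_{k,j})\otimes_{\wh{\bS}}\gD(\bfm_{k,j})^\psi$ from Theorem~\ref{Thm:affine_cellular}. Via the adjunctions (\ref{eq:induction}) and (\ref{eq:coinduction}), a right homomorphism into this summand reduces to an $\wh{\bS}$-linear map from a reordered tensor factor of $\wh{W}^{\otimes\gb}$ into $\gD(\bfm_{k,j})^\psi$; by Proposition~\ref{Prop:Fujita_isom} the factor is a direct sum of copies of a deformed local Weyl module $\wh{W}(\bm{\pi}_{k,j})$, so the lift is furnished by the universal property in Lemma~\ref{Lem:properties_of_deformed_local}(i).

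For part (ii), I would induct upward on $k$ on the stronger statement
\[ \Phi_\gb(U_{\leq\gl}) + \bE_k + (\wh{\bS}_\gb^+)^m\bE^\gb = \bE^\gb. \]
The base case $k=0$ is tautological since $\bE_0=\bE^\gb$. For the inductive step, assuming the identity at $k-1$, it suffices to show that $\bE_{k-1}/\bE_k$ modulo $(\wh{\bS}_\gb^+)^m(\bE_{k-1}/\bE_k)$ lies in the image of $\Phi_{\gb,k}\colon U_{k-1}/U_k\to\bE_{k-1}/\bE_k$. Theorem~\ref{Thm:affine_cellularity_of_U} identifies $U_{k-1}/U_k\cong\bV(\gl_k)\otimes_{\bA_{\gl_k}}\bV(\gl_k)^\sharp$ with canonical generator coming from $a_{\gl_k}$, and part~(i) identifies the target $\bE_{k-1}/\bE_k\cong\bigoplus_j\wh{W}(\bm{\pi}_{k,j})\otimes_{\wh{\bA}_{\bm{\pi}_{k,j}}}\wh{W}^\sharp(\bm{\pi}_{k,j})$. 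Under these identifications $\Phi_{\gb,k}$ is realized by the natural completion map $\bA_{\gl_k}\to\bigoplus_j\wh{\bA}_{\bm{\pi}_{k,j}}$ at the maximal ideals $\fm_{\bm{\pi}_{k,j}}$. The geometric input, via Lemma~\ref{Lem:specm} combined with the description of the $\wh{\bS}_\gb$-action given by $\kappa\circ\iota$ in Corollary~\ref{Cor:correspondence_of_standard}, is that the $\fm_{\bm{\pi}_{k,j}}$ exhaust the maximal ideals of $\bA_{\gl_k}$ compatible with the central character of $\wh{\bS}_\gb$ on $\wh{W}^{\otimes\gb}$; consequently, modulo $(\wh{\bS}_\gb^+)^m$ the relevant quotient is finite-dimensional and is covered by the image of the uncompleted $\bV(\gl_k)\otimes_{\bA_{\gl_k}}\bV(\gl_k)^\sharp$.

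The main obstacle will be surjectivity in (i), i.e.\ the lifting of $\wh{R}(\gb)^{\mathrm{opp}}$-linear homomorphisms into the subquotient $\wh{W}_{k-1}/\wh{W}_k$. While the flatness of $\wh{W}^{\otimes\gb}$ over $\wh{R}(\gb)^{\mathrm{opp}}$ from Theorem~\ref{Thm:Properties_of_F}(i) does not formally yield the required $\mathrm{Ext}^1$ vanishing, the explicit affine cellular description of the subquotient together with the deformed local Weyl module factorization from Proposition~\ref{Prop:Fujita_isom} should support a direct construction of lifts. Verifying $(\bE^\gb,\bE^\gb)$-bimodule compatibility through the completed tensor products $\hat\otimes$ will be the technically delicate step, but once it is in place both parts of the proposition follow mechanically from the bookkeeping provided by Lemmas~\ref{Lem:action_of_centers}, \ref{Lem:coproduct_of_A}, and \ref{Lem:dual_deformed}.
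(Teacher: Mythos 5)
Your overall plan — inject $\bE_{k-1}/\bE_k$ into $\mathrm{Hom}_{\wh{R}(\gb)^{\mathrm{opp}}}(\wh{W}^{\otimes\gb},\wh{W}_{k-1}/\wh{W}_k)$, identify the target via Lemma~\ref{Lem:bimodule_isom}, and for (ii) exploit the cyclicity of $U_{k-1}/U_k$ together with a Chinese-remainder quotient — tracks the paper's. The genuine gap is in your surjectivity argument for (i).

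You want to lift an arbitrary $\wh{R}(\gb)^{\mathrm{opp}}$-linear map $\wh{W}^{\otimes\gb}\to\wh{W}_{k-1}/\wh{W}_k$ across $\wh{W}_{k-1}\twoheadrightarrow\wh{W}_{k-1}/\wh{W}_k$, but the ingredients you cite do not produce this. The adjunctions (\ref{eq:induction}), (\ref{eq:coinduction}) and Proposition~\ref{Prop:Fujita_isom} are precisely what go into the proof of Lemma~\ref{Lem:bimodule_isom}: they compute the Hom-space, they do not show $\bE_{k-1}/\bE_k$ surjects onto it. Lemma~\ref{Lem:properties_of_deformed_local}(i) produces $U_q'$-linear maps \emph{out of} $\wh{W}(\bm{\pi})$; it says nothing about lifting $\wh{R}(\gb)^{\mathrm{opp}}$-linear maps out of $\wh{W}^{\otimes\gb}$. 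The paper proves surjectivity by a route you do not take: it first shows that $\psi_k(\bar{a}_{\gl_k})$ is a unit multiple of $\wh{w}_{\bm{\pi}_{k,j}}\otimes\wh{w}^{\sharp}_{\bm{\pi}_{k,j}}$ in each summand, then uses Theorem~\ref{Thm:affine_cellularity_of_U} to realize $U_{k-1}/U_k\cong\bV(\gl_k)\otimes_{\bA_{\gl_k}}\bV(\gl_k)^\sharp$ as a cyclic $(U_q',U_q')$-bimodule on $\bar{a}_{\gl_k}$ and the Chinese remainder theorem to identify the target modulo $(\wh{\bS}_\gb^+)^m$ with a cyclic quotient of it; this makes the composite $\psi_k^m\circ\Phi_{\gb,k}^m$ surjective, forcing $\psi_k^m$ surjective (hence bijective, as $\psi_k$ is already injective), and then $\psi_k$ surjective by completeness. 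The bonus is that $\Phi_{\gb,k}^m$ is thereby also surjective, which is exactly the left vertical arrow in the five-lemma induction needed for (ii). So in the paper (i) and (ii) fall out of a single argument running through $\Phi_{\gb,k}^m$, whereas your sequential plan requires a self-contained proof of (i). You could supply it by observing that $\wh{W}^{\otimes\gb}$ is finitely generated over $\wh{\bS}_\gb\subseteq\wh{R}(\gb)^{\mathrm{opp}}$ (stated in the paper just before the proposition) and $\wh{R}(\gb)$ is Noetherian, so flatness upgrades to projectivity and $\mathrm{Ext}^1$ vanishes outright — your worry that flatness alone is not enough is misplaced in this finitely generated, Noetherian setting — but as written your proposal neither makes this observation nor says what the ``direct construction of lifts'' would actually be, so the argument does not close.
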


\begin{proof}
  By definition, there is a natural injective $(\bE^\gb,\bE^\gb)$-bimodule homomorphism $\phi_k$
  from $\bE_{k-1}/\bE_k$ to $\Hom{\wh{R}(\gb)^{\mathrm{opp}}}(\wh{W}^{\otimes \gb},\wh{W}_{k-1}/\wh{W}_{k})$.
  By composing this with (\ref{eq:bimodule_isom}), we obtain an injective homomorphism
  \[ \psi_k \colon \bE_{k-1}/\bE_k \hookrightarrow \bigoplus_j \wh{W}(\bm{\pi}_{k,j}) \otimes_{\wh{\bA}_{\bm{\pi}_{k,j}}} \wh{W}^{\sharp}(\bm{\pi}_{k,j}).
  \]
  In order to show the assertion (i), we need to show the surjectivity of $\psi_k$.
  First we claim that 
  \[ \psi_k (\bar{a}_{\gl_k}) \in \sum_j \wh{\bA}_{\bm{\pi}_{k,j}}^\times (\wh{w}_{\bm{\pi}_{k,j}} \otimes \wh{w}^\sharp_{\bm{\pi}_{k,j}}),
  \]
  where $\bar{a}_{\gl_k}$ is the image of $a_{\gl_k}$ under the natural projection, and $A^\times$ denotes the subset of units of an algebra
  $A$.
  For a $(U_q',U_q')$-bimodule $M$ and $\mu \in P_0$, set $M_{(\mu,\mu)} = \{v \in M \mid q^hv=q^{\langle h,\mu\rangle}v=vq^h \ \text{ for $h \in P_\cl^\vee$}\}$.
  There are natural isomorphisms
  \begin{align*}
   &\Hom{\wh{R}(\gb)^{\mathrm{opp}}}(\wh{W}^{\otimes \gb}, \wh{W}_{k-1}/\wh{W}_k)_{(\gl_k,\gl_k)} \cong \Hom{\wh{R}(\gb)^{\mathrm{opp}}}(\wh{W}^{\otimes \gb}/\wh{W}_k,
     \wh{W}_{k-1}/\wh{W}_k)_{(\gl_k,\gl_k)}\\
   &\cong\mathrm{End}_{\wh{R}(\gb)^{\mathrm{opp}}}\big(\bigoplus_j \gD(\bfm_{k,j})^\psi\big) 
    \cong \bigoplus_j \mathrm{End}_{\wh{R}(\gb)^{\mathrm{opp}}} \big(\gD(\bfm_{k,j})^\psi\big) \cong \bigoplus_j \bS_{\bfm_{k,j}},
  \end{align*}
  where the second isomorphism follows since we have
  \[ (\wh{W}^{\otimes \gb}/\wh{W}_k)_{\gl_k} \cong \bigoplus_j \gD(\bfm_{k,j})^\psi \cong(\wh{W}_{k-1}/\wh{W}_k)_{\gl_k} 
  \]
  by (\ref{eq:subquotient_of_W}), and the third follows from (\ref{eq:definition_of_standard}).
  Obviously $\bar{a}_{\gl_k} \in \left(\bE_{k-1}/\bE_{k}\right)_{(\gl_k,\gl_k)}$,
  and the image $\phi_k(\bar{a}_{\gl_k})\in \Hom{\wh{R}(\gb)^{\mathrm{opp}}}(\wh{W}^{\otimes \gb}, \wh{W}_{k-1}/\wh{W}_k)$ 
  corresponds to $(1,1,\ldots,1)\in \bigoplus_j \bS_{\bfm_{k,j}}$ via the above isomorphism.
  Hence for each $j$, we see that $\mathrm{pr}_j\circ \psi_k(\bar{a}_k)$ generates 
  \[ \Big(\wh{W}(\bm{\pi}_{k,j}) \otimes \wh{W}^{\sharp}(\bm{\pi}_{k,j})\Big)_{(\gl_k,\gl_k)} = 
     \wh{\bA}_{\bm{\pi}_{k,j}} (\wh{w}_{\bm{\pi}_{k,j}} \otimes \wh{w}^\sharp_{\bm{\pi}_{k,j}}),
  \]
  where $\mathrm{pr}_j$ denotes the projection to the $j$-th summand.
  Now the claim is proved.

  For any $m \in \Z_{>0}$, we have 
  \begin{align}\label{eq:chinese_remainder_thm}
   \Big(\bigoplus_j \wh{W}(\bm{\pi}_{k,j}) \otimes_{\wh{\bA}_{\bm{\pi}_{k,j}}}\wh{W}^{\sharp}(\bm{\pi}_{k,j})\Big)\Big/ &(\wh{\bS}^+_\gb)^m 
   \cong \bigoplus_j \Big(\wh{W}(\bm{\pi}_{k,j}) \otimes_{\wh{\bA}_{\bm{\pi}_{k,j}}}\wh{W}^{\sharp}(\bm{\pi}_{k,j})\Big/\wh{\fm}_{\bm{\pi}_{k,j}}^m\Big)\nonumber\\
   &\cong \bigoplus_j\wh{W}(\bm{\pi}_{k,j}) \otimes_{\wh{\bA}_{\bm{\pi}_{k,j}}} \left(\frac{\wh{\bA}_{\bm{\pi}_{k,j}}}{\wh{\fm}^m_{\bm{\pi}_{k,j}}}\right) 
   \otimes_{\wh{\bA}_{\bm{\pi}_{k,j}}}
   \wh{W}^{\sharp}(\bm{\pi}_{k,j})\nonumber\\
   & \cong \bigoplus_j\bV(\gl_k) \otimes_{\bA_{\gl_k}} \left(\frac{\bA_{\gl_k}}{\fm_{\bm{\pi}_{k,j}}^m}\right)\otimes_{\bA_{\gl_k}} \bV(\gl_k)^{\sharp}\nonumber\\
  & \cong \bV(\gl_k) \otimes_{\bA_{\gl_k}} \left(\frac{\bA_{\gl_k}}{\prod_j \fm_{\bm{\pi}_{k,j}}^m}\right)\otimes_{\bA_{\gl_k}} \bV(\gl_k)^{\sharp}
  \end{align}
  by the Chinese remainder theorem, and hence the composition
  \begin{equation}\label{eq:composition_is_surjective}
   U_{k-1}/U_k \stackrel{\Phi_{\gb,k}^m}{\to}\left(\bE_{k-1}/\bE_k\right)\Big/ (\wh{\bS}^+_{\gb})^m \stackrel{\psi^m_k}{\to}
     \left(\bigoplus_j \wh{W}(\bm{\pi}_{k,j}) \otimes_{\wh{\bA}_{\bm{\pi}_{k,j}}}\wh{W}^{\sharp}(\bm{\pi}_{k,j})\right)\Big/ (\wh{\bS}^+_\gb)^m
  \end{equation}
  is surjective by Theorem \ref{Thm:affine_cellularity_of_U} and the above claim, which implies that the map $\psi_k^m$ is surjective for any $m$.
  Since $\psi_k$ is an $\wh{\bS}_\gb$-module homomorphism, the surjectivity of $\psi_k$ also follows, and the assertion (i) is proved.

  Fix $m \in \Z_{>0}$. 
  Since $\psi_k^m$ in (\ref{eq:composition_is_surjective}) is an isomorphism, we see that the map $\Phi_{\gb,k}^m$ is surjective for any $k$, 
  and then by applying the five lemma to the following diagram
  \begin{equation}\label{eq:commutative diagram}
   \xymatrix{ 
   0 \ar[r] & U_{k-1}/U_k\ar[r]^{a_k} \ar[d]_{\Phi_{\gb,k}^m} &  U_{\leq \gl}/U_k \ar[r]^{b_k}\ar[d]_{\Phi_{\gb,\leq k}^m} & U_{\leq \gl}/U_{k-1} \ar[r] \ar[d]_{\Phi_{\gb,\leq {k-1}}^m} & 0\\
            & \big(\bE_{k-1}/\bE_k\big)\big/(\wh{\bS}^+_{\gb})^m \ar[r] & \big(\bE^{\gb}/\bE_k\big)\big/(\wh{\bS}^+_{\gb})^m \ar[r] & 
  \big(\bE^{\gb}/\bE_{k-1}\big)\big/(\wh{\bS}^+_{\gb})^m\ar[r] & 0,}
  \end{equation}
  we can inductively show that the map $\Phi_{\gb,\leq k}^m$ is surjective for any $1\leq k \leq l$.
  Since $\Phi^m_{\gb} = \Phi^m_{\gb,\leq l}$, the assertion (ii) is proved.
\end{proof}

Although the remaining part of our proof of Proposition \ref{Prop:Main_Prop} is similar to that of \cite[Theorem 4.9]{fujita2017affine}, we will give it for completeness.
Using the notation in the proof of the above proposition, set 
\[ K_k^m = \mathrm{Ker}\,\Phi_{\gb,k}^m \subseteq U_{k-1}/U_k \ \text{ and } \ K_{\leq k}^m = \mathrm{Ker}\, \Phi_{\gb,\leq k}^m \subseteq U_{\leq \gl}/U_{k}
\]
for each $1\leq k \leq l$ and $m \in \Z_{>0}$.

\begin{Lem}\label{Lem:final_Lemma}
 For any $1\leq k \leq l$ and $m_1,m_2 \in \Z_{>0}$, there exists some $m > m_1+m_2$ satisfying:
 \begin{itemize}
  \setlength{\leftskip}{-0.5cm}
  \setlength{\parskip}{0.2cm} 
  \item[{\normalfont(i)}] $K_k^m \subseteq K_k^{m_1} \cdot K_k^{m_2}$;
  \item[{\normalfont(ii)}] $K_{\leq k}^m \subseteq K_{\leq k}^{m_1}\cdot K_{\leq k}^{m_2}$.
 \end{itemize}
\end{Lem}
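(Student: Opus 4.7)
The plan is to prove (i) directly using the matrix-algebra description of $U_{k-1}/U_k$, and to deduce (ii) by induction on $k$ combined with an Artin--Rees control. For part (i), I would use Theorem~\ref{Thm:affine_cellularity_of_U} to identify $U_{k-1}/U_k\cong\bV(\gl_k)\otimes_{\bA_{\gl_k}}\bV(\gl_k)^{\sharp}$, and then, tracing through the proof of Proposition~\ref{Prop:Pretend_to_Fujita}~(i) together with the freeness of $\bV(\gl_k)$ over $\bA_{\gl_k}$ (Theorem~\ref{Thm:properties_of_extremal}~(iii)) and the Chinese remainder isomorphism~(\ref{eq:chinese_remainder_thm}), I would show that under this identification $K^m_k$ corresponds to the sub-bimodule $\bV(\gl_k)\otimes_{\bA_{\gl_k}}\fa_m\otimes_{\bA_{\gl_k}}\bV(\gl_k)^{\sharp}$, where I set $\fa_m:=(\wh{\bS}^+_\gb)^m\bA_{\gl_k}=\prod_j\fm^m_{\bm{\pi}_{k,j}}$. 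The ring multiplication on $U_{\leq\gl}$ restricts on the ideal $U_{k-1}/U_k$ to a ``matrix-algebra form'' $(v\otimes v^{\sharp})(w\otimes w^{\sharp})=v\cdot\langle v^{\sharp},w\rangle\otimes w^{\sharp}$ for a natural $\bA_{\gl_k}$-valued pairing, and from this I would deduce $K^{m_1}_k\cdot K^{m_2}_k=\bV(\gl_k)\otimes_{\bA_{\gl_k}}(\fa_{m_1}\fa_{m_2})\otimes_{\bA_{\gl_k}}\bV(\gl_k)^{\sharp}$. Since $\fa_{m_1}\fa_{m_2}=\fa_{m_1+m_2}$, the choice $m=m_1+m_2+1$ already yields $K^m_k\subseteq K^{m_1+m_2}_k\subseteq K^{m_1}_k\cdot K^{m_2}_k$.

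For (ii), the base case $k=1$ coincides with (i) since $U_{\leq\gl}/U_1=U_0/U_1$. In the inductive step, given $m_1,m_2$, I would take $m$ very large (to be tuned against constants appearing below), and for $x\in K^m_{\leq k}$ project to its image $\bar{x}\in U_{\leq\gl}/U_{k-1}$, which lies in $K^m_{\leq k-1}$. The inductive hypothesis furnishes a decomposition $\bar{x}=\sum\bar{y}_i\bar{z}_i$ with $\bar{y}_i\in K^{m_1}_{\leq k-1}$ and $\bar{z}_i\in K^{m_2}_{\leq k-1}$. I would then lift each $\bar{y}_i,\bar{z}_i$ to elements $y_i,z_i\in U_{\leq\gl}/U_k$, using the surjectivity results proved in (the course of the proof of) Proposition~\ref{Prop:Pretend_to_Fujita}~(ii) to adjust the lifts so that they lie in $K^{m_1}_{\leq k}$ and $K^{m_2}_{\leq k}$ respectively, modulo an error I must control. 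The residue $x-\sum y_iz_i$ then lies in $U_{k-1}/U_k$, and I would verify that it in fact lies in $K^{m^{*}}_k$ for an appropriate exponent $m^{*}$, whereupon part~(i) applied at level $k$ produces a further factorization inside $K^{m_1}_k\cdot K^{m_2}_k\subseteq K^{m_1}_{\leq k}\cdot K^{m_2}_{\leq k}$.

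The main obstacle will be controlling these lifts: a priori, $\bar{y}_i\in K^{m_1}_{\leq k-1}$ yields only $\Phi_\gb(\tilde{y}_i)\in\bE_{k-1}+(\wh{\bS}^+_\gb)^{m_1}\bE^{\gb}$, and to move the $\bE_{k-1}$-component into $\bE_k$ modulo a power of $\wh{\bS}^+_\gb$ acting on $\bE_{k-1}$ itself, one needs $(\wh{\bS}^+_\gb)^{m_1}\bE^{\gb}\cap\bE_{k-1}$ to sit inside $(\wh{\bS}^+_\gb)^{?}\bE_{k-1}$, which does not hold tautologically. I plan to resolve this with the Artin--Rees lemma applied to the Noetherian $\wh{\bS}_\gb$-module $\bE^{\gb}$ (which is finitely generated by the discussion after the definition of $\bE^\gb$) and its submodule $\bE_{k-1}$: this produces a constant $c_k$ such that $(\wh{\bS}^+_\gb)^n\bE^{\gb}\cap\bE_{k-1}\subseteq(\wh{\bS}^+_\gb)^{n-c_k}\bE_{k-1}$ for $n\geq c_k$. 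The shift by $c_k$ propagates through the induction on $k$, which both explains why the asserted $m$ can be taken strictly greater than $m_1+m_2$ and dictates how large $m$ must ultimately be chosen; finally, since a common $m$ is required for (i) and (ii), one simply takes the maximum of the two values produced.
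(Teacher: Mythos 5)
Your part (i) is correct and essentially the paper's argument: both reduce to identifying $K^m_k$ with $\big(\prod_j\fm^m_{\bm{\pi}_{k,j}}\big)\cdot (U_{k-1}/U_k)$ via Theorem~\ref{Thm:affine_cellularity_of_U} and (\ref{eq:chinese_remainder_thm}), and then using the idempotent/matrix-algebra structure (the paper phrases this as $(U_{k-1}/U_k)^2=U_{k-1}/U_k$, while you make the pairing explicit); either way one gets $K^{m_1}_kK^{m_2}_k=K^{m_1+m_2}_k$ and (i) follows.

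For part (ii) the overall strategy---induction on $k$, the exact sequence $0\to U_{k-1}/U_k\to U_{\leq\gl}/U_k\to U_{\leq\gl}/U_{k-1}\to 0$, Artin--Rees for the submodule $\bE_{k-1}\subseteq\bE^\gb$, and using part (i) to finish off the residue in $U_{k-1}/U_k$---is the same as the paper's. But your quantifier ordering has a genuine gap. You propose to apply the inductive hypothesis with the \emph{given} exponents $(m_1,m_2)$, getting $\bar y_i\in K^{m_1}_{\leq k-1}$, $\bar z_i\in K^{m_2}_{\leq k-1}$, and to lift to $y_i\in K^{m_1}_{\leq k}$, $z_i\in K^{m_2}_{\leq k}$. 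This lift does exist directly by the surjectivity of $\Phi^{m_1}_{\gb,k}$ (no Artin--Rees needed at this step, so the concern in your last paragraph is misplaced). The problem is the residue: since $K^{m_1}_{\leq k}K^{m_2}_{\leq k}\subseteq K^{m_1+m_2}_{\leq k}$, the best you can say is that $x-\sum y_iz_i\in K^{m_1+m_2}_{\leq k}\cap (U_{k-1}/U_k)$, and Artin--Rees then places it in $K^{m_1+m_2-c_k}_k$. But part (i) gives $K^{m^*}_k\subseteq K^{m_1}_kK^{m_2}_k$ only for $m^*>m_1+m_2$, so $m_1+m_2-c_k$ is \emph{too small} and your final factorization of the residue does not go through. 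No choice of the outer exponent $m$ repairs this, since the residue's bound is controlled by $m_1+m_2$, not by $m$.

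The fix, which is what the paper does, is to reverse the order of choices: first pick $M>m_1+m_2$ with $K^M_k\subseteq K^{m_1}_k\cdot K^{m_2}_k$ from (i); then use Artin--Rees to produce $M'>M$ with $K^{M'}_{\leq k}\cap(U_{k-1}/U_k)\subseteq K^M_k$; then invoke the inductive hypothesis with exponents $(M',M')$, lift $\bar y_i,\bar z_i$ from $K^{M'}_{\leq k-1}$ to $K^{M'}_{\leq k}$, and observe that the residue lies in $K^{M'}_{\leq k}\cap(U_{k-1}/U_k)\subseteq K^M_k\subseteq K^{m_1}_kK^{m_2}_k$, while $\sum y_iz_i\in K^{M'}_{\leq k}K^{M'}_{\leq k}\subseteq K^{m_1}_{\leq k}K^{m_2}_{\leq k}$ since $M'\geq m_1,m_2$. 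With this reordering your plan becomes the paper's proof.
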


\begin{proof}
 For any $k$, it follows from (\ref{eq:chinese_remainder_thm}) and Theorem \ref{Thm:affine_cellularity_of_U} that
 \[ K_k^m = \Big(\prod_{j=1}^{s_j} \fm_{\bm{\pi}_{k,j}}^m\Big)\cdot U_{k-1}/U_k,
 \]
 and $(U_{k-1}/U_k)^2 = U_{k-1}/U_k$ holds since this is generated by an idempotent.  
 Now the assertion (i) is easily proved from these equalities.
 Let us prove the assertion (ii) by the induction on $k$.
 The case $k=1$ follows from (i).
 Assume that $k>1$.
 By (i), we can take an integer $M$ such that $M>m_1+m_2$ and $K_k^M \subseteq K_k^{m_1}\cdot K_k^{m_2}$ hold. 
 By the Artin-Rees lemma, there is a positive integer $M'$ such that $M'>M$ and 
 \[ (\bE_{k-1}/\bE_k)\cap \Big((\wh{\bS}_{\gb}^+)^{M'}(\bE^{\gb}/\bE_k)\Big) \subseteq (\wh{\bS}_{\gb}^+)^M (\bE_{k-1}/\bE_k),
 \] 
 which implies
 \begin{equation}\label{equation:description_of_kernel}
  \mathrm{Ker}(\Phi_{\gb,\leq k}^{M'} \circ a_k) \subseteq K_k^M \subseteq K_k^{m_1}\cdot K_k^{m_2},
 \end{equation}
 where $a_k$ is the map given in the diagram (\ref{eq:commutative diagram}).
 Then by applying the five lemma to the diagram
  \begin{equation*}
   \xymatrix{ 
   0 \ar[r] & U_{k-1}/U_k\ar[r]^{a_k} \ar[d]_{\Phi_{\gb,\leq k}^{M'}\circ a_k} &  U_{\leq \gl}/U_k \ar[r]^{b_k}\ar[d]_{\Phi_{\gb,\leq k}^{M'}} & 
   U_{\leq \gl}/U_{k-1} \ar[r] \ar[d]_{\Phi_{\gb,\leq {k-1}}^{M'}} & 0\\
   0 \ar[r]         & \mathrm{Im}(\Phi_{\gb,\leq k}^{M'} \circ a_k) \ar[r] & \big(\bE^{\gb}/\bE_k\big)\big/(\wh{\bS}_{\gb}^+)^{M'} \ar[r] & \big(\bE^{\gb}/\bE_k\big)\big/(\wh{\bS}_{\gb}^+)^{M'}
   \ar[r] & 0,}
  \end{equation*}
  we obtain an exact sequence
  \[ 0 \to \mathrm{Ker}\, (\Phi_{\gb,\leq k}^{M'} \circ a_k) \to K_{\leq k}^{M'} \to K_{\leq k-1}^{M'} \to 0.
  \]
  By the induction hypothesis, there is $m>M'$ such that $K_{\leq k-1}^m \subseteq K_{\leq k-1}^{M'}\cdot K_{\leq k-1}^{M'}$,
  and we claim that this $m$ satisfies the assertion (ii).
  Let $X \in K_{\leq k}^m$ be an arbitrary element.
  Since $b_k(X) \in K_{\leq k-1}^m \subseteq K_{\leq k-1}^{M'} \cdot K_{\leq k-1}^{M'}$ and there is a surjection $K_{\leq k}^{M'} \cdot K_{\leq k}^{M'} \twoheadrightarrow K_{\leq k-1}^{M'}
  \cdot K_{\leq k-1}^{M'}$,
  there is $Y \in K_{\leq k}^{M'} \cdot K_{\leq k}^{M'}$ such that $b_k(X-Y) =0$.
  Then there is $Z \in \mathrm{Ker}\, (\Phi_{\gb,\leq k}^{M'} \circ a_k)$ such that $a_k(Z) = X-Y$.
  Now we have $a_k(Z) \in a_k\big(K_k^{m_1}\cdot K_k^{m_2}\big) \subseteq K_{\leq k}^{m_1} \cdot K_{\leq k}^{m_2}$ by (\ref{equation:description_of_kernel}), 
  and hence $X \in K_{\leq k}^{m_1}\cdot K_{\leq k}^{m_2}$ holds, as required.
  The proof is complete.
\end{proof}

\noindent\textit{Proof of Proposition \ref{Prop:Main_Prop}}. (i)
Since every object in $\modfd{\bE^\gb}$ can be regarded as an $\big(\bE^\gb/(\wh{\bS}_{\gb}^+)^m\big)$-module for a sufficiently large $m$, 
it follows from Proposition \ref{Prop:Pretend_to_Fujita} (ii) that the functor $\Phi^*_\gb$ is fully faithful.
In order to prove the essentially surjectivity of $\Phi_\gb^*$, it is enough to show for any $M \in \cC_{\bQ,\gb}$ that there is some $m \in \Z_{>0}$ such that
$(\Ker\, \Phi_\gb^m)M=0$.
We shall prove this by the induction on the length of $M$.
If $M$ is simple, the assertion holds (with $m=1$) by Theorem \ref{Thm:correspondence_of_simples} (i).
Assume that the length of $M$ is larger than $1$, and take a nonzero proper submodule $N \subsetneq M$.
By the induction hypothesis, there are positive integers $m_1,m_2$ such that $(\mathrm{Ker}\, \Phi_\gb^{m_1})N = (\mathrm{Ker}\,\Phi_{\gb}^{m_2})(M/N) =0$.
By Lemma \ref{Lem:final_Lemma} (ii) with $k=l$, there is $m$ such that $\mathrm{Ker}\,\Phi_{\gb}^m \subseteq \mathrm{Ker}\, \Phi_\gb^{m_1}\cdot \mathrm{Ker}\,\Phi_{\gb}^{m_2}$,
which implies $(\mathrm{Ker}\,\Phi_{\gb}^m)M = 0$. The proof is complete.\\
(ii) Before starting the proof, note that, for any $M,N \in \modfg{\bE^\gb}$, we have 
\begin{align}\label{eq:projlimit}
 \Hom{\bE^\gb}(M,N) &= \varprojlim_m \Hom{\bE^\gb}\big(M\big/(\wh{\bS}_\gb^+)^m,N\big/(\wh{\bS}_\gb^+)^m\big)\nonumber\\
 &= \varprojlim_m \Hom{U_q'}\big(M\big/(\wh{\bS}_\gb^+)^m,N\big/(\wh{\bS}_\gb^+)^m\big)
\end{align}
by the assertion (i).

For $1\leq k \leq l$ and $1\leq j \leq s_k$, let $R_{k,j} \subseteq \wh{R}(\gb)$ be the two-sided ideal obtained as the inverse image of 
$\bigoplus_{j< r \leq s_k}R^k_{r}$ under the quotient map $\wh{R}(\gb) \to R^k=\wh{R}(\gb)/R_k$.
Then 
\[ \wh{R}(\gb) =R_{0} \supsetneq R_{1,1}\supsetneq \cdots \supsetneq R_{l,s_l-1} \supsetneq R_{l,s_l}=\{0\}
\]
is an affine heredity chain of $\wh{R}(\gb)$.
Set $\wh{W}_{k,j} = \wh{W}^{\otimes \gb} \otimes_{\wh{R}(\gb)} R_{k,j}\subseteq \wh{W}^{\otimes \gb}$, and
\[ \bE_{k,j} = \{f \in \bE^{\gb}\mid \mathrm{Im}\, f \subseteq \wh{W}_{k,j}\}.
\]
We shall prove that the sequence of two-sided ideals
\begin{equation}\label{eq:affine_heredity_chain}
 \bE^{\gb}=\bE_{0} \supsetneq \bE_{1,1}\supsetneq \cdots \supsetneq \bE_{l,s_l-1} \supsetneq \bE_{l,s_l}=\{0\}
\end{equation}
is an affine heredity chain. 
Fix $1\leq k \leq l$ and $1\leq j \leq s_k$ arbitrarily.
By the same proof with Proposition \ref{Prop:Pretend_to_Fujita} (i), we have
\[ \bE_{k,j-1}/\bE_{k,j} \cong \wh{W}(\bm{\pi}_{k,j}) \otimes_{\wh{\bA}_{\bm{\pi}_{k,j}}} \wh{W}^\sharp(\bm{\pi}_{k,j})
\]
as $(\bE^{\gb},\bE^{\gb})$-bimodules,
where we set $\bE_{k,0} = \bE_{k-1}$.
The right-hand side is isomorphic as a left $\bE^{\gb}$-module to $\wh{W}(\bm{\pi}_{k,j}\big)^{\oplus s}$ with $s=\dim W(\bm{\pi}_{k,j})$.
We write $\bE=\bE^\gb/\bE_{k,j}$ for brevity,
and claim that $\wh{W}\big(\bm{\pi}_{k,j}\big)$ is a projective object in $\modfg{\bE}$.
Let $N$ be an arbitrary object in $\modfg{\bE}$. 
Since $N$ is also an $(\bE^{\gb}/\bE_k)$-module, $N$ satisfies $N_\mu = 0$ for $\mu \in P^+_0\setminus \{\gl_1,\ldots,\gl_k\}$ by Lemma \ref{Lem:comparing_subquotient} (ii).
Hence by (\ref{eq:taking_highest_weight}) and (\ref{eq:projlimit}), we have
\begin{align}\label{eq:projlim_eigenspace}
 \Hom{\bE}\big(\wh{W}(\bm{\pi}_{k,j}),N\big) = \varprojlim_m \left(N\big/(\wh{\bS}_\gb^+)^m\right)_{\bm{\pi}_{k,j}}.
\end{align}
We easily see from this that $\mathrm{Hom}_{\bE}\big(\wh{W}(\bm{\pi}_{k,j}),-\big)$ is exact, and hence $\wh{W}(\bm{\pi}_{k,j})$ is projective, as required.
Moreover, since $\bE^{\gb}/\bE_{k,j-1}$ has a filtration whose subquotients are isomorphic to $\wh{W}(\bm{\pi})$ with $\bm{\pi} \ngeq \bm{\pi}_{k,j}$,
we have $\Hom{\bE}(\bE_{k,j-1}/\bE_{k,j},\bE^{\gb}/\bE_{k,j-1})=0$ by (\ref{eq:projlim_eigenspace}).
In addition, we have $\mathrm{End}_{\bE}\big(\wh{W}(\bm{\pi}_{k,j})\big) \cong \wh{\bA}_{\bm{\pi}_{k,j}}$ by (\ref{eq:projlim_eigenspace}),
and we easily see from this that the head of $\wh{W}(\bm{\pi}_{k,j})$ is $L(\bm{\pi}_{k,j})$.
Hence, $\wh{W}(\bm{\pi}_{k,j})$ is a projective cover of $L(\bm{\pi}_{k,j})$.
Finally, $\wh{W}(\bm{\pi}_{k,j})$ is free of finite rank over $\wh{\bA}_{\bm{\pi}_{k,j}}$ by Lemma \ref{Lem:properties_of_deformed_local}.
Now we conclude that (\ref{eq:affine_heredity_chain}) is an affine heredity chain, and hence $\bE^\gb$ is an affine quasihereditary algebra.

Then by Theorem \ref{Thm:hereditary_corresp}, $\modfg{\bE^\gb}$ is an affine highest weight category,
and now it follows from the definitions that deformed local Weyl modules $\wh{W}(\bm{\pi})$ are standard and local Weyl modules $W(\bm{\pi})$ are proper standard.
For the proof of the fact that dual local Weyl modules $\wh{W}^\vee(\bm{\pi})$ are proper costandard, see \cite[Proposition 4.18]{fujita2017affine}.

What we have shown is that $\modfg{\bE^{\gb}}$ is an affine highest weight category with respect to a poset whose order is possibly stronger than $\leq$.
However, at this point we can apply Theorem \ref{Thm:Fujita_theorem} to show that $\cF_\gb$ gives an equivalence between $\modfg{\wh{R}(\gb)}$ and $\modfg{\bE^{\gb}}$.
Hence the poset can be replaced by $(\cC_{\bQ,\gb},\leq)$, and the proof is complete.\qed

\def\cprime{$'$} \def\cprime{$'$} \def\cprime{$'$} \def\cprime{$'$}


\end{document}